\newtheorem{corollary}{Corollary}
\newtheorem{lemma}{Lemma}
\newtheorem{assumption}{Assumption}
\newcommand{\bdmath}{\begin{dmath}}
\newcommand{\edmath}{\end{dmath}}
\newcommand{\beq}{\begin{equation}}
\newcommand{\eeq}{\end{equation}}
\newcommand{\bdm}{\begin{displaymath}}
\newcommand{\edm}{\end{displaymath}}
\newcommand{\bea}{\begin{eqnarray}}
\newcommand{\eea}{\end{eqnarray}}
\newcommand{\beal}{\beq \begin{array}{ll}}
\newcommand{\eeal}{\end{array} \eeq}
\newcommand{\beas}{\begin{eqnarray*}}
\newcommand{\eeas}{\end{eqnarray*}}
\newcommand{\ba}{\begin{array}}
\newcommand{\ea}{\end{array}}
\newcommand{\bit}{\begin{itemize}}
\newcommand{\eit}{\end{itemize}}
\newcommand{\ben}{\begin{enumerate}}
\newcommand{\een}{\end{enumerate}}
\newcommand{\calA}{{\cal A}}
\newcommand{\calF}{{\cal F}}
\newcommand{\calG}{{\cal G}}
\newcommand{\calI}{{\cal I}}
\newcommand{\calK}{{\cal K}}
\newcommand{\calL}{{\cal L}}
\newcommand{\calM}{{\cal M}}
\newcommand{\calN}{{\cal N}}
\newcommand{\calO}{{\cal O}}
\newcommand{\calP}{{\cal P}}
\newcommand{\calR}{{\cal R}}
\newcommand{\calS}{{\cal S}}
\newcommand{\calT}{{\cal T}}
\newcommand{\calV}{{\cal V}}
\newcommand{\calW}{{\cal W}}
\newcommand{\calX}{{\cal X}}
\newcommand{\calZ}{{\cal Z}}
\newcommand{\eg}{\emph{e.g.,}\xspace}
\newcommand{\ie}{\emph{i.e.,}\xspace}
\newcommand{\hide}[1]{}
\newcommand{\hiddenText}{{\color{gray} hidden text.}}
\newcommand{\hideWithText}[1]{\hiddenText}
\newcommand{\norm}[1]{\left\| #1 \right\|}
\newcommand{\E}{{\mathbb{E}}}
\newcommand{\tran}{^{\mathsf{T}}}
\newcommand{\diag}[1]{\mathrm{diag}\left(#1\right)}
\newcommand{\rank}[1]{\mathrm{rank}(#1)}
\newcommand{\Real}[1]{ { {\mathbb R}^{#1} } }
\newcommand{\scenario}[1]{{\smaller \sf#1}\xspace}
\newcommand{\MOSEK}{\scenario{MOSEK}}
\newcommand{\blue}[1]{{\color{blue}#1}}
\newcommand{\linkToPdf}[1]{\href{#1}{\blue{(pdf)}}}
\newcommand{\linkToPpt}[1]{\href{#1}{\blue{(ppt)}}}
\newcommand{\linkToCode}[1]{\href{#1}{\blue{(code)}}}
\newcommand{\linkToWeb}[1]{\href{#1}{\blue{(web)}}}
\newcommand{\linkToVideo}[1]{\href{#1}{\blue{(video)}}}
\newcommand{\linkToMedia}[1]{\href{#1}{\blue{(media)}}}
\newcommand{\award}[1]{\xspace} 
\newcommand{\Sn}{\mathbb{S}^n}
\newcommand{\R}{\mathbb{R}}
\renewcommand{\norm}[1]{\lVert #1 \rVert}
\newcommand{\inprod}[2]{\left\langle #1, #2 \right\rangle}
\newcommand{\vectorize}[1]{\mathrm{vec}\parentheses{#1}}
\newcommand{\mymid}{\ \middle\vert\ }
\newcommand{\sym}[1]{\mathbb{S}^{#1}}
\newcommand{\bmat}{\left[ \begin{array}}
\newcommand{\emat}{\end{array}\right]}
\newcommand{\psd}[1]{\sym{#1}_{+}}
\newcommand{\parentheses}[1]{\left(#1\right)}
\newcommand{\abs}[1]{\left|#1\right|}
\newcommand{\bbN}{\mathbb{N}}
\newcommand{\ceil}[1]{\left\lceil #1 \right\rceil}
\newcommand{\PiSnp}[1]{\Pi_{\psd{n}} (#1)}
\newcommand{\Sym}[1]{\mathbb{S}^{#1}}
\newcommand{\Symp}[1]{\mathbb{S}_{+}^{#1}}
\newcommand{\Sympp}[1]{\mathbb{S}_{++}^{#1}}
\newcommand{\Symn}[1]{\mathbb{S}_{-}^{#1}}
\newcommand{\Symnn}[1]{\mathbb{S}_{--}^{#1}}
\newcommand{\minface}[2]{\mathrm{minface}\left( #1, #2 \right)}
\newcommand{\Id}{\mathrm{Id}}
\newcommand{\Xopt}{\calX_\star}
\newcommand{\Sopt}{\calS_\star}
\newcommand{\Zopt}{\calZ_\star}
\DeclareRobustCommand{\mymat}[1]{ \begin{bmatrix} #1 \end{bmatrix} }
\newcommand{\dist}{\mathrm{dist}}
\DeclareDocumentCommand{\H}{o}{
  \IfNoValueTF{#1}{H}{H_{#1}}
}
\DeclareDocumentCommand{\Z}{o}{
  \IfNoValueTF{#1}{Z}{Z_{#1}}
}
\newcommand{\Zk}{\Z^{(k)}}
\newcommand{\Hk}{H^{(k)}}
\newcommand{\Xk}{X^{(k)}}
\newcommand{\yk}{y^{(k)}}
\newcommand{\Sk}{S^{(k)}}
\newcommand{\Zkpo}{\Z^{(k+1)}}
\newcommand{\Hkpo}{H^{(k+1)}}
\newcommand{\Xkpo}{X^{(k+1)}}
\newcommand{\ykpo}{y^{(k+1)}}
\newcommand{\Skpo}{S^{(k+1)}}
\newcommand{\Xs}{X_{\star}}
\newcommand{\ys}{y_{\star}}
\newcommand{\Ss}{S_{\star}}
\newcommand{\Zs}{\Z_{\star}}
\newcommand{\Qs}{Q_{\star}}
\newcommand{\dXk}{\Xk - \Xs}
\newcommand{\dZk}{\Zk - \Zs}
\newcommand{\sigC}{\sigma C}
\newcommand{\sigS}{\sigma S}
\newcommand{\sigSk}{\sigma \Sk}
\newcommand{\sigSs}{\sigma \Ss}
\newcommand{\HOk}{\HO^{(k)}}
\newcommand{\ak}{a^{(k)}}
\newcommand{\bk}{b^{(k)}}
\newcommand{\akpo}{a^{(k+1)}}
\newcommand{\bkpo}{b^{(k+1)}}
\newcommand{\psik}{\Psi^{(k)}}
\newcommand{\Proj}{\Pi}
\newcommand{\PA}{\calP}
\newcommand{\PAp}{\PA^{\perp}}
\newcommand{\Omep}{\Omega^\perp}
\newcommand{\TheHO}{\Theta \circ \HO}
\newcommand{\TheHOT}{\Theta\tran \circ \HO\tran}
\newcommand{\Thep}{\Theta^\perp}
\newcommand{\ThepHO}{\Theta^\perp \circ \HO}
\newcommand{\ThepHOT}{(\Theta^\perp)\tran \circ \HO\tran}
\DeclareDocumentCommand{\lam}{m o}{
  \IfNoValueTF{#2}{\lambda_{#1}}{\lambda_{#1,#2}}
}
\DeclareDocumentCommand{\gam}{m o}{
  \IfNoValueTF{#2}{\gamma_{#1}}{\gamma_{#1,#2}}
}
\newcommand{\sigmin}[1]{\sigma_{\min} \left( #1 \right)}
\newcommand{\lammin}[1]{\lambda_{\min} \left( #1 \right)}
\newcommand{\lammax}[1]{\lambda_{\max} \left( #1 \right)}
\newcommand{\deltalam}[2]{\lammin{#1} - \lammax{#2}}
\DeclareDocumentCommand{\X}{o}{
  \IfNoValueTF{#1}{X}{X_{#1}}
}
\DeclareDocumentCommand{\QX}{o}{
  \IfNoValueTF{#1}{Q_X}{Q_{X,#1}}
}
\DeclareDocumentCommand{\QS}{o}{
  \IfNoValueTF{#1}{Q_S}{Q_{S,#1}}
}
\DeclareDocumentCommand{\QO}{o}{
  \IfNoValueTF{#1}{Q_O}{Q_{O,#1}}
}
\DeclareDocumentCommand{\W}{o}{
  \IfNoValueTF{#1}{W}{W_{#1}}
}
\DeclareDocumentCommand{\WO}{o}{
  \IfNoValueTF{#1}{W_O}{W_{O,#1}}
}
\DeclareDocumentCommand{\E}{o}{
  \IfNoValueTF{#1}{E}{E_{#1}}
}
\DeclareDocumentCommand{\I}{o}{
  \IfNoValueTF{#1}{I}{I_{#1}}
}
\DeclareDocumentCommand{\Y}{o}{
  \IfNoValueTF{#1}{Q}{Q_{#1}}
}
\DeclareDocumentCommand{\V}{o}{
  \IfNoValueTF{#1}{V}{V_{#1}}
}
\DeclareDocumentCommand{\etanew}{o}{
  \IfNoValueTF{#1}{\eta}{\eta_{#1}}
}
\DeclareDocumentCommand{\ZX}{o}{
  \IfNoValueTF{#1}{Z_X}{Z_{X,#1}}
}
\DeclareDocumentCommand{\ZS}{o}{
  \IfNoValueTF{#1}{Z_S}{Z_{S,#1}}
}
\DeclareDocumentCommand{\ZO}{o}{
  \IfNoValueTF{#1}{Z_O}{Z_{O,#1}}
}
\DeclareDocumentCommand{\dA}{o}{
  \IfNoValueTF{#1}{\Delta A}{\Delta A_{X,#1}}
}
\DeclareDocumentCommand{\HX}{o}{
  \IfNoValueTF{#1}{H_X}{H_{X,#1}}
}
\DeclareDocumentCommand{\HS}{o}{
  \IfNoValueTF{#1}{H_S}{H_{S,#1}}
}
\DeclareDocumentCommand{\HO}{o}{
  \IfNoValueTF{#1}{H_O}{H_{O,#1}}
}
\DeclareDocumentCommand{\dH}{o}{
  \IfNoValueTF{#1}{\Delta H}{\Delta H_{#1}}
}
\DeclareDocumentCommand{\dHX}{o}{
  \IfNoValueTF{#1}{\Delta H_X}{\Delta H_{X,#1}}
}
\DeclareDocumentCommand{\dHS}{o}{
  \IfNoValueTF{#1}{\Delta H_S}{\Delta H_{S,#1}}
}
\DeclareDocumentCommand{\dHO}{o}{
  \IfNoValueTF{#1}{\Delta H_O}{\Delta H_{O,#1}}
}
\DeclareDocumentCommand{\dX}{o}{
  \IfNoValueTF{#1}{\Delta X}{\Delta X_{#1}}
}
\DeclareDocumentCommand{\dS}{o}{
  \IfNoValueTF{#1}{\Delta S}{\Delta S_{#1}}
}
\newcommand{\normtwo}[1]{\norm{#1}_2}
\newcommand{\normF}[1]{\norm{#1}_\mathsf{F}}
\newcommand{\normop}[1]{\norm{#1}_{\mathrm{op}}}
\newcommand{\vvec}{\text{vec}}
\newcommand{\LamXS}{\diag{\lam{1}, \ldots, \lam{r}, \lam{r+1}, \ldots, \lam{n}}}
\newcommand{\LamX}{\Lambda_X}
\newcommand{\LamS}{\Lambda_S}
\newcommand{\Asdp}{\calA}
\newcommand{\AsdpT}{\calA^*}
\newcommand{\Madmm}{\calM}
\newcommand{\Fix}{\mathrm{Fix}}
\DeclareDocumentCommand{\svec}{o}{
  \IfNoValueTF{#1}{\mathrm{svec}}{\svec \left( #1 \right)}
}
\DeclareDocumentCommand{\smat}{o}{
  \IfNoValueTF{#1}{\mathrm{smat}}{\smat \left( #1 \right)}
}
\DeclareDocumentCommand{\tH}{o}{
  \IfNoValueTF{#1}{\widetilde{H}}{\widetilde{H}_{1}}
}
\DeclareDocumentCommand{\tHX}{o}{
  \IfNoValueTF{#1}{\widetilde{H}_X}{\widetilde{H}_{X, 1}}
}
\DeclareDocumentCommand{\tHS}{o}{
  \IfNoValueTF{#1}{\widetilde{H}_S}{\widetilde{H}_{S, 1}}
}
\DeclareDocumentCommand{\tHO}{o}{
  \IfNoValueTF{#1}{\widetilde{H}_O}{\widetilde{H}_{O, 1}}
}
\newcommand{\rmax}{r_{\mathrm{max}}}
\newcommand{\Dd}[1]{\widetilde \Omega (#1)}
\newcommand{\Ddp}[1]{\widetilde \Omega^\perp (#1)}
\crefname{assumption}{Assumption}{Assumptions}
\newcommand{\range}{\calR}
\newcommand{\nullspace}{\calN}
\newcommand{\pfm}{\Pi_{\Fix(\Madmm)}} 
\newif\ifrebuttal
\DeclareRobustCommand{\rebuttal}[1]{%
    \ifrebuttal
        {#1}
    \fi
}
\theoremstyle{thmstyleone}%
\newtheorem{theorem}{Theorem}%
\newtheorem{proposition}{Proposition}%
\theoremstyle{thmstyletwo}%
\newtheorem{example}{Example}%
\newtheorem{remark}{Remark}%
\theoremstyle{thmstylethree}%
\title{Local Linear Convergence of the Alternating Direction Method of Multipliers for Semidefinite Programming \\
under Strict Complementarity}
\author{Shucheng Kang%
\thanks{School of Engineering and Applied Sciences, Harvard University. Email: \texttt{skang1@g.harvard.edu}}
\and Xin Jiang%
\thanks{Department of Industrial and Systems Engineering, University of Houston. Email: \texttt{xinjiang@uh.edu}}
\and Heng Yang%
\thanks{School of Engineering and Applied Sciences, Harvard University. Email: \texttt{hankyang@seas.harvard.edu}}
}
\date{March 25, 2025}
\begin{document}

\maketitle

\begin{abstract}
    We investigate the local linear convergence properties of the Alternating Direction Method of Multipliers (ADMM) when applied to Semidefinite Programming (SDP). A longstanding belief suggests that ADMM is only capable of solving SDPs to moderate accuracy, primarily due to its sublinear worst-case complexity and empirical observations of slow convergence. We challenge this notion by introducing a new sufficient condition for local linear convergence: as long as the converged primal--dual optimal solutions satisfy strict complementarity, ADMM attains local linear convergence, independent of nondegeneracy conditions. Our proof is based on a direct local linearization of the ADMM operator and a refined error bound for the projection onto the positive semidefinite cone, improving previous bounds and revealing the anisotropic nature of projection residuals. 
    Extensive numerical experiments confirm the significance of our theoretical results, demonstrating that ADMM achieves local linear convergence and computes high-accuracy solutions in a variety of SDP instances, including those where nondegeneracy fails. Furthermore, we identify cases where ADMM struggles, linking these difficulties with near violations of strict complementarity—a phenomenon that parallels recent findings in linear programming. 
\end{abstract}

\newpage

{\small
\tableofcontents
}

\newpage


\section{Introduction}
\label{sec:intro}

Consider the semidefinite programs (SDPs) in the standard form:
\begin{equation} \label{eq:intro-sdp}
    \begin{array}{llllll}
        \text{Primal:} \;\; & \text{minimize} & \inprod{C}{X} & \qquad \qquad \quad \text{Dual:} \;\; & \text{maximize} & b\tran y \\
        & \text{subject to} & \Asdp X = b & & \text{subject to} & \AsdpT y + S = C \\
        & & X \in \Symp{n} & & & S \in \Symp{n},
    \end{array}
\end{equation}
with primal variable $X \in \Sn$ and dual variables $S \in \Sn$, $y \in \Real{m}$, where $\Sn$ is the set of real symmetric $n \times n$ matrices and $\Symp{n}$ is the set of positive semidefinite (PSD) matrices in $\Sn$. The linear operator $\Asdp : \Sn \to \Real{m}$ is defined as
\[
    \Asdp X := \left( \inprod{A_1}{X}, \cdots, \inprod{A_m}{X} \right)
\]
and $\AsdpT y = \sum_{i=1}^m y_i A_i$ is its adjoint operator. The coefficients $C,A_1,\ldots,A_m$ are symmetric $n \times n$ matrices. It is assumed that $\{A_i\}_{i=1}^m$ are linearly independent so that $\Asdp \AsdpT$ is an invertible operator.

With the growing demand for solving large-scale SDPs, particularly those arising from moment and sums-of-squares (SOS) relaxations in polynomial optimization~\cite{lasserre2001siopt-global,nie2023siopt-moment-momentpolynomialopt,yang2022pami-outlier-robust-geometric-perception,kang2024wafr-strom,kang2025global,fantuzzi2024siam-global-pop-integral-functionals,magron23book-sparse,wang2022certifying,gertler2025many,huang2024arxiv-sparsehomogenization}, first-order methods (FOMs) have gained increasing attention due to their low per-iteration cost and ability to exploit problem structure. Among these, the Alternating Direction Method of Multipliers (ADMM) has emerged as a widely adopted approach, with numerous implementations, applications, and variations~\cite{wen2010mp-admmsdp,chen2017mp-sgsadmm,zheng2017ifac-cdcs-sdpsolver,odonoghue2023-scs-sdpsolver,yang2015mp-sdpnalplus-sdpsolver,kang2024wafr-strom}.

\paragraph{ADMM for SDP.}
Starting from $(X^{(0)}, y^{(0)}, S^{(0)})$, the classical three-step ADMM iteration for the SDP~\eqref{eq:intro-sdp} reads as~\cite{wen2010mp-admmsdp}:
\begin{subequations}
    \label{eq:intro:admm-three-step}
    \begin{align}
        & \ykpo = (\Asdp \AsdpT)^{-1} \left(\sigma^{-1} b - \Asdp \left( \sigma^{-1} \Xk + \Sk - C \right) \right) \\
        & \Skpo = \Pi_{\Symp{n}} \left( C - \AsdpT \ykpo - \sigma^{-1} \Xk \right) \\
        & \Xkpo = \Xk + \sigma \left( \Skpo + \AsdpT \ykpo - C \right)
    \end{align}
\end{subequations}
where $\Pi_{\Symp{n}}(\cdot)$ denotes the orthogonal projection onto the PSD cone $\Symp{n}$ and $\sigma>0$ is the penalty parameter. Under mild conditions, $(\Xk, \yk, \Sk )$ is convergent to $(\Xs, \ys, \Ss)$, one of the optimal solution pairs satisfying the Karush--Kuhn--Tucker (KKT) conditions~\cite[Theorem 2]{wen2010mp-admmsdp}:
\begin{equation}
    \label{eq:intro:kkt}
    \Asdp \Xs = b, \qquad \AsdpT \ys + \Ss = C, \qquad \inprod{\Xs}{\Ss} = 0, \qquad \Xs \in \Symp{n}, \qquad \Ss \in \Symp{n}.
\end{equation}
\rebuttal{
The ADMM iteration~\eqref{eq:intro:admm-three-step} applied to the dual SDP is equivalent to the Douglas-Rachford splitting (DRS) method applied to the primal SDP~\cite{li2018siamsc-semismooth-newton-sdp}:
}
\begin{equation} \label{eq:intro:admm-one-step}
\begin{aligned}
    \Zkpo &= \AsdpT (\Asdp \AsdpT)^{-1} \Asdp (-2 \Pi_{\Symp{n}}(\Zk) + \Zk) + \Pi_{\Symp{n}}(\Zk) \\
    &\phantom{=} \mbox{} + \AsdpT (\Asdp \AsdpT)^{-1} b + \sigma \AsdpT (\Asdp \AsdpT)^{-1} \Asdp C - \sigC, 
\end{aligned}
\end{equation}
where we make the change of variables $\Z := \X - \sigS$ (and $\Zs := \Xs - \sigSs$). From~$\Zk$, we can extract the primal variable and the (scaled) dual variable as
\begin{align}
    \label{eq:intro:extract-XS}
    \Xk = \Pi_{\Symp{n}} (\Zk), \qquad \sigSk = \Pi_{\Symp{n}} (-\Zk).
\end{align}

In this paper, we investigate the local convergence properties of ADMM for solving SDPs. To this end, we begin by recalling two important regularity conditions in SDP.

\paragraph{Nondegeneracy and strict complementarity.}
First introduced in~\cite{alizadeh1997mp-complementarity-nondegeneracy-sdp}, nondegeneracy and strict complementarity have been two fundamental regularity conditions in SDP~\cite{alizadeh1998siopt-aho,zhao2010siopt-newton-cg-alm}. Since the primal--dual optimal solutions are simultaneously diagonalizable~\cite[pp. 308]{wolkowicz2000book-sdp-handbook}, we assume they admit the following decomposition:
\begin{subequations}
    \label{eq:intro:Xs-sigSs}
    \begin{align}
        & \Xs = \Qs \mymat{\LamX & 0 \\ 0 & 0} \Qs\tran, \quad \LamX := \diag{\lam{1}, \dots, \lam{r}} \\
        & \sigSs = \Qs \mymat{0 & 0 \\ 0 & \LamS} \Qs\tran, \quad \LamS := -\diag{\lam{n-s+1}, \dots, \lam{n}},
    \end{align}    
\end{subequations}
where $\diag{\cdot}$ assembles a vector into a diagonal matrix, $\Qs \in \Real{n \times n}$ is an orthogonal matrix, and the eigenvalues satisfy
\begin{align*}
    \lam{1} \ge \cdots \ge \lam{r} > 0 > \lam{n-s+1} \ge \cdots \ge \lam{n}
\end{align*}
with $r + s \le n$. Then, we define four important subspaces~\cite{alizadeh1997mp-complementarity-nondegeneracy-sdp}:
\begin{subequations}
    \label{eq:intro:orthogonal-complement}
    \begin{align}
        \calT_\Xs := & \left\{ \Qs \mymat{\HX & \HO\tran \\ \HO & 0} \Qs\tran \mymid \HX \in \Sym{r}, \, \HO \in \Real{(n-r) \times r} \right\} \\
        \calN_\Xs := \calT_\Xs^\perp = & \left\{ \Qs \mymat{0 & 0 \\ 0 & \HS} \Qs\tran \mymid \HS \in \Sym{n-r} \right\} \\
        \calT_\Ss := & \left\{ \Qs \mymat{0 & \HO\tran \\ \HO & \HS} \Qs\tran \mymid \HS \in \Sym{s}, \, \HO \in \Real{s \times (n-s)} \right\} \\
        \calN_\Ss := \calT_\Ss^\perp = & \left\{ \Qs \mymat{\HX & 0 \\ 0 & 0} \Qs\tran \mymid \HX \in \Sym{n-s} \right\},
    \end{align}
\end{subequations}
where $\calL^\perp$ represents the orthogonal complement of the linear subspace $\calL$. Further, we denote $\calR(\AsdpT) := \{\sum_{i=1}^m A_i y_i \mid y \in \Real{m} \}$ as the range space of $\AsdpT$, and $\calN(\Asdp)$ as $\calR(\AsdpT)^\perp = \{ X \in \Sn \mid \Asdp X = 0 \}$. The nondegeneracy (ND) and strict complementarity (SC) conditions---two generic properties for SDPs---are defined as~\cite{alizadeh1997mp-complementarity-nondegeneracy-sdp}:
\begin{align}
    \label{eq:intro:primal-nondegeneracy}
    \text{Primal Nondegeneracy: } & \calT_\Xs + \calN(\Asdp) = \Sn \Longleftrightarrow \calN_\Xs \cap \calR(\AsdpT) = \left\{ 0 \right\} \\
    \label{eq:intro:dual-nondegeneracy}
    \text{Dual Nondegeneracy: } & \calT_\Ss + \calR(\AsdpT) = \Sn \Longleftrightarrow \calN_\Ss \cap \calN(\Asdp) = \left\{ 0 \right\} \\
    \label{eq:intro:strict-complementarity}
    \text{Strict Complementarity: } & \rank{\Xs} + \rank{\Ss} = n \Longleftrightarrow r + s = n.
\end{align}
When strict complementarity holds, primal (resp., dual) nondegeneracy is equivalent to the uniqueness of dual (resp., primal) optimal solution~\cite{alizadeh1997mp-complementarity-nondegeneracy-sdp}. 

\subsection{ADMM for SDP: Sublinear Rate and Moderate Accuracy?}
A common perception for ADMM among practitioners is that it generally cannot solve the SDP \eqref{eq:intro-sdp} to high accuracy (\eg max KKT residual below $10^{-10}$). Indeed, this perception arises from both theoretical challenges and empirical observations.

Theoretically, a well-accepted convergence rate for ADMM applied to solving SDPs is $\calO\left( 1/\epsilon \right)$, which matches the general convex optimization case~\cite{ryu2023book-largescale-convexopt}. Establishing linear convergence $\calO(\log (1/\epsilon))$ is considered hard, since known sufficient conditions, such as strong convexity~\cite{nishihara2015pmlr-general-analysis-admm}, local polyhedrality~\cite{liang2017jota-local-convergence-admm,hong2017mp-linear-convergence-admm} and certain growth conditions \cite{deng2016global,hong2017mp-linear-convergence-admm,davis2017fasterconvergence,liu2018partial,yuan2020discerning,zamani2024exact}, either fail or remain unclear for SDP. As pioneering works, \cite{chan2008siopt-nd-sr-nonsigularity-sdp,han2018mor-linear-rate-admm} established local linear convergence of ADMM for solving SDPs when primal nondegeneracy~\eqref{eq:intro:primal-nondegeneracy} and dual nondegeneracy~\eqref{eq:intro:dual-nondegeneracy} both hold (regardless of strict complementarity~\eqref{eq:intro:strict-complementarity}), by showing the metric subregularity (or calmness) of the KKT operator. However, two challenges remain: (a) two-side nondegeneracy conditions are hard to check numerically, and (b) important subclasses of SDP, such as those arising from the moment-SOS relaxations with finite convergence~\cite{lasserre2001siopt-global}, are known to be degenerate. It remains an open question in optimization theory whether alternative, and perhaps simple-to-verify, sufficient conditions can ensure the linear convergence of ADMM for solving SDPs.
    
Empirically, the slow convergence of ADMM and its variants when solving SDPs is widely reported~\cite{garstka2021jota-cosmo,kang2024wafr-strom,zheng2017ifac-cdcs-sdpsolver,yang2023mp-stride}. For a typical SDP, the interior point method~\cite{alizadeh1998siopt-aho} (IPM) implemented in \MOSEK~\cite{aps2019ugrm-mosek-sdpsolver} can solve the problem to machine precision (if memory permits), while ADMM often struggles to achieve moderate accuracy (e.g., max KKT residual $10^{-4}$). Consequently, ADMM and its variants are primarily used to warmstart downstream solvers~\cite{yang2015mp-sdpnalplus-sdpsolver,yang2023mp-stride} or as efficient methods to obtain coarse solutions~\cite{kang2024wafr-strom}. In contrast, recently in the linear programming (LP) literature, first-order methods are observed to exhibit significantly improved local linear convergence rates after initially traversing a prolonged phase of slow convergence~\cite{lu2024mp-geometry-pdhg-lp}. To the best of our knowledge, no comparable numerical evidence has been documented in the SDP literature.

Thus, the central question driving this paper is:
\begin{quotation}
    \noindent
    Can ADMM exhibit empirically observable linear convergence when solving SDPs? If so, can we establish numerically verifiable sufficient conditions to guarantee this behavior?
\end{quotation}

\subsection{Contributions}

We affirm this question both empirically and theoretically. We establish local linear convergence of ADMM for SDP under a mild strict complementarity (SC) assumption. Moreover, comprehensive numerical results are reported to demonstrate such a prevalent linear rate of convergence.

\paragraph{Theoretical contribution.}
We establish a new sufficient condition that guarantees the local linear convergence of ADMM for solving SDPs.
\begin{theorem}[Informal: Local Linear Convergence under Strict Complementarity]
If ADMM converges to an optimal solution $(\Xs, \ys, \Ss)$ of the SDP \eqref{eq:intro-sdp} that satisfies strict complementarity~\eqref{eq:intro:strict-complementarity}, then ADMM attains local linear convergence after finite iterations.
\end{theorem}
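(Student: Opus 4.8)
The plan is to convert strict complementarity into differentiability of the ADMM fixed-point operator at its limit, and then run a perturbed-linear-iteration argument whose nonlinear remainder is controlled by a \emph{sharper, anisotropic} estimate of the PSD-projection residual. Throughout, $\Madmm$ denotes the operator in~\eqref{eq:intro:admm-one-step}, so that $\Zkpo = \Madmm(\Zk)$ and $\Madmm(\Zs)=\Zs$; collecting the affine terms it reads $\Madmm(Z) = \calP Z + (\I - 2\calP)\,\Pi_{\Symp{n}}(Z) + (\text{const})$, where $\calP := \AsdpT(\Asdp\AsdpT)^{-1}\Asdp$ is the orthogonal projector onto $\calR(\AsdpT)$ and $\I - 2\calP$ is an orthogonal reflection. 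Because $\Xs$ and $\Ss$ share the eigenbasis $\Qs$ of~\eqref{eq:intro:Xs-sigSs}, strict complementarity~\eqref{eq:intro:strict-complementarity} ($r+s=n$) makes $\Zs = \Xs - \sigSs = \Qs\,\diag{\lam{1},\dots,\lam{r},\lam{r+1},\dots,\lam{n}}\,\Qs\tran$ nonsingular, with eigenvalues bounded away from $0$. Hence $\Pi_{\Symp{n}}$ is real-analytic (in particular $C^1$) on a neighborhood of $\Zs$, and so is $\Madmm$; since $\Zk\to\Zs$, all $\Zk$ eventually enter this neighborhood, and there I write $\Madmm(\Zs+\Delta) = \Zs + \calM_\star\Delta + E(\Delta)$ with $\calM_\star := D\Madmm(\Zs)$ and $E$ the second-order remainder. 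This handles the ``after finite iterations'' qualifier.

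\textbf{Step 1: an anisotropic bound on the projection residual.} Work in the $\Qs$-basis and split $\Delta = \mymat{\Delta_{XX} & \Delta_{XS} \\ \Delta_{XS}\tran & \Delta_{SS}}$, where $\Delta_{XX}\in\Sym{r}$, $\Delta_{SS}\in\Sym{s}$ and $\Delta_{XS}\in\Real{r\times s}$ is the ``wing'' block $\wing(\Delta)$ coupling $\calR(\Xs)$ and $\calR(\Ss)$. Since $\Madmm$ is affine apart from $\Pi_{\Symp{n}}$, $E(\Delta) = (\I-2\calP)\bigl[\Pi_{\Symp{n}}(\Zs+\Delta) - \Xs - D\Pi_{\Symp{n}}(\Zs)\Delta\bigr]$, so $\norm{E(\Delta)}$ equals the PSD-projection residual. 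On the block-diagonal slice $\Delta_{XS}=0$ the spectrum of $\Zs+\Delta$ stays split across $0$ block-by-block and $\Pi_{\Symp{n}}(\Zs+\Delta) = \Xs + \mymat{\Delta_{XX} & 0 \\ 0 & 0}$ is \emph{exactly affine}, so the residual vanishes identically there. By invariant-subspace perturbation, for general small $\Delta$ the two spectral subspaces of $\Zs+\Delta$ differ from those of $\Zs$ by a rotation of size $O(\norm{\wing(\Delta)}/\delta_\star)$ generated solely by $\wing(\Delta)$, where $\delta_\star := \lam{r}-\lam{r+1}>0$ is the eigenvalue gap of $\Zs$ at $0$. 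Tracking this through $\Pi_{\Symp{n}}(Z) = P_+(Z)\,Z$ (with $P_+$ the spectral projector onto the positive eigenspace) yields the refined estimate $\norm{E(\Delta)} \le (c/\delta_\star)\,\norm{\wing(\Delta)}\,\norm{\Delta}$ for $\norm{\Delta}$ small: \emph{every} term of the residual carries a factor of $\norm{\wing(\Delta)}$. This is the improvement over the generic $O(\norm{\Delta}^2)$ bound, and it is what makes the iteration argument close.

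\textbf{Step 2: spectrum of $\calM_\star$, the flat subspace, and the iteration.} As $\Madmm$ is (averaged, hence) nonexpansive, $\normop{\calM_\star}\le 1$ and the powers $\calM_\star^k = D(\Madmm^{\,k})(\Zs)$ are uniformly bounded, so the eigenvalue $1$ of $\calM_\star$ is semisimple, and the averaged form $\Madmm = (1-\alpha)\I + \alpha N$ forces every other eigenvalue to have modulus $<1$; since $\Sn$ is finite-dimensional there is a gap $\rho\in(0,1]$ and an $\calM_\star$-invariant splitting $\Sn = \calF\oplus\calF^{c}$ with $\calF:=\ker(\calM_\star-\I)$, $\calF^{c}:=\range(\calM_\star-\I)$, $\calM_\star|_\calF=\I$, and the spectral radius of $\calM_\star|_{\calF^{c}}$ at most $1-\rho$; fix an adapted norm $\norm{\cdot}_*$ with $\norm{\calM_\star v}_*\le(1-\rho/2)\norm{v}_*$ on $\calF^{c}$. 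The key structural fact is that \emph{$\wing$ vanishes on $\calF$}: if $\calM_\star d = d$, then writing $\calM_\star - \I = (\I-2\calP)D\Pi_{\Symp{n}}(\Zs) - (\I-\calP)$ and splitting the equation along $\calN(\Asdp)\oplus\calR(\AsdpT)$ gives $D\Pi_{\Symp{n}}(\Zs)d \in \calN(\Asdp)$ and $(D\Pi_{\Symp{n}}(\Zs)-\I)d \in \calR(\AsdpT)$; these are orthogonal, and since $D\Pi_{\Symp{n}}(\Zs)$ acts as the identity on $\Delta_{XX}$, as zero on $\Delta_{SS}$, and with divided-difference weights $\Theta_{ij}=\lam{i}/(\lam{i}-\lam{j})$ on the wing, the inner product collapses to $2\sum_{i\le r<j}\Theta_{ij}(\Theta_{ij}-1)(\wing(d))_{ij}^2 = 0$; as $\Theta_{ij}\in(0,1)$ \emph{strictly}---a direct consequence of the strict eigenvalue ordering $\lam{r}>0>\lam{r+1}$ guaranteed by strict complementarity---each summand is $<0$, forcing $\wing(d)=0$. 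Now decompose $\Zk-\Zs = u_k + v_k$ with $u_k\in\calF$, $v_k\in\calF^{c}$ (both $\to 0$), and project $\Zkpo - \Zs = \calM_\star(\Zk-\Zs) + E(\Zk-\Zs)$ onto the two subspaces: $u_{k+1} = u_k + \Pi_{\calF}E(\Zk-\Zs)$ and $v_{k+1} = \calM_\star v_k + \Pi_{\calF^{c}}E(\Zk-\Zs)$. Since $\wing(u_k)=0$, Step~1 gives $\norm{E(\Zk-\Zs)}\le (c/\delta_\star)\norm{\wing(v_k)}\,\norm{\Zk-\Zs}\lesssim \norm{v_k}_*(\norm{u_k}_*+\norm{v_k}_*)$. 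For $k$ large enough that $\norm{u_k}_*,\norm{v_k}_*$ are small, this yields $\norm{v_{k+1}}_*\le(1-\rho/4)\norm{v_k}_*$, i.e.\ $v_k\to 0$ R-linearly; then $\norm{u_{k+1}-u_k}_*$ is geometrically summable, so $u_k$ converges---necessarily to $0$, as $\Zk\to\Zs$---with $\norm{u_k}_*\lesssim(1-\rho/4)^{k}$. Hence $\norm{\Zk-\Zs}_*\to 0$ linearly, and by~\eqref{eq:intro:extract-XS} ($Z\mapsto\Pi_{\Symp{n}}(\pm Z)$ are $1$-Lipschitz and $y$ is recovered linearly) the same local linear rate transfers to $(\Xk,\yk,\Sk)$.

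\textbf{Expected main obstacle.} The delicate part is Step~1: proving the anisotropic residual bound, namely that near a nonsingular $\Zs$ the curvature of $\Pi_{\Symp{n}}$ lives entirely in the wing directions and scales like $1/\delta_\star$. This needs a careful invariant-subspace perturbation (a Riccati/Sylvester expansion of the tilted eigenspaces), not a naive second-order Taylor bound, and it is where the paper's ``refined error bound'' really earns its keep. The remaining ingredients---differentiability from strict complementarity, semisimplicity of the eigenvalue $1$, and the orthogonality identity pinning $\wing$ to zero on $\calF$---are comparatively routine; but it is worth stressing that \emph{no nondegeneracy condition is used}: strict complementarity alone supplies smoothness of the operator and the strict inequality $\Theta_{ij}\in(0,1)$, and finite-dimensionality supplies the spectral gap that earlier proofs extracted from metric subregularity under two-sided nondegeneracy.
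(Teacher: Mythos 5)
Your proposal is correct in its overall architecture and, in the degenerate case, it closes the argument by a genuinely different route than the paper. The shared skeleton is: the anisotropic residual bound for $\Pi_{\Symp{n}}$ at the nonsingular $\Zs$ (your Step 1 is exactly \cref{thm:eb-intro-thm}, and your sketch via a Sylvester/Riccati expansion of the tilted invariant subspaces is the same strategy the paper carries out in \cref{sec:error-bound} — be aware this is the paper's heaviest lifting, and your one-line ``rotation generated solely by the wing'' is precisely what Algorithm~\ref{alg:error-bound:iterative-elimination} is built to make rigorous); the linearization $\Zkpo-\Zs=\Madmm(\dZk)+\psik$; the fact that the wing vanishes on $\Fix(\Madmm)$ (your inner-product identity is the paper's \cref{lem:lin-auxi} and \cref{prop:lin-M}(c), though ``each summand is $<0$'' should read ``$\le 0$, vanishing only when the wing entry does''); and the geometric decay of the component orthogonal to $\Fix(\Madmm)$ (the paper's \cref{lem:conv-nnd-blk}). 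Where you diverge is the end-game without nondegeneracy: the paper never controls $\normF{\Zk-\Zs}$ directly, but instead extends Sturm's regularized backward error bound to the scaled KKT system (\cref{lem:conv-nnd-kkt,lem:conv-nnd-Z}), adds the minimal-face terms $\Proj_{\calT_\Ss}(\Xk)$, $\Proj_{\calT_\Xs}(\Sk)$, and combines this ``conditional sharpness'' with Fej\'er monotonicity (\cref{lem:conv-nnd-Zdiff}) to get R-linear decay of $\dist(\Zk,\Zopt)$ only. You instead telescope the fixed-subspace component: $\pfm\Madmm=\pfm$ (valid since $\Fix(\Madmm)=\Fix(\Madmm^*)$ for a linear nonexpansive map, which also makes your splitting orthogonal) gives $u_{k+1}=u_k+\pfm\psik$, the refined bound gives $\normF{\psik}\lesssim\normF{\HOk}\,\normF{\Hk}$ with $\normF{\HOk}\lesssim\normF{v_k}$ decaying geometrically, so the increments are summable, and the a priori convergence $\Zk\to\Zs$ pins the limit of $u_k$ at zero, yielding $\normF{u_k}=O(\rho^k)$ and hence R-linear convergence of $\normF{\Zk-\Zs}$ itself, with the nondegenerate case as the special case $\calF=\{0\}$. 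This is leaner (no Hoffman/Sturm machinery, no minimal-face bookkeeping) and nominally stronger (distance to the specific limit, not to $\Zopt$); its essential enabler is exactly the anisotropic factor $\normF{\HOk}$ — with the generic $O(\normF{\Hk}^2)$ bound of \cref{lem:eb-intro-SS02} the increments $\normF{u_{k+1}-u_k}$ would not be summable without an a priori rate, so the argument would not close. Two cosmetic simplifications: you do not need the semisimplicity/adapted-norm detour, since \cref{prop:lin-M}(d) gives $\normop{\Madmm-\pfm}<1$ directly in the Frobenius norm; and the contraction on $v_k$ should be run in that norm to match the error bound's constants.
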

Our sufficient condition holds independently of nondegeneracy (ND). Verifying strict complementarity~\eqref{eq:intro:strict-complementarity} requires only checking the numerical rank of $\Xs$ and $\Ss$, a significantly more tractable procedure compared with verifying nondegeneracy, which involves examining the intersection of two subspaces.

At a high level, our proof framework is built upon a detailed local linearization analysis of the ADMM operator, incorporating several key contributions.
\begin{itemize}
    \item \textbf{A refined error bound for the PSD cone projector.}
    We improve the classic linearization result of the PSD cone projector in~\cite[Theorem 4.6]{sun2002mor-semismooth-matrix-valued} by tightening the bound on the linearization residual from $\calO (\norm{\H}^2)$ to $\calO( \norm{\HO} \cdot \norm{\H})$, where~$\H$ represents a small perturbation and $\HO$ denotes its off-block-diagonal part. This refinement is a cornerstone of our proof, revealing the anisotropic nature of the residual and offering potential applications beyond our setting.
    
    \item \textbf{Characterization of the local behavior of ADMM.}
    Our analysis relies on a local linearization of the fixed-point iteration \eqref{lem:error-bound:error-control-one-step}. In particular, when near optimum, the iterate error $\Zkpo-\Zs$ can be written as a linear transformation of $\Zk-\Zs$ plus a quadratic residual term. When both ND and SC hold, we prove that the linear transformation is contractive and that the linearization residual becomes negligible, directly leading to local linear convergence.

    \item \textbf{Handling cases without nondegeneracy.}
    When ND fails but SC holds, we first show that the several ``partial'' sequences, \eg the off-block-diagonal part of $\Zk-\Zs$, vanish at a local linear rate. To establish the convergence of the full sequence, we further extend the regularized backward error bound for spectrahedra~\cite[Lemma 2.3]{sturm2000siopt-error-bounds-linear-matrix-inequalities} to the (scaled) KKT system of SDP. This helps build a local ``conditional sharpness'' property, analogous to sharpness in LP~\cite{applegate2023mp-faster-lp-sharpness} but with an additional term related to the minimal faces of the PSD cone. This ``conditional sharpness'', together with the convergence of ``partial'' sequences, finally yields the R-linear rate of convergence for the ADMM iterates.
\end{itemize}
\rebuttal{
Notably, our proof framework differs from that of~\cite{han2018mor-linear-rate-admm} in that it does not directly rely on metric subregularity of the KKT operator, which is generally intractable to verify numerically. At the end of the paper, we also briefly discuss the relationship between our framework and several local error bounds in the ADMM literature. In particular, in the ADMM-for-SDP setting, we show that strict complementarity at the limiting KKT point implies a weak local error bound for the DRS operator.
}

\paragraph{Empirical contribution.}
In parallel with the theoretical findings, we present extensive numerical evidence demonstrating the prevalent local linear convergence of ADMM when solving SDPs. To systematically analyze this phenomenon, we categorize SDP problems into four families based on whether the nondegeneracy (ND) and strict complementarity (SC) conditions hold or fail. A representative subset of our numerical results is shown in \cref{fig:intro}, where ADMM consistently enters a linear convergence regime across all four cases. The full set of numerical experiments is detailed in \cref{sec:exp}, covering a broad range of SDP instances. Our test suite includes standard benchmark datasets~\cite{abor1999dataset-dimacs,mittelmann2006dataset-sparse-sdp-problems} as well as newly generated SDP problems in real-world applications~\cite{han2025arxiv-xm,yang2022pami-outlier-robust-geometric-perception}. These instances span classical MAXCUT-style SDPs and more challenging problems from the moment-SOS relaxations with finite convergence~\cite{lasserre2001siopt-global,wang2023arxiv-manisdp}. All the SDP problems are available at 
\url{https://github.com/ComputationalRobotics/admmsdp-linearconv}.


\begin{figure}[htbp]
    \centering

    \begin{minipage}{\textwidth}
        \centering
        \begin{tabular}{cc}
            \begin{minipage}{0.43\textwidth}
                \centering
                \includegraphics[width=\columnwidth]{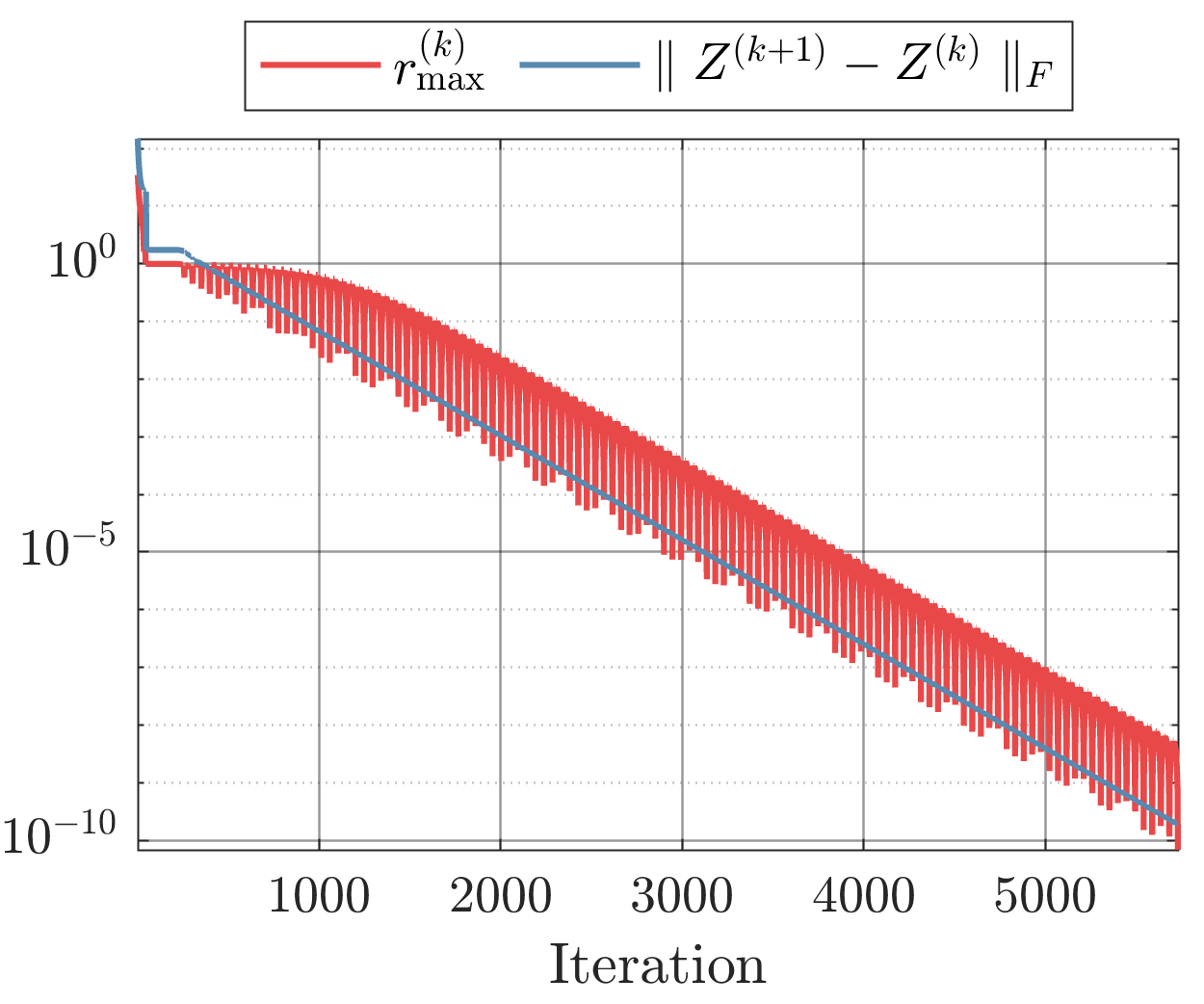}
                (a) ND holds and SC holds
            \end{minipage}

            \begin{minipage}{0.43\textwidth}
                \centering
                \includegraphics[width=\columnwidth]{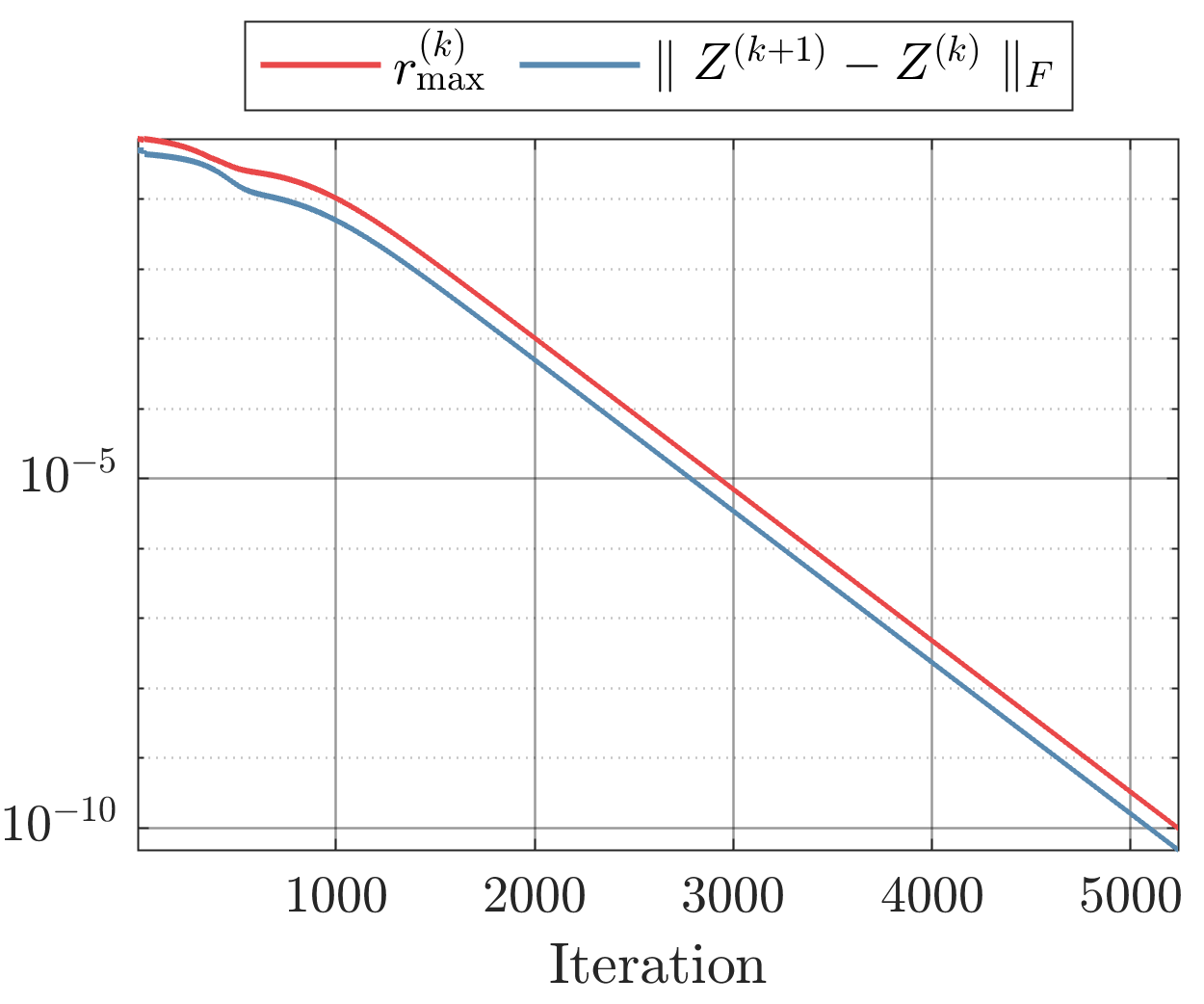}
                (b) ND holds and SC fails
            \end{minipage}
        \end{tabular}
    \end{minipage}

    \begin{minipage}{\textwidth}
        \centering
        \begin{tabular}{cc}
            \begin{minipage}{0.43\textwidth}
                \centering
                \includegraphics[width=\columnwidth]{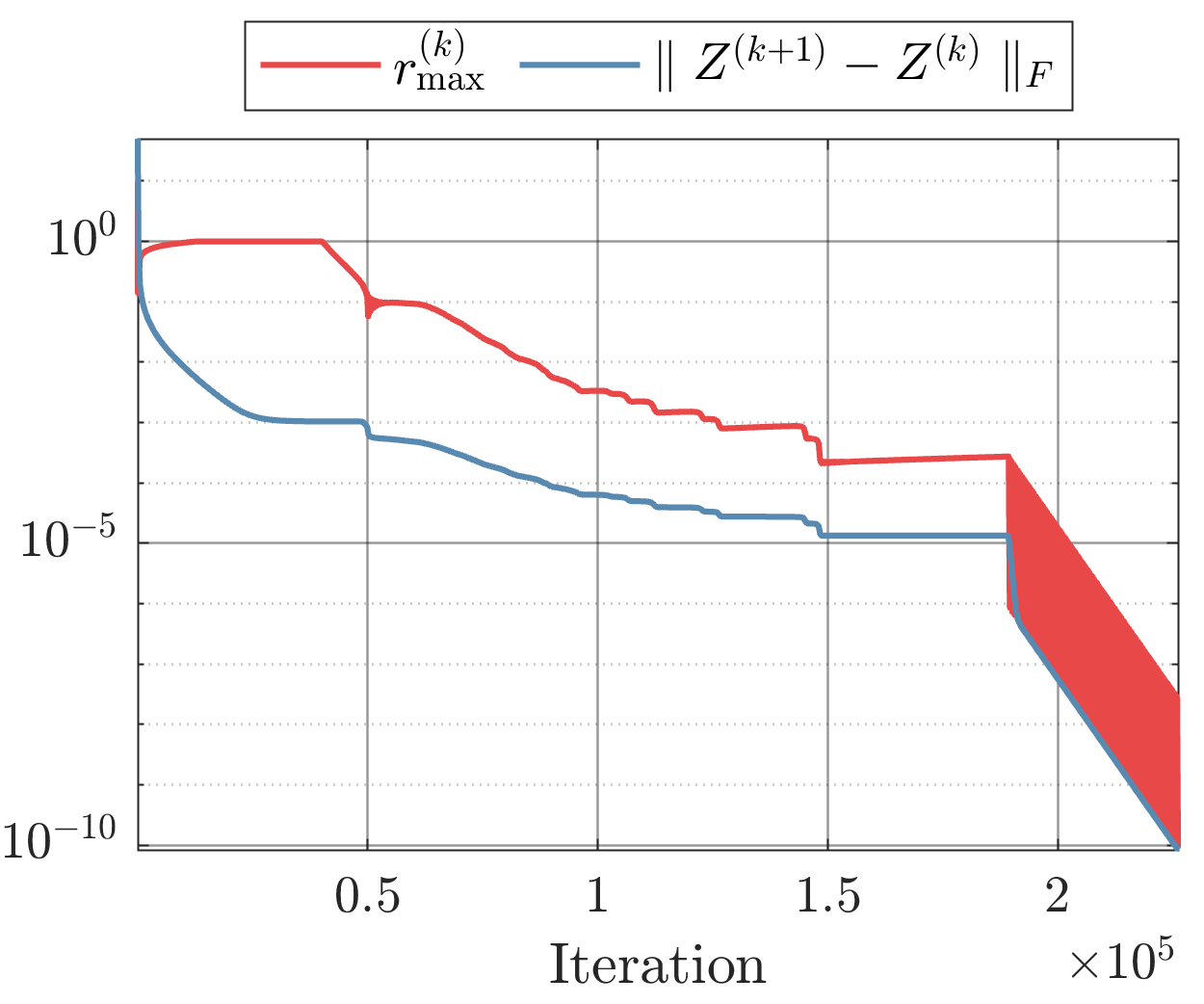}
                (c) ND fails and SC holds
            \end{minipage}

            \begin{minipage}{0.43\textwidth}
                \centering
                \includegraphics[width=\columnwidth]{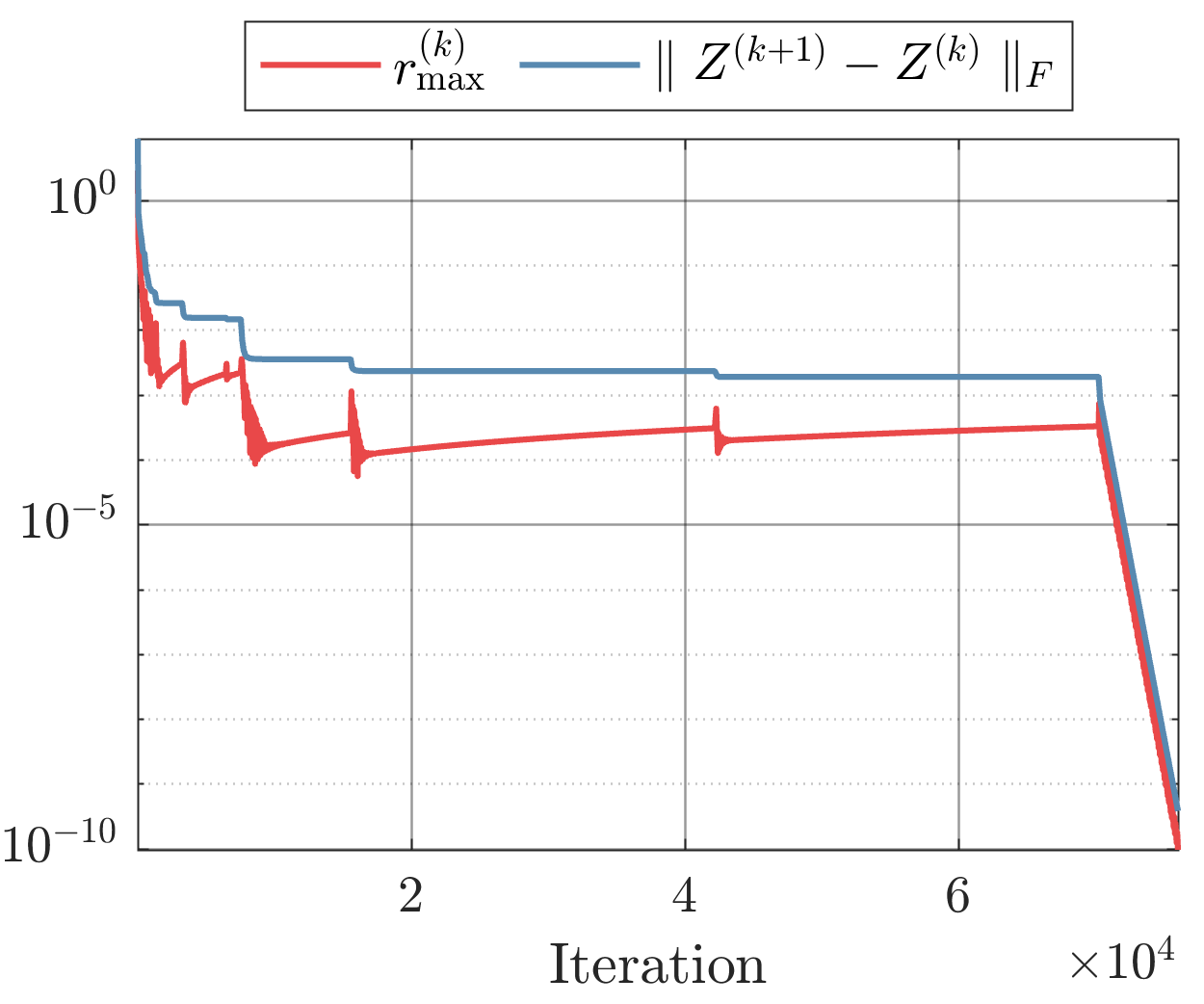}
                (d) ND fails and SC fails
            \end{minipage}
        \end{tabular}
    \end{minipage}

    \caption{Four representative SDP instances. (a) A toy structure-from-motion problem from~\cite{han2025arxiv-xm}; (b) A toy example from~\cite[pp. 44]{wolkowicz2000book-sdp-handbook}; (c) A Quasar problem from~\cite{yang2019quaternion} with random initialization; (d) Second-order relaxation for a random BQP problem~\cite{yang2023mp-stride} with all-zeros initialization. Here, $\rmax$ denotes the maximum KKT residual. In all the cases, ADMM with fixed $\sigma$ parameter eventually exhibits local linear convergence.} 
    \label{fig:intro}
\end{figure}

In addition, we document ``failure'' cases of ADMM in which the maximum KKT residual remains above $10^{-10}$ even after reaching the iteration limit ($10^6$ iterations) or the time limit (100 hours). These cases exhibit a common feature: the minimum positive eigenvalue of the converged $\Xs$ or $\Ss$ is near zero. This behavior closely resembles difficult cases in LP~\cite{lu2024mp-geometry-pdhg-lp} and can be partly explained via our proof framework.

\paragraph{Open questions: rank identification and beyond.}
In first-order methods for LP~\cite{lu2024mp-geometry-pdhg-lp}, local linear convergence is achieved alongside with basis identification. An analogous result in ADMM for SDP (with SC) would be \textit{rank identification}; \ie after a finite number of iterations, ADMM identifies the solution rank, and all subsequent iterates maintain at the same rank. Though this result could be readily drawn from the partial smoothness theory \cite{wright1993identifiable,lewis2002siopt-activesets-nonsmoothness-sensitivity,drusvyatskiy2014optimality}, we provide a more direct proof that only leverages the algorithmic properties of ADMM. However, unlike the LP case, it remains unclear that whether rank identification and linear convergence occur \textit{simultaneously} in ADMM for SDP. In this work, we provide a partial answer through a numerical example and leave a full investigation for future work.

Relating our discussion back to \cref{fig:intro}, this work establishes (R-)linear convergence guarantees of ADMM for SDP, which covers cases (a)--(c) in \cref{fig:intro}. (In comparison, \cite{han2018mor-linear-rate-admm} explains cases (a) and (b).) However, whether ADMM provably attains local linear convergence in case (d), where both ND and SC fail, remains an open question, a gap between theory and practice that warrants further investigation.

\subsection{Notations}

We use $\Real{n}$ to denote the set of $n$-dimensional real vectors and $\R^{n}_+$ (resp., $\R^{n}_{++}$) the set of nonnegative (resp., positive) vectors in $\Real{n}$. Denote $\Real{m \times n}$ as the set of $m \times n$ matrices. Denote $\Sn$ as the set of real symmetric $n \times n$ matrices and $\Symp{n}$/$\Sympp{n}$ (resp. $\Symn{n}$/$\Symnn{n}$) as the set of positive semidefinite/positive definite matrices (resp., negative semidefinite/negative definite matrices) in $\Sn$. Denote $\bbN$ the set of nonnegative integers and for any integer $n \in \bbN$, define $[n] := 1, 2, \dots, n$. Denote $\Id$ as the identity operator, denote $\I[n]$ as the $n \times n$ identity matrix and $E_n$ (resp., $E_{m \times n}$) as the $n \times n$ (resp. $m \times n$) all-ones matrix. For $A \in \Sn$, $\lammin{A}$ (resp., $\lammax{A})$ represents its minimal eigenvalue (resp., maximal eigenvalue). For $x \in \Real{n}$, we denote $\normtwo{x}$ as its Euclidean norm. For $X \in \Real{m \times n}$, $\normtwo{X}$ represents its spectral norm, $\normF{X}$ its Frobenius norm, and $\norm{X}$ an arbitrary norm. For a linear operator $\calM : \Sn \to \Sn$, we use $\normop{\calM}$ to denote its operator norm: $\normop{\calM} := \sup\{\normF{\calM X} \mid \normF{X} = 1\}$.

Denote $A \circ B$ as the Hadamard product between two matrices $A$ and $B$ of the same size; \ie $(A \circ B)_{ij} = A_{ij} B_{ij}$. Denote $A \otimes B$ as the Kronecker product between $A \in \Real{m \times n}$ and $B \in \Real{p \times q}$, and denote $A \oplus B = A \otimes \I[m] + \I[n] \otimes B$ as the Kronecker sum between $A \in \Real{n \times n}$ and $B \in \Real{m \times m}$. Denote $\vectorize{X}$ has the column-major vectorization of an arbitrary matrix $X$, and denote $\svec[A]: \Sn \to \Real{t(n)}$ as the symmetric vectorization of $A$, where $t(n) := \frac{n(n+1)}{2}$. Denote $\smat$ as the inverse operator of $\svec$. For an arbitrary matrix $A$, $A_{a:b, c:d}$ represents the submatrix of $A$ indexed from row~$a$ to row $b$ and from column~$c$ to column $d$. 

The distance from a point $X \in \Sn$ to a set $\calX \subseteq \Sn$ is defined as
\[
    \dist(X, \calX) := \inf_{\widetilde X \in \calX} \normF{X - \widetilde X}.
\]
We will use the same notation to denote the distance to a Cartesian product of sets:
\[
    \dist((X,S), \calX \times \calS) := \inf_{(\widetilde X, \widetilde S) \in \calX \times \calS} \sqrt{\normF{X-\widetilde X}^2 + \normF{S-\widetilde S}^2}.
\]
We denote the orthogonal projection onto a set $\calX$ as $\Proj_\calX$; in particular, $\PiSnp{\cdot}$ means the orthogonal projection onto the PSD cone.

\subsection{Outline}

After a brief review of related work in \cref{sec:related-work}, we introduce our refined error bound in \cref{sec:error-bound-intro}, a fundamental result that underpins our proof framework and holds independent interest. We then examine the local linearization of ADMM in \cref{sec:linearization} and establish its local linear convergence both with and without nondegeneracy in \cref{sec:conv-with-nd} and \cref{sec:conv-without-nd}, respectively. Due to its mathematical complexity, the full proof of our refined error bound is deferred to \cref{sec:error-bound}. In \cref{sec:exp}, we conduct extensive numerical experiments to support our theoretical findings. \Cref{sec:rank-id} briefly discusses the rank identification phenomenon as well as its relationship with local linear convergence. \rebuttal{\cref{sec:metric} briefly discusses the relationship between strict complementarity and metric subregularity, local error bounds, and hidden polyhedrality.} Finally, \cref{sec:conclusion} includes concluding remarks.


\section{Related Work}
\label{sec:related-work}

With the rapid development of data science and all fields in engineering, SDP problems are growing in scale. Efficient and scalable algorithms have been developed, analyzed and implemented for solving large-scale SDPs.

\paragraph{Augmented Lagrangian method (ALM).}
Originally introduced to enhance the performance of penalty methods~\cite{powell1969ac-nonlinear-constraints-min}, ALM has shown promise in tackling large-scale SDPs~\cite{yang2015mp-sdpnalplus-sdpsolver}. Under mild conditions (strong duality and the existence of a strictly complementary solution pair), linear convergence of ALM~\cite{zhao2010siopt-newton-cg-alm,cui2016arxiv-superlinear-alm-sdp,liao2024nips-inexact-alm-linear-convergence} is established by leveraging its connection to proximal point methods~\cite{rockafellar1976mor-alm-ppm} (PPM) and quadratic growth properties~\cite{sturm2000siopt-error-bounds-linear-matrix-inequalities,ding2023ol-strict-complementarity-error-bound}.

\paragraph{Burer--Monteiro (BM) factorization method.}
The BM method~\cite{burer2003springer-bm} replaces the conic constraint by $X = RR\tran$, reducing the problem to a lower-dimensional nonlinear program. Under specific rank and regularity conditions, it recovers the global optimum of the original SDP~\cite{boumal2016arxiv-bm-smoothsdp,tang2023arxiv-feasible-lowranksdp}. Owing to its efficiency, the BM method has achieved significant empirical success in real-world problems with low-rank solutions~\cite{han2025arxiv-xm}. It can also be combined with ALM~\cite{wang2023arxiv-manisdp} or ADMM~\cite{han2024arxiv-lowrank-admm}.

\paragraph{Spectral bundle method (SBM).}
First proposed in~\cite{helmberg2000siopt-spectral-bundle-sdp}, SBM has gained attention for its low per-iteration cost. It enjoys sublinear convergence under mild assumptions~\cite{ding2023siopt-revisit-spectral-bundle}. Furthermore, if a strictly complementary solution pair exists and the surrogate function captures the correct rank of the optimal solution, SBM achieves local linear convergence~\cite{ding2023siopt-revisit-spectral-bundle,liao2023arxiv-overview-spectral-bundle-sdp}. Similar to ALM, its linear convergence guarantees rely on quadratic growth. Recent work also incorporates SBM into ALM~\cite{liao2025arxiv-bundle-alm}.

\rebuttal{
\paragraph{First-order proximal methods and (local) linear convergence of ADMM.}
As a broad class of first-order methods derived from the monotone operator theory~\cite{ryu2023book-largescale-convexopt}, primal--dual proximal methods are widely used for SDP. In addition to ADMM~\cite{wen2010mp-admmsdp}, symmetric Gauss-Seidel (sGS)-ADMM~\cite{chen2017mp-sgsadmm} has gained traction for solving general SDPs to medium accuracy, and its connection to proximal ALM is discussed in~\cite{chen2021mp-alm-admm-equivalence}. Other proximal methods, such as the primal--dual hybrid gradient (PDHG) method~\cite{jiang2022coa-bregman-sparsesdp}, have also been explored. On the theoretical side, linear convergence of ADMM was first established for structured problems such as linear and quadratic programs~\cite{boley13siopt-linearconv-admm-lp-qp}, and later extended to strongly convex settings under additional smoothness and rank conditions~\cite{deng2016global,nishihara2015pmlr-general-analysis-admm}. Subsequent local and variational-analytic analyses further identified sufficient conditions based on geometric regularity, error bounds, calmness, and metric-subregularity~\cite{liang2017jota-local-convergence-admm,davis2017fasterconvergence,hong2017mp-linear-convergence-admm,liu2018partial,yuan2020discerning,han2018mor-linear-rate-admm}. Nonetheless, establishing local linear convergence remains much harder for SDP than for ALM or SBM. In particular, the known sufficient conditions for linear convergence of ADMM, \eg strong convexity~\cite{nishihara2015pmlr-general-analysis-admm}, local polyhedrality~\cite{liang2017jota-local-convergence-admm} and other growth conditions \cite{deng2016global,hong2017mp-linear-convergence-admm,davis2017fasterconvergence,liu2018partial,yuan2020discerning,zamani2024exact}, either fail or remain unclear for SDP. See~\cref{sec:metric} for more discussion.
}


\section{A Refined Error Bound for PSD Cone Projection}
\label{sec:error-bound-intro}

We see from \eqref{eq:intro:admm-one-step} that the only nonlinear operation in ADMM is the projection onto the PSD cone. So, to better understand the convergence of ADMM for SDP, we need to study the local behavior of the PSD cone projector $\Pi_{\psd{n}}$. A classic result on the perturbation theory of $\Pi_{\psd{n}}$ is \cite[Theorem 4.6]{sun2002mor-semismooth-matrix-valued}, which is restated below for self-containment.
\begin{lemma}[\text{\cite[Theorem 4.6]{sun2002mor-semismooth-matrix-valued}}] \label{lem:eb-intro-SS02}
    Given an $n \times n$ symmetric nonsingular matrix $Z \in \Sn$, denote its eigenvalue decomposition by
    \[
        Z = Q \LamXS Q\tran, \;\; \text{where} \ \lam{1} \geq \cdots \geq \lam{r} > 0 > \lam{r+1} \geq \cdots \geq \lam{n}
    \]
    and $Q \in \R^{n \times n}$ is an orthogonal matrix. Then, the function $\Pi_{\psd{n}} \colon \Sn \to \Sn$ is Fr\'echet differentiable and its Fr\'echet differential at $\Z$ for $\H \in \Sn$ is given by
    \[
        (\PiSnp{\Z})^\prime (\H) = Q \big(\Omega \circ (Q\tran \H Q) \big) Q\tran,
    \]
    where the $n \times n$ symmetric matrix $\Omega$ is defined as
    \begin{equation} \label{eq:eb-intro-Omega}
        \Omega = \mymat{
        1 & \cdots & 1 & \frac{\lam{1}}{\lam{1} - \lam{r+1}} & \cdots & \frac{\lam{1}}{\lam{1} - \lam{n}} \\
        \vdots & \ddots & \vdots & \vdots & \ddots & \vdots \\
        1 & \cdots & 1 & \frac{\lam{r}}{\lam{r} - \lam{r+1}} & \cdots & \frac{\lam{r}}{\lam{r} - \lam{n}} \\
        \frac{\lam{1}}{\lam{1} - \lam{r+1}} & \cdots & \frac{\lam{r}}{\lam{r} - \lam{r+1}} & 0 & \cdots & 0 \\
        \vdots & \ddots & \vdots & \vdots & \ddots & \vdots \\
        \frac{\lam{1}}{\lam{1} - \lam{n}} & \cdots & \frac{\lam{r}}{\lam{r} - \lam{n}} & 0 & \cdots & 0 
    } := \mymat{\E[r] & \Theta\tran \\ \Theta & 0}.
    \end{equation}
    Here, $E_r$ is the all-ones matrix of size $r \times r$ and $\Theta \in \Real{(n-r) \times r}$ captures the off-block-diagonal part in $\Omega$:
    \begin{equation} \label{eq:eb-intro-Theta}
        \Theta_{ij} = \frac{\lam{j}}{\lam{j} - \lam{i+r}} \in (0,1), \quad \text{for} \ i \in [n-r], \, j \in [r].
    \end{equation}
    Moreover, for any sufficiently small perturbation $\H \in \Sn$, it holds that
    \[
        \normtwo{\PiSnp{\Z + \H} - \PiSnp{\Z} - Q \big(\Omega \circ (Q\tran \H Q) \big) Q\tran} = \calO(\normtwo{\H}^2).
    \]
\end{lemma}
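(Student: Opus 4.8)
This is the classical perturbation statement for the PSD projector, so the plan is to reconstruct its proof by realizing $\PiSnp{\cdot}$ locally as an \emph{analytic} matrix function and then reading off both the derivative and the quadratic remainder from the holomorphic functional calculus. The decisive structural fact is that $\Z$ is nonsingular, so its spectrum has a gap straddling the origin: $\lam{r} > 0 > \lam{r+1}$. Pick two disjoint open sets $\calD_-, \calD_+ \subset \mathbb{C}$ containing the intervals $[\lam{n},\lam{r+1}]$ and $[\lam{r},\lam{1}]$, respectively, and let $g$ be the function equal to $0$ on $\calD_-$ and to $\zeta \mapsto \zeta$ on $\calD_+$; this $g$ is holomorphic on $\calD := \calD_- \cup \calD_+$. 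By Weyl's inequality $\abs{\lambda_i(\Z + \H) - \lam{i}} \le \normtwo{\H}$ for every $i$, so for all $\H$ in a small enough ball the matrix $\Z + \H$ still has exactly $r$ eigenvalues (inside $\calD_+$) positive and $n-r$ (inside $\calD_-$) negative, and remains nonsingular. Hence $g(\Z + \H)$ is well defined by the spectral theorem and equals the sum of the rank-one terms attached to its positive eigenvalues, i.e. $g(\Z + \H) = \PiSnp{\Z + \H}$. Fixing a contour $\Gamma \subset \calD$ enclosing all eigenvalues of $\Z$ (hence also of $\Z + \H$ for $\H$ small) gives the Cauchy representation
\[
    \PiSnp{M} = \frac{1}{2\pi\mathrm{i}} \oint_\Gamma g(\zeta)\, \bigl(\zeta \I[n] - M\bigr)^{-1}\, d\zeta ,
\]
valid for all $M$ in a neighborhood $\calU$ of $\Z$, which exhibits $\PiSnp{\cdot}$ as a real-analytic (in fact entrywise complex-analytic) function of $M$ on $\calU$; in particular it is Fr\'echet differentiable and $C^2$ there.

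Differentiating under the integral sign yields $(\PiSnp{\Z})^\prime(\H) = \frac{1}{2\pi\mathrm{i}}\oint_\Gamma g(\zeta)\,(\zeta\I[n]-\Z)^{-1}\H(\zeta\I[n]-\Z)^{-1}\,d\zeta$. Inserting the eigendecomposition $\Z = Q\LamXS Q\tran$ diagonalizes every resolvent, so, with $\widetilde{\H} := Q\tran \H Q$, the $(i,j)$ entry of $Q\tran (\PiSnp{\Z})^\prime(\H) Q$ equals $\widetilde{\H}_{ij}$ times $\frac{1}{2\pi\mathrm{i}}\oint_\Gamma \frac{g(\zeta)}{(\zeta-\lam{i})(\zeta-\lam{j})}\,d\zeta$. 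By the residue theorem this integral equals the first divided difference $g^{[1]}(\lam{i},\lam{j}) := \bigl(g(\lam{i})-g(\lam{j})\bigr)/(\lam{i}-\lam{j})$ when $\lam{i}\neq\lam{j}$, and $g^\prime(\lam{i})$ when $\lam{i}=\lam{j}$. Evaluating $g^{[1]}$ on the three regimes --- both eigenvalues positive (value $1$), both negative (value $0$), and mixed $i \le r < j$ (value $\lam{i}/(\lam{i}-\lam{j})$) --- reproduces precisely the matrix $\Omega$ of \eqref{eq:eb-intro-Omega}. Therefore $Q\tran (\PiSnp{\Z})^\prime(\H) Q = \Omega\circ\widetilde{\H}$, which is the asserted formula; the bound $\Theta_{ij}\in(0,1)$ in \eqref{eq:eb-intro-Theta} follows since $\lam{j}-\lam{i+r} > \lam{j} > 0$.

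For the quadratic remainder there are two interchangeable routes. Since $\PiSnp{\cdot}$ is $C^2$ on $\calU$, Taylor's theorem with the integral form of the remainder gives $\normtwo{\PiSnp{\Z+\H} - \PiSnp{\Z} - (\PiSnp{\Z})^\prime(\H)} \le \tfrac12 L\,\normtwo{\H}^2$, where $L$ bounds the second Fr\'echet derivative of $\PiSnp{\cdot}$ over a compact sub-ball of $\calU$ and is finite by continuity. Alternatively, expanding the resolvent identity $(\zeta\I[n]-\Z-\H)^{-1} = (\zeta\I[n]-\Z)^{-1} + (\zeta\I[n]-\Z)^{-1}\H(\zeta\I[n]-\Z)^{-1} + (\zeta\I[n]-\Z)^{-1}\H(\zeta\I[n]-\Z)^{-1}\H(\zeta\I[n]-\Z-\H)^{-1}$ inside the Cauchy integral isolates the first-order term $(\PiSnp{\Z})^\prime(\H)$ and bounds the remaining term by $\tfrac{1}{2\pi}\,\mathrm{length}(\Gamma)\cdot\sup_{\zeta\in\Gamma}\abs{g(\zeta)}\cdot\sup_{\zeta\in\Gamma}\normtwo{(\zeta\I[n]-\Z)^{-1}}^3\cdot\normtwo{\H}^2$, once $\normtwo{\H}$ is small enough that the perturbed resolvent on $\Gamma$ is at most twice the unperturbed one; this yields an explicit constant. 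Either way the residual is $\calO(\normtwo{\H}^2)$.

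\textbf{Main obstacle.} The genuinely delicate point is the \emph{uniformity} of the construction: a single $g$, contour $\Gamma$, and neighborhood $\calU$ must serve for \emph{all} small $\H$, and one must know that $g(\Z+\H)$ truly is the PSD projection $\PiSnp{\Z+\H}$ rather than merely $g$ applied to a symmetric matrix. Both hinge on nonsingularity of $\Z$, which forces the inertia of $\Z+\H$ to be locally constant and keeps the spectrum uniformly away from $0$, so that $\calD_\pm$ and $\Gamma$ can be frozen once and for all; this is exactly where the hypothesis is used, and it is also the step that breaks down --- necessitating the refined, anisotropic analysis of \cref{sec:error-bound} --- once a zero eigenvalue (a nontrivial kernel of $\Z$) is allowed. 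If a functional-calculus-free argument is preferred, the same facts can be obtained from Weyl's inequality together with a Davis--Kahan $\sin\Theta$ bound on the invariant subspaces of the positive and negative eigenvalues, followed by a direct perturbation expansion of the associated spectral projector; the holomorphic calculus merely packages this cleanly.
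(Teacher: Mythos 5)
Your proposal is correct, and it is worth noting that the paper itself does not prove \cref{lem:eb-intro-SS02} at all: it is quoted verbatim from Sun and Sun (Theorem 4.6), whose argument runs through the theory of L\"owner operator functions and (semi)smoothness of symmetric matrix-valued functions, essentially by writing $\PiSnp{X} = \tfrac12\bigl(X + (X^2)^{1/2}\bigr)$ and exploiting that $X \mapsto (X^2)^{1/2}$ is analytic near a nonsingular $Z$. Your contour-integral route is the Daleckii--Krein packaging of the same idea and is a legitimate self-contained proof: the residue computation correctly reproduces the three regimes of the divided-difference matrix ($1$ for two positive eigenvalues, $0$ for two negative ones, $\lam{i}/(\lam{i}-\lam{j})$ for the mixed case $i \le r < j$), which is exactly $\Omega$ in \eqref{eq:eb-intro-Omega}, and analyticity of the Cauchy representation in a fixed neighborhood of $\Z$ gives both Fr\'echet differentiability and the $\calO(\normtwo{\H}^2)$ remainder, with an explicit constant from the second-order resolvent expansion. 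What your route buys is transparency about where nonsingularity enters (the frozen contour and locally constant inertia), which is precisely the point the paper leans on when motivating why the refined bound of \cref{thm:eb-intro-thm} is needed and why the singular case is harder. Two small points to tighten in a polished write-up: choose $\calD_+$ inside the open right half-plane and $\calD_-$ inside the open left half-plane, so that ``eigenvalue of $\Z+\H$ lies in $\calD_+$'' is literally equivalent to ``eigenvalue is positive'' and the identity $g(\Z+\H) = \PiSnp{\Z+\H}$ is immediate; and observe explicitly that since $g \equiv 0$ on $\calD_-$, only the component of $\Gamma$ around the positive spectrum contributes to the Cauchy integral, which is what makes the residue bookkeeping in the mixed and negative-negative cases clean.
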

However, the following simple example illustrates that the above result may not be tight and motivates our refined error bound. To see this, assume for brevity that $Q = I$ in \cref{lem:eb-intro-SS02} and partition the perturbation $H \in \Sn$ as
\begin{equation} \label{eq:eb-intro-partition}
    H = \mymat{\HX & \HO\tran \\ \HO & \HS}, \quad \text{where} \ \HX \in \Sym{r}, \HS \in \Sym{n-r}, \ \text{and} \ \HO \in \Real{(n-r) \times r}.
\end{equation}
We set $\HO = 0$ and $\normtwo{\H} \le \sigmin{\Z} := \min \{\lam{r}, -\lam{r+1}\}$. Then we have
\[
    \Z + \H = \mymat{\LamX + \HX & 0 \\ 0 & \LamS + \HS},
\]
where $\LamX = \diag{\lam{1},\ldots,\lam{r}}$ and $\LamS = \diag{\lam{r+1},\ldots,\lam{n}}$. We then obtain from Weyl's inequality that
\begin{align*}
    \lammin{\LamX + \HX} &\ge \lam{r} + \lammin{\HX} \ge \lam{r} - \normtwo{\HX} \ge 0, \\
    \lammax{\LamS + \HS} &\le \lam{r+1} + \lammax{\HS} \le \lam{r+1} + \normtwo{\HS} \le 0,
\end{align*}
where we also use the facts that $\normtwo{\HX} \leq \normtwo{\H}$ and $\normtwo{\HS} \leq \normtwo{\H}$. Therefore,
\begin{align*}
    \PiSnp{\Z + \H} - \PiSnp{\Z} 
    = \mymat{
        \LamX + \HX & 0 \\
        0 & 0
    } - \mymat{\LamX & 0 \\ 0 & 0}
    = \Omega \circ \H,
\end{align*}
\ie the residual term is exactly zero while $\normtwo{\H}$ is nonzero. This motivates the following refined error bound for the PSD projection $\Pi_{\psd{n}}$, which involves the ``off-block-diagonal'' part $H_O$ in the residual.
\begin{theorem} \label{thm:eb-intro-thm}
    Given an $n \times n$ symmetric nonsingular matrix $Z \in \Sn$, denote its eigenvalue decomposition by
    \[
        Z = Q \LamXS Q\tran, \;\; \text{where} \ \lam{1} \geq \cdots \geq \lam{r} > 0 > \lam{r+1} \geq \cdots \geq \lam{n}
    \]
    and $Q \in \Real{n \times n}$ is an orthogonal matrix. Then, there exist two positive constants $C_{\mathrm{EB}}$ and $\alpha_{\mathrm{EB}}$ such that for all $\H \in \Sn$ with $\normtwo{\H} \leq C_{\mathrm{EB}}$, it holds that
    \begin{equation} \label{eq:eb-intro-bnd}
        \normtwo{\PiSnp{\Z + \H} - \PiSnp{\Z} - Q (\Omega \circ \tH) Q\tran} \le \alpha_{\mathrm{EB}} \cdot \normtwo{\tHO} \cdot \normtwo{\H}, 
    \end{equation}
    where $\tH := Q\tran H Q$ is partitioned as
    \[
        \tH = \mymat{\tHX & \tHO\tran \\ \tHO & \tHS} \quad \text{with} \ \tHX \in \Sym{r}, \ \tHS \in \Sym{n-r}, \ \text{and} \ \tHO \in \Real{(n-r) \times r}.
    \]
\end{theorem}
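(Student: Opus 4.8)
The plan is to reduce to the block-diagonal case and then track the fact that the positive and negative eigenspaces of $\Z$ are coupled, to first order, only through the off-block-diagonal block $\HO$; this is exactly what turns the classical $\calO(\normtwo{\H}^2)$ residual of \cref{lem:eb-intro-SS02} into the anisotropic $\calO(\normtwo{\tHO}\normtwo{\H})$. By orthogonal invariance of $\PiSnp{\cdot}$ and of the spectral norm we may conjugate by $Q$ and assume $Q = \I[n]$, so that $\Z = \mymat{\LamX & 0 \\ 0 & \LamS}$ with $\LamX = \diag{\lam{1},\dots,\lam{r}} \succ 0$, $\LamS = \diag{\lam{r+1},\dots,\lam{n}} \prec 0$, and $\tH = \H = \mymat{\HX & \HO\tran \\ \HO & \HS}$. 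Then $\PiSnp{\Z} = \mymat{\LamX & 0 \\ 0 & 0}$ and $\Omega\circ\H = \mymat{\HX & (\Theta\circ\HO)\tran \\ \Theta\circ\HO & 0}$, so the claim becomes a statement about the four blocks of $\PiSnp{\Z+\H}-\PiSnp{\Z}-\Omega\circ\H$.

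For $\normtwo{\H} < \sigmin{\Z}$, Weyl's inequality guarantees that $\Z+\H$ has exactly $r$ positive and $n-r$ negative eigenvalues, and its positive invariant subspace is a graph over $\mathrm{span}(e_1,\dots,e_r)$ --- the range of $\mymat{\I[r] \\ P}$ with $P \in \Real{(n-r)\times r}$ solving the algebraic Riccati equation
\[
    \HO + (\LamS+\HS)P - P(\LamX+\HX) - P\HO\tran P = 0 .
\]
The linear part $P \mapsto P(\LamX+\HX) - (\LamS+\HS)P$ is invertible with inverse bounded in terms of $1/\sigmin{\Z}$ (the spectra of $\LamX+\HX$ and $\LamS+\HS$ are separated by $0$), so a contraction argument gives, for $\normtwo{\H}$ small, a unique small solution with $\normtwo{P} \le c\,\normtwo{\HO}$. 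This is the crucial \emph{anisotropic} bound, and it holds because the forcing term of the Riccati equation is $\HO$, not all of $\H$. Writing the orthogonal projector onto this invariant subspace as $\Pi_+ = \mymat{\I[r] \\ P}(\I[r]+P\tran P)^{-1}\mymat{\I[r] & P\tran}$ and $\Pi_0 := \mymat{\I[r] & 0 \\ 0 & 0}$, we get $\Pi_+ = \Pi_0 + \Delta$ where $\Delta$ is symmetric, its off-diagonal blocks equal $P(\I[r]+P\tran P)^{-1} = P + \calO(\normtwo{P}^3)$ and its transpose, its diagonal blocks are $\calO(\normtwo{P}^2)$, and $\normtwo{\Delta} = \calO(\normtwo{\HO})$.

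Since $\PiSnp{\Z+\H} = \Pi_+(\Z+\H)\Pi_+$ and $\PiSnp{\Z} = \Pi_0\Z\Pi_0$, I would expand
\[
    \PiSnp{\Z+\H}-\PiSnp{\Z} = \mymat{\HX & 0 \\ 0 & 0} + \Pi_0(\Z+\H)\Delta + \Delta(\Z+\H)\Pi_0 + \Delta(\Z+\H)\Delta
\]
and bound block by block, using $\normtwo{\HO}\le\normtwo{\H}$ to absorb every $\normtwo{\HO}^2$ or $\normtwo{\HO}^3$ term into $\calO(\normtwo{\H}\normtwo{\HO})$. Every term in which a $\Delta$ multiplies an $\H$, the term $\Delta(\Z+\H)\Delta$, and the $(X,X)$ and $(S,S)$ blocks throughout are $\calO(\normtwo{\H}\normtwo{\HO})$; the only pieces of exact order $\normtwo{\HO}$ are the off-diagonal blocks $\LamX\Delta_{XS}$ and $\Delta_{SX}\LamX$ coming from $\Pi_0\Z\Delta + \Delta\Z\Pi_0$. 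To match these to $\Theta\circ\HO$, subtract the Sylvester equation $P_0\LamX - \LamS P_0 = \HO$ from the Riccati equation and invert its linear part: this yields $\normtwo{P - P_0} = \calO(\normtwo{\H}\normtwo{\HO})$, while the explicit entries $(P_0)_{ij} = (\HO)_{ij}/(\lam{j}-\lam{i+r})$ give $(P_0\LamX)_{ij} = \Theta_{ij}(\HO)_{ij}$. Hence $\Delta_{SX}\LamX = P\LamX + \calO(\normtwo{\HO}^3) = \Theta\circ\HO + \calO(\normtwo{\H}\normtwo{\HO})$, and by symmetry of $\Delta$, $\LamX\Delta_{XS} = (\Theta\circ\HO)\tran + \calO(\normtwo{\H}\normtwo{\HO})$. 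Subtracting $\Omega\circ\H$ then leaves a residual that is $\calO(\normtwo{\H}\normtwo{\HO})$ in every block, and collecting the hidden constants into $\alpha_{\mathrm{EB}}$ and the smallness threshold into $C_{\mathrm{EB}}$ gives \eqref{eq:eb-intro-bnd}.

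The main obstacle is the sharp control of the Riccati solution: proving $\normtwo{P} = \calO(\normtwo{\HO})$ rather than the naive $\calO(\normtwo{\H})$, together with the refined estimate $P\LamX = \Theta\circ\HO + \calO(\normtwo{\H}\normtwo{\HO})$. These two facts are precisely what upgrades the isotropic residual of \cref{lem:eb-intro-SS02} to the anisotropic one; once they are established, the remaining block expansion is careful but routine. A minor point to keep track of is that $\LamX$ and $\LamS$ may have repeated eigenvalues --- harmless here, since the only spectral gap used is the one at $0$ between $\lam{r}$ and $\lam{r+1}$, and the Sylvester and Riccati equations remain uniquely solvable irrespective of multiplicities.
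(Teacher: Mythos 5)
Your proposal is correct, and it reaches \cref{thm:eb-intro-thm} by a genuinely different route than the paper. The paper's proof (Section \ref{sec:error-bound}) block-diagonalizes $\Z+\H$ through the infinite iterative scheme of Algorithm \ref{alg:error-bound:iterative-elimination}: at each step a Sylvester equation is solved, the iterate is conjugated by $\exp(\W[\ell])$, and the off-diagonal blocks are shown to decay quadratically, so that the limit $\Y[\infty]$ exists and the residual can be split as in \eqref{eq:error-bound:two-terms}; the anisotropy enters through the bound $\normtwo{\WO[0]} \le \etanew[0]\normtwo{\HO}$ of \cref{lem:error-bound:error-control-one-step}. You instead use the classical invariant-subspace (Stewart/Riccati) machinery in one shot: the positive eigenspace of $\Z+\H$ is the graph of a single $P$ solving the quadratic equation with forcing term $\HO$, the contraction argument gives $\normtwo{P}=\calO(\normtwo{\HO})$, the explicit projector formula yields $\PiSnp{\Z+\H}=\Pi_+(\Z+\H)\Pi_+$, and comparing $P$ with the solution $P_0$ of the unperturbed Sylvester equation (whose entries reproduce $\Theta\circ\HO$ exactly) gives $\normtwo{P-P_0}=\calO(\normtwo{\H}\normtwo{\HO})$; the block expansion then closes the bound. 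The key insight is the same in both proofs—the coupling between the two eigenspaces is governed by a Sylvester-type equation whose right-hand side is $\HO$, not all of $\H$—but your route is shorter and leans on standard perturbation results (bounded inverse of the Sylvester operator for Hermitian blocks with spectra separated across $0$, Bauer--Fike plus the Weyl eigenvalue count to identify the graph subspace as the positive eigenspace), whereas the paper's construction is self-contained and produces explicit constants ($\etanew[0,f]$, $\alpha_{K,f}$, $C_{K,f}$ in \cref{lem:error-bound:lower-bound-C-eta-alpha}) whose dependence on the gap $\min\{\lam{r},-\lam{r+1}\}$ is later used to interpret the failure cases. Two small points you should make explicit in a full write-up: (i) the verification that the invariant subspace produced by the contraction is indeed the positive one (the restriction $\LamX+\HX+\HO\tran P$ has spectrum within $\normtwo{\HX}+\normtwo{\HO}\normtwo{P}$ of that of $\LamX$, and $\Z+\H$ has exactly $r$ positive eigenvalues by Weyl), and (ii) that the contraction radius and all hidden constants depend only on $n$, $r$ and the eigenvalues of $Z$, so that $C_{\mathrm{EB}}$ and $\alpha_{\mathrm{EB}}$ are uniform in $\H$; both are routine given your setup.
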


\begin{remark}
When $Q = I$, the bound \eqref{eq:eb-intro-bnd} reduces to
\[
    \normtwo{\PiSnp{\Z + \H} - \PiSnp{\Z} - \Omega \circ \H} \le \alpha_{\mathrm{EB}} \cdot \normtwo{\HO} \cdot \normtwo{\H}.
\]
This aligns with our observation in the motivating example: when $\HO=0$, both sides of the above inequality becomes zero. One shall also note that, in general, $\tHO$ is not the off-block-diagonal part of the perturbation $\H$. In fact, without using the notation~$\tH$, the bound~\eqref{eq:eb-intro-bnd} can be written as
\[
    \normtwo{\PiSnp{\Z + \H} - \PiSnp{\Z} - Q \big(\Omega \circ (Q\tran \H Q) \big) Q\tran} \le \alpha_{\mathrm{EB}} \cdot \normtwo{Q_S\tran \H Q_X} \cdot \normtwo{\H},
\]
where we partition the eigenvalue decomposition of $Z$ as
\[
    Z = \mymat{Q_X & Q_S} \mymat{\LamX & 0 \\ 0 & \LamS} \mymat{Q_X\tran \\ Q_S\tran}.
\]
\end{remark}

\begin{remark}
\label{rem:psdproj-eb}
As all norms are equivalent, \eqref{eq:eb-intro-bnd} implies that there exists a positive constant~$\alpha^\prime_{\mathrm{EB}}$ such that
\[
    \normF{\PiSnp{\Z + \H} - \PiSnp{\Z} - Q (\Omega \circ \tH) Q\tran} \le \alpha^\prime_{\mathrm{EB}} \cdot \normF{\tHO} \cdot \normF{\H}.
\]
In the convergence analysis (\cref{sec:linearization,sec:conv-with-nd,sec:conv-without-nd}), we mainly use the Frobenius norm of matrices, consistent with most literature.
\end{remark}

\begin{remark}
In~\cite[Proposition 3.4]{cui2016arxiv-superlinear-alm-sdp}, the authors establish another perturbation property for the PSD cone projector. Their result has two key distinctions from \cref{thm:eb-intro-thm}: (1) their results cover cases where $\Z$ is singular, while ours only focuses on the nonsingular case; (2) under the nonsingularity assumption, our results can directly lead to theirs; \ie \cref{thm:eb-intro-thm} is stronger than~\cite[Proposition 3.4]{cui2016arxiv-superlinear-alm-sdp}. See \cref{app:sec:details-cui-prop} for detailed discussion.
\end{remark}


\section{Local Linearization of ADMM}
\label{sec:linearization}

With a better understanding of the PSD cone projection, we now study the local behavior of ADMM. Our analysis is different from the standard approaches for ADMM and starts by locally linearizing the iteration~\eqref{eq:intro:admm-one-step}. \rebuttal{In particular, we show that when near optimum, the residual $\Hkpo := \Zk - \Zs$ is almost a linear transformation of the previous residual $\Hk$, plus a correction term in the order of $\calO(\normF{\HO} \normF{\H})$.}

The rest of the section is devoted to study this linearization and is organized as follows. \Cref{sec:lin-ass} lists all the assumptions made throughout the paper, \cref{sec:lin-linearization} presents the local linearization of ADMM, and \cref{sec:lin-property} describes the properties of such a linearization.

\subsection{Assumptions} \label{sec:lin-ass}

We make the following assumption on the pair of primal--dual SDPs \eqref{eq:intro-sdp}.
\begin{assumption} \label{ass:lin-sol}
\begin{enumerate}[label=(\alph*)]
    \item The linear operator $\Asdp : \Sn \to \Real{m}$ is surjective.

    \item The pair of primal--dual SDPs \eqref{eq:intro-sdp} has a nonempty set of KKT points.
\end{enumerate}
\end{assumption}

From convex duality theory, any pair of primal--dual solutions $(\Xs, \ys, \Ss)$ of \eqref{eq:intro-sdp} satisfies complementary slackness; \rebuttal{\ie $\Xs$ and $\Ss$ admit the decompositions in \eqref{eq:intro:Xs-sigSs}.} Moreover, $\inprod{\Xs}{\Ss} = 0$ and $\rank{\Xs} + \rank{\Ss} \leq n$. When the above inequality holds with equality, the solution pair $(\Xs,\ys,\Ss)$ is called strictly complementary.

It is known that under \cref{ass:lin-sol}, three-step ADMM \eqref{eq:intro:admm-three-step} converges to a KKT point $(\Xs, \ys, \Ss)$, or equivalently, one-step ADMM \eqref{eq:intro:admm-one-step} converges to the point $\Zs := \Xs - \sigma \Zs$. Our analysis assumes additionally that the convergent point of ADMM is strictly complementary.
\begin{assumption} \label{ass:lin-sc}
    Three-step ADMM \eqref{eq:intro:admm-three-step} converges to a KKT point $(\Xs, \ys, \Ss)$ satisfying strict complementarity; \ie, $\rank{\Xs} + \rank{\Ss} = n$. 
\end{assumption}
\Cref{ass:lin-sc} is equivalent to the condition that the convergent point $\Zs$ of one-step ADMM \eqref{eq:intro:admm-one-step} is nonsingular. \Cref{ass:lin-sc} is a mild assumption in the sense that it holds for generic SDPs \cite{alizadeh1997mp-complementarity-nondegeneracy-sdp} \rebuttal{(\ie the set of SDP data $(\Asdp, b, C)$ that fails strict complementarity is of Lebesque measure zero)}. Numerical experiments in \cref{sec:exp} further demonstrate that even for degenerate SDPs with multiple solutions, one-step ADMM often converges to a nonsingular $\Zs$ (if one exists) when initialized with a random (standard Gaussian) guess $\Z^{(0)}$. 

For ease of presentation, we assume without loss of generality that the convergent points $\Xs$ and $\Ss$ are diagonal, \ie $\Qs = \I[n]$ in~\eqref{eq:intro:Xs-sigSs}. This assumption does not limit the scope of our conclusions because we can readily construct a pair of SDPs equivalent to \eqref{eq:intro-sdp} and generate ADMM iterates $(\widetilde X^{(k)},\widetilde y^{(k)}, \widetilde S^{(k)})$ orthogonally similar to the iterates $(X^{(k)},y^{(k)}, S^{(k)})$ generated by \eqref{eq:intro:admm-three-step}. To see this, suppose $(X^{(k)},y^{(k)}, S^{(k)})$ converges to $(\Xs,\ys,\Ss)$ with $\Qs \neq \I[n]$. We construct another pair of SDPs 
\begin{equation} \label{eq:lin-sdp}
\begin{array}{llllll}
    \text{Primal:} \;\; & \text{minimize} & \langle{\widetilde C}, {\widetilde X} \rangle & \qquad \qquad \quad \text{Dual:} \;\; & \text{maximize} & b\tran \tilde y \\[2pt]
    & \text{subject to} & \widetilde \calA \widetilde X = b & & \text{subject to} & \widetilde \calA^\ast \tilde y + \widetilde S = \widetilde C \\[2pt]
    & & \widetilde X \in \psd{n} & & & \widetilde S \in \psd{n}
\end{array}
\end{equation}
with primal variable $\widetilde X \in \Sn$ and dual variables $(\tilde y, \widetilde S) \in \Real{m} \times \Sn$. The coefficients are $\widetilde C := \Qs\tran C \Qs$ and $\widetilde A_i := \Qs\tran A_i \Qs$ for all $i \in [m]$. If we apply three-step ADMM \eqref{eq:intro:admm-three-step} to the modified SDP \eqref{eq:lin-sdp}, starting at $\widetilde X^{(0)} := \Qs\tran X^{(0)} \Qs$ and $\widetilde S^{(0)} := \Qs\tran S^{(0)} \Qs$, straightforward calculations show that the generated sequence $(\widetilde X^{(k)},\widetilde y^{(k)}, \widetilde S^{(k)})$ is related to $(X^{(k)},y^{(k)}, S^{(k)})$ as follows:
\[
    \widetilde X^{(k)} = \Qs\tran \Xk \Qs, \qquad \tilde y^{(k)} = \yk, \qquad \widetilde S^{(k)} = \Qs\tran \Sk \Qs, \quad \text{for all $k \in \bbN$}.
\]
Moreover, the sequence $(\widetilde X^{(k)},\widetilde y^{(k)}, \widetilde S^{(k)})$ converges to $(\Qs\tran \Xs \Qs, \ys, \Qs\tran \Ss \Qs)$, a KKT point of the modified SDP \eqref{eq:lin-sdp}.

\rebuttal{
\begin{remark}
    One may notice that Assumption~\ref{ass:lin-sc}, like several other regularity conditions used to guarantee ADMM's local linear convergence (\eg metric subregularity~\cite{liu2018partial} and two-sided nondegeneracy~\cite{han2018mor-linear-rate-admm}), is \emph{post hoc}: it can generally be verified only after the algorithm has converged. This is largely unavoidable, because local linear convergence is inherently a statement about the geometry in a neighborhood of the limit point. For example, there exist simple SDP instances~\cite[Example 1]{cui2016arxiv-superlinear-alm-sdp} in which metric subregularity of the KKT operator fails at a particular KKT point, while holding at the remaining points in the optimal set.
    Compared with other regularity conditions, Assumption~\ref{ass:lin-sc} has two main advantages:
    \begin{itemize}
        \item It can be checked efficiently in practice. Given an ADMM iterate $(\Xk, \yk, \Sk)$ near a KKT point, one needs only $\calO(n^3)$ time to determine whether strict complementarity holds, via a numerical full-rankness check on $\Xk - \sigma \Sk$. By contrast, checking two-sided nondegeneracy requires $\calO(n^6)$ time because one must form a basis for $\calT_{\Xk}$ and $\calT_{\Sk}$. General metric subregularity, on the other hand, is typically not directly verifiable in computation.
        \item In a wide range of real-world SDPs, two-sided nondegeneracy is known to fail~\cite{yang2019quaternion,kang2024wafr-strom}. By contrast, strictly complementary solutions still exist for a substantial subset of such degenerate SDPs~\cite{ding2023siopt-revisit-spectral-bundle}.
    \end{itemize}
    In Section~\ref{sec:metric}, we will also see connections between strict complementarity and several forms of metric subregularity/local error bounds.
\end{remark}
}

\subsection{Local Linearization} \label{sec:lin-linearization}

Now we study the local behavior of one-step ADMM \eqref{eq:intro:admm-one-step}. In particular, we linearize the ADMM iteration when near optimum. For ease of representation, we define
\[
    \PA := \Pi_{\range(\AsdpT)} = \AsdpT (\Asdp \AsdpT)^{-1} \Asdp, \;\; \PAp := \Id - \PA, \;\;\Omep := E_n - \Omega, \;\; \Thep := E_{(n-r) \times r} - \Theta,
\]
where recall $E_r$ (resp., $E_{(n-r) \times r}$) is the all-ones matrix of size $n \times n$ (resp., $(n-r) \times r$). With the above abbreviations, we rewrite the iteration \eqref{eq:intro:admm-one-step} as
\begin{equation} \label{eq:lin-linearization}
    \Zkpo - \Zs = \Madmm (\Zk - \Zs) + \psik,
\end{equation}
where
\begin{align}
    \Madmm (\H) &:= \PA (\Omep \circ \H) + \PAp (\Omega \circ \H), \label{eq:lin-M} \\
    \psik &:= (\Id - 2\PA) \big(\PiSnp{\Zk} - \PiSnp{\Zs} - \Omega \circ (\Zk - \Zs) \big). \label{eq:lin-psik}
\end{align}
This reformulation \eqref{eq:lin-linearization} says that when near optimum, the residual $\Hkpo := \Zkpo - \Zs$ is almost a linear transformation of the previous residual $\Hk := \Zk - \Zs$, with a quadratic correction term $\psik = \calO(\normtwo{\Hk}^2)$ (see \cref{lem:eb-intro-SS02}).
\begin{proof}[Proof of \eqref{eq:lin-linearization}]
    Note that $\Zs$ is a fixed point of \eqref{eq:intro:admm-one-step}, \ie
    \[
        \Zs = \PA (-2\PiSnp{\Zs} + \Zs) + \PiSnp{\Zs} + \AsdpT (\Asdp \AsdpT)^{-1} b - \sigma (\Id - \PA) C. 
    \]
    Substituting back into \eqref{eq:intro:admm-one-step} yields
    \begin{align*}
        \Zkpo - \Zs &= (\Id - 2\PA) \big(\PiSnp{\Zk} - \PiSnp{\Zs} \big) + \PA(\Zk - \Zs) \\
        &= (\Id - 2\PA) \Omega \circ (\Zk - \Zs) + \PA(\Zk - \Zs) \\
        &\phantom{=} \mbox{} + (\Id - 2\PA) \big(\PiSnp{\Zk} - \PiSnp{\Zs} - \Omega \circ (\Zk - \Zs) \big) \\
        &= (\Id - 2\PA) \Omega \circ (\Zk - \Zs) + \PA(\Zk - \Zs) + \psik,
    \end{align*}
    by the definition of $\psik$. Moreover, for any $\H \in \Sn$, we have
    \rebuttal{
    \begin{align*}
        (\Id - 2\PA) \Omega \circ \H + \PA \H &= \PAp (\Omega \circ \H) - \PA (\Omega \circ \H) + \PA \H \\
        &= \PAp (\Omega \circ \H) + \PA(\Omep \circ \H) \\
        &= \Madmm (H),
    \end{align*}
    }
    where the first line uses $\PAp = \Id - \PA$ and the second line uses $\Omep = E_r - \Omega$.
\end{proof}

\subsection{Properties of \texorpdfstring{$\Madmm$}{M} and \texorpdfstring{$\psik$}{M}}
\label{sec:lin-property}

Now we represent some properties of the linear operator $\Madmm$ and the residual $\psik$ that will be used to establish the linear convergence of ADMM. In particular, we characterize the nonempty set of fixed points $\Fix(\Madmm) := \{H \mid \Madmm(H) = H\}$; see \cref{prop:lin-M}. Throughout this subsection, we partition the matrix $\H$ (or $\Hk$) as in~\eqref{eq:eb-intro-partition} with $r := \rank{\Xs}$.

\begin{lemma} \label{lem:lin-auxi}
    Suppose $\Omega \in \Sn$ is defined as in \eqref{eq:eb-intro-Omega} with $r := \rank{\Xs}$. For any matrix $H \in \Sn$ as partitioned in~\eqref{eq:eb-intro-partition}, it holds that
    \begin{equation} \label{eq:lin-inprod}
        \inprod{\Omega \circ \H}{\Omep \circ \H} = 2\inprod{\Theta \circ \HO}{\Thep \circ \HO} \ge 0
    \end{equation}
    with equality only if $\HO = 0$, and that
    \begin{equation} \label{eq:lin-minus}
        \normF{\H}^2 - \normF{\Madmm(\H)}^2 = \normF{\PA (\Omega \circ \H)}^2 + \normF{\PAp (\Omep \circ \H)}^2 + 4 \inprod{\Theta \circ \HO}{\Thep \circ \HO}.
    \end{equation}
\end{lemma}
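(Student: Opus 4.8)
The plan is to unwind both identities from the block structure of $\Omega$ and $\Omep$ induced by the splitting $r := \rank{\Xs}$. Writing $\H$ as in \eqref{eq:eb-intro-partition} and using $\Omega = \mymat{E_r & \Theta\tran \\ \Theta & 0}$, one gets $\Omega \circ \H = \mymat{\HX & \Theta\tran \circ \HO\tran \\ \Theta \circ \HO & 0}$; since $\Omep = E_n - \Omega = \mymat{0 & \Thep\tran \\ \Thep & E_{n-r}}$, one also gets $\Omep \circ \H = \mymat{0 & \Thep\tran \circ \HO\tran \\ \Thep \circ \HO & \HS}$. These two block forms are the only structural inputs needed.

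First I would prove \eqref{eq:lin-inprod}. Computing the Frobenius inner product block by block, the two diagonal blocks contribute nothing (one factor is the zero block in each), while the two off-diagonal blocks each contribute $\inprod{\Theta \circ \HO}{\Thep \circ \HO}$, giving the factor $2$. For the sign and equality statement, expand $\inprod{\Theta \circ \HO}{\Thep \circ \HO} = \sum_{i,j} \Theta_{ij}\Thep_{ij}(\HO)_{ij}^2$; by \eqref{eq:eb-intro-Theta} we have $\Theta_{ij} \in (0,1)$, hence $\Thep_{ij} = 1 - \Theta_{ij} \in (0,1)$, so every summand is nonnegative and vanishes exactly when $(\HO)_{ij} = 0$. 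Thus the inner product is $\ge 0$, with equality only if $\HO = 0$.

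Next I would prove \eqref{eq:lin-minus} via two Pythagorean splittings. Since $\Omega + \Omep = E_n$ and $E_n$ has all entries equal to $1$, we have $\H = \Omega \circ \H + \Omep \circ \H$, so $\normF{\H}^2 = \normF{\Omega \circ \H}^2 + \normF{\Omep \circ \H}^2 + 2\inprod{\Omega \circ \H}{\Omep \circ \H}$, and the cross term equals $4\inprod{\Theta \circ \HO}{\Thep \circ \HO}$ by \eqref{eq:lin-inprod}. On the other hand, $\PA = \AsdpT(\Asdp\AsdpT)^{-1}\Asdp$ is the orthogonal projector onto $\range(\AsdpT)$ and $\PAp = \Id - \PA$ the orthogonal projector onto $\nullspace(\Asdp) = \range(\AsdpT)^\perp$, so the two summands of $\Madmm(\H)$ in \eqref{eq:lin-M} are mutually orthogonal, giving $\normF{\Madmm(\H)}^2 = \normF{\PA(\Omep \circ \H)}^2 + \normF{\PAp(\Omega \circ \H)}^2$. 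Applying Pythagoras once more to each Hadamard-weighted matrix, $\normF{\PA(\Omep \circ \H)}^2 = \normF{\Omep \circ \H}^2 - \normF{\PAp(\Omep \circ \H)}^2$ and $\normF{\PAp(\Omega \circ \H)}^2 = \normF{\Omega \circ \H}^2 - \normF{\PA(\Omega \circ \H)}^2$. Substituting these into $\normF{\H}^2 - \normF{\Madmm(\H)}^2$ and cancelling $\normF{\Omega \circ \H}^2$ and $\normF{\Omep \circ \H}^2$ yields exactly the claimed identity.

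The argument is essentially bookkeeping, so I do not expect a genuine obstacle; the only points requiring care are the orthogonality of the two terms defining $\Madmm(\H)$ (which rests on $\PA$ and $\PAp$ projecting onto complementary orthogonal subspaces) and correctly matching, in the two Pythagorean splittings, which Hadamard-weighted matrix is acted on by which projector.
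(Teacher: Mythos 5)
Your proposal is correct and follows essentially the same argument as the paper: the block computation of $\inprod{\Omega \circ \H}{\Omep \circ \H}$ is identical, and your second identity uses the same ingredients (the splitting $\H = \Omega \circ \H + \Omep \circ \H$, orthogonality of $\PA$ and $\PAp$, and \eqref{eq:lin-inprod}). The only cosmetic difference is organizational—the paper expands $\normF{\H}^2$ via the four-term decomposition $\H = \PA(\Omega \circ \H) + \PA(\Omep \circ \H) + \PAp(\Omega \circ \H) + \PAp(\Omep \circ \H)$, whereas you apply Pythagoras twice to the two Hadamard-weighted pieces—but the cancellations are the same.
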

\begin{proof}
    From the definition of $\Omega$ \eqref{eq:eb-intro-Omega} and the partition of $\H$ \eqref{eq:eb-intro-partition}, we see that
    \begin{align}
        \inprod{\Omega \circ \H}{\Omep \circ \H} &= \inprod{
            \mymat{E_r & \Theta\tran \\ \Theta & 0} \circ \mymat{\HX & \HO\tran \\ \HO & \HS}
        }{
            \mymat{0 & (\Thep)\tran \\ \Thep & E_{n-r}} \circ \mymat{\HX & \HO\tran \\ \HO & \HS}
        } \nonumber \\
        &= \inprod{
            \mymat{\HX & \Theta\tran \circ \HO\tran \\ \Theta \circ \HO & 0}
        }{
            \mymat{0 & (\Thep)\tran \circ \HO\tran \\ \Thep \circ \HO & \HS}
        } \nonumber \\
        &= 2 \inprod{\Theta \circ \HO}{\Thep \circ \HO} \ge 0. \label{eq:lin-inprod-prf}
    \end{align}
    Since all the entries in $\Theta$ and $\Thep$ are strictly positive, the inner product \eqref{eq:lin-inprod-prf} is zero if and only if $\HO=0$.

    To show the second conclusion, we first decompose $\H$ as
    \begin{equation} \label{eq:lin-H-decompose}
        \H = \PA (\Omega \circ \H) + \PA (\Omep \circ \H) + \PAp (\Omega \circ \H) + \PAp (\Omep \circ \H).
    \end{equation}
    Then we have
    \begin{align*}
        \normF{\H}^2 &= \normF{\PA (\Omega \circ \H)}^2 + \normF{\PA (\Omep \circ \H)}^2 + \normF{\PAp (\Omega \circ \H)}^2 + \normF{\PAp (\Omep \circ \H)}^2 \\
        &\phantom{=} \mbox{} + 2\inprod{\PA (\Omega \circ \H)}{\PA (\Omep \circ \H)} + 2\inprod{\PAp (\Omega \circ \H)}{\PAp (\Omep \circ \H)},
    \end{align*}
    and
    \[
        \normF{\Madmm(\H)}^2 = \normF{\PAp (\Omega \circ \H) + \PA (\Omep \circ \H)}^2 
        = \normF{\PAp (\Omega \circ \H)}^2 + \normF{\PA (\Omep \circ \H)}^2.
    \]
    Combining both expressions with \eqref{eq:lin-inprod} gives the desirable result.
\end{proof}

\begin{proposition} \label{prop:lin-M}
    The linear operator $\Madmm : \Sn \to \Sn$ has the following properties.
    \begin{enumerate}[label=(\alph*)]
        \item $\Madmm$ is firmly nonexpansive under the Frobenius norm.
        
        \item The sequence $\{\Madmm^k\}_{k=1}^\infty$ converges to $\pfm$.
        
        \item $H \in \Fix(\Madmm)$ if and only if the following three conditions holds
        \begin{equation} \label{eq:lin-fix}
            H_O = 0, \qquad \mymat{H_X & 0 \\ 0 & 0} \in \nullspace(\Asdp), \qquad \mymat{0 & 0 \\ 0 & H_S} \in \range(\AsdpT).
        \end{equation}

        \item $\normop{\Madmm - \pfm} < 1$.
    \end{enumerate}
\end{proposition}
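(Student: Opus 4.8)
I would derive all four claims from the energy identity \eqref{eq:lin-minus} of \cref{lem:lin-auxi}, together with the elementary identity $\H - \Madmm(\H) = \PA(\Omega \circ \H) + \PAp(\Omep \circ \H)$, which is immediate from the decomposition \eqref{eq:lin-H-decompose} (using $\PA + \PAp = \Id$ and $\Omega + \Omep = E_n$) and the definition \eqref{eq:lin-M} of $\Madmm$. Write $g(\H) \ge 0$ for the right-hand side of \eqref{eq:lin-minus}, so that $\normF{\H}^2 - \normF{\Madmm(\H)}^2 = g(\H)$. For part~(a): since $\PA(\Omega \circ \H) \in \range(\AsdpT)$ and $\PAp(\Omep \circ \H) \in \nullspace(\Asdp)$ are orthogonal, the elementary identity gives $\normF{\H - \Madmm(\H)}^2 = \normF{\PA(\Omega \circ \H)}^2 + \normF{\PAp(\Omep \circ \H)}^2$; subtracting this from $g(\H)$ leaves $\normF{\H}^2 - \normF{\Madmm(\H)}^2 - \normF{\H - \Madmm(\H)}^2 = 4\inprod{\Theta \circ \HO}{\Thep \circ \HO} \ge 0$, which is exactly the defining inequality of firm nonexpansiveness for a linear operator.

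For part~(c): if $\Madmm(\H) = \H$ then $\normF{\Madmm(\H)} = \normF{\H}$, hence $g(\H) = 0$, and since $g$ is a sum of three nonnegative terms each of them vanishes. Vanishing of $\inprod{\Theta \circ \HO}{\Thep \circ \HO}$ forces $\HO = 0$, because the entries of $\Theta$ and $\Thep$ are strictly positive (by \eqref{eq:lin-inprod}); then $\Omega \circ \H = \mymat{\HX & 0 \\ 0 & 0}$ and $\Omep \circ \H = \mymat{0 & 0 \\ 0 & \HS}$, so the conditions $\PA(\Omega \circ \H) = 0$ and $\PAp(\Omep \circ \H) = 0$ become precisely the remaining two requirements in \eqref{eq:lin-fix} (using $\nullspace(\PA) = \nullspace(\Asdp)$ and $\range(\PA) = \range(\AsdpT)$, which hold since $\Asdp$ is surjective by \cref{ass:lin-sol}). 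For the converse I would substitute $\HO = 0$ directly into \eqref{eq:lin-M} and use that $\PAp$ fixes $\mymat{\HX & 0 \\ 0 & 0}$ while $\PA$ fixes $\mymat{0 & 0 \\ 0 & \HS}$.

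For parts~(b) and~(d) I would work with the orthogonal splitting $\Sn = \Fix(\Madmm) \oplus \Fix(\Madmm)^\perp$. The key preliminary is that a linear nonexpansive operator obeys $\Fix(\Madmm) = \Fix(\Madmm^*)$: if $\Madmm w = w$ then $\normF{w}^2 = \inprod{w}{\Madmm w} = \inprod{\Madmm^* w}{w} \le \normF{\Madmm^* w}\,\normF{w} \le \normF{w}^2$, forcing $\Madmm^* w = w$. Hence $\Fix(\Madmm)^\perp$ is $\Madmm$-invariant (for $v \perp \Fix(\Madmm)$ and $w \in \Fix(\Madmm)$, $\inprod{\Madmm v}{w} = \inprod{v}{\Madmm^* w} = \inprod{v}{w} = 0$) and $\Madmm$ restricts to the identity on $\Fix(\Madmm)$; let $\Madmm_0$ be its restriction to $\Fix(\Madmm)^\perp$. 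By \eqref{eq:lin-minus} and part~(c), every nonzero $v \in \Fix(\Madmm)^\perp$ has $g(v) > 0$ (else $v \in \Fix(\Madmm)$), so $\normF{\Madmm v} < \normF{v}$; as the unit sphere of the finite-dimensional space $\Fix(\Madmm)^\perp$ is compact and $v \mapsto \normF{\Madmm v}$ is continuous, $\normop{\Madmm_0} < 1$. Since $\pfm$ is the identity on $\Fix(\Madmm)$ and zero on $\Fix(\Madmm)^\perp$, the operator $\Madmm - \pfm$ is block-diagonal for this orthogonal splitting with blocks $0$ and $\Madmm_0$, so $\normop{\Madmm - \pfm} = \normop{\Madmm_0} < 1$; this gives~(d). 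For~(b), the relations $\pfm\Madmm = \Madmm\pfm = \pfm$ yield $\Madmm^k - \pfm = (\Madmm - \pfm)^k$ for $k \ge 1$, whence $\normop{\Madmm^k - \pfm} \le \normop{\Madmm - \pfm}^k \to 0$.

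The main obstacle is not the nonexpansiveness or the fixed-point bookkeeping---both are immediate from \cref{lem:lin-auxi}---but the orthogonality underlying (b) and~(d): one has to show that the limit of $\Madmm^k$ is the \emph{orthogonal} projection onto $\Fix(\Madmm)$ rather than an oblique projection, and that $\normop{\Madmm - \pfm}$ is strictly below~$1$ (not merely $\le 1$). Both hinge on the $\Madmm$-invariance of $\Fix(\Madmm)^\perp$, which in turn comes from $\Fix(\Madmm) = \Fix(\Madmm^*)$; once this is secured, finite-dimensional compactness closes the argument.
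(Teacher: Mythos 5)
Your proof is correct, and parts (a) and (c) are essentially the paper's argument in equivalent form: the paper verifies firm nonexpansiveness via $\inprod{\Madmm(\H)}{\H} \ge \normF{\Madmm(\H)}^2$, which is algebraically the same inequality as your $\normF{\H}^2 - \normF{\Madmm(\H)}^2 - \normF{\H - \Madmm(\H)}^2 \ge 0$, and its proof of (c) likewise reduces to $\PA(\Omega \circ \H) = 0$, $\PAp(\Omep \circ \H) = 0$ plus \cref{lem:lin-auxi} to force $\HO = 0$. Where you genuinely diverge is in (b) and (d). The paper proves (b) by citing standard monotone-operator results (Bauschke--Combettes, and the two-subspace Douglas--Rachford analysis), and its proof of (d) is terse: it argues that for $\H \notin \Fix(\Madmm)$ some term on the right of \eqref{eq:lin-minus} is positive, which directly controls $\normF{\Madmm \H}$, and passes to $\normF{(\Madmm - \pfm)\H} < \normF{\H}$ without comment; the missing link is exactly the orthogonality $(\Madmm - \pfm)\H \perp \pfm \H$, which requires the $\Madmm$-invariance of $\Fix(\Madmm)^\perp$. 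Your self-contained route supplies precisely this: $\Fix(\Madmm) = \Fix(\Madmm^*)$ via the Cauchy--Schwarz equality argument, hence invariance of $\Fix(\Madmm)^\perp$, the block-diagonal structure of $\Madmm - \pfm$, compactness of the unit sphere in the finite-dimensional complement to upgrade pointwise strict contraction to $\normop{\Madmm - \pfm} < 1$, and the identity $(\Madmm - \pfm)^k = \Madmm^k - \pfm$ (from $\pfm\Madmm = \Madmm\pfm = \pfm$) to get (b) from (d). What your approach buys is independence from the cited operator-theory references and, in addition, a rigorous filling-in of the orthogonality step that the paper's proof of (d) glosses over; what it costs is a page of linear-algebra bookkeeping that the paper outsources. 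One small caveat: your claim that $g(v) = 0$ forces $v \in \Fix(\Madmm)$ should be stated as following from the vanishing of all three terms of \eqref{eq:lin-minus} together with part (c) (as you in fact use), since $g = 0$ is needed, not merely norm equality.
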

\begin{proof}
    \textit{Part (a):} We verify the firm nonexpansiveness of $\Madmm$ via its definition:
    \begin{subequations}
    \begin{align}
        &\ \inprod{\calM(\H)}{\H} \nonumber \\
        =&\ \inprod{
            \PA (\Omep \circ \H) + \PAp (\Omega \circ \H)
        }{
            \PA (\Omega \circ \H) + \PA (\Omep \circ \H) + \PAp (\Omega \circ \H) + \PAp (\Omep \circ \H)
        } \nonumber \\
        =&\ \normF{\Madmm (\H)}^2 + \inprod{\PA (\Omep \circ \H) + \PAp (\Omega \circ \H)}{\PA (\Omega \circ \H) + \PAp (\Omep \circ \H)} \nonumber \\
        =&\ \normF{\Madmm (\H)}^2 + \inprod{\PA (\Omep \circ \H)}{\PA (\Omega \circ \H)} + \inprod{\PAp (\Omega \circ \H)}{\PAp (\Omep \circ \H)} \nonumber \\
        =&\ \normF{\Madmm (\H)}^2 + \inprod{\Omep \circ \H}{\Omega \circ \H} \label{eq:lin-fn-1} \\
        \ge &\ \normF{\Madmm (\H)}^2, \label{eq:lin-fn-2}
    \end{align}
    \end{subequations}
    where~\eqref{eq:lin-fn-1} uses the fact that $\PA \PAp = 0$ and~\eqref{eq:lin-fn-2} uses \eqref{eq:lin-inprod}.

    \textit{Part (b)} follows readily from part (a) and monotone operator theory; see \cite[Proposition 5.16 (ii)]{bauschke2017springer-convex-analysis-hilbert-spaces} and \cite[Corollary 2.7 (ii)]{bauschke2016springer-drs-two-subspaces}.

    \textit{Part (c):} From the decomposition of $\H$~\eqref{eq:lin-H-decompose} and the definition of $\Madmm$~\eqref{eq:lin-M}, we see that
    \[
        \H \in \Fix(\Madmm) \qquad \Longleftrightarrow \qquad \PAp (\Omep \circ \H) = 0 \text{ and } \PA (\Omega \circ \H) = 0.
    \]
    On one hand, if $\H \in \Fix(\Madmm)$, we conclude from \cref{lem:lin-auxi} that $\HO$ has to be zero. Then expanding $\PA(\Omega \circ \H) = 0$ gives
    \[
        \PA \left(\mymat{\HX & 0 \\ 0 & 0} \right) = 0,
    \]
    which is equivalent to the second condition in \eqref{eq:lin-fix} (since $\PA := \Pi_{\nullspace(\Asdp)^\perp}$). Similarly, expanding $\PAp(\Omep \circ \H) = 0$ gives the last condition in \eqref{eq:lin-fix}.

    On the other hand, the first two conditions in \eqref{eq:lin-fix} imply that $\PA (\Omega \circ \H) = 0$ (since all the entries in $\Theta$ zero strictly positive). Similarly, $\HO=0$ and the last condition in \eqref{eq:lin-fix} imply $\PAp (\Omep \circ \H) = 0$. Combining the two results yields $\H \in \Fix(\Madmm)$. 

    \textit{Part (d):} is equivalent to show that $\normF{(\Madmm - \pfm) \H} < \normF{\H}$ for any nonzero $\H$. If $\H \in \Fix(\Madmm)$ (and $\H \neq 0$), then $\normF{(\Madmm - \pfm) \H} = 0 < \normF{\H}$. Otherwise, $\H \notin \Fix(\Madmm)$, and at least one of the three conditions in \eqref{eq:lin-fix} is not satisfied. So, at least one of the three terms on the right-hand side of~\eqref{eq:lin-minus} is positive, which implies the desirable result.
\end{proof}

\begin{proposition} \label{prop:lin-psi}
    There exist two constants $\bar k_\Psi \in \bbN$ and $\alpha_\Psi > 0$ such that for any integer $k \ge \bar k_\Psi$, it holds that
    \[
        \normF{\psik} \leq \alpha_\Psi \cdot \normF{\HOk} \cdot \normF{\Hk},
    \]
    where $\Hk := \dZk$ is partitioned as in \eqref{eq:eb-intro-partition}.
\end{proposition}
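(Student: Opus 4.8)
The plan is to obtain this estimate as a direct consequence of the refined error bound \cref{thm:eb-intro-thm}, because $\psik$ is exactly the PSD-projection residual $\PiSnp{\Zk} - \PiSnp{\Zs} - \Omega \circ (\Zk - \Zs)$ composed with the fixed linear operator $\Id - 2\PA$ appearing in \eqref{eq:lin-psik}. First I would record the two structural facts already in place: by \cref{ass:lin-sc} the limit point $\Zs$ is nonsingular, and under the working convention $\Qs = \I[n]$ it is in fact diagonal, with its first $r := \rank{\Xs}$ diagonal entries positive and the remaining $n-r$ negative. Hence $Q = \I[n]$ is an orthogonal eigenvector matrix of $\Zs$, so in the notation of \cref{thm:eb-intro-thm} the rotated perturbation $\widetilde H$ coincides with the perturbation itself and its off-block-diagonal part (with block size $r$) is precisely $\HOk$ when the perturbation is $\Hk := \Zk - \Zs$.

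Next, since three-step ADMM converges under \cref{ass:lin-sol,ass:lin-sc}, the associated one-step iterates satisfy $\Hk \to 0$. I would therefore pick $\bar k_\Psi \in \bbN$ large enough that $\normtwo{\Hk} \le C_{\mathrm{EB}}$ for all $k \ge \bar k_\Psi$, where $C_{\mathrm{EB}}$ is the admissibility radius supplied by \cref{thm:eb-intro-thm} applied at $Z = \Zs$. For every such $k$, the Frobenius-norm form of \cref{thm:eb-intro-thm} (stated in the remarks following it) gives a constant $\alpha^\prime_{\mathrm{EB}} > 0$ with $\normF{\PiSnp{\Zk} - \PiSnp{\Zs} - \Omega \circ \Hk} \le \alpha^\prime_{\mathrm{EB}} \cdot \normF{\HOk} \cdot \normF{\Hk}$.

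It then remains to pass this bound through the prefactor $\Id - 2\PA$. Since $\PA = \Pi_{\range(\AsdpT)}$ is an orthogonal projection on $\Sn$, $\Id - 2\PA$ is an orthogonal reflection and hence a Frobenius isometry, so $\normop{\Id - 2\PA} = 1$ and $\normF{\psik} \le \normF{\PiSnp{\Zk} - \PiSnp{\Zs} - \Omega \circ \Hk}$. Combining the last two inequalities and setting $\alpha_\Psi := \alpha^\prime_{\mathrm{EB}}$ yields the claim. I do not expect any genuinely hard step here: essentially all the content is carried by the refined error bound, and the only points needing care are (i) confirming that the diagonalization convention $\Qs = \I[n]$ forces $Q = \I[n]$ so that the residual is governed by $\normF{\HOk}$ rather than by some rotated block, and (ii) the elementary observation that the reflection $\Id - 2\PA$ is norm-preserving. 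Should one wish to avoid the $\Qs = \I[n]$ reduction, the same argument goes through using the coordinate-free form of \cref{thm:eb-intro-thm}, with $\normF{Q_S\tran \Hk Q_X}$ in place of $\normF{\HOk}$.
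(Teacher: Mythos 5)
Your proposal is correct and follows essentially the same route as the paper: apply the refined error bound of \cref{thm:eb-intro-thm} at $Z=\Zs$ (nonsingular by \cref{ass:lin-sc}, diagonal under the $\Qs=\I[n]$ convention), choose $\bar k_\Psi$ via $\Hk \to 0$ so the perturbation lies within the admissibility radius, and use that $\Id - 2\PA$ is a reflection and hence a Frobenius isometry. The only cosmetic difference is that the paper exploits the isometry to write an equality $\normF{\psik} = \normF{\PiSnp{\Zs+\Hk} - \PiSnp{\Zs} - \Omega \circ \Hk}$, whereas you only need (and state) the corresponding inequality.
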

\begin{proof}
    The linear operator $2\PA - \Id$ is the reflection operator, and thus preserves the Frobenius norm. Thus, we have from the definition of $\psik$ \eqref{eq:lin-psik} that
    \begin{align*}
        \normF{\psik} &= \normF{\PiSnp{\Zk} - \PiSnp{\Zs} - \Omega \circ (\dZk)} \\
        &= \normF{\PiSnp{\Zs + \Hk} - \PiSnp{\Zs} - \Omega \circ \Hk}.
    \end{align*}
    The desirable result then follows from \cref{thm:eb-intro-thm} and the fact that $\Hk \to 0$ as $k \to \infty$.
\end{proof}


\section{Local Linear Convergence with Nondegeneracy} \label{sec:conv-with-nd}

In this section, we establish local linear convergence of ADMM when primal and dual nondegeneracy holds at optimum. In this case, the pair of SDPs \eqref{eq:intro-sdp} has a unique KKT point and \cref{ass:lin-sc} is equivalent to merely existence of a strictly complementary solution, which is a common regularity condition for SDP in the literature.

We start with the simple characteristic of $\Fix(\Madmm)$ when nondegeneracy holds.
\begin{lemma} \label{lem:conv-nd-fix}
    Suppose \cref{ass:lin-sol,ass:lin-sc}, primal nondegeneracy~\eqref{eq:intro:primal-nondegeneracy} and dual nondegeneracy~\eqref{eq:intro:dual-nondegeneracy} hold. Then, it holds that $\Fix(\Madmm) = \{0\}$.
\end{lemma}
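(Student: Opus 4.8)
The plan is to combine the block characterization of $\Fix(\Madmm)$ in \cref{prop:lin-M}(c) with the two nondegeneracy conditions, eliminating each block of a fixed point in turn.

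First I would take an arbitrary $H \in \Fix(\Madmm)$, partitioned as in \eqref{eq:eb-intro-partition} with $r := \rank{\Xs}$, so that by \cref{prop:lin-M}(c) the three conditions in \eqref{eq:lin-fix} hold: $H_O = 0$, $\mymat{H_X & 0 \\ 0 & 0} \in \nullspace(\Asdp)$, and $\mymat{0 & 0 \\ 0 & H_S} \in \range(\AsdpT)$. Since we have reduced to the case $\Qs = \I[n]$ in \cref{sec:lin-linearization}, and since strict complementarity (\cref{ass:lin-sc}) forces $\rank{\Ss} = n - r$, the subspaces in \eqref{eq:intro:orthogonal-complement} become the obvious coordinate subspaces: $\calN_\Ss$ is exactly the set of symmetric matrices supported on the leading $r \times r$ block, and $\calN_\Xs$ is exactly the set supported on the trailing $(n-r) \times (n-r)$ block. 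Hence $\mymat{H_X & 0 \\ 0 & 0} \in \calN_\Ss$ and $\mymat{0 & 0 \\ 0 & H_S} \in \calN_\Xs$.

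Next I would invoke the intersection form of the nondegeneracy conditions. Dual nondegeneracy \eqref{eq:intro:dual-nondegeneracy} says $\calN_\Ss \cap \nullspace(\Asdp) = \{0\}$; since $\mymat{H_X & 0 \\ 0 & 0}$ lies in both, it vanishes, i.e.\ $H_X = 0$. Symmetrically, primal nondegeneracy \eqref{eq:intro:primal-nondegeneracy} says $\calN_\Xs \cap \range(\AsdpT) = \{0\}$, and since $\mymat{0 & 0 \\ 0 & H_S}$ lies in both, $H_S = 0$. Together with $H_O = 0$ this gives $H = 0$, so $\Fix(\Madmm) \subseteq \{0\}$; the reverse inclusion is trivial because $\Madmm$ is linear and $\Madmm(0) = 0$.

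I do not anticipate a genuine obstacle: \cref{prop:lin-M}(c) has already done the heavy lifting, and what remains is a two-line subspace argument. The only place that warrants care is the bookkeeping of block sizes---verifying that strict complementarity ($r + s = n$) is exactly what makes the $(n-r) \times (n-r)$ block appearing in \eqref{eq:lin-fix} coincide with the $s \times s$ block defining $\calN_\Xs$ (and the $r \times r$ block coincide with the $(n-s) \times (n-s)$ block defining $\calN_\Ss$), so that \eqref{eq:intro:primal-nondegeneracy} and \eqref{eq:intro:dual-nondegeneracy} apply verbatim rather than merely up to an inclusion of subspaces.
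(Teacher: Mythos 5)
Your proposal is correct and follows essentially the same route as the paper: invoke \cref{prop:lin-M}(c), then use dual nondegeneracy ($\calN_\Ss \cap \nullspace(\Asdp) = \{0\}$) to kill $H_X$ and primal nondegeneracy ($\calN_\Xs \cap \range(\AsdpT) = \{0\}$) to kill $H_S$, with $H_O = 0$ already given. The only addition is your explicit block-size bookkeeping under strict complementarity, which the paper leaves implicit but which is indeed the reason the coordinate blocks in \eqref{eq:lin-fix} coincide with $\calT_\Ss^\perp$ and $\calT_\Xs^\perp$.
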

\begin{proof}
    For any $\H \in \Fix(\Madmm)$, we see from \cref{prop:lin-M} (c) and the definition of $\calT^\perp_\Xs$ that
    \[
        \mymat{0 & 0 \\ 0 & \HS} \in \calT^\perp_\Xs \ \text{and} \ \mymat{0 & 0 \\ 0 & \HS} \in \range(\AsdpT).
    \]
    Yet primal nondegeneracy suggests $\calT^\perp_\Xs \cap \range(\AsdpT) = \{0\}$. Thus, $\HS = 0$.

    Similarly, from \cref{prop:lin-M} (c) and the definition of $\calT^\perp_\Ss$, we conclude that
    \[
        \mymat{\HX & 0 \\ 0 & 0} \in \calT^\perp_\Ss \ \text{and} \ \mymat{\HX & 0 \\ 0 & 0} \in \nullspace(\Asdp).
    \]
    Yet dual nondegeneracy suggests that $\calT^\perp_\Ss \cap \nullspace(\Asdp) = \{0\}$. Thus, $\HX=0$.

    Therefore, any point $\H \in \Fix(\Madmm)$ must satisfy $\H=0$; \ie $\Fix(\Madmm) = \{0\}$.
\end{proof}

\begin{theorem} \label{thm:conv-nd}
    Suppose \cref{ass:lin-sol,ass:lin-sc}, primal nondegeneracy~\eqref{eq:intro:primal-nondegeneracy} and dual nondegeneracy~\eqref{eq:intro:dual-nondegeneracy} hold. For any $\rho \in (\normop{\Madmm}, 1)$, there exists $\bar k_{\mathrm{ND}} \in \bbN$ such that for any integer $k \ge \bar k_{\mathrm{ND}}$, it holds that
    \[
        \normF{\Zkpo - \Zs} \leq \rho \normF{\Zk - \Zs}.
    \]
\end{theorem}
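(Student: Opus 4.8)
The plan is to combine three ingredients already in hand: (i) the exact local linearization $\Zkpo - \Zs = \Madmm(\Zk - \Zs) + \psik$ from \eqref{eq:lin-linearization}; (ii) the quadratic smallness of the residual, $\normF{\psik} \le \alpha_\Psi \normF{\HOk}\normF{\Hk} \le \alpha_\Psi \normF{\Hk}^2$ for $k \ge \bar k_\Psi$, from \cref{prop:lin-psi}; and (iii) the fact that, under nondegeneracy, $\Madmm$ is an honest contraction on all of $\Sn$.

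First I would establish that $\normop{\Madmm} < 1$. By \cref{lem:conv-nd-fix}, primal and dual nondegeneracy force $\Fix(\Madmm) = \{0\}$, so the projector $\pfm$ onto $\Fix(\Madmm)$ is the zero operator; hence $\normop{\Madmm} = \normop{\Madmm - \pfm}$, which is strictly below $1$ by \cref{prop:lin-M}(d). In particular the interval $(\normop{\Madmm}, 1)$ is nonempty, so a choice of $\rho$ is possible.

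Next, fix such a $\rho$. Writing $\Hk := \Zk - \Zs$, the linearization \eqref{eq:lin-linearization} together with the triangle inequality and the operator-norm bound gives, for $k \ge \bar k_\Psi$,
\[
    \normF{\Hkpo} \;\le\; \normop{\Madmm}\,\normF{\Hk} + \normF{\psik} \;\le\; \bigl(\normop{\Madmm} + \alpha_\Psi \normF{\Hk}\bigr)\normF{\Hk}.
\]
Since under \cref{ass:lin-sol} three-step ADMM converges (hence one-step ADMM converges, i.e.\ $\Hk \to 0$), there is an index $\bar k_{\mathrm{ND}} \ge \bar k_\Psi$ beyond which $\alpha_\Psi \normF{\Hk} \le \rho - \normop{\Madmm}$. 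For all $k \ge \bar k_{\mathrm{ND}}$ this yields $\normF{\Hkpo} \le \rho\,\normF{\Hk}$, which is the claim.

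I do not expect a serious obstacle in this particular step: the heavy lifting has been done upstream — the refined error bound \cref{thm:eb-intro-thm} (which gives the $\calO(\normF{\HOk}\normF{\Hk})$, hence $o(\normF{\Hk})$, control of $\psik$) and the structural identification $\Fix(\Madmm) = \{0\}$ under two-sided nondegeneracy. The only points needing care are the bookkeeping of the two thresholds ($\bar k_\Psi$ from \cref{prop:lin-psi} and the convergence-based threshold for $\alpha_\Psi\normF{\Hk}$ to be small), and noting that here even the crude quadratic bound $\normF{\psik} \le \alpha_\Psi\normF{\Hk}^2$ suffices — the sharper off-diagonal factor $\normF{\HOk}$ is not needed until the degenerate case in \cref{sec:conv-without-nd}.
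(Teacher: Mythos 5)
Your proposal is correct and follows essentially the same route as the paper: identify $\Fix(\Madmm)=\{0\}$ via \cref{lem:conv-nd-fix} so that $\pfm=0$ and $\normop{\Madmm}<1$ by \cref{prop:lin-M}, then absorb $\psik$ (controlled by \cref{prop:lin-psi} and the convergence $\Hk\to 0$) into the gap $\rho-\normop{\Madmm}$. The only cosmetic difference is that the paper makes $\alpha_\Psi\normF{\HOk}$ small while you make $\alpha_\Psi\normF{\Hk}$ small, which is an equivalent (slightly cruder but sufficient) choice.
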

\begin{proof}
    Since $\Fix(\Madmm) = \{0\}$, we have $\pfm = 0$ and $\normop{\Madmm} < 1$ from \cref{prop:lin-M}. Then \cref{prop:lin-psi} implies that there exists $\bar k_\Psi \in \bbN$ such that $\normF{\psik} \le \alpha_\Psi \normF{\HOk} \normF{\Hk}$ for any integer $k \geq \bar k_\Psi$. Then, convergence of ADMM suggests that for any $\rho \in (\normop{\Madmm}, 1)$, there exists $\bar k_O$ such that for any integer $k \geq \max\{\bar k_\Psi, \bar k_O\} =: \bar k_{\mathrm{ND}}$, we have $\alpha_\Psi \normF{\HOk} \leq \rho - \normop{\Madmm}$ and
    \[
        \normF{\psik} \le (\rho - \normop{\Madmm}) \cdot \normF{\Hk} = (\rho - \normop{\Madmm}) \cdot \normF{\dZk}.
    \]
    Finally, 
    \begin{align*}
        \normF{\Zkpo - \Zs} &= \normF{\Madmm (\dZk) + \psik} \\
        &\le \normop{\Madmm} \cdot \normF{\dZk} + \normF{\psik} \\
        &\le (\normop{\Madmm} + \rho - \normop{\Madmm}) \cdot \normF{\dZk} \\
        &= \rho \normF{\dZk}.
    \end{align*}
\end{proof}

\begin{remark}
    Our proof framework can also cover the case where ND holds and SC fails. To stay consistent with our SC assumption, the detailed proof of this case is deferred to \cref{app:sec:conv-only-nd}. So, combining \cref{thm:conv-nd} and the results in \cref{app:sec:conv-only-nd}, we establish local linear convergence of ADMM for SDP under only the nondegeneracy conditions. Though this conclusion can be drawn from \cite{han2018mor-linear-rate-admm}, our proof techniques are completely different from theirs and do not involve the metric subregularity of the KKT operator. Moreover, numerical evidence is provided in \cref{sec:exp} to support the theoretical findings in \cref{app:sec:conv-only-nd}.
\end{remark}

\section{Local R-linear Convergence without Nondegeneracy}
\label{sec:conv-without-nd}

Without two-side nondegeneracy, $\Fix(\Madmm)$ is not $\left\{ 0 \right\}$ and the proof technique in \cref{sec:conv-with-nd} does not apply anymore. Another nice property of $\Fix(\Madmm)$ in \cref{prop:lin-M} turns out to be useful: $\normop{\Madmm - \pfm} < 1$. Specifically, this property motivates us to study the ``projected sequence'':
\begin{align*}
    (\Id - \pfm) \Hkpo &= (\Id-\pfm) \Madmm \Hk + (\Id - \pfm) \psik \\
    &= (\Madmm - \pfm) (\Id - \pfm) \Hk + (\Id - \pfm) \psik,
\end{align*}
where the last equality follows from 
\[
    (\Madmm - \pfm) (\Id - \pfm) = \Madmm - \pfm = (\Id - \pfm) \Madmm.
\]
So, combining with the structure of $\Fix(\Madmm)$ and our refined error bound in \cref{thm:eb-intro-thm}, we are able to establish the (R-)linear convergence of several ``partial'' sequences
\[
    (\Id - \pfm) \Hk, \qquad \HOk, \qquad \Proj_{\calT_{\Ss}} (\Xk), \qquad \Proj_{\calT_{\Xs}} (\Sk),
\]
where the last two terms correspond to the part of $\Xk$ (resp., $\Sk$) that lies outside the minimal face of $\Xs$ (resp., $\Sk$); see \cref{lem:conv-nnd-blk,lem:conv-nnd-T}. (One may already notice that in the nondegenerate case where $\Fix(\Madmm) = \{0\}$, the sequence $(\Id - \pfm) \Hk$ is exactly $\Hk$, and the proof is done at this step.)

So, what is missing in the more general, possibly degenerate case? It turns out that an error bound for~$\Proj_{\psd{n}}$ alone is insufficient; a growth condition is needed that accounts for both the PSD cone and an affine set. More specifically, consider the spectrahedron $\calV \cap \psd{n}$, where $\calV$ is an affine space in $\Sn$. Following the convention in \cite{sturm2000siopt-error-bounds-linear-matrix-inequalities}, we call $\dist(\X, \calV \cap \psd{n})$ the \textit{forward error} and $\dist(\X, \calV) + [-\lammin{\X}]_+$ the \textit{backward error}. \rebuttal{In the polyhedral case} (\ie $\psd{n}$ reduces to the nonnegative orthant), the backward error and the forward error are in the same order \cite{hoffman2003ws-approximate-sol-linear-inequalities}, which leads to the \textit{sharpness} condition and linear convergence of first-order methods in linear programming \cite{applegate2023mp-faster-lp-sharpness}. In the spectrahedron case, however, it is shown in \cite{sturm2000siopt-error-bounds-linear-matrix-inequalities} that
\[
    \text{forward error} = \calO \big( (\text{backward error})^{1/2} \big)
\]
under mild conditions.
So SDPs are not sharp in general and linear convergence does not follow in a straightforward manner.

Fortunately, by exploiting the geometry of the PSD cone, it is shown in \cite[Lemma~2.3]{sturm2000siopt-error-bounds-linear-matrix-inequalities} that the forward error is in the same order as the backward error with respect to the regularized system
\[
    \calV \cap \minface{\Xs}{\Symp{n}}, \quad \text{where} \ \minface{\Xs}{\Symp{n}} := \left\{\mymat{\Gamma & 0 \\ 0 & 0} \Bigm\vert \Gamma\in \Symp{r} \right\} = \Symp{n} \cap \calT_{\Ss}^\perp.
\]
(The simple characteristic of the minimal face needs the assumption, made without loss of generality, that~$\Xs$ is diagonal.) Extending the conclusion in \cite[Lemma~2.3]{sturm2000siopt-error-bounds-linear-matrix-inequalities}, we obtain a linear growth condition on the distance to optimality. More specifically, we upper bound the distance from $\Zk$ to the optimal set by the sum of the following three terms:
\[
    \normF{\Zkpo - \Zk}, \qquad \normF{\Proj_{\calT_{\Ss}} (\Xk)}, \qquad \normF{\Proj_{\calT_{\Xs}} (\Sk)},
\]
where the last two terms correspond to the part of $\Xk$ (resp., $\Sk$) that lies outside the minimal face of~$\Xs$ (resp., $\Ss$); see \cref{lem:conv-nnd-kkt,lem:conv-nnd-Z}. Finally, combining the two ingredients (convergence of some partial sequences and the new growth condition) yields the desirable linear convergence guarantees of ADMM without nondegeneracy conditions.

Below we dive into the details, we remind that without two-side nondegeneracy, the primal and dual solutions may not be unique. So we denote by $\Xopt$ the optimal set for primal SDP in \eqref{eq:intro-sdp}, by $\Sopt$ the set of dual optimal $S$, and by $\Zopt$ the set of fixed points for the one-step ADMM \eqref{eq:intro:admm-one-step}.

We begin our analysis with some basic results on ADMM, of which the proof mainly uses the ADMM update rule \eqref{eq:intro:admm-one-step}. In fact, the inequality \eqref{eq:conv-nnd-Zdiff-ineq} is a special case of \cite[Proposition 3.1]{zhang2018arxiv-linearly-convergent-admm}.
\begin{lemma} \label{lem:conv-nnd-Zdiff}
    The sequence $\{\Zk\}$ generated by one-step ADMM \eqref{eq:intro:admm-one-step} satisfies
    \begin{equation} \label{eq:conv-nnd-Zdiff-eq}
        \normF{\Zkpo-\Zk}^2 = \normF{\PA (\Xk - \widetilde X)}^2 + \sigma^2 \normF{\PAp (\Sk - C)}^2,
    \end{equation}
    where $\widetilde X$ is an arbitrary matrix satisfying $\Asdp \widetilde X = b$.
    And
    \begin{equation} \label{eq:conv-nnd-Zdiff-ineq}
        \normF{\Zkpo-\Zk}^2 \leq \dist^2 (\Zk, \Zopt) - \dist^2 (\Zkpo, \Zopt),
    \end{equation}
    for all $k \in \bbN$.
\end{lemma}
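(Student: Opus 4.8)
The plan is to establish the two assertions separately, in both cases by direct manipulation of the one-step update \eqref{eq:intro:admm-one-step}; no regularity beyond \cref{ass:lin-sol} enters (which guarantees that $\widetilde X$ and a point of $\Zopt$ exist).

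\textbf{The equality \eqref{eq:conv-nnd-Zdiff-eq}.} First I would substitute the extraction formulas \eqref{eq:intro:extract-XS}, i.e.\ $\Xk = \PiSnp{\Zk}$ and $\sigSk = \PiSnp{-\Zk}$ (so that $\Zk = \Xk - \sigSk$), into the right-hand side of \eqref{eq:intro:admm-one-step}, and use $\PA = \AsdpT(\Asdp\AsdpT)^{-1}\Asdp$ together with the identity $\AsdpT(\Asdp\AsdpT)^{-1}b = \PA\widetilde X$, valid for any $\widetilde X$ with $\Asdp\widetilde X = b$. Since $-2\PiSnp{\Zk} + \Zk = -\Xk - \sigSk$, a few lines of algebra collapse the update into
\[
    \Zkpo - \Zk = -\PA(\Xk - \widetilde X) + \sigma\,\PAp(\Sk - C).
\]
The first summand lies in $\range(\AsdpT)$ and the second in $\range(\AsdpT)^\perp = \nullspace(\Asdp)$, so the two terms are Frobenius-orthogonal, and the Pythagorean identity gives \eqref{eq:conv-nnd-Zdiff-eq}.

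\textbf{The inequality \eqref{eq:conv-nnd-Zdiff-ineq}.} The key observation is that the identity derived in the proof of \eqref{eq:lin-linearization} used only that $\Zs$ satisfies the one-step fixed-point equation; hence the same computation yields, for \emph{every} $W \in \Zopt$,
\[
    \Zkpo - W = (\Id - 2\PA)\bigl(\PiSnp{\Zk} - \PiSnp{W}\bigr) + \PA(\Zk - W),
\]
and subtracting $\Zk - W$ gives $\Zkpo - \Zk = (\Id - 2\PA)\bigl(\PiSnp{\Zk} - \PiSnp{W}\bigr) - \PAp(\Zk - W)$. Writing $u := \PiSnp{\Zk} - \PiSnp{W}$ and $v := \Zk - W$, I would expand the two squared norms, using that $\Id - 2\PA$ is a reflection (hence preserves $\inprod{\cdot}{\cdot}$) and $\PA + \PAp = \Id$; the cross terms combine to
\[
    \normF{\Zkpo - W}^2 + \normF{\Zkpo - \Zk}^2 = \normF{v}^2 - 2\bigl(\inprod{u}{v} - \normF{u}^2\bigr).
\]
Firm nonexpansiveness of the metric projection onto the convex cone $\Symp{n}$ gives $\inprod{u}{v} \ge \normF{u}^2$, so the parenthesized term is nonnegative and $\normF{\Zkpo - W}^2 + \normF{\Zkpo - \Zk}^2 \le \normF{\Zk - W}^2$. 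Finally I would take $W = \Proj_{\Zopt}(\Zk)$ — well defined since $\Zopt$ is nonempty by \cref{ass:lin-sol} and closed, being the fixed-point set of a nonexpansive operator — and use $\normF{\Zkpo - W}^2 \ge \dist^2(\Zkpo, \Zopt)$ together with $\normF{\Zk - W} = \dist(\Zk, \Zopt)$ to obtain \eqref{eq:conv-nnd-Zdiff-ineq}.

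\textbf{Main obstacle.} Neither part is deep. The only care required is the bookkeeping in the algebraic collapse of \eqref{eq:intro:admm-one-step} for the first identity, and correctly matching the sign of the cross term in the expansion with the firm-nonexpansiveness inequality for the second; it is the appearance of $\PAp(\Sk-C)$ and the reflection $\Id-2\PA$ that make the relevant orthogonal decompositions go through. As an alternative to the second part, \eqref{eq:conv-nnd-Zdiff-ineq} can be quoted from \cite[Proposition 3.1]{zhang2018arxiv-linearly-convergent-admm} once the one-step ADMM operator is identified as a Douglas--Rachford splitting operator, which is firmly nonexpansive with fixed-point set $\Zopt$.
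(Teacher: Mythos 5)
Your proposal is correct. The first part is essentially identical to the paper's argument: the paper also collapses the one-step update to $\Zkpo - \Zk = -\PA(\Xk - \widetilde X) + \sigma \PAp(\Sk - C)$ (via $\Asdp^\dagger b = \PA\widetilde X$) and invokes orthogonality of $\range(\AsdpT)$ and $\nullspace(\Asdp)$. For the second part you take a genuinely different, though closely related, route. The paper fixes an arbitrary $\overline Z \in \Zopt$, expands $\normF{\Zk-\overline Z}^2 - \normF{\Zkpo-\overline Z}^2$ directly using the displayed expression for $\Zkpo - \Zk$, and drives the sign of the leftover inner product with the explicit structural facts $\PA(\overline X - \widetilde X) = 0$, $\PAp(\overline S - C) = 0$, $\inprod{\Xk}{\Sk} = 0$, $\inprod{\overline X}{\overline S} = 0$ and positive semidefiniteness of the extracted iterates; you instead observe that every $W \in \Zopt$ satisfies the same fixed-point identity used in Section 4, write $\Zkpo - \Zk = (\Id-2\PA)u - \PAp v$ with $u = \PiSnp{\Zk} - \PiSnp{W}$, $v = \Zk - W$, and let firm nonexpansiveness of $\Pi_{\Symp{n}}$ supply the single inequality $\inprod{u}{v} \ge \normF{u}^2$ (your expansion of the cross terms, using that $\Id - 2\PA$ is a self-adjoint involution and $\PA - \PAp = -( \Id - 2\PA)$, checks out). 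Your version is the standard quasi-Fej\'er argument for a firmly nonexpansive (Douglas--Rachford-type) operator and packages the complementarity and conic feasibility of $(\Xk,\Sk)$ and $(\overline X,\overline S)$ into one projection inequality, which is arguably cleaner and makes transparent why the result needs no regularity beyond existence of a fixed point; the paper's computation is more elementary and exposes exactly which KKT identities are being used (and, as you note, the inequality could alternatively be quoted from Zhang et al.). The remaining details you rely on — that $\Zopt$ is nonempty and closed so a nearest point exists — are unproblematic in this finite-dimensional setting.
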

\begin{proof}
    See \cref{app:sec:conv-nnd-Zdiff-prf}.
\end{proof}

\subsection{R-linear Decay outside Minimal Faces}

\begin{lemma} \label{lem:conv-nnd-blk}
    Suppose \cref{ass:lin-sol,ass:lin-sc} hold. Let $\Hk := \dZk$, $k \in \bbN$, be partitioned as in \eqref{eq:eb-intro-partition}. Then, for any $\rho \in (\normop{\Madmm - \pfm}, 1)$, there exists $\bar k_\rho \in \bbN$ such that for any integer $k \geq \bar k_\rho$, it holds that
    \[
        \normF{(\Id - \pfm) \Hkpo} \leq \rho \normF{(\Id - \pfm) \Hk}.
    \]
    Moreover, $\normF{\HOk}$ converges R-linearly.
\end{lemma}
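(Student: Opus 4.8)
The plan is to work directly with the ``projected residual'' identity derived just above the lemma,
\[
    (\Id - \pfm)\Hkpo = (\Madmm - \pfm)(\Id - \pfm)\Hk + (\Id - \pfm)\psik ,
\]
and to argue that the linear part contracts at the fixed rate $\normop{\Madmm - \pfm} < 1$ (\cref{prop:lin-M} (d)) while the nonlinear residual $\psik$ is eventually negligible compared with $\normF{(\Id - \pfm)\Hk}$. Since $\Fix(\Madmm)$ is a linear subspace (it is cut out by the linear conditions in \cref{prop:lin-M} (c)), $\pfm$ and $\Id - \pfm$ are orthogonal projections, hence nonexpansive in the Frobenius norm; taking norms in the identity above gives
\[
    \normF{(\Id - \pfm)\Hkpo} \le \normop{\Madmm - \pfm}\,\normF{(\Id - \pfm)\Hk} + \normF{\psik}.
\]

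Next I would control $\normF{\psik}$. By \cref{prop:lin-psi} there are $\bar k_\Psi \in \bbN$ and $\alpha_\Psi > 0$ with $\normF{\psik} \le \alpha_\Psi\,\normF{\HOk}\,\normF{\Hk}$ for all $k \ge \bar k_\Psi$. The key step — the one place where the structure of $\Fix(\Madmm)$ is really used — is the inequality $\normF{\HOk} \le \normF{(\Id - \pfm)\Hk}$. It holds because, by \cref{prop:lin-M} (c), every element of $\Fix(\Madmm)$ has vanishing off-block-diagonal block, so $\pfm\Hk$ is block diagonal and $(\Id-\pfm)\Hk = \Hk - \pfm\Hk$ inherits the full off-block-diagonal block $\HOk$ of $\Hk$; hence $\normF{(\Id-\pfm)\Hk}^2 \ge 2\normF{\HOk}^2 \ge \normF{\HOk}^2$. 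Combining this with the two displays above gives, for all $k \ge \bar k_\Psi$,
\[
    \normF{(\Id - \pfm)\Hkpo} \le \bigl(\normop{\Madmm - \pfm} + \alpha_\Psi\,\normF{\Hk}\bigr)\,\normF{(\Id - \pfm)\Hk}.
\]

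To finish, I would use that under \cref{ass:lin-sol,ass:lin-sc} one-step ADMM converges to $\Zs$, so $\Hk = \dZk \to 0$. Given $\rho \in (\normop{\Madmm - \pfm}, 1)$, set $\delta := \rho - \normop{\Madmm - \pfm} > 0$ and pick $\bar k_\rho \ge \bar k_\Psi$ large enough that $\alpha_\Psi\,\normF{\Hk} \le \delta$ for all $k \ge \bar k_\rho$; the previous display then yields $\normF{(\Id - \pfm)\Hkpo} \le \rho\,\normF{(\Id - \pfm)\Hk}$, which is the first claim. Iterating from $\bar k_\rho$ gives $\normF{(\Id - \pfm)\Hk} \le \rho^{\,k-\bar k_\rho}\,\normF{(\Id - \pfm)\H^{(\bar k_\rho)}}$ for $k \ge \bar k_\rho$, and combining with $\normF{\HOk} \le \normF{(\Id - \pfm)\Hk}$ shows that $\normF{\HOk}$ is bounded above by a geometrically decaying sequence, i.e.\ converges R-linearly.

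The only real obstacle is establishing $\normF{\HOk} \le \normF{(\Id-\pfm)\Hk}$, i.e.\ recognizing that all the off-block-diagonal information in $\Hk$ survives the projection $\Id-\pfm$; this is exactly what lets the residual bound $\normF{\psik} = \calO(\normF{\HOk}\,\normF{\Hk})$ be absorbed into the contraction rate even though $\Fix(\Madmm) \neq \{0\}$ in the degenerate regime. Everything else is routine manipulation of the identity \eqref{eq:lin-linearization} together with the already-established properties of $\Madmm$ and $\psik$.
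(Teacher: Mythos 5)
Your proposal is correct and follows essentially the same route as the paper: the same projected identity $(\Id-\pfm)\Hkpo = (\Madmm-\pfm)(\Id-\pfm)\Hk + (\Id-\pfm)\psik$, the same key observation from \cref{prop:lin-M}~(c) that $\pfm\Hk$ has zero off-block-diagonal part so $\normF{(\Id-\pfm)\Hk}\ge\sqrt{2}\,\normF{\HOk}$, and the same absorption of the residual bound from \cref{prop:lin-psi} into the contraction factor using $\Hk\to 0$. The only cosmetic difference is that you spell out the contraction inequality explicitly, whereas the paper phrases it as the ratio $\normF{(\Id-\pfm)\psik}/\normF{(\Id-\pfm)\Hk}\to 0$; the content is identical.
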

\begin{proof}
    We first show that
    \[
        \frac{\normF{(\Id - \pfm) \psik}}{\normF{(\Id - \pfm) \Hk}} \to 0 \quad \text{as} \ k \to \infty.
    \]
    To see this, we first note from \cref{prop:lin-M} (c) that the off-block-diagonal part of $\pfm(\Hk)$ is zero, which implies that $(\Id - \pfm) \Hk$ and $\Hk$ have the same off-block-diagonal part. So,
    \[
        \normF{(\Id - \pfm) \Hk} \geq \sqrt 2 \normF{\HOk}.
    \]
    Then, we conclude from \cref{prop:lin-psi} that there exist $\bar k_\Psi \in \bbN$ and $\alpha_\Psi > 0$ such that for any integer $k \geq \bar k_\Psi$, it holds that
    \[
        \frac{\normF{(\Id - \pfm) \psik}}{\normF{(\Id - \pfm) \Hk}} \leq 
        \frac{\normF{\psik}}{\normF{(\Id - \pfm) \Hk}} \leq
        \frac{\alpha_\Psi \normF{\HOk} \normF{\Hk}}{\normF{(\Id - \pfm) \Hk}} \leq
        \frac{\alpha_\Psi}{\sqrt 2} {\normF{\Hk}},
    \]
    which goes to $0$ as $k \to \infty$.

    Finally, the R-linear convergence of $\normF{\HOk}$ follows naturally from the fact that $\normF{\HOk} \leq \normF{(\Id - \pfm) \Hk}$.
\end{proof}

\Cref{thm:eb-intro-thm} plays a vital role in the proof of \cref{lem:conv-nnd-blk}. If we used \cref{lem:eb-intro-SS02}, we could only upper bound $\normF{(\Id - \pfm) \psik}$ by $\normF{\Hk}^2$. This could only imply
\begin{equation} \label{eq:eq:conv-nnd-liang}
    \frac{\normF{(\Id - \pfm) \psik}}{\normF{(\Id - \pfm) \Hk}} = \calO\left( \frac{\normF{\Hk}^2}{\normF{\HOk}} \right).
\end{equation}
In \cite{liang2017jota-local-convergence-admm}, the authors investigate general ADMM for convex problems with partially smooth objectives and attempt to establish the linear convergence of the projected sequence ${(\Id - \pfm) \Hk}$. In \cite[pp. 911, line 5]{liang2017jota-local-convergence-admm}, the authors assert (without providing a justification)  that the left-hand side of \eqref{eq:eq:conv-nnd-liang} vanishes as $k \to \infty$. With the refined error bound in \cref{thm:eb-intro-thm}, our analysis confirms this claim in the context of SDP. However, its validity in the more general convex setting remains unclear to us.

\begin{remark}
    The R-linear convergence of $\normF{\HO}$ requires the assumption, made without loss of generality, that $\Xs$ and $\Ss$ are diagonal. Otherwise, when $\Qs \neq I$, the R-linearly convergent sequence is $\normF{Q_{\star,S}\tran \H Q_{\star, X}}$, where $\Qs = \mymat{Q_{\star, X} & Q_{\star, S}}$.
\end{remark}

\begin{lemma} \label{lem:conv-nnd-T}
    Suppose \cref{ass:lin-sol,ass:lin-sc} hold. for any $\rho \in (\normop{\Madmm - \pfm}, 1)$, there exists $\bar k_\calT \in \bbN$ such that for any integer $k \geq \bar k_\calT$, the two norms
    \[
        \normF{\Proj_{\calT_{\Ss}} (\Xk)} \quad \text{and} \quad \normF{\Proj_{\calT_{\Xs}} (\Sk)}
    \]
    converge R-linearly.
\end{lemma}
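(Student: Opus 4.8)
The plan is to reduce the two R-linear decay claims to the R-linear decay of $\normF{\HOk}$ already established in \cref{lem:conv-nnd-blk}. The point is that $\Xk = \PiSnp{\Zk}$ is a small perturbation of $\Xs = \PiSnp{\Zs}$ and $\sigma\Sk = \PiSnp{-\Zk}$ a small perturbation of $\sigma\Ss = \PiSnp{-\Zs}$, so the refined error bound \cref{thm:eb-intro-thm} pins down their leading-order behaviour; the parts that $\Proj_{\calT_{\Ss}}$ and $\Proj_{\calT_{\Xs}}$ retain turn out to be driven entirely by the off-block-diagonal block $\HOk$, up to a residual that \cref{thm:eb-intro-thm} controls by $\normF{\HOk}\cdot\normF{\Hk}$.

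First I would treat $\Proj_{\calT_{\Ss}}(\Xk)$. By the normalization in \cref{sec:lin-ass}, $\Zs$ is diagonal with its positive eigenvalues $\lam{1},\dots,\lam{r}$ in the leading block, so \cref{thm:eb-intro-thm} applies with $Q=\I[n]$ and gives, for all $k$ large enough that $\normtwo{\Hk}\le C_{\mathrm{EB}}$,
\[
    \Xk = \Xs + \Omega\circ\Hk + R^{(k)}, \qquad \normF{R^{(k)}}\le\alpha^\prime_{\mathrm{EB}}\,\normF{\HOk}\,\normF{\Hk},
\]
where $\Omega=\mymat{\E[r] & \Theta\tran \\ \Theta & 0}$. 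Since $\Xs=\mymat{\LamX & 0 \\ 0 & 0}$, the $(1,1)$-block of $\Xk$ is $\LamX+\HXk+[R^{(k)}]_{11}$, its off-diagonal block is $\Theta\circ\HOk+[R^{(k)}]_{21}$, and its $(2,2)$-block is $[R^{(k)}]_{22}$. Under strict complementarity the projector $\Proj_{\calT_{\Ss}}$ merely zeros out the $(1,1)$-block (here $\calT_{\Ss}=\{\mymat{0 & \HO\tran \\ \HO & \HS}\}$ in these coordinates), and $\Theta_{ij}\in(0,1)$, so $\normF{\Proj_{\calT_{\Ss}}(\Xk)}\le\sqrt2\,\normF{\HOk}+(\sqrt2+1)\normF{R^{(k)}}$. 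Because $\normF{\Hk}\to0$, for all large $k$ one has $\alpha^\prime_{\mathrm{EB}}\normF{\Hk}\le1$, hence $\normF{\Proj_{\calT_{\Ss}}(\Xk)}\le C\normF{\HOk}$ for a universal constant $C$; the R-linear convergence of $\normF{\HOk}$ from \cref{lem:conv-nnd-blk} then transfers to $\normF{\Proj_{\calT_{\Ss}}(\Xk)}$ with the same rate.

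The estimate for $\Proj_{\calT_{\Xs}}(\Sk)$ is the mirror image. I apply \cref{thm:eb-intro-thm} to $\sigma\Sk=\PiSnp{-\Zs-\Hk}$; now $-\Zs$ is diagonal but with its positive eigenvalues in the trailing $(n-r)$ coordinates, so the orthogonal factor is a permutation $Q^\prime$, and the off-block-diagonal block of $(Q^\prime)\tran(-\Hk)Q^\prime$ coincides with $\HOk$ up to sign and transpose, keeping the residual of order $\normF{\HOk}\normF{\Hk}$. The $\Omega$-matrix associated with $-\Zs$ has its zero block in the position corresponding, in the original coordinates, to the $(1,1)$-block; thus the linearization of $\sigma\Sk$ has vanishing $(1,1)$-block, an off-diagonal block that is a Hadamard product of $\HOk$ with entries in $(0,1)$, and a $(2,2)$-block that absorbs $\sigma\Ss$. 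Since $\Proj_{\calT_{\Xs}}$ zeros out the $(2,2)$-block (here $\calT_{\Xs}=\{\mymat{\HX & \HO\tran \\ \HO & 0}\}$), the same bookkeeping gives $\normF{\Proj_{\calT_{\Xs}}(\sigma\Sk)}\le C^\prime\normF{\HOk}$ for large $k$; dividing by $\sigma$ and invoking \cref{lem:conv-nnd-blk} again yields the R-linear decay of $\normF{\Proj_{\calT_{\Xs}}(\Sk)}$.

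The main obstacle here is organizational rather than conceptual: aligning the block partitions of $\Omega$, of $\Zs$ and $-\Zs$, and of the projectors $\Proj_{\calT_{\Ss}},\Proj_{\calT_{\Xs}}$, and---crucially---verifying that the PSD-cone linearization residual is genuinely of order $\normF{\HOk}\normF{\Hk}$ rather than $\normF{\Hk}^2$. This last point is exactly where \cref{thm:eb-intro-thm} is indispensable: with only the classical $\calO(\normF{\Hk}^2)$ residual of \cref{lem:eb-intro-SS02} the bound on $\normF{\Proj_{\calT_{\Ss}}(\Xk)}$ would retain an $\calO(\normF{\Hk}^2)$ term whose R-linear decay is not yet available at this stage of the argument, whereas the refined residual collapses into a vanishing multiple of $\normF{\HOk}$ and the reduction goes through.
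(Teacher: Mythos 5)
Your argument is correct and is essentially the paper's own proof: the paper bounds $\normF{\Proj_{\calT_{\Ss}}(\Xk)}$ by $\normF{\psik}+\sqrt{2}\,\normF{\HOk}\le(\alpha_\Psi\normF{\Hk}+\sqrt{2})\normF{\HOk}$ using \cref{prop:lin-psi} (which is just \cref{thm:eb-intro-thm} applied at $\Zs$, as you do directly), notes $\Proj_{\calT_{\Ss}}$ kills $\Xs$ and the $(1,1)$-block while $\Proj_{\calT_{\Ss}}(\Omega\circ\Hk)$ is the off-block-diagonal part with $\Theta$-entries in $(0,1)$, and then invokes \cref{lem:conv-nnd-blk}. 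Your explicit permutation bookkeeping for the dual part is just a spelled-out version of the paper's ``follows similarly since $\Sk=(1/\sigma)\PiSnp{-\Zk}$.''
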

\begin{proof}
    For any $k \in \bbN$, we have
    \begin{subequations}
    \begin{align}
        \normF{\psik} &= \normF{\PiSnp{\Zs + \Hk} - \PiSnp{\Zs} - \Omega \circ \Hk} \nonumber \\
        &= \normF{\Xk - \Xs - \Omega \circ \Hk} \label{eq:conv-nnd-face-1a} \\
        &\ge \normF{\Proj_{\calT_{\Ss}} (\Xk - \Xs - \Omega \circ \Hk)}. \label{eq:conv-nnd-face-1}
    \end{align}
    \end{subequations}
    where \eqref{eq:conv-nnd-face-1a} uses $\Xk = \PiSnp{\Zk}$.
     Then, from the definition of $\calT_{\Ss}$ \eqref{eq:intro:orthogonal-complement}, we have
    \rebuttal{
    \begin{equation} \label{eq:conv-nnd-face-2}
        \Proj_{\calT_{\Ss}} (\Omega \circ \Hk) = \mymat{0 & \Theta\tran \circ (\HOk)\tran \\ \Theta \circ \HOk & 0}.
    \end{equation}
    }
    Combining \eqref{eq:conv-nnd-face-1}, and \eqref{eq:conv-nnd-face-2} and \cref{prop:lin-psi}, we conclude that there exists $\bar k_\Psi$ such that for any integer $k \geq \bar k_\Psi$, we have
    \begin{align*}
        \normF{\Proj_{\calT_{\Ss}} (\Xk)} &= \normF{\Proj_{\calT_{\Ss}} (\dXk)} \\
        &\leq \normF{\Proj_{\calT_{\Ss}} (\Xk - \Xs - \Omega \circ \Hk)} + \normF{\Proj_{\calT_{\Ss}} (\Omega \circ \Hk)} \\
        &\leq \normF{\psik} + \sqrt 2 \normF{\HOk} \\
        &\leq (\alpha_\Psi \normF{\Hk} + \sqrt 2) \normF{\HOk},
    \end{align*}
    The convergence of ADMM suggests that for sufficiently large $k \in \bbN$, the residual $\Hk$ is bounded, and thus $\normF{\Proj_{\calT_{\Ss}} (\dXk)}$ is bounded above by a multiple of $\normF{\HOk}$, a R-linearly convergent sequence (\cref{lem:conv-nnd-blk}).

    The second part of the lemma follows similarly since $\Sk = (1/\sigma) \PiSnp{-\Zk}$.
\end{proof}

\subsection{Linear Growth of Distance to Optimality}

In this section, we present the one-iteration analysis for our convergence measure $\dist(\Zk, \Zopt)$. The following lemma is inspired by \cite[Lemma~2.3]{sturm2000siopt-error-bounds-linear-matrix-inequalities} and gives the regularized backward error for the (scaled) KKT system.
\begin{lemma} \label{lem:conv-nnd-kkt}
    Let $(\Xs, \ys, \Ss)$ be the convergent point of ADMM \eqref{eq:intro:admm-three-step} satisfying strict complementarity~\eqref{eq:intro:strict-complementarity}. Then, there exist three positive constants $(\delta_X, \delta_S, \kappa)$ such that for all $(X, S) \in \Sn \times \Sn$ with $\normF{X} \leq \delta_X$ and $\normF{\sigS} \leq \delta_S$, it holds that
    \begin{align*}
        &\ \kappa \cdot \dist \big((X, \sigS), \Xopt \times (\sigma \Sopt) \big) \\
        \leq&\ \normF{\PA (X - \widetilde X)} + \normF{\PAp (\sigS - \sigC)} + |\inprod{X}{\sigC} + \langle {\widetilde X}, {\sigS} \rangle - \langle {\widetilde X}, {\sigC} \rangle| \\
        &\ + [-\lammin{X}]_+ + [-\lammin{\sigS}]_+ + \normF{\Proj_{\calT_\Ss} (X)} + \normF{\Proj_{\calT_\Xs} (\sigS)},
    \end{align*}
    where $\widetilde X$ is an arbitrary matrix with $\Asdp \widetilde X = b$ and $\sigma>0$ is the parameter in ADMM.
\end{lemma}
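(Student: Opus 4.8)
The plan is to exploit strict complementarity to reduce each optimal set to an affine slice of a \emph{small} PSD cone, and then apply a Sturm-type Lipschitz error bound to the resulting Slater-regular systems; six of the seven terms on the right then fall out, the duality-gap term being appended harmlessly at the end.

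\textbf{Step 1 (minimal-face description of the optimal sets).} First I would observe that, under strict complementarity, $\Xs$ attains the maximal rank among all primal optima: any optimal $X$ satisfies $\inprod{\Ss}{X} = \inprod{C - \AsdpT\ys}{X} = p^\star - b\tran\ys = 0$ (strong duality), so $\range(X)\subseteq\ker(\Ss)$, and $\dim\ker(\Ss) = n - \rank\Ss = \rank\Xs$; hence $\Xs\in\ri{\Xopt}$, and symmetrically $\sigSs\in\ri{\sigma\Sopt}$. With the standing convention $\Qs = \I[n]$ this yields
\[
    \Xopt = \{X : \Asdp X = b\}\cap\minface{\Xs}{\Symp n}, \qquad \sigma\Sopt = \{W : \PAp(W - \sigC) = 0\}\cap\minface{\Ss}{\Symp n},
\]
with $\minface{\Xs}{\Symp n} = \Symp n\cap\calT_{\Ss}^\perp$ and $\minface{\Ss}{\Symp n} = \Symp n\cap\calT_{\Xs}^\perp$; the inclusion ``$\subseteq$'' is the rank argument, while ``$\supseteq$'' holds because a primal-feasible $X\in\minface{\Xs}{\Symp n}$ forces $\inprod{\Ss}{X} = 0$, hence $\inprod C X = b\tran\ys = p^\star$, so $X$ is primal optimal (dually for $\sigma\Sopt$). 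The crucial point is that the optimal-value constraint is \emph{automatically encoded} in the face, so it need not be imposed separately.

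\textbf{Step 2 (Sturm-type bound on each factor, then unpacking).} Since $\dist((X,\sigS),\Xopt\times(\sigma\Sopt))\le\dist(X,\Xopt) + \dist(\sigS,\sigma\Sopt)$, it suffices to bound the two summands. For the primal one, identifying $\minface{\Xs}{\Symp n}$ with $\Symp r$ ($r = \rank\Xs$), the set $\Xopt$ is the feasible set of an affine--conic system that is \emph{Slater-regular}: by Step~1 it contains $\Xs$, which lies in its relative interior $\{\mymat{\Gamma & 0 \\ 0 & 0}:\Gamma\succ 0\}$ since $\LamX\succ 0$. Hence, following the argument of \cite[Lemma~2.3]{sturm2000siopt-error-bounds-linear-matrix-inequalities} (equivalently, by Robinson's constraint qualification and metric regularity of conic systems), there are $\kappa_X,\delta_X > 0$ such that, on the ball of radius $\delta_X$,
\[
    \dist(X,\Xopt) \le \kappa_X\big(\dist(X,\{Y : \Asdp Y = b\}) + \dist(X,\minface{\Xs}{\Symp n})\big).
\]
The first distance equals $\normF{\PA(X - \widetilde X)}$ exactly, since the projection onto $\{Y:\Asdp Y = b\}$ is $Y\mapsto Y - \PA(Y - \widetilde X)$ (and $\PA(X-\widetilde X)$ does not depend on the choice of $\widetilde X$). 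For the second I would split $X$ along the linear span $\calT_{\Ss}^\perp$ of $\minface{\Xs}{\Symp n}$: the off-subspace component has norm $\normF{\Proj_{\calT_{\Ss}}(X)}$, and the on-subspace part $\mymat{X_{11} & 0 \\ 0 & 0}$, with $X_{11}$ the leading $r\times r$ block, has distance $\dist(X_{11},\Symp r)\le\sqrt r\,[-\lammin{X_{11}}]_+\le\sqrt r\,[-\lammin X]_+$ to $\minface{\Xs}{\Symp n}$, using the interlacing inequality $\lammin{X_{11}}\ge\lammin X$. Thus $\dist(X,\Xopt)\lesssim\normF{\PA(X-\widetilde X)} + \normF{\Proj_{\calT_{\Ss}}(X)} + [-\lammin X]_+$, and symmetrically $\dist(\sigS,\sigma\Sopt)\lesssim\normF{\PAp(\sigS - \sigC)} + \normF{\Proj_{\calT_{\Xs}}(\sigS)} + [-\lammin{\sigS}]_+$ on a ball of radius $\delta_S$. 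Summing, merging constants into one $\kappa$, and finally appending the nonnegative duality-gap residual $|\inprod{X}{\sigC} + \langle\widetilde X,\sigS\rangle - \langle\widetilde X,\sigC\rangle|$ to the right-hand side---which only weakens the inequality; in the argument just sketched it is not needed, but it is the convenient handle on the gap in the subsequent one-step analysis---would give the claim.

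\textbf{Main obstacle.} The crux is Step~2: obtaining a genuinely \emph{linear} (Lipschitz) error bound rather than the $1/2$-H\"older bound that \cite{sturm2000siopt-error-bounds-linear-matrix-inequalities} gives for generic spectrahedra. Strict complementarity is precisely what unlocks this, because it forces $\Xs\in\ri{\Xopt}$ (and $\sigSs\in\ri{\sigma\Sopt}$), so the systems are Slater-regular with correct minimal faces $\minface{\Xs}{\Symp n}$ and $\minface{\Ss}{\Symp n}$, and the optimal-value constraint disappears into the face; without it, $\ri{\Xopt}$ sits in a strictly smaller face and the H\"older loss reappears---which would be fatal for the R-linear convergence derived later. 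A secondary, more benign point is the localization: the Sturm/Robinson bound holds only on bounded sets, with $\kappa$ depending on the radius, which is the origin of the constants $\delta_X,\delta_S$; the projection identity and the interlacing estimate, by contrast, are global.
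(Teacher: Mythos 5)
Your argument is correct, but it takes a genuinely different route from the paper's. The paper does not use your Step~1 simplification: it keeps the duality-gap equation as part of an affine system $\calF$ (primal feasibility, dual feasibility, zero gap) together with a face system $\calR$ ($\Proj_{\calT_\Ss}(X)=0$, $\Proj_{\calT_\Xs}(\sigma S)=0$), applies Hoffman's error bound to the purely affine set $\calF\cap\calR$, and then handles the two PSD constraints by an explicit construction: it shifts the projection of $(X,\sigma S)$ onto $\calF\cap\calR$ by $\beta\,(\Xs,\sigma\Ss)$ with $\beta$ chosen from $[-\lammin{X}]_+$, $[-\lammin{\sigma S}]_+$ and the projection residual, which lands inside $\Xopt\times(\sigma\Sopt)$ after rescaling by $1/(1+\beta)$; strict complementarity enters because $\Xs$ (resp.\ $\sigma\Ss$) is positive definite on the block spanning $\calT_\Ss^\perp$ (resp.\ $\calT_\Xs^\perp$), with margin $\lam{r}$ (resp.\ $-\lam{r+1}$). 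You instead use strict complementarity at the level of the optimal sets themselves, showing $\Xopt=\{Y:\Asdp Y=b\}\cap\Symp{n}\cap\calT_\Ss^\perp$ and $\sigma\Sopt=(\sigma C+\range(\AsdpT))\cap\Symp{n}\cap\calT_\Xs^\perp$ (so the gap term is redundant and only appended for later convenience), decoupling primal and dual, and then invoke a bounded-linear-regularity/Sturm-type Lipschitz bound for each pair (affine set, face cone), which is legitimate because $\Xs\in\ri{\Symp{n}\cap\calT_\Ss^\perp}$ and $\sigma\Ss\in\ri{\Symp{n}\cap\calT_\Xs^\perp}$ and one member of each pair is polyhedral; the cone distance is then split via $\normF{\Proj_{\calT_\Ss}(X)}$ plus an eigenvalue-interlacing estimate giving $\sqrt{r}\,[-\lammin{X}]_+$. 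What the paper's route buys is a fully explicit, self-contained constant (Hoffman plus the hand-made $\beta$-shift, which is essentially Sturm's argument written out) and a characterization that does not need the face-encodes-optimality observation; what your route buys is conceptual economy (primal and dual fully decoupled, gap constraint seen to be superfluous under strict complementarity) at the price of quoting a regularity theorem whose constant is only known to exist on bounded sets---which is harmless here, since both proofs are local anyway and the constants $\delta_X,\delta_S,\kappa$ are only required to exist for a radius covering the convergent iterates.
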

\begin{proof}
    See \cref{app:sec:conv-nnd-kkt-prf}.
\end{proof}

\begin{lemma} \label{lem:conv-nnd-Z}
    Suppose \cref{ass:lin-sol,ass:lin-sc} hold. Then, there exists $\bar k_Z \in \bbN$ and $\alpha_Z > 0$ such that for any integer $k \geq \bar k_Z$, it holds that
    \[
        \dist (\Zk, \Zopt) \leq \alpha_Z (\normF{\Zkpo - \Zk} + \normF{\Proj_{\calT_{\Ss}} (\Xk)} + \sigma \normF{\Proj_{\calT_\Xs} (\Sk)}),
    \]
\end{lemma}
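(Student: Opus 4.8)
The plan is to obtain the bound from the regularized backward error estimate in \cref{lem:conv-nnd-kkt}, instantiated at the pair $(X,S)=(\Xk,\Sk)$ extracted from $\Zk$ via \eqref{eq:intro:extract-XS}, after first converting $\dist(\Zk,\Zopt)$ into $\dist\big((\Xk,\sigSk),\Xopt\times(\sigma\Sopt)\big)$. For the conversion I would use that $\Zk=\Xk-\sigSk$ is a Moreau decomposition, so $\Xk\succeq 0$ and $\sigSk\succeq 0$, and that for any $\Xs\in\Xopt$ and $\Ss\in\Sopt$ strong duality forces $\inprod{\Xs}{\Ss}=0$ (hence $(\Xs,y,\Ss)$ is a KKT triple for the dual multiplier $y$ associated with $\Ss$, so $\Xs-\sigSs\in\Zopt$). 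Then $\normF{\Zk-(\Xs-\sigSs)}^2\le 2\normF{\Xk-\Xs}^2+2\sigma^2\normF{\Sk-\Ss}^2$, and minimizing over $(\Xs,\Ss)$ gives $\dist(\Zk,\Zopt)\le\sqrt 2\,\dist\big((\Xk,\sigSk),\Xopt\times(\sigma\Sopt)\big)$.

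Next, since ADMM converges, $\Xk\to\Xs$ and $\Sk\to\Ss$, so for all large $k$ the pair $(\Xk,\sigSk)$ lies in the neighborhood where \cref{lem:conv-nnd-kkt} is valid; I would apply it with the feasible reference point $\widetilde X:=\Xs$. Of the seven terms on the right-hand side of that lemma, $[-\lammin{\Xk}]_+$ and $[-\lammin{\sigSk}]_+$ vanish because $\Xk=\PiSnp{\Zk}$ and $\sigSk=\PiSnp{-\Zk}$ are PSD; the feasibility residuals obey $\normF{\PA(\Xk-\Xs)}\le\normF{\Zkpo-\Zk}$ and $\normF{\PAp(\sigSk-\sigC)}=\sigma\normF{\PAp(\Sk-C)}\le\normF{\Zkpo-\Zk}$ by \eqref{eq:conv-nnd-Zdiff-eq} of \cref{lem:conv-nnd-Zdiff} (with the same $\widetilde X=\Xs$); and the terms $\normF{\Proj_{\calT_\Ss}(\Xk)}$ and $\normF{\Proj_{\calT_\Xs}(\sigSk)}=\sigma\normF{\Proj_{\calT_\Xs}(\Sk)}$ are exactly those appearing in the claimed inequality.

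The remaining ``gap'' term $\sigma\,\big|\inprod{\Xk}{C}+\inprod{\Xs}{\Sk}-\inprod{\Xs}{C}\big|$ is the part I expect to require the most care. Using $C=\AsdpT\ys+\Ss$ together with the optimality identity $\inprod{\Xs}{C}=b\tran\ys$, I would rewrite its argument as
\[
    \inprod{\Xk}{C}-\inprod{\Xs}{C-\Sk}=\inprod{\Asdp\Xk-b}{\ys}+\inprod{\Xk}{\Ss}+\inprod{\Xs}{\Sk}.
\]
The first summand is controlled since $\normtwo{\Asdp\Xk-b}=\normtwo{\Asdp(\Xk-\Xs)}\le\normop{\Asdp}\normF{\PA(\Xk-\Xs)}\le\normop{\Asdp}\normF{\Zkpo-\Zk}$. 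For the other two, the reduction $\Qs=I$ with $\Xs=\diag{\LamX,0}$ and $\sigSs=\diag{0,\LamS}$ shows that $\inprod{\Xk}{\Ss}$ depends only on the trailing $(n-r)\times(n-r)$ block of $\Xk$, a sub-block of $\Proj_{\calT_\Ss}(\Xk)$, and $\inprod{\Xs}{\Sk}$ depends only on the leading $r\times r$ block of $\Sk$, a sub-block of $\Proj_{\calT_\Xs}(\Sk)$; hence $\sigma\,|\inprod{\Xk}{\Ss}|\le\normF{\LamS}\normF{\Proj_{\calT_\Ss}(\Xk)}$ and $\sigma\,|\inprod{\Xs}{\Sk}|\le\normF{\LamX}\cdot\sigma\normF{\Proj_{\calT_\Xs}(\Sk)}$. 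Thus the gap term is bounded by a fixed multiple of $\normF{\Zkpo-\Zk}+\normF{\Proj_{\calT_\Ss}(\Xk)}+\sigma\normF{\Proj_{\calT_\Xs}(\Sk)}$.

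Collecting the estimates, \cref{lem:conv-nnd-kkt} yields $\kappa\,\dist\big((\Xk,\sigSk),\Xopt\times(\sigma\Sopt)\big)\le c\big(\normF{\Zkpo-\Zk}+\normF{\Proj_{\calT_\Ss}(\Xk)}+\sigma\normF{\Proj_{\calT_\Xs}(\Sk)}\big)$ for a constant $c$ depending only on the problem data and on $\Zs$; dividing by $\kappa$ and chaining with the first step produces the claim with $\alpha_Z:=\sqrt 2\,c/\kappa$ and $\bar k_Z$ the first index after which $(\Xk,\sigSk)$ lies in the domain of \cref{lem:conv-nnd-kkt}. Beyond the gap term, the only point needing attention is verifying that this domain is entered after finitely many iterations, which is immediate from $\Xk\to\Xs$ and $\Sk\to\Ss$.
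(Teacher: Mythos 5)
Your proposal is correct and follows the same skeleton as the paper's proof: the $\sqrt 2$ conversion from $\dist(\Zk,\Zopt)$ to $\dist\big((\Xk,\sigSk),\Xopt\times(\sigma\Sopt)\big)$, the application of \cref{lem:conv-nnd-kkt} with the eigenvalue terms killed by $\Xk,\Sk\in\psd{n}$, and the use of \eqref{eq:conv-nnd-Zdiff-eq} to absorb the two feasibility residuals into $\normF{\Zkpo-\Zk}$ are all exactly what the paper does.

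The one place you diverge is the duality-gap term, and both treatments work. The paper uses $\inprod{\Xk}{\Sk}=0$ to rewrite the gap as $\sigma\,\abs{\inprod{\Xk-\widetilde X}{\Sk-C}}$, splits along $\PA/\PAp$, and applies Cauchy--Schwarz with a boundedness constant $\alpha_Z^\prime$ (built from $\delta_X,\delta_S,\normF{C},\normF{\Asdp^\dagger b}$), so the gap is bounded purely by the same feasibility residuals and hence contributes only to $\normF{\Zkpo-\Zk}$. You instead choose $\widetilde X=\Xs$, invoke dual feasibility and optimality ($C=\AsdpT\ys+\Ss$, $\inprod{\Xs}{C}=b\tran\ys$) and the block-diagonal structure of $\Xs$ and $\sigma\Ss$, which splits the gap into a piece controlled by $\normF{\Zkpo-\Zk}$ (via $\normtwo{\Asdp\Xk-b}$) and pieces controlled by $\normF{\Proj_{\calT_\Ss}(\Xk)}$ and $\sigma\normF{\Proj_{\calT_\Xs}(\Sk)}$. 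Your constant then involves $\normtwo{\ys}$, $\normF{\LamX}$, $\normF{\LamS}$ rather than the paper's Cauchy--Schwarz bound, but since the neighborhood bounds $\normF{\Xk}\le\delta_X$, $\normF{\Sk}\le\delta_S$ are needed anyway to enter the domain of \cref{lem:conv-nnd-kkt}, neither route is materially cheaper; the paper's version is marginally tidier in that it never needs the dual multiplier $\ys$ or the diagonalized structure at this point, while yours makes the role of complementary slackness more explicit. All the individual steps you sketch (the identity rewriting the gap, $\Asdp\PAp=0$, the sub-block inclusions in $\Proj_{\calT_\Ss}$ and $\Proj_{\calT_\Xs}$) check out.
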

\begin{proof}
    We first bound the distance $\dist(\Zk, \Zopt)$ by the distance from $(\Xk, \sigma \Sk)$ to the set $\Xopt \times (\sigma \Sopt)$ as follows:
    \begin{align*}
        \dist(\Zk, \Zopt) &= \inf \{\normF{\Zk - \Z} \mid \Z \in \Zopt\} \\
        &= \inf \{\normF{\Zk - (X - \sigma S)} \mid X \in \Xopt, S \in \Sopt\} \\
        &= \inf \{\normF{(\widetilde X - \sigma \widetilde S) - (X - \sigma S)} \mid \widetilde X - \sigma \widetilde S = \Zk, X \in \Xopt, S \in \Sopt\} \\
        &\leq \sqrt 2 \cdot \inf \Big\{\sqrt{\normF{\widetilde X - X}^2 + \sigma^2 \normF{\widetilde S - S}^2} \Bigm\vert \widetilde X - \sigma \widetilde S = \Zk, X \in \Xopt, S \in \Sopt \Big\} \\
        &\leq \sqrt 2 \cdot \dist\big( (\Xk, \sigma \Sk), \Xopt \times (\sigma \Sopt) \big).
    \end{align*}
    Then, we bound $\dist ((\Xk, \sigma \Sk), \Xopt \times (\sigma \Sopt))$ using \cref{lem:conv-nnd-kkt} and the facts that $\Xk \in \psd{n}$ and $\Sk \in \psd{n}$ for all $k \in \bbN$. More specifically, there exist $(\bar k_Z, \delta_X, \delta_S) \in \bbN \times \Real{}_{++} \times \Real{}_{++}$ such that for any integer $k \geq \bar k_Z$, we have $\normF{\Xk} \leq \delta_X$, $\normF{\Sk} \leq \delta_S$ and
    \begin{align}
        &\ \kappa \cdot \dist\big( (\Xk, \sigma \Sk), \Xopt \times (\sigma \Sopt) \big) \nonumber \\
        \leq&\ \normF{\PA (\Xk - \widetilde X)} + \sigma \normF{\PAp (\Sk - C)} + \sigma |\langle {\Xk}, {C} \rangle + \langle {\widetilde X}, {\Sk} \rangle - \langle {\widetilde X}, {C} \rangle| \nonumber \\
        &\ + \normF{\Proj_{\calT_{\Ss}} (\Xk)} + \sigma \normF{\Proj_{\calT_{\Xs}} (\Sk)}, \label{eq:conv-nnd-Z-1}
    \end{align}
    where $\widetilde X$ is an arbitrary matrix satisfying $\Asdp \widetilde X = b$. The inner product on the right-hand side of \eqref{eq:conv-nnd-Z-1} can be further bounded by
    \rebuttal{
    \begin{subequations}
    \begin{align}
        &\ \sigma |\langle {\Xk}, {C} \rangle + \langle {\widetilde X}, {\Sk} \rangle - \langle {\widetilde X}, {C} \rangle | \nonumber \nonumber \\
        =&\ \sigma |\langle {\Xk}, {C} \rangle + \langle {\widetilde X}, {\Sk} \rangle - \langle {\widetilde X}, {C} \rangle| \nonumber \\
        =&\ \sigma |\langle {\Xk-\widetilde X}, {\Sk-C} \rangle| \label{eq:conv-nnd-Z-2a} \\
        =&\ \sigma |\langle {\PA(\Xk-\widetilde X)}, {\PA(\Sk-C)} \rangle + \langle {\PAp(\Xk-\widetilde X)}, {\PAp(\Sk-C)} \rangle| \nonumber \\
        \leq&\ \sigma \normF{\PA(\Xk-\widetilde X)} \normF{\PA(\Sk-C)} + \sigma \normF{\PAp(\Xk - \widetilde X)} \normF{\PAp(\Sk-C)} \nonumber \\
        \leq&\ \alpha_Z^\prime \big(\normF{\PA (\Xk - \widetilde X)} + \sigma \normF{\PAp(\Sk-C)} \big), \label{eq:conv-nnd-Z-2b}
    \end{align} 
    \end{subequations}
    }
    where \eqref{eq:conv-nnd-Z-2a} uses the fact $\langle \Xk, \Sk \rangle = 0$ and in \eqref{eq:conv-nnd-Z-2b} we define
    \[
        \alpha_Z^\prime := \max\{\sigma(\delta_S + \normF{C}), \, \delta_X + \normF{\Asdp^\dagger b}\} 
    \]
    (recall $\Asdp^\dagger b$ is a valid choice for $\widetilde X$). Finally, combining \eqref{eq:conv-nnd-Z-1}, \eqref{eq:conv-nnd-Z-2b} with \eqref{eq:conv-nnd-Zdiff-eq} in \cref{lem:conv-nnd-Zdiff} and denoting $\alpha_Z := {\sqrt 2 (1+\alpha_Z^\prime)}/{\kappa}$ give the desirable result.
\end{proof}

\subsection{Main Theorem}

Now we are ready to present our main theorem.
\begin{theorem} \label{thm:conv-nnd}
    Suppose \cref{ass:lin-sol,ass:lin-sc} hold. Then, for any $\rho_0 \in (\normop{\Madmm - \pfm}, 1)$, there exists $\bar k \in \bbN$ such that for any integer $k \geq \bar k$, the distance to optimality $\dist(\Zk, \Zopt)$ converges R-linearly; \ie there exists $(\alpha, \rho) \in \Real{}_{++} \times (0,1)$ such that
    \[
        \dist(\Zk, \Zopt) \leq \alpha \rho^k.
    \]
\end{theorem}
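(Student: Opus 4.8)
The plan is to convert the ingredients assembled in \cref{lem:conv-nnd-Zdiff,lem:conv-nnd-blk,lem:conv-nnd-T,lem:conv-nnd-Z} into a single scalar recursion for $d_k := \dist(\Zk,\Zopt)$. First I would observe that \cref{ass:lin-sol} already guarantees that one-step ADMM converges to a fixed point $\Zs\in\Zopt$, so $d_k\le\normF{\Zk-\Zs}\to 0$ and all the ``for sufficiently large $k$'' hypotheses of the cited lemmas are eventually in force. Applying \cref{lem:conv-nnd-blk,lem:conv-nnd-T} with the prescribed $\rho_0\in(\normop{\Madmm-\pfm},1)$ shows that the ``partial'' quantity $\beta_k := \normF{\Proj_{\calT_\Ss}(\Xk)} + \sigma\normF{\Proj_{\calT_\Xs}(\Sk)}$ decays R-linearly, say $\beta_k\le c\,\rho_0^{\,k}$ for $k\ge\bar k_1$. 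From \eqref{eq:conv-nnd-Zdiff-ineq} in \cref{lem:conv-nnd-Zdiff} I would also record that $\{d_k\}$ is nonincreasing and that $\normF{\Zkpo-\Zk}^2\le d_k^2-d_{k+1}^2$.

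Next I would feed these into the linear growth bound of \cref{lem:conv-nnd-Z}. Squaring $d_k\le\alpha_Z(\normF{\Zkpo-\Zk}+\beta_k)$, using $(a+b)^2\le 2a^2+2b^2$, and inserting the telescoping inequality gives
\[
    \frac{d_k^2}{2\alpha_Z^2}-\beta_k^2 \;\le\; \normF{\Zkpo-\Zk}^2 \;\le\; d_k^2-d_{k+1}^2 ,
\]
so that, after harmlessly enlarging $\alpha_Z$ if necessary to ensure $q := 1-\frac{1}{2\alpha_Z^2}\in(0,1)$,
\[
    d_{k+1}^2 \;\le\; q\,d_k^2+\beta_k^2 \;\le\; q\,d_k^2+c^2\rho_0^{\,2k}
\]
for all $k$ past some $\bar k\ge\bar k_1$. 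Unrolling this perturbed linear recursion yields $d_k^2\le q^{\,k-\bar k}d_{\bar k}^2+c^2\sum_{j=\bar k}^{k-1}q^{\,k-1-j}\rho_0^{\,2j}$; the geometric sum is $O\!\big(\max\{q,\rho_0^2\}^{\,k}\big)$ when $q\neq\rho_0^2$ and $O(k\,q^{\,k})$ in the resonant case $q=\rho_0^2$. In every case $d_k\le\alpha\,\rho^{\,k}$ for any fixed $\rho$ with $\max\{\sqrt q,\rho_0\}<\rho<1$ and a suitable $\alpha>0$, which is the claimed R-linear rate.

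As for where the difficulty lies: the heavy lifting is in the lemmas being invoked, not in this final assembly. \cref{lem:conv-nnd-blk} crucially needs the refined error bound \cref{thm:eb-intro-thm}---with the classical \cref{lem:eb-intro-SS02} one would only get $\normF{(\Id-\pfm)\psik}/\normF{(\Id-\pfm)\Hk}=O(\normF{\Hk}^2/\normF{\HOk})$, which need not vanish---and \cref{lem:conv-nnd-Z} rests on the regularized backward error estimate \cref{lem:conv-nnd-kkt}, the extension of \cite[Lemma~2.3]{sturm2000siopt-error-bounds-linear-matrix-inequalities} to the scaled KKT system. Within the present proof, the one subtle point is that $\normF{\Zkpo-\Zk}$ is accessible only through the telescoping difference $d_k^2-d_{k+1}^2$, so a clean Q-linear bound on $d_k$ is not available directly; one must instead solve the recursion $d_{k+1}^2\le q\,d_k^2+\beta_k^2$, taking a little care in the borderline case $q=\rho_0^2$, where a polynomial factor $k$ appears and is absorbed into a marginally larger rate.
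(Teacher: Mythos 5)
Your proposal is correct and follows essentially the paper's own route: the same three ingredients (the Fej\'er-type inequality of \cref{lem:conv-nnd-Zdiff}, the R-linear decay of the minimal-face projections from \cref{lem:conv-nnd-blk,lem:conv-nnd-T}, and the growth bound of \cref{lem:conv-nnd-Z}) are combined into a scalar recursion for $d_k := \dist(\Zk,\Zopt)$. The only difference is the final bookkeeping: you square and unroll the perturbed recursion $d_{k+1}^2 \le q\,d_k^2 + c^2\rho_0^{2k}$ (with the resonant case $q=\rho_0^2$ absorbed into a marginally larger rate), whereas the paper splits the indices according to whether the $\rho_0^k$ term dominates and majorizes $a^{(k)}$ by $b^{(k)} := \max\{a^{(k)},\, 2\alpha_Z(\alpha_X+\alpha_S)\rho_0^{k-1}\}$; both yield the same R-linear conclusion.
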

\begin{proof}
    Define $\ak := \dist(\Zk, \Zopt)$ for brevity. From \cref{lem:conv-nnd-T}, we deduce that there exist $(\bar k_\calT, \alpha_X, \alpha_S) \in \bbN \times \Real{}_{++} \times \Real{}_{++}$ such that for any integer $k \geq \bar k_\calT$, we have
    \begin{equation} \label{eq:conv-nnd-thm-1}
        \normF{\Proj_{\calT_{\Ss}} (\Xk)} \leq \alpha_X \rho_0^k, \qquad \normF{\Proj_{\calT_{\Ss}} (\Sk)} \leq \alpha_S \rho_0^k.
    \end{equation}
    Then, with $\bar k_Z \in \bbN$ as defined in \cref{lem:conv-nnd-Z}, we have for any integer $k \geq \bar k := \max\{\bar k_\calT, \bar k_Z\} + 1$ that
    \begin{subequations}
    \begin{align}
        \akpo &\leq \ak \label{eq:conv-nnd-thm-2a} \\
        &\leq \alpha_Z (\normF{\Zkpo - \Zk} + \normF{\Proj_{\calT_{\Ss}} (\Xk)} + \sigma \normF{\Proj_{\calT_\Xs} (\Sk)}) \label{eq:conv-nnd-thm-2b} \\
        &\leq \alpha_Z \Big(\sqrt{(\ak)^2 - (\akpo)^2} + \normF{\Proj_{\calT_{\Ss}} (\Xk)} + \sigma \normF{\Proj_{\calT_\Xs} (\Sk)} \Big) \label{eq:conv-nnd-thm-2c} \\
        &\leq \alpha_Z \sqrt{(\ak)^2 - (\akpo)^2} + \alpha_Z (\alpha_X + \alpha_S) \rho_0^k. \label{eq:conv-nnd-thm-2d}
    \end{align}
    \end{subequations}
    In \eqref{eq:conv-nnd-thm-2a} we use \eqref{eq:conv-nnd-Zdiff-ineq} in \cref{lem:conv-nnd-Zdiff}, \eqref{eq:conv-nnd-thm-2b} uses \cref{lem:conv-nnd-Z}, \eqref{eq:conv-nnd-thm-2c} uses \eqref{eq:conv-nnd-Zdiff-ineq} again, and finally \eqref{eq:conv-nnd-thm-2d} uses \eqref{eq:conv-nnd-thm-1}.

    Then, we partition the index set $\{k \in \bbN \mid k \geq \bar k\}$ into
    \[
        \calI := \{k \in \bbN \mid k \geq \bar k, \, \akpo \geq 2\alpha_Z (\alpha_X + \alpha_S) \rho_0^k\}
    \]
    and its complement $\calI^{\mathrm c} := \{k \in \bbN \mid k \geq \bar k\} \setminus \calI$.
    \begin{enumerate}
        \item If $k \in \calI$, then from \eqref{eq:conv-nnd-thm-2d} we have
        \rebuttal{
        \[
            (\ak)^2 - (\akpo)^2 \geq (\akpo - \alpha_Z (\alpha_X + \alpha_S) \rho_0^k)^2 \geq \tfrac{1}{4\alpha_Z^2} (\akpo)^2,
        \] 
        }
        which implies that
        \[
            \akpo \leq \sqrt{\tfrac{4\alpha_Z^2}{1+4\alpha_Z^2}} \ak. 
        \]

        \item If $k \in \calI^{\mathrm c}$, then readily we have
        \rebuttal{
        \[
            a^{(k+1)} < 2\alpha_Z (\alpha_X + \alpha_S) \rho_0^k.
        \]
        }
    \end{enumerate}
    Combining the two cases yields the desirable local R-linear convergence of $\ak$. More specifically, define 
    \[
        \bk := \max\{\ak, 2\alpha_Z(\alpha_X + \alpha_S) \rho_0^{k-1}\}, \qquad \rho := \max \left\{\sqrt{\tfrac{4\alpha_Z^2}{1+4\alpha_Z^2}}, \rho_0 \right\} \in (0,1),
    \]
    and consider any pair $(k-1,k)$.
    \begin{enumerate}
        \item If $(k-1,k) \in \calI \times \calI$, then $\bkpo = \akpo \leq \rho \ak = \rho \bk$.
        
        \rebuttal{
        \item If $(k-1,k) \in \calI \times \calI^{\mathrm c}$, then $\bkpo = 2\alpha_Z(\alpha_X + \alpha_S) \rho_0^k \leq \rho_0 \ak \leq \rho \bk$.
        }

        \rebuttal{
        \item If $(k-1,k) \in \calI^{\mathrm c} \times \calI$, then $\bkpo = \akpo \leq \rho \ak < \rho \bk$.
        }
        
        \rebuttal{
        \item If $(k-1,k) \in \calI^{\mathrm c} \times \calI^{\mathrm c}$, then $\bkpo = 2\alpha_Z (\alpha_X + \alpha_S) \rho_0^k = \rho_0 \bk \le \rho \bk$.
        }
    \end{enumerate}
    To conclude, $\{\bk\}$ is a linearly convergent sequence with rate $\rho \in (0,1)$ and an upper bound for $\{\ak\}$. So, $\ak := \dist(\Zk, \Zopt)$ converges R-linearly for sufficiently large $k \in \bbN$.
\end{proof}
\rebuttal{
\begin{remark}
    We note that our current analysis framework is intentionally restricted to a fixed penalty parameter $\sigma$, since the linearization is built around a stationary fixed-point map. If $\sigma$ varies with the iteration index, then the underlying operator becomes time-varying, and the present contraction and $\Fix(\Madmm)$ arguments no longer apply directly. Extending our framework to the adaptive-$\sigma$ setting would therefore be an interesting direction for future work.
\end{remark}
}


\section{Proof of the Refined Error Bound} \label{sec:error-bound}

In this section, we detail the proof of \cref{thm:eb-intro-thm}, which builds an error bound for the PSD cone projection:
\[
    \normtwo{\PiSnp{\Z + \H} - \PiSnp{\Z} - \Omega \circ \H} \le \alpha_{\mathrm{EB}} \cdot \normtwo{\HO} \cdot \normtwo{\H},
\]
where we have assumed without loss of generality that $\Z$ is diagonal. A traditional way to compute the orthogonal projection onto the PSD cone is via eigenvalue decomposition. Though conceptually simple, this method destroys the block structure of $\Z + \H$ (as $\H$ is not diagonal). So, to prove \cref{thm:eb-intro-thm}, we advocate for a seemingly much more complex procedure inspired by iterative methods for eigenvalue decomposition (see, \eg~\cite{parlett1987book-symmetric-eig}). We detail the iterative algorithm in \blue{Algorithm \ref{alg:error-bound:iterative-elimination}} and briefly discuss the high-level intuition here.
\rebuttal{

\begin{figure}[h]
    \centering 
    \includegraphics[width=0.95\textwidth]{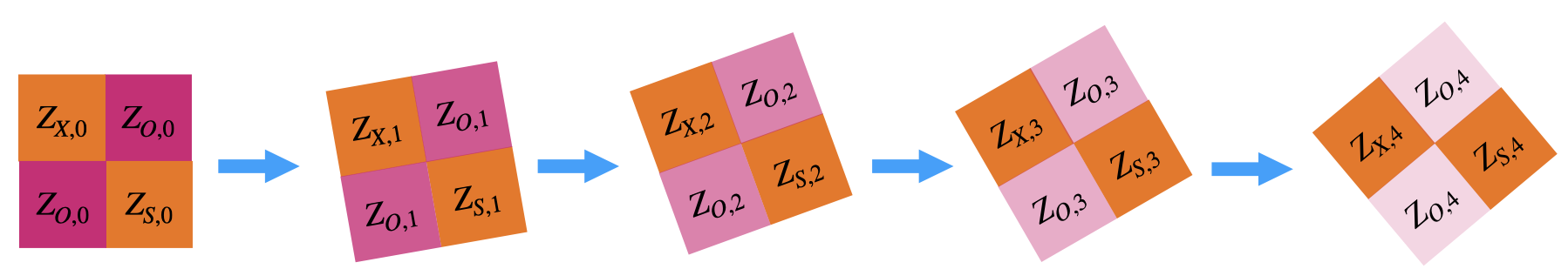}
    \caption{
        \label{fig:psdproj-eb}
        Visualization of the elimination procedure (Algorithm \ref{alg:error-bound:iterative-elimination}).
    }
\end{figure}

\noindent
\textbf{A roadmap to the proof of \cref{thm:eb-intro-thm}.} A standard way to project onto the PSD cone is via eigenvalue decomposition. Although conceptually simple, this destroys the block structure of $\Z + \H$ because $\H$ is not diagonal. To prove \cref{thm:eb-intro-thm}, we instead use a more structured procedure inspired by iterative methods for eigenvalue decomposition; see, \eg~\cite{parlett1987book-symmetric-eig}. As detailed in Algorithm~\ref{alg:error-bound:iterative-elimination}, in the eigenbasis of $Z$, the perturbation $H$ is nearly block diagonal, and its off-block-diagonal part is the only term coupling the positive and negative spectral subspaces. Each Sylvester equation then produces a small skew-symmetric rotation that cancels this coupling to first order, leaving only higher-order terms. Iterating this step yields a rapidly convergent block diagonalization; see \cref{fig:psdproj-eb} for an illustration. Overall, this construction locally mimics PSD cone projection while isolating the first- and second-order terms needed in the proof.
}
\begin{equation} \label{eq:error-bound:intro}
    \Z + \H = \LamXS + \mymat{\HX & \HO\tran \\ \HO & \HS} =: \mymat{\ZX[0] & \ZO[0]\tran \\ \ZO[0] & \ZS[0]}.
\end{equation}
When $\normtwo{H}$ is sufficiently small, we have $\ZX[0] = \LamX + \HX \in \Sympp{r}$ and $\ZS[0] = \LamS + \HS \in \Symnn{n-r}$. \blue{Algorithm~\ref{alg:error-bound:iterative-elimination}} explicitly constructs an orthogonal matrix $\Y[\infty]$ that is close to $\I[n]$ and satisfies
\begin{equation} \label{eq:error-bound:Qinfty}
    \Y[\infty]\tran \mymat{\ZX[0] & \ZO[0]\tran \\ \ZO[0] & \ZS[0]} \Y[\infty] = \mymat{\ZX[\infty] & 0 \\ 0 & \ZS[\infty]}.
\end{equation}
(So roughly speaking, $\ZX[\infty] \in \Sympp{r}$ (resp., $\ZS[\infty] \in \Symnn{n-r}$) is also close to $\ZX[0]$ (resp., $\ZS[0]$).) To compute the orthogonal matrix $\Y[\infty]$ in \eqref{eq:error-bound:Qinfty}, we solve a series of Sylvester equations~\eqref{eq:error-bound:sylvester-eq} for $\W[\ell]$ and show that the recursively defined matrix $\Y[\ell+1] \leftarrow \Y[\ell] \exp(\W[\ell])$ converges to $\Y[\infty]$. As we will see later, each Sylvester equation helps build a skew-symmetric matrix $\W[\ell]$ such that the off-block-diagonal part of $(\I[n] + \W[\ell])\tran \Z[\ell] (\I[n] + \W[\ell])$ is gradually removed at each iteration; see \eqref{eq:error-bound:main-iter}.
Then, with $\Y[\infty]$ computed (or approximated), we can derive a fine-grained error bound for
\[
    \PiSnp{\Z + \H} - \PiSnp{\Z} - \Omega \circ \H = \Y[\infty] \mymat{\ZX[\infty] & 0 \\ 0 & 0} \Y[\infty]\tran - \PiSnp{\Z} - \Omega \circ \H,
\]
which further leads to the conclusion in \cref{thm:eb-intro-thm}.

\begin{algorithm}[tbp]
    \SetAlgoLined
    \caption{An iterative elimination procedure for PSD cone projection}
    \label{alg:error-bound:iterative-elimination}
    \KwIn{A nonsingular matrix $\Z \in \Sn$ with $r$ positive eigenvalues, and a perturbation $\H \in \Sn$.}
    \KwOut{$\PiSnp{\Z + \H} \leftarrow \V[\infty].$}
    Initialization: $\Z[0] := \mymat{\ZX[0] & \ZO[0]\tran \\ \ZO[0] & \ZS[0]} \leftarrow \Z + \H$ and $\Y[0] \leftarrow \I[n]$.

    \For{$\ell = 0$ to $\infty$}{
        (1) Solve the following Sylvester equation for $\WO \in \Real{(n-r) \times r}$
        \begin{align}
            \label{eq:error-bound:sylvester-eq}
            \WO \ZX[\ell] + (-\ZS[\ell]) \WO = \ZO[\ell]
        \end{align}
        and obtain $\WO[\ell] \leftarrow \WO$.

        (2) Compute 
        \begin{align}
            \W[\ell] &\leftarrow \mymat{0 & -\WO[\ell]\tran \\ \WO[\ell] & 0} \nonumber \\
            \Z[\ell+1] := \mymat{\ZX[\ell+1] & \ZO[\ell+1]\tran \\ \ZO[\ell+1] & \ZS[\ell+1]} &\leftarrow \exp(\W[\ell])\tran \mymat{\ZX[\ell] & \ZO[\ell]\tran \\ \ZO[\ell] & \ZS[\ell]} \exp(\W[\ell]). \label{eq:error-bound:main-iter}
        \end{align}\

        (3) Compute $\Y[\ell+1] \leftarrow \Y[\ell] \exp(\W[\ell])$ and
        \begin{align}
            \label{eq:error-bound:V-iter}
            \V[\ell+1] \leftarrow \Y[\ell+1] \mymat{\ZX[\ell+1] & 0 \\ 0 & 0} \Y[\ell+1]\tran.
        \end{align}
    }
\end{algorithm}

\paragraph{Proof outline.}
\rebuttal{
In summary, the proof consists of three major steps:
}
\begin{enumerate}
    \item We show that at each iteration, the Sylvester equation~\eqref{eq:error-bound:sylvester-eq} is well-defined and has a unique solution. We ensure this by showing $\ZX[\ell] \in \Sympp{r}, \ZS[\ell] \in \Symnn{n-r}$ for all $\ell \in \bbN$. 

    \item We show that the limit of the sequence $\left\{ \V[\ell] \right\}_{\ell=0}^{\infty}$ exists and is exactly $\PiSnp{\Z + \H}$. This is achieved by showing the exponential decay of the three sequences
    \[
        \normtwo{\ZX[\ell+1] - \ZX[\ell]}, \qquad \normtwo{\ZS[\ell+1] - \ZS[\ell]}, \qquad \normtwo{\Y[\ell+1] - \Y[\ell]}.
    \]

    \item Last, we show that
    \begin{align}
        &\ \normtwo{\PiSnp{\Z + \H} - \PiSnp{\Z} - \Omega \circ \H} \nonumber \\
        =&\ \left\|{\Y[\infty] \mymat{\ZX[\infty] & 0 \\ 0 & 0} \Y[\infty]\tran - \PiSnp{\Z} - \Omega \circ \H}\right\|_2 \nonumber \\
        \le &\ \left\| (\I[n] + \W[0]) \mymat{\ZX[\infty] & 0 \\ 0 & 0} (\I[n] + \W[0])\tran - \PiSnp{\Z} - \Omega \circ \H \right\|_2 \nonumber \\
        &\ + \left\|{\Y[\infty] \mymat{\ZX[\infty] & 0 \\ 0 & 0} \Y[\infty]\tran - (\I[n] + \W[0]) \mymat{\ZX[\infty] & 0 \\ 0 & 0} (\I[n] + \W[0])\tran} \right\|_2 \label{eq:error-bound:two-terms}  
    \end{align}
    Then, we bound the growth of the first term on the right-hand side of \eqref{eq:error-bound:two-terms} by $\calO\left( \normtwo{\HO} \cdot \normtwo{\H} \right)$ and that of the second term by $\calO\left( \normtwo{\HO}^2 \right)$. 
\end{enumerate}

\begin{remark}
    We reiterate that in~\eqref{eq:error-bound:intro}, we have assumed without loss of generality that $\Z$ is diagonal. Extension of the presented proof to the non-diagonal case is straightforward and detailed in \cref{sec:error-bound:ndiag}.
\end{remark}

\subsection{Step 1: Error Bound for Sylvester Equations} 

\Cref{lem:error-bound:error-control-one-step} shows that $\ZX[\ell] \in \Sympp{r}$ and $\ZS[\ell] \in \Symnn{n-r}$ imply the well-posedness of the Sylvester equation \eqref{eq:error-bound:sylvester-eq}. \rebuttal{Then, \cref{lem:error-bound:induction} proves that the definiteness of} $\ZX[\ell]$ and $\ZS[\ell]$ holds as long as $\ZX[0] \in \Sympp{r}$, $\ZS[0] \in \Symnn{n-r}$ and $\normtwo{\ZO[0]}$ is sufficiently small. With these two lemmas, we complete Step 1 in the proof outline. For ease of notation, we define $d := \sqrt{\min\{r, n-r\}}$ and
\begin{align}
    \label{eq:error-bound:eta}
    \etanew[\ell] := \frac{d}{\deltalam{\ZX[\ell]}{\ZS[\ell]}} \quad \text{for} \ \ell \in \bbN.
\end{align}

\begin{lemma}
    \label{lem:error-bound:error-control-one-step}
    At iteration $\ell$ in \blue{Algorithm~\ref{alg:error-bound:iterative-elimination}}, suppose $\ZX[\ell] \in \Sympp{r}$, $\ZS[\ell] \in \Symnn{n-r}$, and $\normtwo{\ZO[\ell]} \le \frac{3}{4\etanew[\ell]}$. Then, the Sylvester equation~\eqref{eq:error-bound:sylvester-eq} has a unique solution $\WO[\ell]$ satisfying $\normtwo{\WO[\ell]} \le \etanew[\ell] \cdot \normtwo{\ZO[\ell]}$. Moreover, it holds that
    \begin{align*}
        &\ \max\{\norm{\ZX[\ell+1] - \ZX[\ell]}_2, \, \norm{\ZS[\ell+1] - \ZS[\ell]}_2, \, \norm{\ZO[\ell+1]}_2\} \\
        \le&\ \left( 
            \frac{4}{9}\etanew[\ell]^4 \cdot \normtwo{\Z[0]}^3 + \frac{4}{3} \etanew[\ell]^3 \cdot \normtwo{\Z[0]}^2 + \frac{13}{3} \etanew[\ell]^2 \cdot \normtwo{\Z[0]} + 4\etanew[\ell] 
         \right) \cdot \norm{\ZO[\ell]}_2^2
    \end{align*}
    for all $\ell \in \bbN$.
\end{lemma}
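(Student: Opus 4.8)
The plan is to follow the structure of \blue{Algorithm~\ref{alg:error-bound:iterative-elimination}} and establish the three claims in order: unique solvability of the Sylvester equation, the bound $\normtwo{\WO[\ell]}\le\etanew[\ell]\normtwo{\ZO[\ell]}$, and the quadratic one-step estimate.

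\textbf{Sylvester equation and the bound on $\WO[\ell]$.} Since $\ZX[\ell]\in\Sympp{r}$ and $-\ZS[\ell]\in\Sympp{n-r}$, the spectra of $\ZX[\ell]$ and $\ZS[\ell]$ are disjoint (one entirely positive, the other entirely negative), so the linear operator $\calL_\ell\colon\WO\mapsto(-\ZS[\ell])\WO+\WO\ZX[\ell]$ on $\Real{(n-r)\times r}$ is invertible and \eqref{eq:error-bound:sylvester-eq} has a unique solution $\WO[\ell]$. To bound it I would vectorize: the matrix of $\calL_\ell$ is the Kronecker sum $\ZX[\ell]\oplus(-\ZS[\ell])$, with eigenvalues $\lami{\ZX[\ell]}+\lamj{-\ZS[\ell]}$; because $\ZX[\ell]$ and $-\ZS[\ell]$ are symmetric, $\calL_\ell$ is self-adjoint and positive definite with respect to the Frobenius inner product, and its smallest eigenvalue is $\lammin{\ZX[\ell]}+\lammin{-\ZS[\ell]}=\lammin{\ZX[\ell]}-\lammax{\ZS[\ell]}=\deltalam{\ZX[\ell]}{\ZS[\ell]}>0$. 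Hence $\normF{\WO[\ell]}\le\normF{\ZO[\ell]}/\deltalam{\ZX[\ell]}{\ZS[\ell]}$, and using $\normtwo{\WO[\ell]}\le\normF{\WO[\ell]}$ together with $\normF{\ZO[\ell]}\le\sqrt{\min\{r,n-r\}}\,\normtwo{\ZO[\ell]}$ (since $\ZO[\ell]\in\Real{(n-r)\times r}$ has rank at most $\min\{r,n-r\}$) gives $\normtwo{\WO[\ell]}\le\etanew[\ell]\normtwo{\ZO[\ell]}$. With the hypothesis $\normtwo{\ZO[\ell]}\le\tfrac{3}{4\etanew[\ell]}$ and the identity $\normtwo{\W[\ell]}=\normtwo{\WO[\ell]}$ (the nonzero singular values of the block-skew matrix $\W[\ell]$ are those of $\WO[\ell]$), this yields $\normtwo{\W[\ell]}\le\tfrac34$, a uniform bound I will use to sum tails of matrix-exponential series.

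\textbf{The key first-order cancellation.} Write $a:=\normtwo{\ZO[\ell]}$, $\eta:=\etanew[\ell]$, $N:=\normtwo{\Z[0]}$. Because $\W[\ell]$ is skew-symmetric, $\exp(\W[\ell])$ is orthogonal, so \eqref{eq:error-bound:main-iter} is an orthogonal similarity; hence $\normtwo{\Z[\ell+1]}=\normtwo{\Z[\ell]}=N$ for every $\ell$, and in particular $\normtwo{\ZX[\ell]},\normtwo{\ZS[\ell]},\normtwo{\ZO[\ell]}\le N$. Expanding $\exp(\W[\ell])=\I[n]+\W[\ell]+E_\ell$ with $E_\ell:=\exp(\W[\ell])-\I[n]-\W[\ell]$ and grouping $G_\ell:=\W[\ell]+E_\ell$, we get from \eqref{eq:error-bound:main-iter}:
\[
    \Z[\ell+1]-\Z[\ell]=\big(\W[\ell]\tran\Z[\ell]+\Z[\ell]\W[\ell]\big)+\big(E_\ell\tran\Z[\ell]+\Z[\ell]E_\ell+G_\ell\tran\Z[\ell]G_\ell\big)=:\big(\W[\ell]\tran\Z[\ell]+\Z[\ell]\W[\ell]\big)+R_\ell.
\]
A direct block computation with $\W[\ell]=\mymat{0&-\WO[\ell]\tran\\ \WO[\ell]&0}$ shows that the off-block-diagonal (bottom-left) block of $\W[\ell]\tran\Z[\ell]+\Z[\ell]\W[\ell]$ equals $-\big(\WO[\ell]\ZX[\ell]-\ZS[\ell]\WO[\ell]\big)=-\ZO[\ell]$ by \eqref{eq:error-bound:sylvester-eq}, which exactly cancels the $\ZO[\ell]$ present in $\Z[\ell]$; thus $\ZO[\ell+1]$ is the off-block-diagonal part of $R_\ell$ alone. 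The block-diagonal part of $\W[\ell]\tran\Z[\ell]+\Z[\ell]\W[\ell]$ is $\diag{\WO[\ell]\tran\ZO[\ell]+\ZO[\ell]\tran\WO[\ell],\,-\WO[\ell]\ZO[\ell]\tran-\ZO[\ell]\WO[\ell]\tran}$, of spectral norm at most $2\normtwo{\WO[\ell]}\,a\le 2\eta a^2$; everything else in $\ZX[\ell+1]-\ZX[\ell]$, $\ZS[\ell+1]-\ZS[\ell]$, and all of $\ZO[\ell+1]$, comes from $R_\ell$, whose every term contains a factor $E_\ell$ or is at least quadratic in $\W[\ell]$.

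\textbf{Bounding $R_\ell$.} Split $E_\ell=E_\ell^{\mathrm{e}}+E_\ell^{\mathrm{o}}$ by parity of the power of $\W[\ell]$: $E_\ell^{\mathrm{e}}=\cosh(\W[\ell])-\I[n]$ is symmetric and block-diagonal (even powers of $\W[\ell]$ are block-diagonal), while $E_\ell^{\mathrm{o}}=\sinh(\W[\ell])-\W[\ell]$ is skew and off-block-diagonal; moreover $\normtwo{E_\ell^{\mathrm{e}}}\le\cosh\normtwo{\W[\ell]}-1=\calO(\normtwo{\W[\ell]}^2)$ and $\normtwo{E_\ell^{\mathrm{o}}}\le\sinh\normtwo{\W[\ell]}-\normtwo{\W[\ell]}=\calO(\normtwo{\W[\ell]}^3)$ for $\normtwo{\W[\ell]}\le\tfrac34$. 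Expanding $R_\ell$ into the finite sum of products $P\tran\Z[\ell]Q$ with $P,Q\in\{\W[\ell],E_\ell^{\mathrm{e}},E_\ell^{\mathrm{o}},\I[n]\}$ (omitting $(P,Q)=(\I[n],\I[n])$ and the two linear terms, already accounted for), I would bound each product using: the parity of the number of $\W[\ell]$-factors decides whether it lands in the block-diagonal or off-block-diagonal part and whether the sandwiched part of $\Z[\ell]$ is $\diag{\ZX[\ell],\ZS[\ell]}$ (norm $\le N$) or $\mymat{0&\ZO[\ell]\tran\\ \ZO[\ell]&0}$ (norm $\le a$); each $\W[\ell]$-factor contributes $\normtwo{\W[\ell]}\le\eta a$, and $E_\ell^{\mathrm{e}},E_\ell^{\mathrm{o}}$ contribute $\cosh(\eta a)-1$, $\sinh(\eta a)-\eta a$. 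Each term then has a bound of the form $c\,\eta^{j}a^{j}\cdot N$ or $c\,\eta^{j}a^{j}\cdot a$ with $j\ge2$; pulling out a factor $a^2$ and bounding the surplus powers of $a$ either by $\normtwo{\ZO[\ell]}\le N$ (raising the $N$-degree) or by $a\le\tfrac{3}{4\eta}$ (lowering the $\eta$-degree), and summing the $\cosh/\sinh$ tails geometrically using $\eta a\le\tfrac34$, collects the contributions into the four groups $\eta a^2$, $\eta^2 N a^2$, $\eta^3 N^2 a^2$, $\eta^4 N^3 a^2$. Tracking the accumulated constants yields
\[
    \max\{\normtwo{\ZX[\ell+1]-\ZX[\ell]},\,\normtwo{\ZS[\ell+1]-\ZS[\ell]},\,\normtwo{\ZO[\ell+1]}\}\le\Big(\tfrac49\eta^4 N^3+\tfrac43\eta^3 N^2+\tfrac{13}{3}\eta^2 N+4\eta\Big)a^2.
\]

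\textbf{Expected obstacle.} The conceptual content is short: spectral disjointness gives unique solvability, the Sylvester equation is precisely the first-order annihilator of $\ZO[\ell]$, and orthogonal invariance pins $\normtwo{\Z[\ell]}$ at $N$. The work is entirely the last step — deciding which of the infinitely many products in $\exp(\W[\ell])\tran\Z[\ell]\exp(\W[\ell])$ fall in the block-diagonal versus off-block-diagonal blocks (a parity bookkeeping driven by the structure of $\W[\ell]$), bounding the exponential remainders $E_\ell^{\mathrm{e}},E_\ell^{\mathrm{o}}$ tightly enough, and making a consistent choice of degree reduction ($a\le N$ versus $a\le\tfrac{3}{4\eta}$) so that the constants aggregate to exactly the stated polynomial. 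I expect this constant-chasing, rather than any single inequality, to be the bottleneck.
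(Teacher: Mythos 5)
Your proposal is correct in its two essential ingredients and these coincide with the paper's own proof: (i) you vectorize the Sylvester equation \eqref{eq:error-bound:sylvester-eq}, identify the coefficient operator with the Kronecker sum $(-\ZS[\ell])\oplus\ZX[\ell]$, lower-bound its smallest eigenvalue by $\deltalam{\ZX[\ell]}{\ZS[\ell]}$, and convert $\normF{\cdot}$ to $\normtwo{\cdot}$ through the factor $d$ to obtain $\normtwo{\WO[\ell]}\le\etanew[\ell]\normtwo{\ZO[\ell]}$ — this is exactly the paper's argument; (ii) you observe that the Sylvester equation makes the linear-in-$\W[\ell]$ term annihilate $\ZO[\ell]$, so the new off-block-diagonal block is quadratic, and you use orthogonal invariance of $\exp(\W[\ell])$ to keep $\normtwo{\Z[\ell]}=\normtwo{\Z[0]}$, both also as in the paper. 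Where you diverge is the bookkeeping of the remainder: you expand $\exp(\W[\ell])$ and sort the infinitely many products by parity (the $\cosh(\W[\ell])-\I[n]$ versus $\sinh(\W[\ell])-\W[\ell]$ split), whereas the paper first computes the similarity by $\I[n]+\W[\ell]$ \emph{exactly} — which gives the clean identity $\QO=-\WO[\ell]\ZO[\ell]\tran\WO[\ell]$, not merely ``quadratic'' — and then bounds the discrepancy between the $\exp(\W[\ell])$- and $(\I[n]+\W[\ell])$-conjugations wholesale by $2\normtwo{\psi(\W[\ell])}\normtwo{\Z[\ell]}\normtwo{\I[n]+\W[\ell]}+\normtwo{\psi(\W[\ell])}^2\normtwo{\Z[\ell]}$ with the single elementary estimate $\normtwo{\psi(S)}\le\tfrac23\normtwo{S}^2$ for $\normtwo{S}\le\tfrac34$ (\cref{lem:error-bound:exp-bound}), finally controlling all three blocks at once via the sandwich $\max\{\text{block norms}\}\le\normtwo{\cdot}\le\sum\{\text{block norms}\}$ of \cref{lem:error-bound:twonorm}. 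That organization turns your anticipated bottleneck into a short finite computation whose constants sum exactly to $\tfrac49\etanew[\ell]^4\normtwo{\Z[0]}^3+\tfrac43\etanew[\ell]^3\normtwo{\Z[0]}^2+\tfrac{13}{3}\etanew[\ell]^2\normtwo{\Z[0]}+4\etanew[\ell]$.

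The only real shortfall in your write-up is that the constant accumulation is asserted (``tracking the accumulated constants yields'') rather than carried out; since the lemma's content \emph{is} this explicit polynomial (it feeds the function $f$ and the thresholds $\alpha_K$, $C_K$ downstream), the proof is incomplete without it. That said, your scheme has slack — you bound the maximum of the three blocks directly, while the paper bounds their sum — so executing your parity bookkeeping honestly would produce constants no larger than the stated ones; alternatively, adopting the paper's $(\I[n]+\W[\ell])$-exact-plus-$\psi$-tail split lands on them with far less effort.
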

\begin{proof}
    See \cref{app:sec:error-bound:error-control-one-step}.
\end{proof}

The proof of \cref{lem:error-bound:induction} needs two auxiliary functions. Define $f(x): [0, \infty) \mapsto \Real{}$ as any fixed continuous and monotonically increasing function satisfying: (1) $f(0)=0$; (2) $f(x) \ge \frac{9}{4} x^4 + \frac{4}{3} x^3 + \frac{13}{3} x^2 + 4 x$ for all $x \geq 0$. Then, define $g(y): [0, \infty) \mapsto \Real{}$ as:
\begin{align}
    \label{eq:error-bound:g-func}
    g(y) := y \cdot f\left( 2\etanew[0] \cdot (\normtwo{\ZX[0]} + \normtwo{\ZS[0]} + y) \right) \cdot f\left( \etanew[0] \cdot (\normtwo{\ZX[0]} + \normtwo{\ZS[0]} + y) \right)
\end{align}
So, $g(y)$ is also monotonically increasing on $[0,\infty)$ and $g(0) = 0$.

\begin{lemma}
    \label{lem:error-bound:induction}
    Suppose $\ZX[0] \in \Sympp{r}$ and $\ZS[0] \in \Symnn{n-r}$. Define two positive constants $\alpha_K$ and $C_K$:
    \begin{align}
        \label{eq:error-bound:alphaK}
        \alpha_K = \frac{f\left( 
            \etanew[0] \cdot (\normtwo{\ZX[0]} + \normtwo{\ZS[0]} + \frac{1}{2})
         \right)}{\normtwo{\ZX[0]}}, \qquad C_K := \min\left\{ C_1, C_2, C_3, C_4, C_5 \right\},
    \end{align} 
    where 
    \begin{gather*}
        C_1 = \frac{1}{2}, \quad C_2 = g^{-1}(\normtwo{\ZX[0]}^2), \quad C_3 = \frac{3}{8\etanew[0]}, \\
        C_4 = \frac{1}{\sqrt{4 \etanew[0] \alpha_K}}, \quad C_5 = \sqrt{\frac{1}{4 \alpha_K} \min \left\{ \lammin{\ZX[0]}, -\lammax{\ZS[0]} \right\}}.
    \end{gather*}
    
    For any $\normtwo{\ZO[0]} \le C_K$ and for any integer $\ell \ge 1$, it holds that
    \begin{subequations} \label{eq:error-bound:induction}
        \begin{align}
            \normtwo{\ZO[\ell]} &\le \alpha_K \cdot \normtwo{\ZO[0]}^{\ell+1} \label{eq:error-bound:induction-1} \\
            \normtwo{\ZX[\ell] - \ZX[0]} &\le \alpha_K \cdot \sum_{i=0}^{\ell-1} \normtwo{\ZO[0]}^{i+2} \label{eq:error-bound:induction-2} \\
            \normtwo{\ZS[\ell] - \ZS[0]} &\le \alpha_K \cdot \sum_{i=0}^{\ell-1} \normtwo{\ZO[0]}^{i+2}. \label{eq:error-bound:induction-3} 
        \end{align}
    \end{subequations}
    Moreover, for any integer $\ell \ge 1$, it holds that
    \[
        \frac{2}{3} \etanew[0] \le \etanew[\ell] \le 2 \etanew[0], \quad \lammin{\ZX[\ell]} \ge \frac{1}{2} \lammin{\ZX[0]} > 0 \quad \lammax{\ZS[\ell]} \le \frac{1}{2} \lammax{\ZS[0]} < 0.
    \]
    Thus, $\ZX[\ell] \in \Sympp{n}$ and $\ZS[\ell] \in \Symnn{n}$ for all $\ell \in \bbN$.
\end{lemma}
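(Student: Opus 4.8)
\textbf{Proof strategy for \cref{lem:error-bound:induction}.} The plan is to prove the estimates \eqref{eq:error-bound:induction-1}--\eqref{eq:error-bound:induction-3} by induction on $\ell$, carrying along in the induction hypothesis the three invariants
\[
    \ZX[\ell] \in \Sympp{r}, \qquad \ZS[\ell] \in \Symnn{n-r}, \qquad \normtwo{\ZO[\ell]} \le \tfrac{3}{4\etanew[\ell]},
\]
which are exactly what is needed to invoke \cref{lem:error-bound:error-control-one-step} at step $\ell$, together with the two-sided bound $\tfrac{2}{3}\etanew[0] \le \etanew[\ell] \le 2\etanew[0]$. The base case $\ell=0$ is immediate: the definiteness is assumed, $\etanew[0]=\etanew[0]$, and $\normtwo{\ZO[0]} \le C_K \le C_3 = \tfrac{3}{8\etanew[0]} \le \tfrac{3}{4\etanew[0]}$.

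\textbf{Inductive step.} Assuming the invariants (and, for $\ell \ge 1$, the estimates) hold at step $\ell$, I would first invoke \cref{lem:error-bound:error-control-one-step} at step $\ell$, which yields a unique $\WO[\ell]$ and the bound $\max\{\normtwo{\ZX[\ell+1]-\ZX[\ell]},\normtwo{\ZS[\ell+1]-\ZS[\ell]},\normtwo{\ZO[\ell+1]}\} \le \theta_\ell\normtwo{\ZO[\ell]}^2$ with $\theta_\ell := \tfrac{4}{9}\etanew[\ell]^4\normtwo{\Z[0]}^3 + \tfrac{4}{3}\etanew[\ell]^3\normtwo{\Z[0]}^2 + \tfrac{13}{3}\etanew[\ell]^2\normtwo{\Z[0]} + 4\etanew[\ell]$. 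Since $f(x) \ge \tfrac{9}{4}x^4 + \tfrac{4}{3}x^3 + \tfrac{13}{3}x^2 + 4x$, taking $x=\etanew[\ell]\normtwo{\Z[0]}$ and using $\normtwo{\Z[0]} \ge \normtwo{\ZX[0]}$ (Cauchy interlacing) gives $\theta_\ell \le f(\etanew[\ell]\normtwo{\Z[0]})/\normtwo{\ZX[0]}$; monotonicity of $f$, the bound $\etanew[\ell] \le 2\etanew[0]$ (at $\ell=0$ only $\etanew[0]$ is needed), and $\normtwo{\Z[0]} \le \normtwo{\ZX[0]}+\normtwo{\ZS[0]}+\normtwo{\ZO[0]}$ then reduce the preservation of \eqref{eq:error-bound:induction-1}--\eqref{eq:error-bound:induction-3} to an inequality of the form $\theta_\ell\,\alpha_K\,\normtwo{\ZO[0]}^\ell \le 1$. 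For $\ell=0$ this is $\theta_0 \le \alpha_K$, which holds since $\normtwo{\ZO[0]} \le C_1 = \tfrac12$ makes $\normtwo{\Z[0]} \le \normtwo{\ZX[0]}+\normtwo{\ZS[0]}+\tfrac12$, whence $\theta_0 \le f(\etanew[0](\normtwo{\ZX[0]}+\normtwo{\ZS[0]}+\tfrac12))/\normtwo{\ZX[0]} = \alpha_K$; for $\ell \ge 1$, since $\normtwo{\ZO[0]}^\ell \le \normtwo{\ZO[0]}$, it follows from $g(\normtwo{\ZO[0]}) \le \normtwo{\ZX[0]}^2$, i.e.\ from $\normtwo{\ZO[0]} \le C_2 = g^{-1}(\normtwo{\ZX[0]}^2)$ --- this is precisely what $C_2$ and the auxiliary function $g$ are engineered for. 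Summing then gives \eqref{eq:error-bound:induction-1}--\eqref{eq:error-bound:induction-3} at step $\ell+1$.

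\textbf{Re-establishing the invariants at $\ell+1$.} Using $\normtwo{\ZO[0]} \le C_1 = \tfrac12$, the geometric series in \eqref{eq:error-bound:induction-2}--\eqref{eq:error-bound:induction-3} sums to at most $2\normtwo{\ZO[0]}^2$, so $\normtwo{\ZX[\ell+1]-\ZX[0]} \le 2\alpha_K\normtwo{\ZO[0]}^2 \le 2\alpha_K C_5^2 = \tfrac12\min\{\lammin{\ZX[0]},-\lammax{\ZS[0]}\}$, and similarly for $\ZS$; Weyl's inequality then yields $\lammin{\ZX[\ell+1]} \ge \tfrac12\lammin{\ZX[0]} > 0$ and $\lammax{\ZS[\ell+1]} \le \tfrac12\lammax{\ZS[0]} < 0$, and, bounding $\deltalam{\ZX[\ell+1]}{\ZS[\ell+1]}$ between $\tfrac12$ and $\tfrac32$ times $\deltalam{\ZX[0]}{\ZS[0]}$, the two-sided bound $\tfrac{2}{3}\etanew[0] \le \etanew[\ell+1] \le 2\etanew[0]$. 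Finally $\normtwo{\ZO[\ell+1]} \le \alpha_K\normtwo{\ZO[0]}^2 \le \alpha_K C_4^2 = \tfrac{1}{4\etanew[0]} \le \tfrac{3}{4\etanew[\ell+1]}$, which is the remaining invariant; this closes the induction, and the last displayed bounds in the statement are read off the invariants.

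\textbf{Main obstacle.} The substance is not any individual step but the coupled constant bookkeeping: $C_1,\dots,C_5$ must be simultaneously compatible, because $\etanew[\ell]$ enters both the precondition $\normtwo{\ZO[\ell]} \le \tfrac{3}{4\etanew[\ell]}$ and the amplification factor $\theta_\ell$, while $\alpha_K$ --- deliberately defined through $f$ at the \emph{buffered} radius $\normtwo{\ZX[0]}+\normtwo{\ZS[0]}+\tfrac12$, so that it is a fixed constant over the whole admissible range of $\normtwo{\ZO[0]}$ --- controls at once the width of the tube keeping $\ZX[\ell],\ZS[\ell]$ definite, the summability-after-contraction, and the precondition. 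The delicate point is checking that the decay-preservation step collapses cleanly to $\normtwo{\ZO[0]} \le g^{-1}(\normtwo{\ZX[0]}^2)$; once the constants are pinned down, every estimate is just Cauchy interlacing, Weyl's inequality, the elementary lower bound on $f$, and summation of a geometric series.
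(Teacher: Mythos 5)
Your overall architecture is the same as the paper's: induct on $\ell$, carry the definiteness of $\ZX[\ell],\ZS[\ell]$, the two-sided bound $\tfrac23\etanew[0]\le\etanew[\ell]\le2\etanew[0]$ and the precondition $\normtwo{\ZO[\ell]}\le\tfrac{3}{4\etanew[\ell]}$ as invariants, feed them into \cref{lem:error-bound:error-control-one-step}, and re-establish the invariants from the geometric-series bounds via Weyl's inequality, with $C_1, C_3, C_4, C_5$ playing exactly the roles you assign them. The base case, the definiteness/$\eta$ bookkeeping, and the precondition check via $C_4$ are all correct and essentially identical to the paper's Appendix C.2.

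The gap is in the preservation step for $\ell\ge1$, where you induct directly on the $\alpha_K$-form of \eqref{eq:error-bound:induction} and claim that $\theta_\ell\,\alpha_K\,\normtwo{\ZO[0]}\le 1$ "follows from $\normtwo{\ZO[0]}\le C_2=g^{-1}(\normtwo{\ZX[0]}^2)$." It does not, because of a buffer mismatch: writing $b:=\normtwo{\ZX[0]}+\normtwo{\ZS[0]}$, your bound gives
\[
\theta_\ell\,\alpha_K\,\normtwo{\ZO[0]}\;\le\;\frac{\normtwo{\ZO[0]}\, f\big(2\etanew[0](b+\normtwo{\ZO[0]})\big)\, f\big(\etanew[0](b+\tfrac12)\big)}{\normtwo{\ZX[0]}^{2}},
\]
whereas the hypothesis $g(\normtwo{\ZO[0]})\le\normtwo{\ZX[0]}^2$ only controls $\normtwo{\ZO[0]}\, f(2\etanew[0](b+\normtwo{\ZO[0]}))\, f(\etanew[0](b+\normtwo{\ZO[0]}))$, and $f(\etanew[0](b+\tfrac12))\ge f(\etanew[0](b+\normtwo{\ZO[0]}))$ since $\normtwo{\ZO[0]}\le\tfrac12$ and $f$ is increasing. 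When $C_2$ is the binding constant (so $g(\normtwo{\ZO[0]})=\normtwo{\ZX[0]}^2$ with $\normtwo{\ZO[0]}<\tfrac12$), your required inequality can fail; $g$ is engineered for the \emph{unbuffered} constant, not for $\alpha_K$. This is precisely why the paper does not induct on \eqref{eq:error-bound:induction} itself but on a strengthened version with the smaller constant $f(\etanew[0]\normtwo{\Z[0]})/\normtwo{\Z[0]}$: since the constant enters quadratically on the left of the preservation inequality and only linearly on the right, the bootstrap closes with the small constant (whose $f$-argument matches the second factor of $g$) and the $\alpha_K$-form is then recovered at the end from $f(\etanew[0]\normtwo{\Z[0]})/\normtwo{\Z[0]}\le\alpha_K$. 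Your proof is repaired by inserting exactly this strengthening; as written, the induction does not close at $\ell=1$.
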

\begin{proof}
    See \cref{app:sec:error-bound:induction}.
\end{proof}

\subsection{Step 2: Convergence of \texorpdfstring{$\{\V[\ell]\}_{\ell=0}^{\infty}$}{Vl}}

Now we show that the sequence $\left\{ \V[\ell] \right\}_{\ell=0}^{\infty}$ converges to $\PiSnp{\Z + \H}$; see \cref{lem:error-bound:V-limit}. This is achieved by bounding the distance between $\Y[\ell+1]$ and $\Y[\ell]$ and that between $\Y[\ell]$ and $\I[n] + \W[0]$; see \cref{lem:error-bound:distance-Yk}.

\begin{lemma}
    \label{lem:error-bound:distance-Yk}
    For any integer $\ell \ge 1$, it holds that
    \begin{align}
        \label{eq:error-bound:distance-Yk-1}
        \normtwo{\Y[\ell+1] - \Y[\ell]} \le \frac{8}{3} \etanew[0] \alpha_K \cdot \normtwo{\ZO[0]}^{\ell+1}
    \end{align}
    and
    \begin{align}
        \label{eq:error-bound:distance-Yk-2}
        \normtwo{\Y[\ell] - (\I[n] + \W[0])} \le \frac{2}{3}\etanew[0]^2 \cdot \normtwo{\ZO[0]}^2 + \frac{8}{3} \etanew[0] \alpha_K \cdot \sum_{i=1}^{\ell-1} \normtwo{\ZO[0]}^{i+1}
    \end{align}
    where $\etanew[0]$ is defined in~\eqref{eq:error-bound:eta} and $\alpha_K$ in~\eqref{eq:error-bound:alphaK}.
\end{lemma}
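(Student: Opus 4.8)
The plan is to deduce both bounds from the single-step estimate of \cref{lem:error-bound:error-control-one-step} --- namely $\normtwo{\WO[\ell]} \le \etanew[\ell] \normtwo{\ZO[\ell]}$ --- together with the geometric decay $\normtwo{\ZO[\ell]} \le \alpha_K \normtwo{\ZO[0]}^{\ell+1}$ and the two-sided bound $\tfrac23\etanew[0] \le \etanew[\ell] \le 2\etanew[0]$ from \cref{lem:error-bound:induction}, supplemented by two elementary scalar inequalities for the matrix exponential. In effect this lemma is a bookkeeping corollary that repackages the per-iteration bounds as statements about the orthogonal factors $\Y[\ell]$.

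First I would record that every $\Y[\ell]$ is orthogonal: $\Y[0] = \I[n]$ and $\Y[\ell+1] = \Y[\ell]\exp(\W[\ell])$ with $\W[\ell]$ skew-symmetric, so $\exp(\W[\ell]) \in \ortho{n}$, and the claim follows by induction. Hence $\normtwo{\Y[\ell+1] - \Y[\ell]} = \normtwo{\Y[\ell](\exp(\W[\ell]) - \I[n])} = \normtwo{\exp(\W[\ell]) - \I[n]}$, using invariance of the spectral norm under left multiplication by an orthogonal matrix. Noting $\normtwo{\W[\ell]} = \normtwo{\WO[\ell]}$ (the nonzero singular values of $\W[\ell]$ are those of $\WO[\ell]$), \cref{lem:error-bound:error-control-one-step,lem:error-bound:induction} give, for $\ell \ge 1$,
\[
    \normtwo{\W[\ell]} \le \etanew[\ell]\normtwo{\ZO[\ell]} \le 2\etanew[0]\alpha_K\normtwo{\ZO[0]}^{\ell+1} = \bigl(2\etanew[0]\alpha_K\normtwo{\ZO[0]}^2\bigr)\,\normtwo{\ZO[0]}^{\ell-1} \le \tfrac12,
\]
where the last step uses $\normtwo{\ZO[0]} \le C_4 = (4\etanew[0]\alpha_K)^{-1/2}$ and $\normtwo{\ZO[0]} \le C_1 = \tfrac12 \le 1$. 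Since $\normtwo{\exp(W) - \I[n]} \le e^{\normtwo W} - 1 \le \tfrac43\normtwo W$ whenever $\normtwo W \le \tfrac12$, we obtain $\normtwo{\Y[\ell+1] - \Y[\ell]} \le \tfrac43\normtwo{\W[\ell]} \le \tfrac83\etanew[0]\alpha_K\normtwo{\ZO[0]}^{\ell+1}$, which is \eqref{eq:error-bound:distance-Yk-1}.

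For \eqref{eq:error-bound:distance-Yk-2} I would use the triangle inequality
\[
    \normtwo{\Y[\ell] - (\I[n] + \W[0])} \le \normtwo{\Y[1] - (\I[n] + \W[0])} + \sum_{i=1}^{\ell-1}\normtwo{\Y[i+1] - \Y[i]},
\]
bounding the sum by $\tfrac83\etanew[0]\alpha_K\sum_{i=1}^{\ell-1}\normtwo{\ZO[0]}^{i+1}$ via the first part (when $\ell = 1$ the sum is empty). For the remaining term, $\Y[1] = \exp(\W[0])$, so $\Y[1] - (\I[n] + \W[0]) = \exp(\W[0]) - \I[n] - \W[0]$, and the elementary estimate $\normtwo{\exp(W) - \I[n] - W} \le e^{\normtwo W} - 1 - \normtwo W \le \tfrac23\normtwo W^2$ (valid for $\normtwo W \le \tfrac12$) together with $\normtwo{\W[0]} \le \etanew[0]\normtwo{\ZO[0]}$ --- legitimate since $\normtwo{\ZO[0]} \le C_3 = \tfrac{3}{8\etanew[0]} \le \tfrac{3}{4\etanew[0]}$, so \cref{lem:error-bound:error-control-one-step} applies at $\ell = 0$ --- gives $\normtwo{\Y[1] - (\I[n] + \W[0])} \le \tfrac23\etanew[0]^2\normtwo{\ZO[0]}^2$. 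Adding the two contributions yields exactly \eqref{eq:error-bound:distance-Yk-2}.

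I do not expect a genuine obstacle. The only points that need care are (i) the two scalar inequalities $e^x - 1 \le \tfrac43 x$ and $e^x - 1 - x \le \tfrac23 x^2$ on $[0,\tfrac12]$, verified by an elementary monotonicity argument on $h(x) = \tfrac43 x - (e^x - 1)$ and its analogue, and (ii) checking that $\normtwo{\W[\ell]}$ remains in $[0,\tfrac12]$ uniformly in $\ell$, which is where the specific constants $C_1$, $C_3$, $C_4$ in the definition of $C_K$ enter; keeping track of exactly which of $C_1,\dots,C_5$ is invoked at each step is the mildly delicate part.
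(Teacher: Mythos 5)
Your proposal is correct and follows essentially the same route as the paper's proof: orthogonality of $\Y[\ell]$, the bound $\normtwo{\W[\ell]} = \normtwo{\WO[\ell]} \le \etanew[\ell]\normtwo{\ZO[\ell]} \le 2\etanew[0]\alpha_K\normtwo{\ZO[0]}^{\ell+1} \le \tfrac12$ via \cref{lem:error-bound:error-control-one-step,lem:error-bound:induction} and the constants $C_1, C_3, C_4$, then a telescoping triangle inequality with $\Y[1]-(\I[n]+\W[0]) = \exp(\W[0])-\I[n]-\W[0]$. The only cosmetic difference is that you invoke the scalar estimates $e^x-1\le\tfrac43 x$ and $e^x-1-x\le\tfrac23 x^2$ on $[0,\tfrac12]$, whereas the paper reaches the same $\tfrac43\normtwo{\W[\ell]}$ and $\tfrac23\normtwo{\W[0]}^2$ factors through \cref{lem:error-bound:exp-bound} combined with $\normtwo{\exp(\W[\ell])-\I[n]}\le\normtwo{\psi(\W[\ell])}+\normtwo{\W[\ell]}$.
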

\begin{proof}
    See \cref{app:sec:error-bound:distance-Yk}.
\end{proof}

\begin{lemma}
    \label{lem:error-bound:V-limit}
    The sequence $\left\{ \V[\ell] \right\}_{\ell=1}^\infty$ generated in \blue{Algorithm~\ref{alg:error-bound:iterative-elimination}} converges to $\PiSnp{\Z + \H}$.
\end{lemma}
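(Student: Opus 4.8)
The plan is to pass to the limit in the three recursions defining $\Y[\ell]$, $\Z[\ell]$, and $\V[\ell]$ in \blue{Algorithm~\ref{alg:error-bound:iterative-elimination}}, using the geometric decay estimates already established in \cref{lem:error-bound:error-control-one-step,lem:error-bound:induction,lem:error-bound:distance-Yk}. Throughout I would assume (shrinking the admissible range of $\normtwo{\H}$, equivalently of $\normtwo{\ZO[0]}$, if necessary) that $\normtwo{\ZO[0]} \le C_K$, so that those three lemmas apply, and also $\normtwo{\ZO[0]} < 1$.

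First I would show that the limits $\ZX[\infty] := \lim_{\ell\to\infty} \ZX[\ell]$, $\ZS[\infty] := \lim_{\ell\to\infty} \ZS[\ell]$, and $\Y[\infty] := \lim_{\ell\to\infty} \Y[\ell]$ exist. Combining the per-iteration bound of \cref{lem:error-bound:error-control-one-step} with the decay \eqref{eq:error-bound:induction-1} and the uniform control on $\etanew[\ell]$ and $\normtwo{\Z[0]}$ from \cref{lem:error-bound:induction}, one gets a constant $\beta$ with $\normtwo{\ZX[\ell+1]-\ZX[\ell]} \le \beta\normtwo{\ZO[\ell]}^2 \le \beta\alpha_K^2\normtwo{\ZO[0]}^{2\ell+2}$ (and likewise for $\ZS[\ell]$); since $\normtwo{\ZO[0]} < 1$ this is summable in $\ell$, so $\{\ZX[\ell]\}$ and $\{\ZS[\ell]\}$ are Cauchy. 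The uniform eigenvalue bounds $\lammin{\ZX[\ell]} \ge \tfrac12\lammin{\ZX[0]} > 0$ and $\lammax{\ZS[\ell]} \le \tfrac12\lammax{\ZS[0]} < 0$ of \cref{lem:error-bound:induction} pass to the limit, so $\ZX[\infty] \in \Sympp{r}$ and $\ZS[\infty] \in \Symnn{n-r}$. Similarly, \eqref{eq:error-bound:distance-Yk-1} bounds $\normtwo{\Y[\ell+1]-\Y[\ell]}$ by a summable geometric sequence, so $\{\Y[\ell]\}$ is Cauchy; and since each $\Y[\ell]$ is a finite product of the orthogonal matrices $\exp(\W[i])$ (the $\W[i]$ being skew-symmetric) and $\On$ is closed, $\Y[\infty]$ is orthogonal.

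Next I would identify $\PiSnp{\Z+\H}$. A one-line induction from \eqref{eq:error-bound:main-iter} together with $\Y[\ell+1] = \Y[\ell]\exp(\W[\ell])$, starting from $\Z[0] = \Z+\H$ and $\Y[0] = \I[n]$, gives the invariant $\Z[\ell] = \Y[\ell]\tran(\Z+\H)\Y[\ell]$ for every $\ell$. Since $\normtwo{\ZO[\ell]} \le \alpha_K\normtwo{\ZO[0]}^{\ell+1} \to 0$ by \eqref{eq:error-bound:induction-1}, letting $\ell \to \infty$ in this invariant yields
\[
    \Y[\infty]\tran(\Z+\H)\Y[\infty] = \mymat{\ZX[\infty] & 0 \\ 0 & \ZS[\infty]},
\]
a block diagonalization with a positive definite $r\times r$ block and a negative definite $(n-r)\times(n-r)$ block. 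Eigendecomposing the two blocks shows that the right-hand side has exactly $r$ positive and $n-r$ negative eigenvalues and that its projection onto $\Symp{n}$ equals $\mymat{\ZX[\infty] & 0 \\ 0 & 0}$; since $\PiSnp{\cdot}$ commutes with orthogonal conjugation, $\PiSnp{\Z+\H} = \Y[\infty]\mymat{\ZX[\infty] & 0 \\ 0 & 0}\Y[\infty]\tran$. Finally, from \eqref{eq:error-bound:V-iter}, $\V[\ell] = \Y[\ell]\mymat{\ZX[\ell] & 0 \\ 0 & 0}\Y[\ell]\tran$, and continuity of matrix multiplication together with $\Y[\ell]\to\Y[\infty]$ and $\ZX[\ell]\to\ZX[\infty]$ yields $\V[\ell]\to\Y[\infty]\mymat{\ZX[\infty] & 0 \\ 0 & 0}\Y[\infty]\tran = \PiSnp{\Z+\H}$, the claim.

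I do not expect a serious obstacle here: all the analytic content — the exponential decay of $\normtwo{\ZO[\ell]}$ and the summable increment bounds for $\ZX[\ell]$, $\ZS[\ell]$, $\Y[\ell]$ — is already in \cref{lem:error-bound:error-control-one-step,lem:error-bound:induction,lem:error-bound:distance-Yk}. What remains is careful bookkeeping: deducing the Cauchy property from the geometric bounds, invoking closedness of $\On$ so that $\Y[\infty]$ is orthogonal, checking the telescoping invariant $\Z[\ell] = \Y[\ell]\tran(\Z+\H)\Y[\ell]$, and the elementary fact that the PSD-cone projection of a block-diagonal matrix with one positive definite block and one negative definite block retains the former and zeroes the latter.
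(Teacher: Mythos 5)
Your proposal is correct and follows essentially the same route as the paper's own proof: Cauchy convergence of $\ZX[\ell]$, $\ZS[\ell]$, $\Y[\ell]$ from the geometric decay in \cref{lem:error-bound:error-control-one-step,lem:error-bound:induction,lem:error-bound:distance-Yk}, the telescoping invariant $\Z[\ell] = \Y[\ell]\tran(\Z+\H)\Y[\ell]$, orthogonality of $\Y[\infty]$, and the observation that projecting the limiting block-diagonal matrix with a positive definite and a negative definite block retains the former. The only differences are cosmetic (you invoke summability rather than the paper's explicit $N(\epsilon)$ bookkeeping), so no changes are needed.
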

\begin{proof}
    See \cref{app:sec:error-bound:V-limit}.
\end{proof}

\subsection{Step 3: Proof of \texorpdfstring{\cref{thm:eb-intro-thm}}{Theorem 2}}

Before we execute the last step of our proof, two more details are needed. First, observe that all the three constants, $\etanew[0]$ in~\eqref{eq:error-bound:eta}, $\alpha_K$ and $C_K$ in~\eqref{eq:error-bound:alphaK} implicitly rely on $\ZX[0] = \LamX + \HX$ and $\ZS[0] = \LamS + \HS$ (though independent of $\ZO[0] = \HO$). Yet, the constants $\alpha_{\mathrm{EB}}$ and $C_{\mathrm{EB}}$ in \cref{thm:eb-intro-thm} should be independent of the perturbation $\H$. The uniform bounds of $\etanew[0]$, $\alpha_K$ and $C_K$ is achieved in \cref{lem:error-bound:lower-bound-C-eta-alpha}.
\begin{lemma}
    \label{lem:error-bound:lower-bound-C-eta-alpha}
    Suppose $\normtwo{\H} \le \frac{1}{2}\min\{\lam{r}, -\lam{r+1}\}$. Then, it holds that
    \[
        \lammin{\ZX[0]} \ge \frac{1}{2} \lam{r} > 0 \qquad \lammax{\ZS[0]} \le \frac{1}{2} \lam{r+1} < 0.
    \]
    Moreover, there exist three positive constants $\alpha_{K,f}$, $\etanew[0,f]$ and $C_{K,f}$, only depending on $n$, $r$ and the eigenvalues $\{\lam{i}\}_{i=1}^n$ of $Z$, such that
    \[
        \etanew[0] \leq \etanew[0,f], \qquad \alpha_K \leq \alpha_{K,f}, \qquad C_K \geq C_{K,f} > 0. 
    \] 
\end{lemma}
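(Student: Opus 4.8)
The plan is to follow the definitions of $\etanew[0]$ (from~\eqref{eq:error-bound:eta}) and of $\alpha_K,C_K$ (from~\cref{lem:error-bound:induction}) and to replace every quantity that varies with the perturbation $\H$ --- equivalently, with $\ZX[0]=\LamX+\HX$, $\ZS[0]=\LamS+\HS$, $\ZO[0]=\HO$ --- by a bound involving only $n$, $r$, and the eigenvalues $\{\lam i\}_{i=1}^n$ of $Z$. The only input needed is $\normtwo{\HX},\normtwo{\HS}\le\normtwo{\H}\le\tfrac12\min\{\lam r,-\lam{r+1}\}$. First, Weyl's inequality applied to $\ZX[0]=\LamX+\HX$ and $\ZS[0]=\LamS+\HS$ gives $\lammin{\ZX[0]}\ge\lam r-\normtwo{\HX}\ge\tfrac12\lam r>0$ and $\lammax{\ZS[0]}\le\lam{r+1}+\normtwo{\HS}\le\tfrac12\lam{r+1}<0$, which is the first claim.

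The same estimate, with $\normtwo{\LamX}=\lam 1$ and $\normtwo{\LamS}=-\lam n$, yields $\tfrac12\lam r\le\normtwo{\ZX[0]}\le\lam 1+\tfrac12\lam r$ and $\normtwo{\ZS[0]}\le -\lam n+\tfrac12(-\lam{r+1})$; set $M:=\lam 1-\lam n+\min\{\lam r,-\lam{r+1}\}$, an upper bound for $\normtwo{\ZX[0]}+\normtwo{\ZS[0]}$. Since $d=\sqrt{\min\{r,n-r\}}$ depends only on $(n,r)$ and $\lammin{\ZX[0]}-\lammax{\ZS[0]}\ge\tfrac12(\lam r-\lam{r+1})$, we get $\etanew[0]\le\frac{2d}{\lam r-\lam{r+1}}=:\etanew[0,f]$. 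For $\alpha_K$, monotonicity of the fixed function $f$ bounds its numerator by $f\big(\etanew[0,f](M+\tfrac12)\big)$, and the lower bound $\normtwo{\ZX[0]}\ge\tfrac12\lam r$ handles the denominator, so $\alpha_K\le\frac{2f(\etanew[0,f](M+1/2))}{\lam r}=:\alpha_{K,f}$.

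It remains to lower-bound $C_K=\min\{C_1,\dots,C_5\}$ term by term. Immediately $C_1=\tfrac12$; $C_3=\tfrac{3}{8\etanew[0]}\ge\tfrac{3}{8\etanew[0,f]}$; $C_4=(4\etanew[0]\alpha_K)^{-1/2}\ge(4\etanew[0,f]\alpha_{K,f})^{-1/2}$; and, using $\min\{\lammin{\ZX[0]},-\lammax{\ZS[0]}\}\ge\tfrac12\min\{\lam r,-\lam{r+1}\}$ from the first paragraph, $C_5\ge\big(\tfrac{1}{8\alpha_{K,f}}\min\{\lam r,-\lam{r+1}\}\big)^{1/2}$. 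The one term requiring care is $C_2=g^{-1}(\normtwo{\ZX[0]}^2)$, because $g$ itself depends on the perturbation through $\etanew[0],\normtwo{\ZX[0]},\normtwo{\ZS[0]}$. The key observation is that for each fixed $y\ge 0$ the value $g(y)$ is monotonically increasing in each of those three quantities, so $g(y)\le\bar g(y):=y\,f\big(2\etanew[0,f](M+y)\big)\,f\big(\etanew[0,f](M+y)\big)$ for all $y\ge 0$, where $\bar g$ is a fixed strictly increasing function with $\bar g(0)=0$ depending only on $(n,r,\{\lam i\})$. Pointwise domination of increasing functions reverses under inversion, so $g^{-1}\ge\bar g^{-1}$, and together with $\normtwo{\ZX[0]}^2\ge\tfrac14\lam r^2$ this gives $C_2\ge\bar g^{-1}(\tfrac14\lam r^2)>0$. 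Taking $C_{K,f}$ to be the minimum of these five bounds completes the argument.

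The proof is essentially bookkeeping; the only genuinely nontrivial point --- and the one I would be most careful about --- is the handling of $C_2$: one must \emph{not} treat $g$ as fixed, but dominate it by a perturbation-free increasing majorant before inverting, invoking the elementary fact that pointwise domination of increasing functions reverses under inversion. (The degenerate cases $r=0$ or $r=n$ never arise, since $\lam r>0>\lam{r+1}$ already forces $1\le r\le n-1$, so all of $\ZX[0],\ZS[0]$ and $d$ are nondegenerate.)
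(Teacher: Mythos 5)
Your proof is correct and follows essentially the same route as the paper: Weyl's inequality for the eigenvalue bounds, perturbation-free majorization of $\eta_0$ and $\alpha_K$ via monotonicity of $f$, and a term-by-term lower bound on $C_1,\dots,C_5$, with your majorant $\bar g$ playing exactly the role of the paper's auxiliary function $g_1$ in the inversion argument for $C_2$. The only differences are immaterial constant choices (e.g., lower-bounding $\|Z_{X,0}\|_2$ by $\tfrac12\lambda_r$ instead of $\lambda_1-\tfrac12\lambda_r$), and your bound for $C_5$ is in fact stated more cleanly (purely in terms of $\lambda_r,\lambda_{r+1}$) than the paper's.
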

\begin{proof}
    See \cref{app:sec:error-bound:lower-bound-C-eta-alpha}.
\end{proof}

With \cref{lem:error-bound:lower-bound-C-eta-alpha}, as long as $\normtwo{\H} \le \min\{C_{K,f}, \frac{1}{2} \min\{\lam{r}, -\lam{r+1}\} \}$, we can safely replace $\alpha_K$ and $\etanew[0]$ in \cref{lem:error-bound:error-control-one-step,lem:error-bound:induction,lem:error-bound:distance-Yk,lem:error-bound:V-limit} with $\alpha_{K,f}$ and $\etanew[0,f]$.

As the last ingredient, \cref{lem:error-bound:first-sylvester-eq} is needed to control the error in the first Sylvester equation ($\ell=0$), which only relies on $n$, $r$, and the eigenvalues $\{\lam{i}\}_{i=1}^n$ of $Z$.
\begin{lemma}
    \label{lem:error-bound:first-sylvester-eq}
    Suppose that $\HX$ and $\HS$ satisfy $\normtwo{\HX} + \normtwo{\HS} \le \frac{\lam{r} - \lam{r+1}}{2nd}$, and that $\WO[0]$ is the solution for 
    \begin{align*}
        & \WO \ZX[0] + (-\ZS[0]) \WO = \ZO[0] \\
        \Longleftrightarrow \quad & \WO (\LamX + \HX) - (\LamS + \HS) \WO= \HO.
    \end{align*}
    Then, it holds that
    \[
        \normtwo{\WO[0] - \Theta_0 \circ \HO} \le \frac{2nd}{(\lam{r} - \lam{r+1})^2}\cdot \normtwo{\HO} \cdot (\normtwo{\HX} + \normtwo{\HS}),
    \]
    where
    \[
        \Theta_0 = \mymat{
            \frac{1}{\lam{1} - \lam{r+1}} & \cdots & \frac{1}{\lam{r} - \lam{r+1}} \\
            \vdots & \ddots & \vdots \\
            \frac{1}{\lam{1} - \lam{n}} & \cdots & \frac{1}{\lam{r} - \lam{n}}
        } \in \Real{(n-r) \times r}.
    \]
\end{lemma}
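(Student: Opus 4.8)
The plan is to treat the first Sylvester equation \eqref{eq:error-bound:sylvester-eq} (at $\ell=0$), namely $\WO(\LamX+\HX)-(\LamS+\HS)\WO=\HO$, as a perturbation of the \emph{diagonal} Sylvester equation $W\LamX-\LamS W=\HO$, whose solution is available in closed form. First I would record that, because $\LamX$ and $\LamS$ are diagonal, the operator $\cL_0(W):=W\LamX-\LamS W$ on $\Real{(n-r)\times r}$ acts entrywise, $(\cL_0 W)_{ij}=(\lam{j}-\lam{i+r})\,W_{ij}$. Hence $\cL_0$ is invertible, its inverse is the Hadamard multiplication $\cL_0^{-1}(Y)=\Theta_0\circ Y$, and, since every multiplier satisfies $\lam{j}-\lam{i+r}\ge\lam{r}-\lam{r+1}>0$, one gets $\normF{\cL_0^{-1}(Y)}\le\frac{1}{\lam{r}-\lam{r+1}}\normF{Y}$. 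In particular $\Theta_0\circ\HO$ is precisely the unique solution of $\cL_0(W)=\HO$, so it is the natural object to compare $\WO[0]$ with.

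Second, I would write the perturbed operator as $\cL=\cL_0+\Phi$ with $\Phi(W):=W\HX-\HS W$, for which $\normF{\Phi(W)}\le(\normtwo{\HX}+\normtwo{\HS})\normF{W}$ by submultiplicativity. A one-line Neumann-series argument then yields well-posedness and a crude bound on $\WO[0]$ simultaneously: since $nd\ge1$, the hypothesis $\normtwo{\HX}+\normtwo{\HS}\le\frac{\lam{r}-\lam{r+1}}{2nd}$ makes the Frobenius operator norm of $\cL_0^{-1}\Phi$ at most $\frac{1}{2}$, so $\cL=\cL_0(\Id+\cL_0^{-1}\Phi)$ is invertible, $\WO[0]=\cL^{-1}(\HO)$ is the unique solution of \eqref{eq:error-bound:sylvester-eq}, and $\normF{\WO[0]}\le\frac{2}{\lam{r}-\lam{r+1}}\normF{\HO}$.

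Third, the error identity is immediate: subtracting $\cL_0(\Theta_0\circ\HO)=\HO$ from $\cL(\WO[0])=\HO$ and using $\cL=\cL_0+\Phi$ gives $\cL_0(\WO[0]-\Theta_0\circ\HO)=-\Phi(\WO[0])$, hence $\WO[0]-\Theta_0\circ\HO=-\cL_0^{-1}\Phi(\WO[0])$. Chaining $\normtwo{\cdot}\le\normF{\cdot}$, the bound on $\normF{\cL_0^{-1}}$, the estimate $\normF{\Phi(\WO[0])}\le(\normtwo{\HX}+\normtwo{\HS})\normF{\WO[0]}$, the bound $\normF{\WO[0]}\le\frac{2}{\lam{r}-\lam{r+1}}\normF{\HO}$, and finally $\normF{\HO}\le d\,\normtwo{\HO}$ (as $\HO$ has rank at most $d^2=\min\{r,n-r\}$), I obtain
\[
\normtwo{\WO[0]-\Theta_0\circ\HO}\le\frac{2d}{(\lam{r}-\lam{r+1})^2}\,\normtwo{\HO}\,(\normtwo{\HX}+\normtwo{\HS}),
\]
which is in fact slightly sharper than the claimed bound, since $d\le nd$.

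The only step that needs genuine care — and thus the main ``obstacle'' — is the bookkeeping of matrix norms: the statement is phrased in the spectral norm $\normtwo{\cdot}$, whereas $\cL_0$ and the Neumann estimate are natural in the Frobenius norm, so one must pass between the two and keep track of the dimensional factor $d=\sqrt{\min\{r,n-r\}}$. (If instead one bounded $\normtwo{\WO[0]}$ directly via \cref{lem:error-bound:error-control-one-step} at $\ell=0$, one would pay a factor $d^2\le n$ rather than $d$, which is exactly the slack absorbed in the stated constant $2nd$.) Everything else — well-posedness of the first Sylvester equation, the closed form $\cL_0^{-1}=\Theta_0\circ(\cdot)$, and the contraction estimate for $\cL_0^{-1}\Phi$ — is elementary once the perturbation $\Phi$ is isolated.
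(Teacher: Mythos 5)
Your proposal is correct, and it in fact proves a slightly stronger bound than the lemma asserts. The underlying idea coincides with the paper's: both treat the first Sylvester equation as a perturbation of the diagonal system $W\LamX-\LamS W=\HO$, whose solution is $\Theta_0\circ\HO$, and both control the perturbation by a Neumann-series (resolvent) argument valid because $\frac{\normtwo{\HX}+\normtwo{\HS}}{\lam{r}-\lam{r+1}}\le\frac{1}{2nd}\le\frac12$. The execution differs, though. The paper vectorizes, writing $\text{vec}(\WO[0])=(A+\dA)^{-1}\text{vec}(\HO)$ with $A=\I[r]\otimes(-\LamS)+\LamX\otimes\I[n-r]$, invokes the Kronecker-sum eigenvalue fact to get $\normtwo{A^{-1}}=(\lam{r}-\lam{r+1})^{-1}$, and bounds $\normtwo{\dA}$ crudely through Frobenius norms of the Kronecker factors, which is where the factor $n$ (and then $nd$) enters. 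You instead stay at the operator level: $\cL_0^{-1}$ is Hadamard multiplication by $\Theta_0$ with operator norm $(\lam{r}-\lam{r+1})^{-1}$ in the Frobenius norm, the perturbation $\Phi(W)=W\HX-\HS W$ obeys the mixed bound $\normF{\Phi(W)}\le(\normtwo{\HX}+\normtwo{\HS})\normF{W}$, the exact error identity $\WO[0]-\Theta_0\circ\HO=-\cL_0^{-1}\Phi(\WO[0])$ replaces the resolvent-difference expansion, and the final pass from $\normF{\HO}$ to $\normtwo{\HO}$ uses $\mathrm{rank}(\HO)\le d^2$, which is valid. This yields the constant $2d$ rather than $2nd$, so the stated inequality follows a fortiori; as a side benefit your Neumann step also establishes existence and uniqueness of $\WO[0]$ under the stated hypothesis alone, without appealing to definiteness of $\ZX[0]$ and $\ZS[0]$. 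The only bookkeeping worth double-checking is exactly the one you flag—the spectral-versus-Frobenius conversions—and your accounting of the dimensional factor $d$ there is consistent.
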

\begin{proof}
    See \cref{app:sec:error-bound:first-sylvester-eq}.
\end{proof}

To prove \cref{thm:eb-intro-thm}, it only remains to upper bound the two terms on the right-hand side of~\eqref{eq:error-bound:two-terms} one-by-one. Define $C_{\mathrm{EB}}$ as
\begin{align}
    \label{eq:error-bound:Cf}
    C_{\mathrm{EB}} := \min\left\{ C_{K, f}, \, \frac{1}{2}\min\left\{ \lam{r}, -\lam{r+1} \right\}, \, \frac{\lam{r} - \lam{r+1}}{4nd} \right\}.
\end{align}
Note that in the following proof, we have already replaced $\alpha_K$ and $\etanew[0]$ in \cref{lem:error-bound:error-control-one-step,lem:error-bound:induction,lem:error-bound:distance-Yk,lem:error-bound:V-limit} with $\alpha_{K,f}$ and $\etanew[0,f]$.
\begin{enumerate}
    \item The first term on the right-hand side of \eqref{eq:error-bound:two-terms} is bounded by
    \begin{align}
        &\ \left\|(\I[n] + \W[0]) \mymat{\ZX[\infty] & 0 \\ 0 & 0} (\I[n] + \W[0])\tran - \PiSnp{\Z} - \Omega \circ \H \right\|_2 \nonumber \\
        =&\ \left\|(\I[n] + \W[0]) \mymat{\ZX[\infty] & 0 \\ 0 & 0} (\I[n] + \W[0])\tran - \mymat{\LamX & 0 \\ 0 & 0} - \Omega \circ \H \right\|_2 \nonumber \\
        =&\ \left\|\mymat{\ZX[\infty] & \ZX[\infty] \WO[0]\tran \\ \WO[0] \ZX[\infty] & \WO[0] \ZX[\infty] \WO[0]\tran} - \mymat{\LamX + \HX & \Theta\tran \circ \HO\tran \\ \Theta \circ \HO & 0} \right\|_2 \nonumber \\
        \le&\ \normtwo{\ZX[\infty] - (\LamX + \HX)} + \normtwo{\WO[0] \ZX[\infty] \WO[0]\tran} + \normtwo{\WO[0] \ZX[\infty] - \Theta \circ \HO} \nonumber \\
        \leq&\ \normtwo{\ZX[\infty] - (\LamX + \HX)} + \normtwo{\WO[0] \ZX[\infty] \WO[0]\tran} + \normtwo{\WO[0] \ZX[0] - \Theta \circ \HO} \nonumber \\
        &\ + \normtwo{\WO[0] (\ZX[\infty] - \ZX[0])}. \label{eq:error-bound:thm-1}
    \end{align}
    Again, we bound the right-hand side of \eqref{eq:error-bound:thm-1} one-by-one.
    \begin{enumerate}
        \item For the term $\normtwo{\ZX[\infty] - (\LamX + \HX)}$, we have from $\ZO[0] = \HO$ that
        \begin{subequations}
        \begin{align}
            & \normtwo{\ZX[\infty] - (\LamX + \HX)} = \normtwo{\ZX[\infty] - \ZX[0]} \nonumber \\
            = & \left\|{\sum_{i=0}^\infty (\ZX[i+1] - \ZX[0])} \right\|_2 \le \sum_{i=0}^\infty \normtwo{\ZX[i+1] - \ZX[0]} \nonumber \\
            \le & \alpha_{K,f} \cdot \sum_{i=0}^\infty \normtwo{\ZO[0]}^{i+2}  \label{eq:error-bound:thm-1a-1} \\
            = & \alpha_{K,f} \cdot \sum_{i=0}^\infty \normtwo{\HO}^{i+2} = \alpha_{K,f} \cdot \frac{\normtwo{\HO}^2}{1 - \normtwo{\HO}} \nonumber \\
            \le & 2\alpha_{K,f} \cdot \normtwo{\HO}^2, \label{eq:error-bound:thm-1a-2}
        \end{align}
    \end{subequations}
        where~\eqref{eq:error-bound:thm-1a-1} uses~\eqref{eq:error-bound:induction-1} and~\eqref{eq:error-bound:thm-1a-2} uses $\normtwo{\HO} \le \normtwo{\H} \le C_{K,f} \le \frac{1}{2}$.
        
        \item For the term $\normtwo{\WO[0] \ZX[\infty] \WO[0]\tran}$, we have
        \[
            \normtwo{\WO[0] \ZX[\infty] \WO[0]\tran} \le \normtwo{\WO[0]}^2 \cdot \normtwo{\ZX[\infty]} \le \etanew[0,f]^2 \cdot \normtwo{\HO}^2 \cdot \normtwo{\ZX[\infty]}.
        \]
        Since $\normtwo{\ZX[\infty]} \le \normtwo{\Z[\infty]} = \normtwo{\Z + \H}$ is bounded, there exists a positive constant $\alpha_1$ such that $\normtwo{\WO[0] \ZX[\infty] \WO[0]\tran} \le \alpha_1 \cdot \normtwo{\HO}^2$.
        
        \item For the term $\normtwo{\WO[0] \ZX[0] - \Theta \circ \HO}$, we have
        \begin{subequations}
        \begin{align}
            &\ \normtwo{\WO[0] \ZX[0] - \Theta \circ \HO} \nonumber \\
            =&\ \normtwo{\WO[0] (\LamX + \HX) - \Theta \circ \HO} \nonumber \\
            \le&\ \normtwo{\WO[0] \LamX - \Theta \circ \HO} + \normtwo{\WO[0] \HX} \nonumber \\
            =&\ \normtwo{\WO[0] \LamX - (\HO \circ \Theta_0) \LamX} + \normtwo{\WO[0] \HX} \label{eq:error-bound:thm-1c-1} \\ 
            \le&\ \lam{1} \cdot \normtwo{\WO[0]  - (\HO \circ \Theta_0) } + \normtwo{\WO[0] \HX} \nonumber \\
            \le&\ \frac{2nd\lam{1}}{(\lam{r} - \lam{r+1})^2}\cdot \normtwo{\HO} \cdot (\normtwo{\HX} + \normtwo{\HS}) + \normtwo{\WO[0] \HX} \label{eq:error-bound:thm-1c-2} \\
            \le&\ \frac{2nd\lam{1}}{(\lam{r} - \lam{r+1})^2}\cdot \normtwo{\HO} \cdot (\normtwo{\HX} + \normtwo{\HS}) + \etanew[0,f] \cdot \normtwo{\HO} \cdot \normtwo{\HX} \label{eq:error-bound:thm-1c-3} \\
            \leq&\ \alpha_2 \normtwo{\HO} \cdot (\normtwo{\HX} + \normtwo{\HS}) \label{eq:error-bound:thm-1c-4}
        \end{align}
        \end{subequations}
        for some positive constant $\alpha_2$. Here,~\eqref{eq:error-bound:thm-1c-1} holds since for a diagonal matrix~$D$:
        \[
            (AD) \circ B = B \circ (AD) = (B \circ A) D,
        \]
        \eqref{eq:error-bound:thm-1c-2} comes from \cref{lem:error-bound:first-sylvester-eq} since
        \[
            \normtwo{\HX} + \normtwo{\HS} \le 2 \normtwo{\H} \le 2 C_{K,f} \le 2 \cdot \frac{\lam{r} - \lam{r+1}}{4nd} = \frac{\lam{r} - \lam{r+1}}{2nd},
        \]
        and~\eqref{eq:error-bound:thm-1c-3} follows from \cref{lem:error-bound:error-control-one-step} and $\normtwo{\WO[0]} \le \etanew[0,f] \cdot \normtwo{\ZO[0]} = \etanew[0,f] \cdot \normtwo{\HO}$. 

        \item For the term $\normtwo{\WO[0] (\ZX[\infty] - \ZX[0])}$, we have
        \begin{subequations}
        \begin{align}
            \normtwo{\WO[0] (\ZX[\infty] - \ZX[0])} \le&\ \normtwo{\WO[0]} \cdot \normtwo{\ZX[\infty] - \ZX[0]} \nonumber \\
            \le&\ \etanew[0,f] \cdot \normtwo{\HO} \cdot \normtwo{\ZX[\infty] - \ZX[0]} \nonumber \\
            \le&\ \etanew[0,f] \cdot \normtwo{\HO} \cdot \alpha_{K,f} \cdot \sum_{i=0}^\infty \normtwo{\HO}^{i+2} \label{eq:error-bound:thm-1d-1} \\
            \le&\ \etanew[0,f] \alpha_{K,f} \cdot \normtwo{\HO} \cdot \frac{\normtwo{\HO}^2}{1 - \normtwo{\HO}} \nonumber \\
            \le&\ 2 \etanew[0,f] \alpha_{K,f} \cdot \normtwo{\HO}^3, \label{eq:error-bound:thm-1d-2}
        \end{align}
        \end{subequations}
        where~\eqref{eq:error-bound:thm-1d-1} follows from~\eqref{eq:error-bound:induction-2}.
    \end{enumerate}

    \item For the second term on the right-hand side of \eqref{eq:error-bound:two-terms}, we see from \cref{lem:error-bound:distance-Yk} that
    \begin{align*}
        \normtwo{\Y[\infty] - (\I[n] + \W[0])} &\le \frac{2}{3}\etanew[0,f]^2 \cdot \normtwo{\HO}^2 + \frac{8}{3} \etanew[0,f] \alpha_{K,f} \cdot \sum_{i=1}^{\infty} \normtwo{\HO}^{i+1} \\
        &= \frac{2}{3}\etanew[0,f]^2 \cdot \normtwo{\HO}^2 + \frac{8}{3} \etanew[0,f] \alpha_{K,f} \cdot \frac{\normtwo{\HO}^2}{1 - \normtwo{\HO}} \\
        &\le \frac{2}{3}\etanew[0,f]^2 \cdot \normtwo{\HO}^2 + \frac{16}{3} \etanew[0,f] \alpha_{K,f} \cdot \normtwo{\HO}^2.
    \end{align*}
    Together with the boundedness of $\normtwo{\I[n] + \W[0]}$ and $\ZX[\infty]$, we conclude that there exists a positive constant $\alpha_3$ such that
    \begin{align}
        &\ \left\|\Y[\infty] \mymat{\ZX[\infty] & 0 \\ 0 & 0} \Y[\infty]\tran - (\I[n] + \W[0]) \mymat{\ZX[\infty] & 0 \\ 0 & 0} (\I[n] + \W[0])\tran \right\|_2 \nonumber \\
        \le&\ 2 \left\|{\left( \Y[\infty] - (\I[n] + \W[0]) \right) \mymat{\ZX[\infty] & 0 \\ 0 & 0} (\I[n] + \W[0])\tran} \right\|_2 \nonumber \\
        &\ + \left\|{\left( \Y[\infty] - (\I[n] + \W[0]) \right) \mymat{\ZX[\infty] & 0 \\ 0 & 0} \left( \Y[\infty] - (\I[n] + \W[0]) \right)\tran}\right\|_2 \nonumber \\
        \le&\ \alpha_3 \cdot \normtwo{\HO}^2. \label{eq:error-bound:thm-1e}
    \end{align}
\end{enumerate}
Therefore, combining \eqref{eq:error-bound:two-terms}, \eqref{eq:error-bound:thm-1}, \eqref{eq:error-bound:thm-1a-2}, \eqref{eq:error-bound:thm-1c-4}, \eqref{eq:error-bound:thm-1d-2} and \eqref{eq:error-bound:thm-1e} yields
\begin{align*}
    &\ \normtwo{\PiSnp{\Z + \H} - \PiSnp{\Z} - \Omega \circ \H} \\
    \le&\ 2\alpha_{K,f} \cdot \normtwo{\HO}^2 + \alpha_2 \cdot \normtwo{\HO}^2 + \alpha_2 \cdot \normtwo{\HO} \cdot (\normtwo{\HX} + \normtwo{\HS}) \\
    &\ + 2\etanew[0,f] \alpha_{K,f} \cdot \normtwo{\HO}^2 + \alpha_3 \cdot \normtwo{\HO}^2 \\
    \le&\ \alpha_{\mathrm{EB}} \cdot \normtwo{\HO} \cdot \normtwo{\H}
\end{align*}
for some positive constant $\alpha_{\mathrm{EB}}$. This concludes the proof.

\subsection{Generalization to the Non-diagonal Case} \label{sec:error-bound:ndiag}

Though the previous analysis is performed under the assumption that $Z$ is a diagonal matrix, straightforward computation generalizes our result to the more general, non-diagonal case. When $Q \neq \I[n]$, we have
\begin{subequations}
\begin{align}
    &\ \normtwo{\PiSnp{\Z + \H} - \PiSnp{\Z} - \Y (\Y\tran \H \Y) \Y\tran} \nonumber \\
    =&\ \normtwo{\Y \big( \Y\tran \PiSnp{\Z + \H} \Y - \Y\tran \PiSnp{\Z} \Y - \Y\tran \H \Y \big) \Y\tran} \nonumber \\
    =&\ \normtwo{\Y\tran \PiSnp{\Z + \H} \Y - \Y\tran \PiSnp{\Z} \Y - \Y\tran \H \Y} \label{eq:error-bound:generalization-1} \\
    =&\ \normtwo{\PiSnp{\Y\tran \Z \Y + \Y\tran \H \Y} -  \PiSnp{\Y\tran \Z \Y} - \Y\tran \H \Y} \label{eq:error-bound:generalization-2} \\
    =&\ \normtwo{\PiSnp{\Y\tran \Z \Y + \tH} -  \PiSnp{\Y\tran \Z \Y} - \tH} \nonumber \\
    \le&\ \alpha_{\mathrm{EB}} \cdot \normtwo{\tHO} \cdot \normtwo{\tH}, \label{eq:error-bound:generalization-3}
\end{align}
\end{subequations}
where~\eqref{eq:error-bound:generalization-1} follows from the fact that $\normtwo{\Y A} = \normtwo{A}$, for any matrix $A$,~\eqref{eq:error-bound:generalization-2} uses $\PiSnp{\Y\tran X \Y} = \Y\tran \PiSnp{X} \Y$, and~\eqref{eq:error-bound:generalization-3} holds since $\Y\tran \Z \Y$ is diagonal.


\section{Numerical Experiments}
\label{sec:exp}

In this section, numerical evidence is reported to support our theoretical findings. In particular, numerical experiments are conducted to demonstrate the following.
\begin{enumerate}
    \item Local (R-)linear convergence is observed, regardless of the (non)degeneracy of the SDP.

    \item The established (R-)linear rate of convergence (\eg in \cref{thm:conv-nd} and \cref{lem:conv-nnd-blk}) is numerically tight.

    \item When SC is close to failure, ADMM for SDP may be extremely slow and no clear linear convergence can be observed within the stated computational budget.
\end{enumerate}

Experiments are performed on a high-performance workstation equipped with a 2.7 GHz AMD 64-Core sWRX8 Processor and 1 TB of RAM. For the standard SDP~\eqref{eq:intro-sdp}, we denote primal infeasibility $r_{\mathrm p}$, dual infeasibility $r_{\mathrm d}$, and relative gap $r_{\mathrm{gap}}$ as:
\[
    r_{\mathrm p} := \frac{\normtwo{\Asdp X - b}}{1 + \normtwo{b}}, \quad r_{\mathrm d} := \frac{\normF{\AsdpT y + S - C}}{1 + \normF{C}}, \quad r_{\mathrm{gap}} := \frac{\abs{\inprod{C}{X} - b\tran y}}{1 + \abs{\inprod{C}{X}} + \abs{b\tran y}},
\]
and define the maximum KKT residual $\rmax := \max \{r_{\mathrm p}, r_{\mathrm d}, r_{\mathrm{gap}} \}$. Unless specified, the stopping criteria are $\rmax \leq 10^{-10}$, or the maximum iteration number goes beyond $10^6$, or the CPU time exceeds $100$ hours. \Cref{tab:instance} presents the data for all the tested SDP instances. The strict complementarity condition is checked by computing $\lammin{\abs{\Zs}}$, the smallest eigenvalues of $\Zs$ in absolute values.

\begin{table}
    \begin{tabular}{cc}
        \centering
        \noindent
        \begin{minipage}{0.49\textwidth}
            \centering
            \begin{tabular}{|c|c|c|c|}
                \hline
                & $n$ & $m$ & $\sigma$ \\
                \hline 
                \texttt{MAXCUT-G*} & $800$ & $800$ & $1$ \\
                \hline 
                \texttt{hamming-10-2} & $1024$ & $23041$ & $0.01$ \\
                \hline 
                \texttt{hamming-7-5-6} & $128$ & $1793$ & $0.01$ \\
                \hline 
                \texttt{hamming-9-5-6} & $512$ & $53761$ & $0.01$ \\
                \hline 
                \texttt{theta-102} & $500$ & $37467$ & $1$ \\
                \hline 
                \texttt{XM-48} & $144$ & $241$ & $100$ \\
                \hline 
                \texttt{XM-149} & $447$ & $746$ & $100$ \\
                \hline
                \texttt{BQP-r*-30-*} & $496$ & $91326$ & $100$ \\
                \hline 
                \texttt{QS-20} & $231$ & $16402$ & $100$ \\
                \hline 
                \texttt{QS-40} & $861$ & $236202$ & $100$ \\
                \hline
                \texttt{Quasar-200} & $804$ & $122601$ & $100$ \\
                \hline
                \texttt{swissroll} & $800$ & $3380$ & $1$ \\
                \hline
                \texttt{1dc-1024} & $1024$ & $24064$ & $100$ \\
                \hline
                \texttt{neosfbr25} & $577$ & $14376$ & $1$ \\
                \hline
            \end{tabular}
        \end{minipage}

        \begin{minipage}{0.49\textwidth}
            \centering
            \begin{tabular}{|c|c|c|c|}
                \hline
                & $n$ & $m$ & $\sigma$ \\
                \hline 
                \texttt{hamming-9-8} & $512$ & $2305$ & $0.01$ \\
                \hline 
                \texttt{hamming-11-2} & $2048$ & $56321$ & $0.01$ \\
                \hline 
                \texttt{hamming-8-3-4} & $256$ & $16129$ & $0.01$ \\
                \hline 
                \texttt{theta-12} & $600$ & $17979$ & $1$ \\
                \hline 
                \texttt{theta-123} & $600$ & $90020$ & $1$ \\
                \hline 
                \texttt{XM-93} & $279$ & $466$ & $100$ \\
                \hline 
                \texttt{BQP-r*-20-*} & $231$ & $20601$ & $100$ \\
                \hline 
                \texttt{BQP-r*-40-*} & $861$ & $296001$ & $100$ \\
                \hline 
                \texttt{QS-30} & $496$ & $77377$ & $100$ \\
                \hline
                \texttt{Quasar-100} & $404$ & $31301$ & $100$ \\
                \hline
                \texttt{Quasar-500} & $2004$ & $756501$ & $100$ \\
                \hline
                \texttt{cnhil10} & $220$ & $5005$ & $0.01$ \\
                \hline
                \texttt{rose13} & $105$ & $2379$ & $1$ \\
                \hline
                & & & \\
                \hline
            \end{tabular}
        \end{minipage}
    \end{tabular}
    \caption{Details about all the tested SDP instances. $n$ is the size of the matrix, $m$ is the number of equality constraints, and $\sigma$ is the fixed penalty paramater in ADMM.}
    \label{tab:instance}
\end{table}

\subsection{Demonstration of Local Linear Convergence}

In this section, we solve a considerable number of SDPs arising from various applications. In all the experiments, local linear convergence of ADMM is clearly observed, regardless of the (non)degeneracy of the SDPs.
\begin{itemize}
    \item \textbf{MAXCUT~\cite{davis2011acm-florida-sparse-matrix-collection}.}
    \Cref{fig:maxcut} reports three representative examples. In all three cases, strict complementarity holds numerically and ADMM enters the linear convergence region rather quickly.

\begin{figure}[tbp]
    \centering

    \begin{minipage}{\textwidth}
        \centering
        \hspace{5mm} \includegraphics[width=0.35\columnwidth]{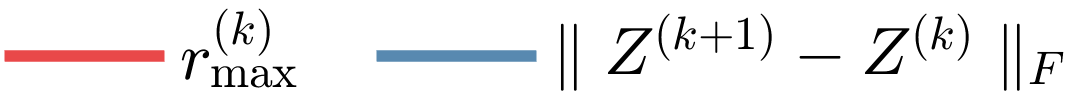}
    \end{minipage}

    \begin{minipage}{\textwidth}
        \centering
        \begin{tabular}{ccc}
            \begin{minipage}{0.30\textwidth}
                \centering
                \includegraphics[width=\columnwidth]{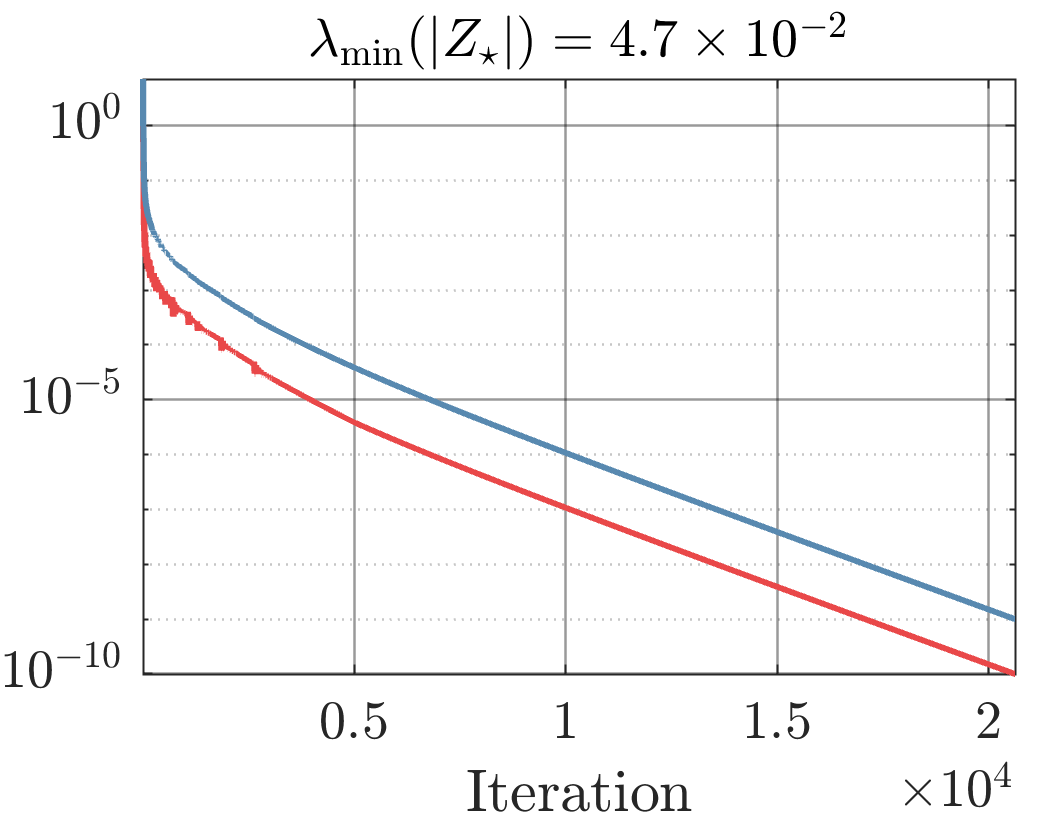}
                \texttt{MAXCUT-G1}
            \end{minipage}

            \begin{minipage}{0.30\textwidth}
                \centering
                \includegraphics[width=\columnwidth]{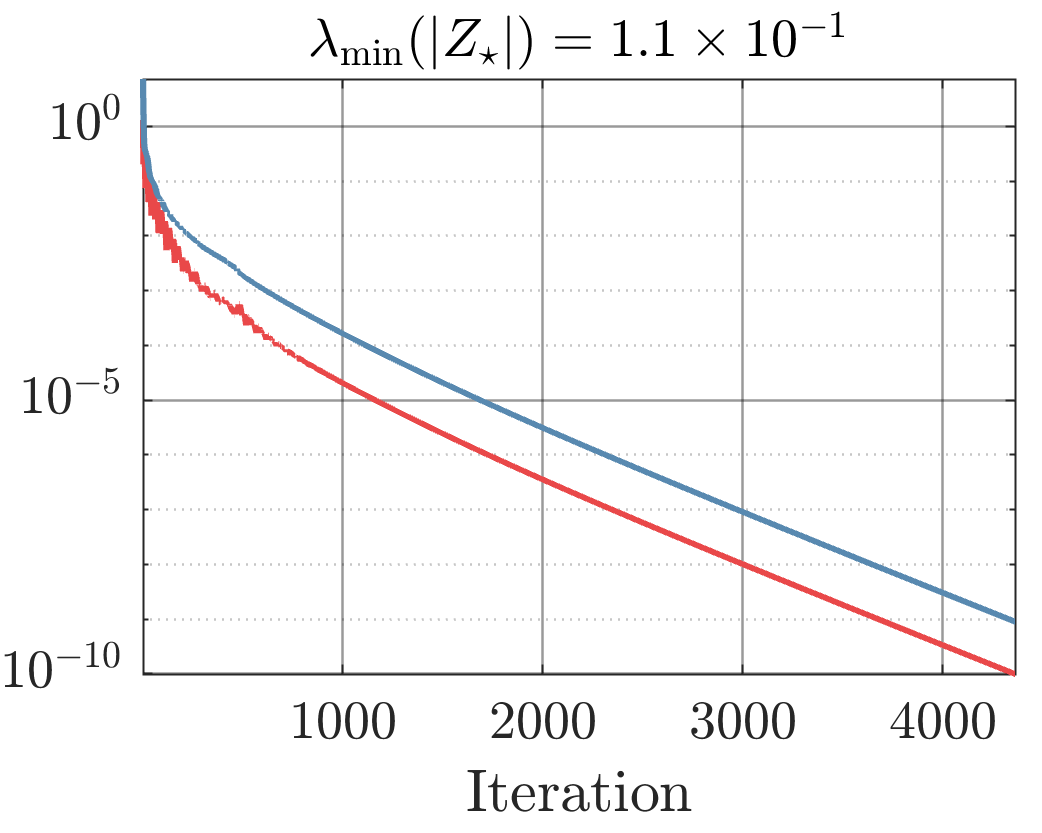}
                \texttt{MAXCUT-G9}
            \end{minipage}

            \begin{minipage}{0.30\textwidth}
                \centering
                \includegraphics[width=\columnwidth]{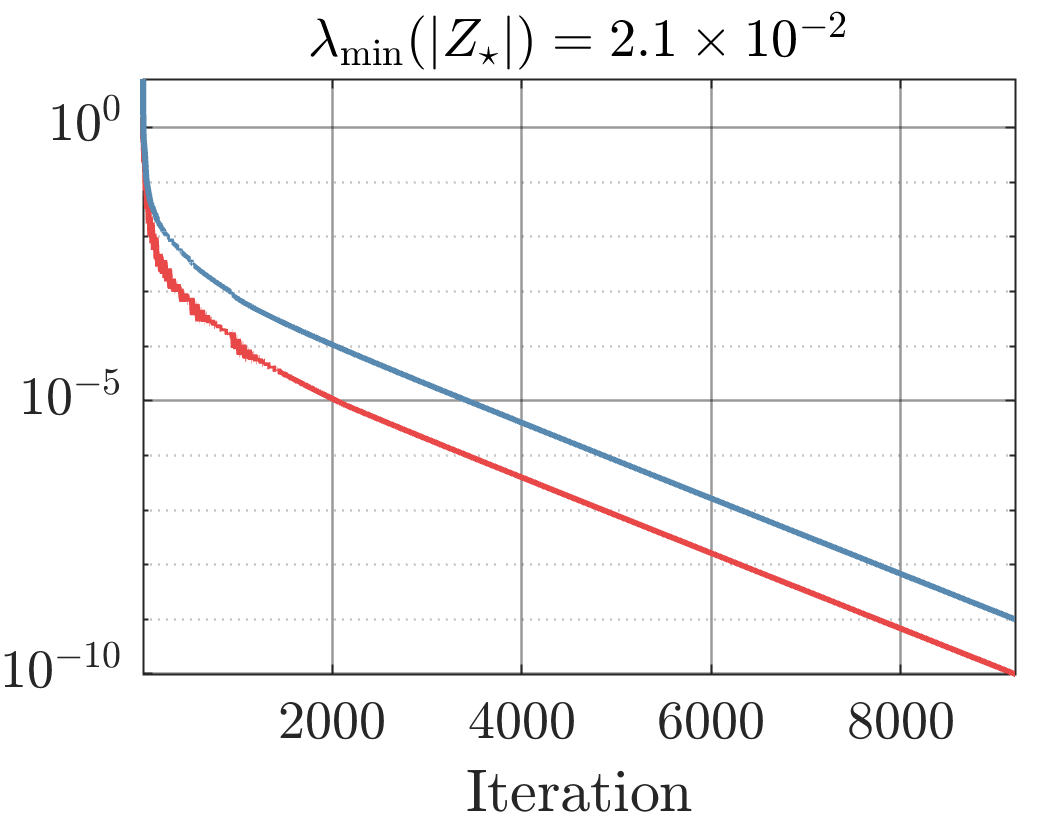}
                \texttt{MAXCUT-G18}
            \end{minipage}
        \end{tabular}
    \end{minipage}
    \vspace{1mm}

    \caption{MAXCUT problems with with random (standard Gaussian) initial guess. In all cases, the converging $\Zs$ is nonsingular. \label{fig:maxcut}}
\end{figure}

    \item \textbf{Hamming set problems~\cite{abor1999dataset-dimacs}.}
    \Cref{fig:hamming} reports three representative examples. In all three cases, strict complementarity holds numerically.

\begin{figure}[tbp]
    \centering

    \begin{minipage}{\textwidth}
        \centering
        \hspace{5mm} \includegraphics[width=0.35\columnwidth]{figs/legends/legend_rmax_dZ.png}
    \end{minipage}

    \begin{minipage}{\textwidth}
        \centering
        \begin{tabular}{ccc}
            \begin{minipage}{0.30\textwidth}
                \centering
                \includegraphics[width=\columnwidth]{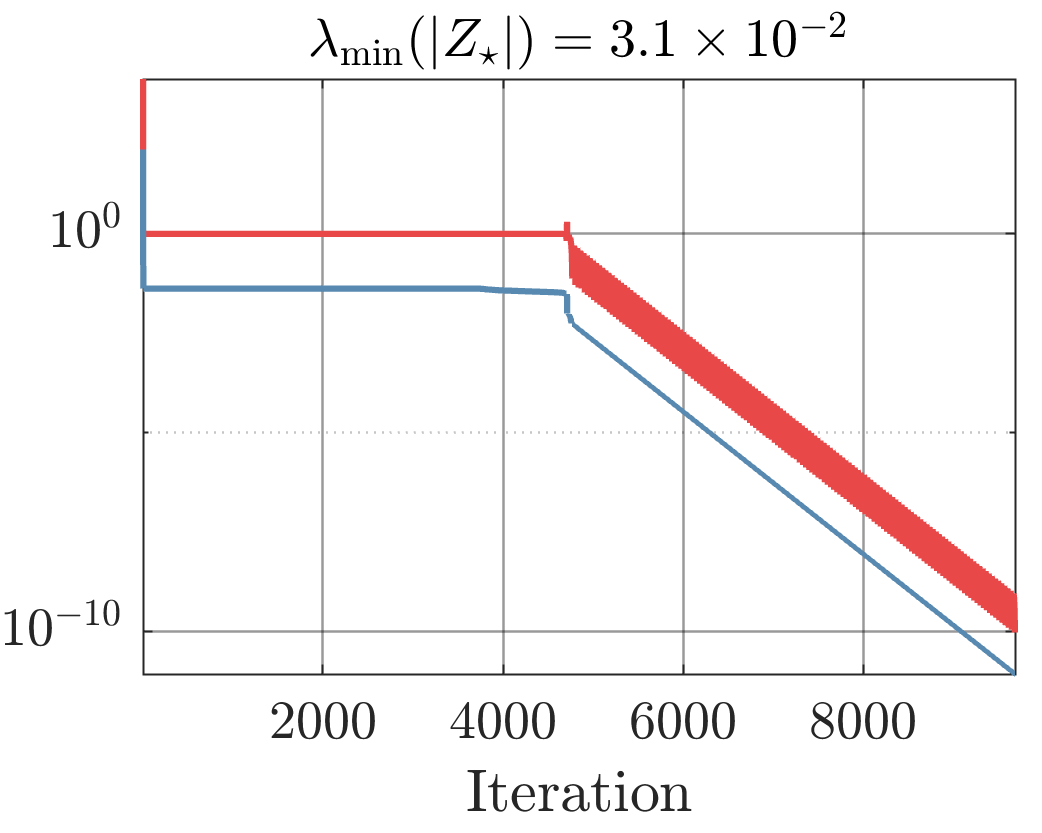}
                \texttt{hamming-9-8}
            \end{minipage}

            \begin{minipage}{0.30\textwidth}
                \centering
                \includegraphics[width=\columnwidth]{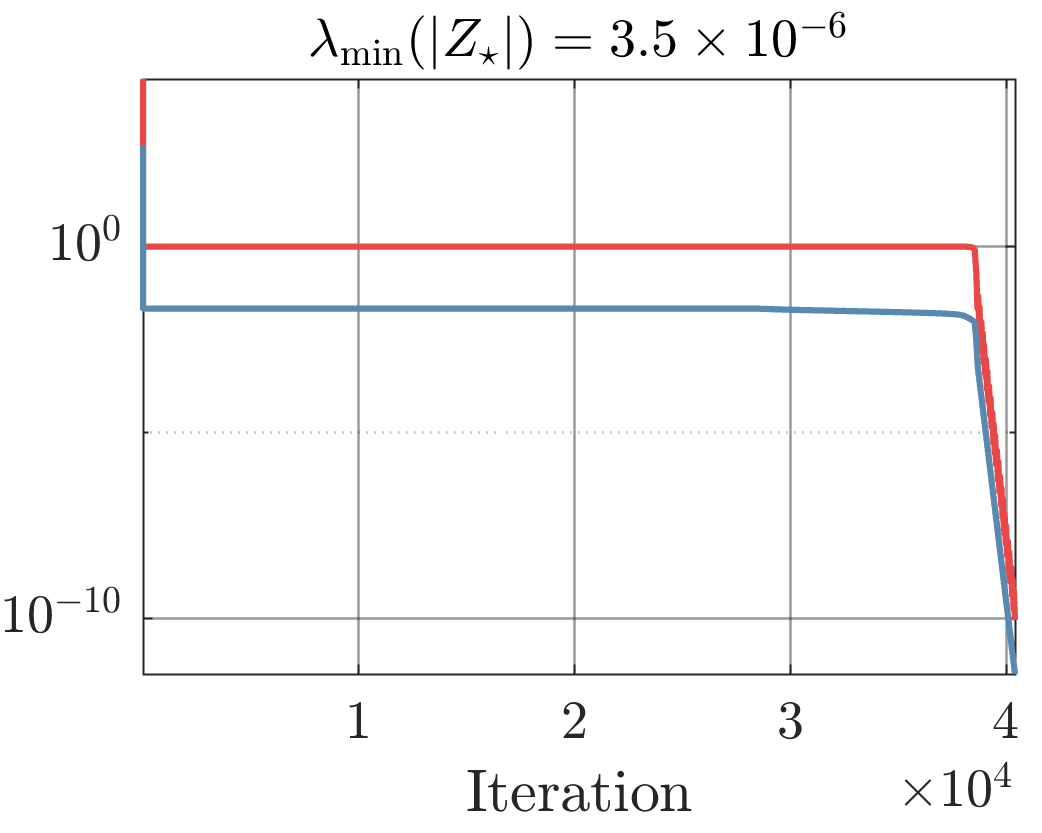}
                \texttt{hamming-11-2}
            \end{minipage}

            \begin{minipage}{0.30\textwidth}
                \centering
                \includegraphics[width=\columnwidth]{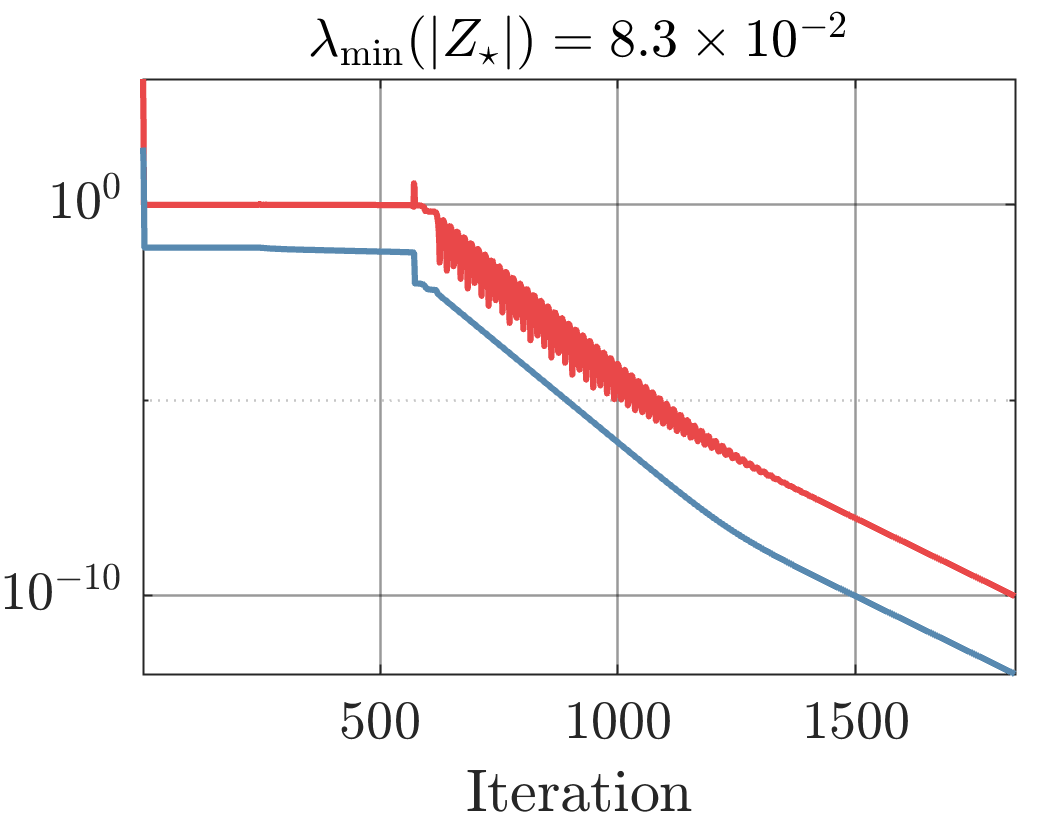}
                \texttt{hamming-7-5-6}
            \end{minipage}
        \end{tabular}
    \end{minipage}

    \caption{Additional Hamming graph problems with with random (standard Gaussian) initial guess. In all cases, the converging $\Zs$ is nonsingular.}
    \label{fig:hamming}
\end{figure}

    \item \textbf{Maximum stable set problems~\cite{mittelmann2006dataset-sparse-sdp-problems}.}
    \Cref{fig:theta} reports three representative examples. In all three cases, strict complementarity holds numerically.

\begin{figure}[htbp]
    \centering

    \begin{minipage}{\textwidth}
        \centering
        \hspace{5mm} \includegraphics[width=0.35\columnwidth]{figs/legends/legend_rmax_dZ.png}
    \end{minipage}

    \begin{minipage}{\textwidth}
        \centering
        \begin{tabular}{ccc}
            \begin{minipage}{0.30\textwidth}
                \centering
                \includegraphics[width=\columnwidth]{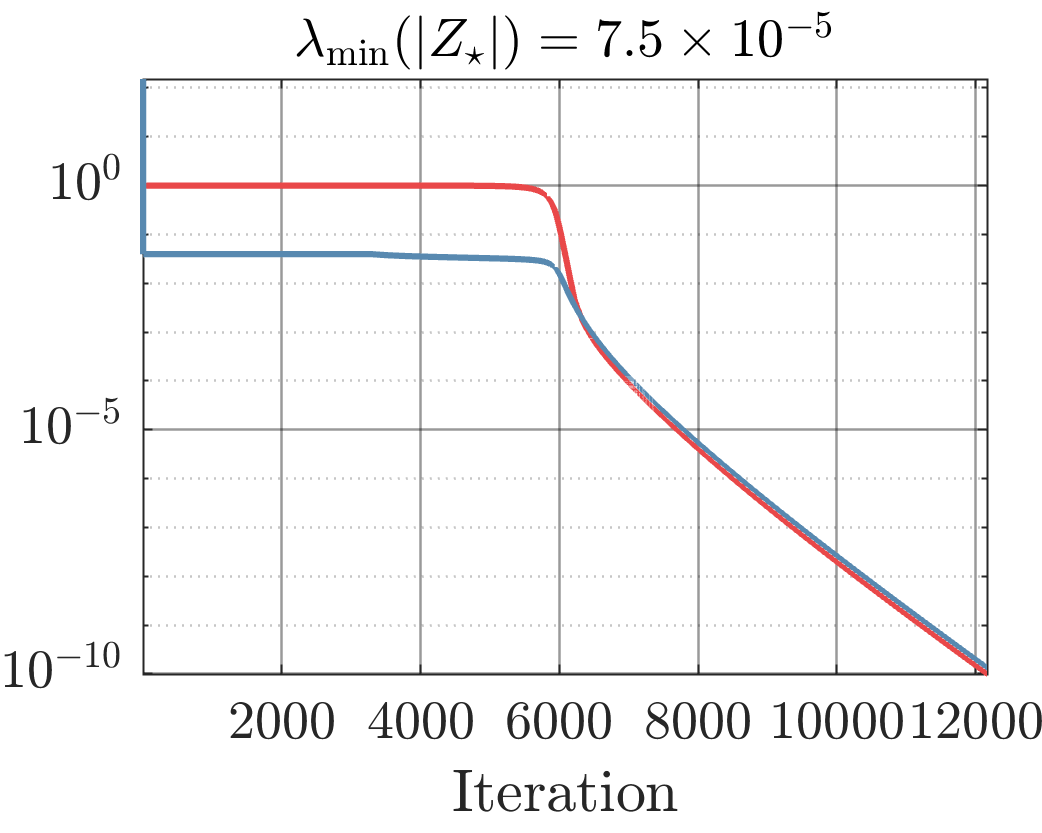}
                \texttt{theta-12}
            \end{minipage}

            \begin{minipage}{0.30\textwidth}
                \centering
                \includegraphics[width=\columnwidth]{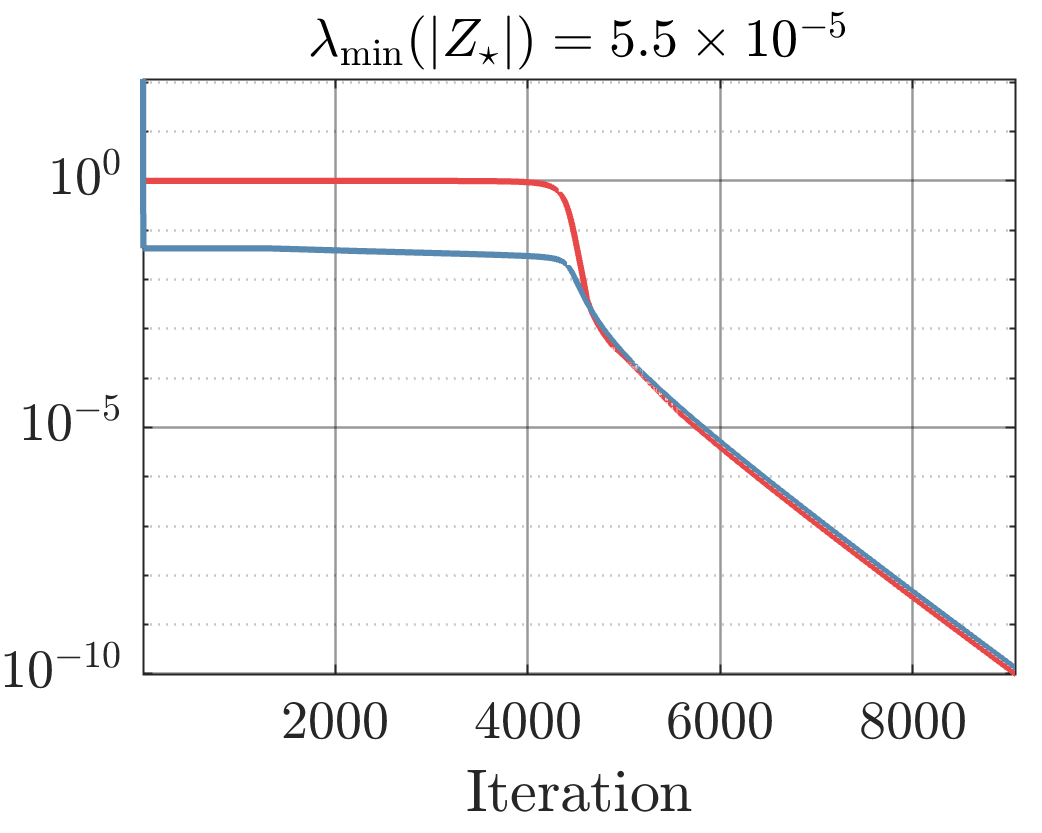}
                \texttt{theta-102}
            \end{minipage}

            \begin{minipage}{0.30\textwidth}
                \centering
                \includegraphics[width=\columnwidth]{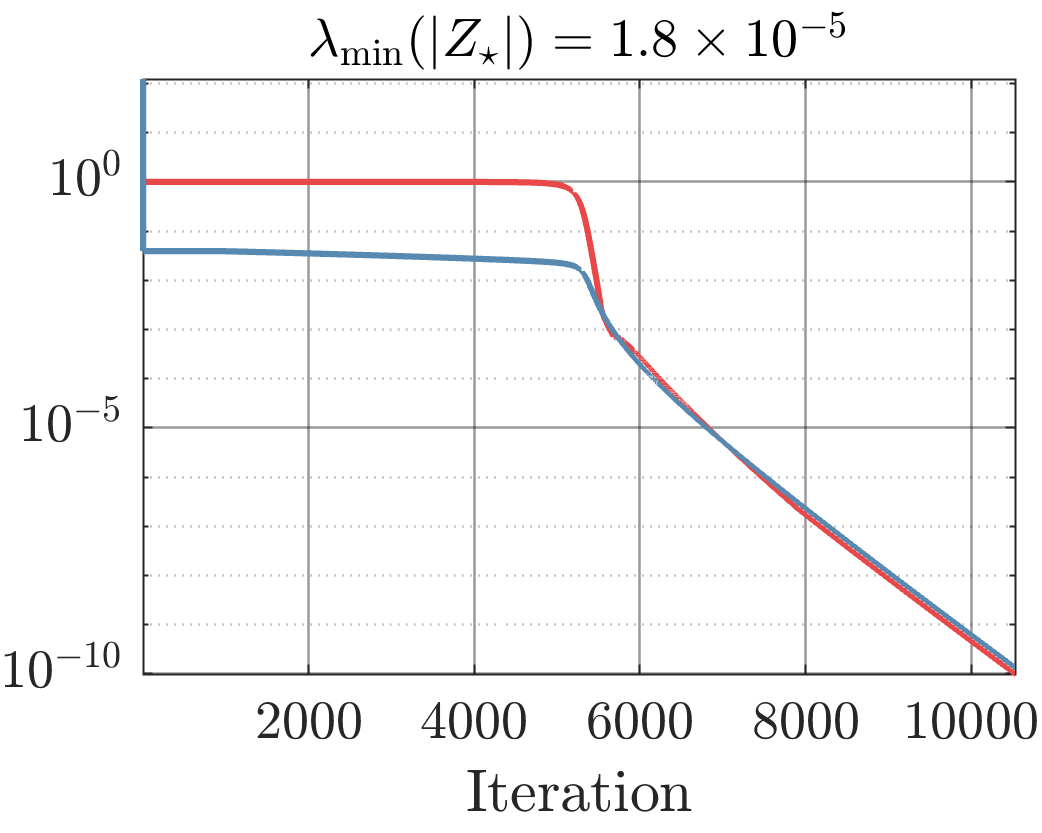}
                \texttt{theta-123}
            \end{minipage}
        \end{tabular}
    \end{minipage}

    \caption{Maximum stable set problems with with random (standard Gaussian) initial guess. In all cases, the converging $\Zs$ is nonsingular. \label{fig:theta}}
\end{figure}

    \item \textbf{Structure from motion problems~\cite{han2025arxiv-xm}.}
    \Cref{fig:XM} reports three representative examples. In all three cases, strict complementarity holds numerically. For \texttt{XM-48} and \texttt{XM-93}, the maximum KKT residual $\rmax$ can only reach $10^{-8}$ due to numerical errors from eigenvalue decomposition and the unbalance between infeasibility and relative gap.

\begin{figure}[tbp]
    \centering

    \begin{minipage}{\textwidth}
        \centering
        \hspace{5mm} \includegraphics[width=0.35\columnwidth]{figs/legends/legend_rmax_dZ.png}
    \end{minipage}

    \begin{minipage}{\textwidth}
        \centering
        \begin{tabular}{ccc}
            \begin{minipage}{0.30\textwidth}
                \centering
                \includegraphics[width=\columnwidth]{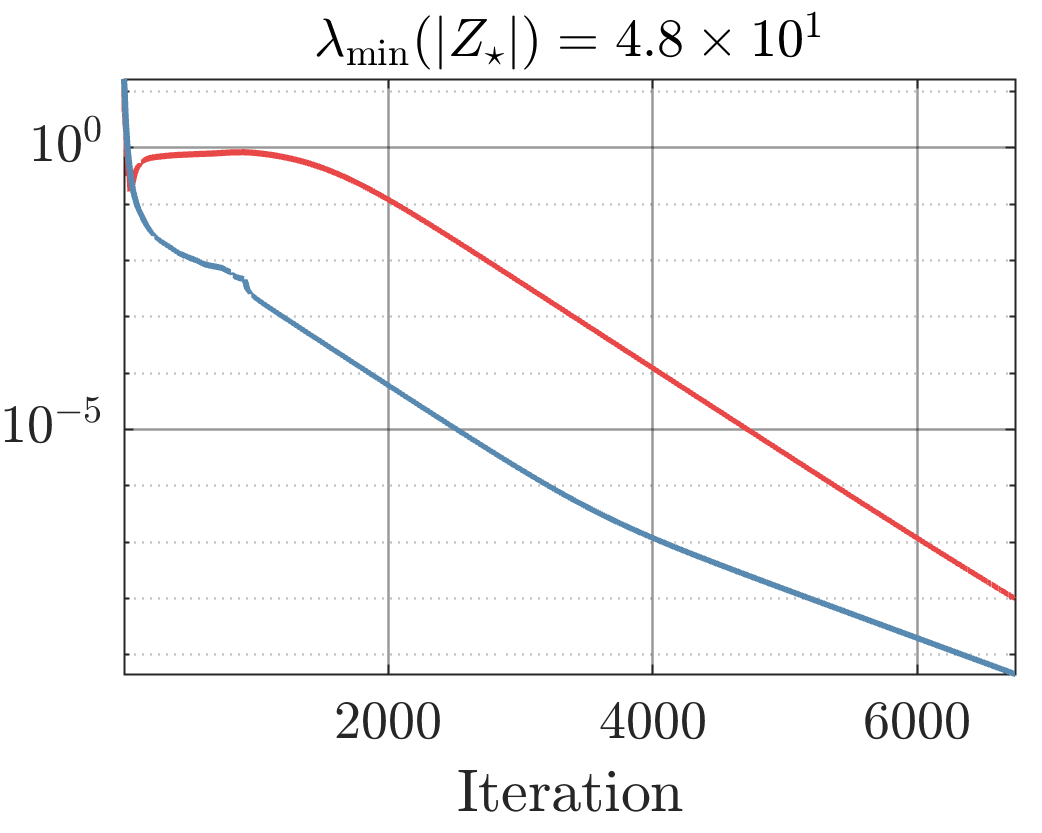}
                \texttt{XM-48}
            \end{minipage}

            \begin{minipage}{0.30\textwidth}
                \centering
                \includegraphics[width=\columnwidth]{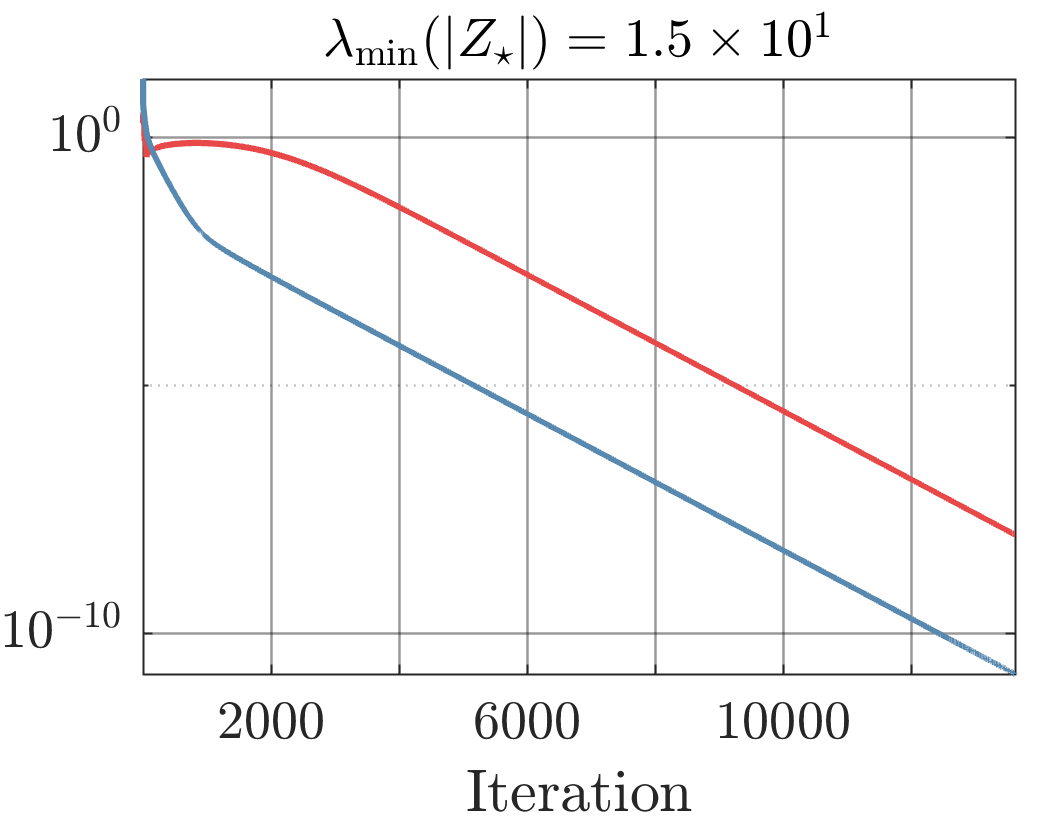}
                \texttt{XM-93}
            \end{minipage}

            \begin{minipage}{0.30\textwidth}
                \centering
                \includegraphics[width=\columnwidth]{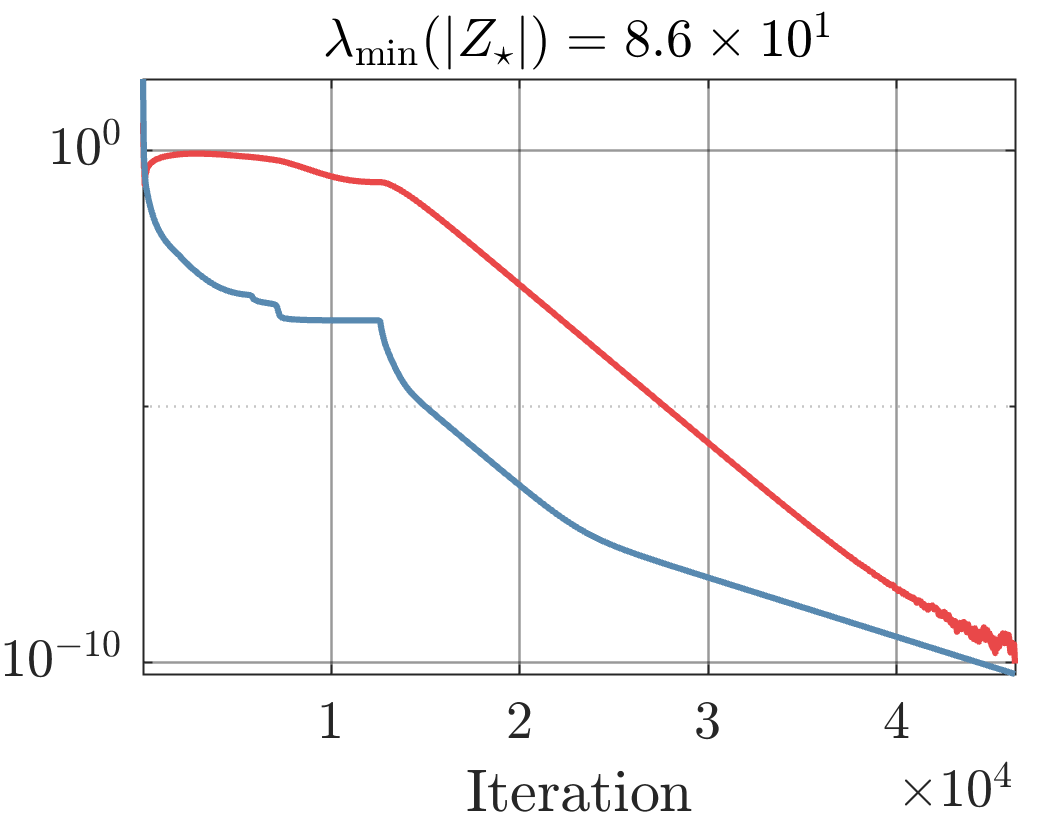}
                \texttt{XM-149}
            \end{minipage}
        \end{tabular}
    \end{minipage}

    \caption{Structure-from-motion problems with with random (standard Gaussian) initial guess. In all cases, the converging $\Zs$ is nonsingular. \label{fig:XM}}
\end{figure}

    \item \textbf{Binary quadratic programming (BQP).}
    We consider the second-order moment-sum-of-squares (moment-SOS) relaxation \cite{lasserre2001siopt-global} of the following polynomial optimization problem
    \[
        \begin{array}{ll}
            \text{minimize} & \frac{1}{2} x\tran Q x + c\tran x \\
            \text{subject to} & 1 - x_i^2 = 0, \;\; i \in [n],
        \end{array}
    \]
    where the optimization variable is $x \in \Real{n}$, and the data are $Q \in \Sn$ and $c \in \Real{n}$. Depending on the data, nondegeneracy (ND) and strict complementarity (SC) conditions may or may not hold.
    \begin{itemize}
        \item \textbf{Case 1: primal ND fails and SC holds.}
        When $c \sim \calN(0,\I[n])$, the SDP relaxation is empirically tight and the primal optimal solution has rank one \cite{yang2023mp-stride,wang2023arxiv-manisdp}. In this case, primal nondegeneracy fails. \Cref{fig:BQP-r1} reports three representative examples with random (standard Gaussian) initial guess.

\begin{figure}[tbp]
    \centering

    \begin{minipage}{\textwidth}
        \centering
        \hspace{5mm} \includegraphics[width=0.35\columnwidth]{figs/legends/legend_rmax_dZ.png}
    \end{minipage}

    \begin{minipage}{\textwidth}
        \centering
        \begin{tabular}{ccc}
            \begin{minipage}{0.30\textwidth}
                \centering
                \includegraphics[width=\columnwidth]{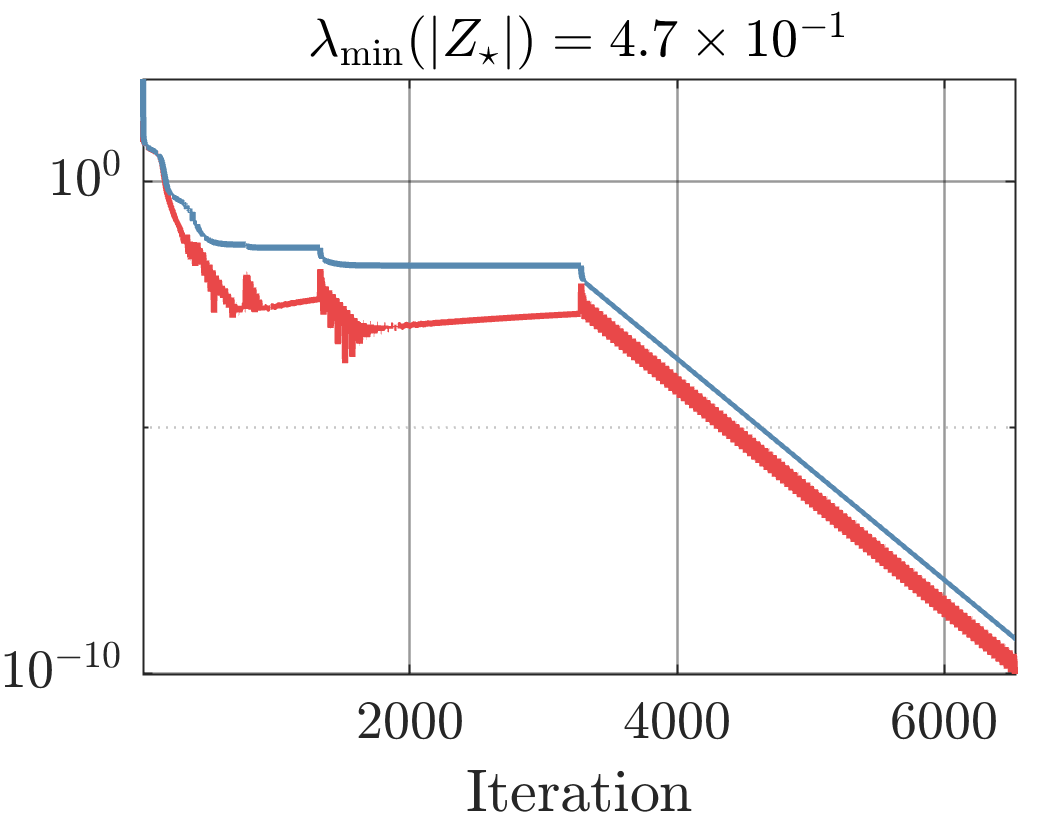}
                \texttt{BQP-r1-20-1}
            \end{minipage}

            \begin{minipage}{0.30\textwidth}
                \centering
                \includegraphics[width=\columnwidth]{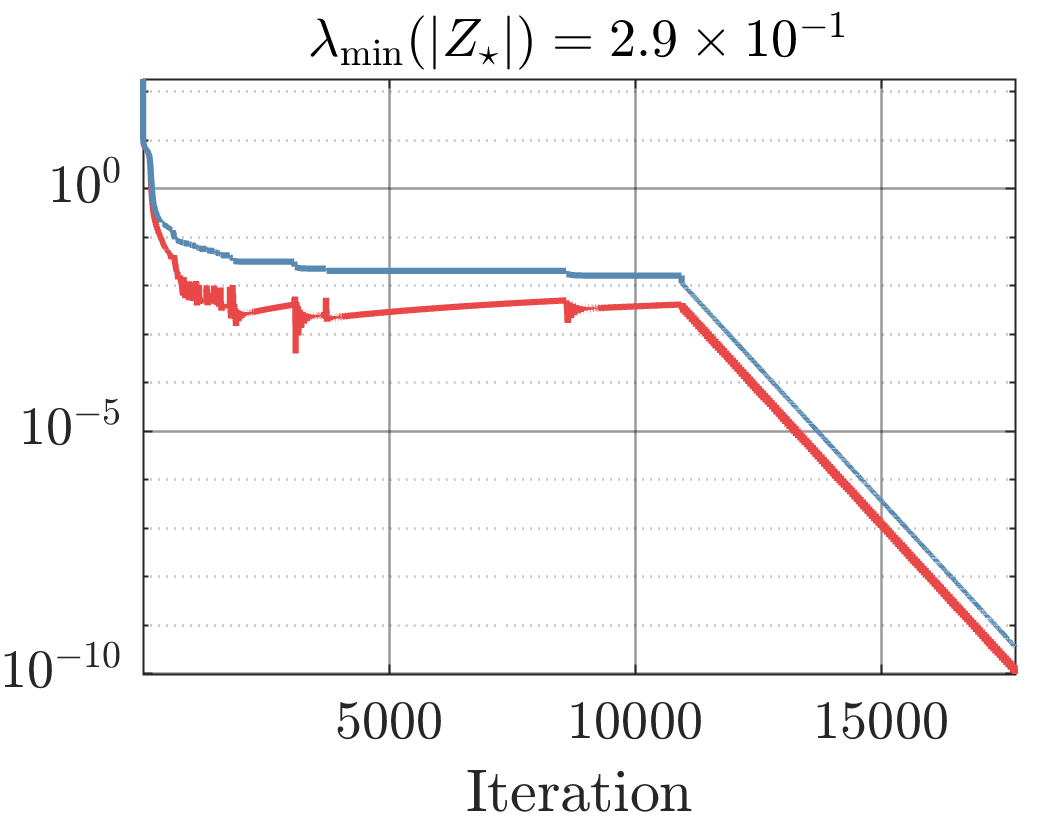}
                \texttt{BQP-r1-30-1}
            \end{minipage}

            \begin{minipage}{0.30\textwidth}
                \centering
                \includegraphics[width=\columnwidth]{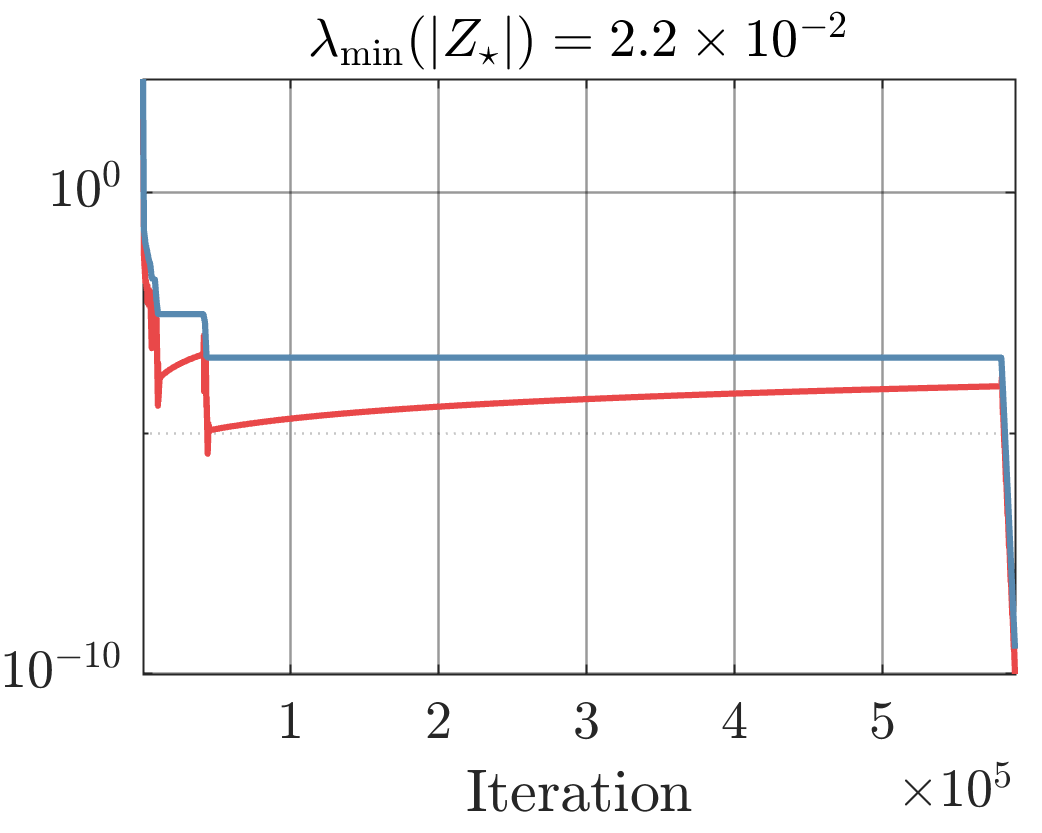}
                \texttt{BQP-r1-40-1}
            \end{minipage}
        \end{tabular}
    \end{minipage}

    \caption{Random BQP problems with $c \sim \calN(0, I_n)$ with random (standard Gaussian) initial guess. In all cases, the converging $\Zs$ is nonsingular. \label{fig:BQP-r1}}
\end{figure}

        \item \textbf{Case 2: primal ND fails and SC fails.}
        We test the same BQP instances with all-zeros initialization. As shown in \cref{fig:BQP-r1-zero}, strict complementarity seems to fail. Nonetheless, the failure of strict complementarity does not deteriorate the linear convergence rate.

\begin{figure}
    \centering

    \begin{minipage}{\textwidth}
        \centering
        \hspace{5mm} \includegraphics[width=0.35\columnwidth]{figs/legends/legend_rmax_dZ.png}
    \end{minipage}

    \begin{minipage}{\textwidth}
        \centering
        \begin{tabular}{ccc}
            \begin{minipage}{0.30\textwidth}
                \centering
                \includegraphics[width=\columnwidth]{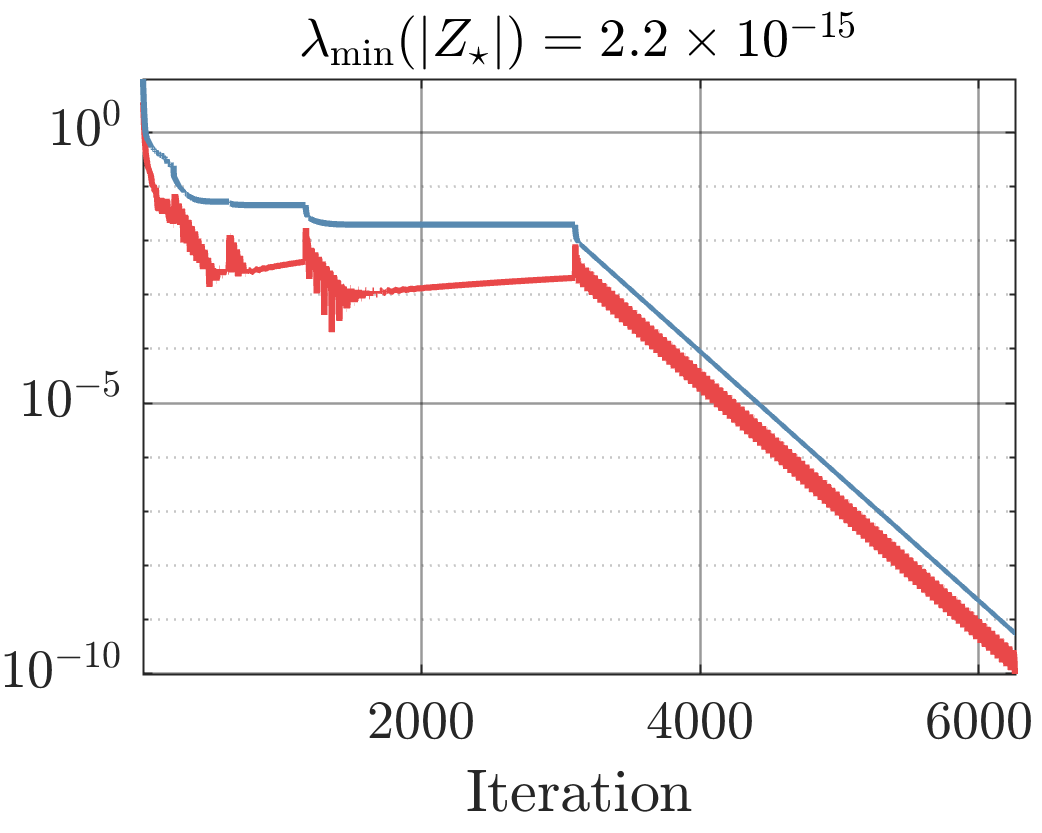}
                \texttt{BQP-r1-20-1}
            \end{minipage}

            \begin{minipage}{0.30\textwidth}
                \centering
                \includegraphics[width=\columnwidth]{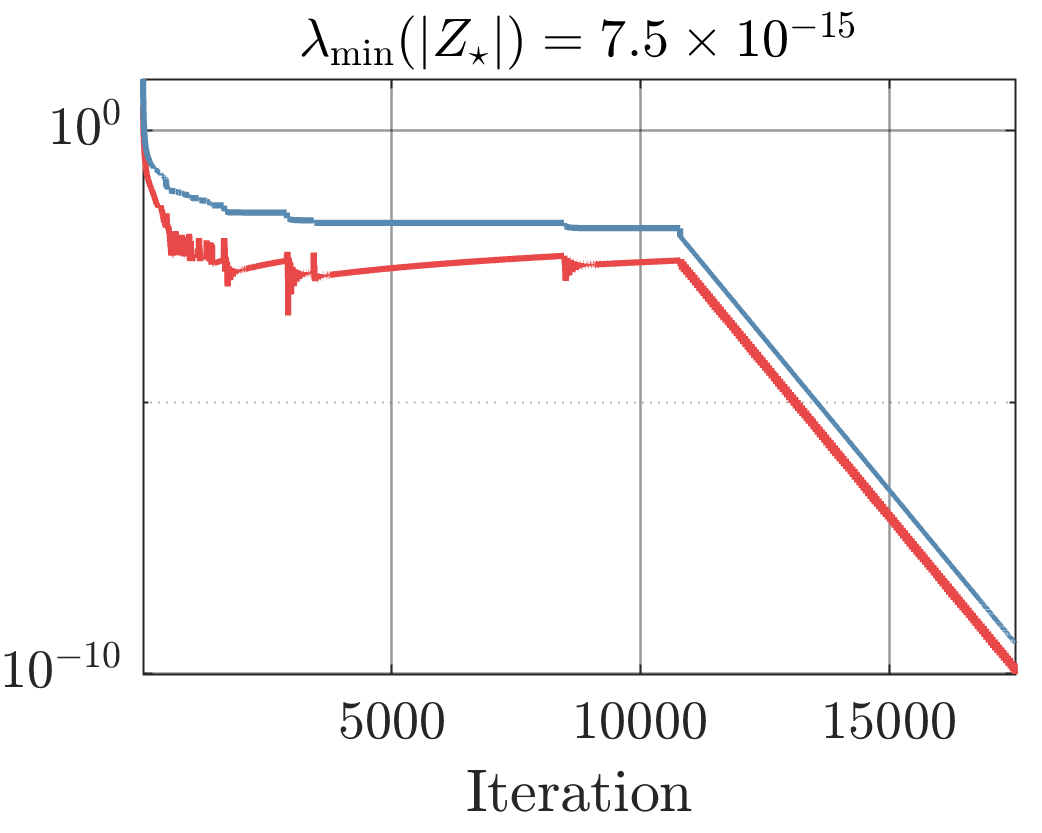}
                \texttt{BQP-r1-30-1}
            \end{minipage}

            \begin{minipage}{0.30\textwidth}
                \centering
                \includegraphics[width=\columnwidth]{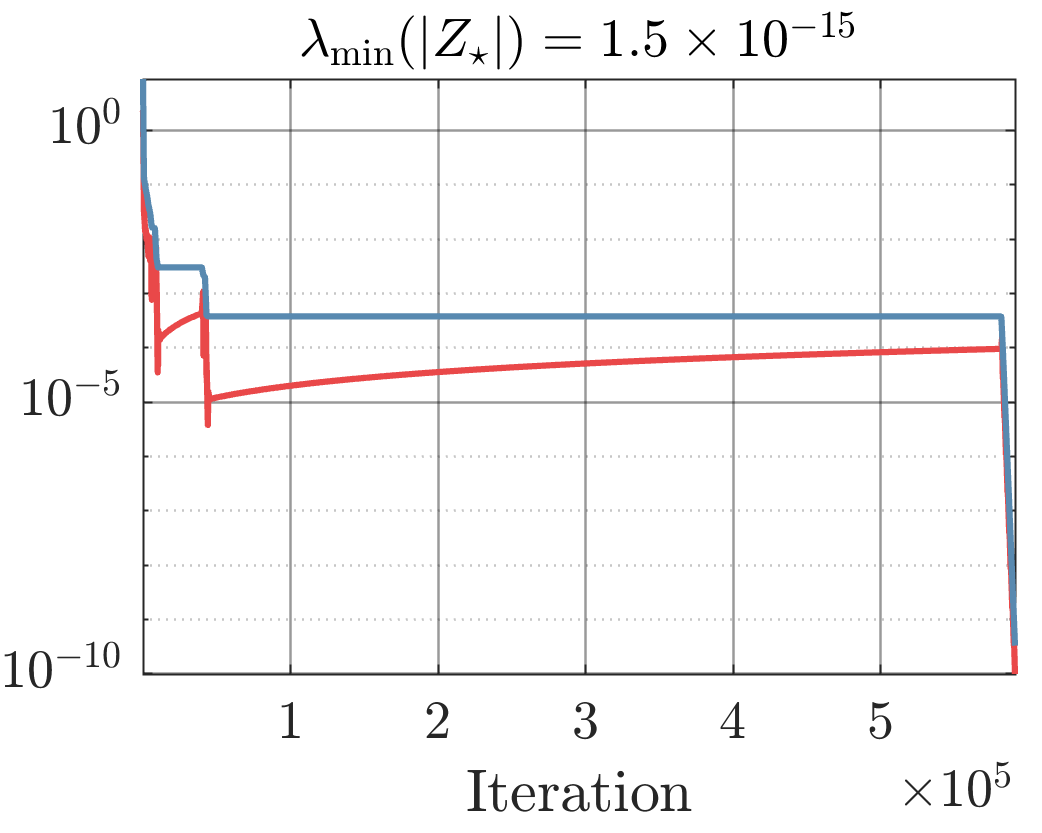}
                \texttt{BQP-r1-40-1}
            \end{minipage}
        \end{tabular}
    \end{minipage}

    \caption{Random BQP problems with $c \sim \calN(0, I_n)$ with all-zero initial guess. In all cases, the converging~$\Zs$ is singular.}
    \label{fig:BQP-r1-zero}
\end{figure}

        \item \textbf{Case 3: both primal and dual ND fail and SC holds.}
        When $c=0$, the SDP relaxation is still empirically tight \cite{yang2023mp-stride,wang2023arxiv-manisdp}. However, the primal optimal solution is no longer unique (due to sign symmetry). In this case, both primal and dual nondegeneracy fail and linear convergence is still observed; see \Cref{fig:BQP-r2}.

\begin{figure}[htbp]
    \centering

    \begin{minipage}{\textwidth}
        \centering
        \hspace{5mm} \includegraphics[width=0.35\columnwidth]{figs/legends/legend_rmax_dZ.png}
    \end{minipage}

    \begin{minipage}{\textwidth}
        \centering
        \begin{tabular}{ccc}
            \begin{minipage}{0.30\textwidth}
                \centering
                \includegraphics[width=\columnwidth]{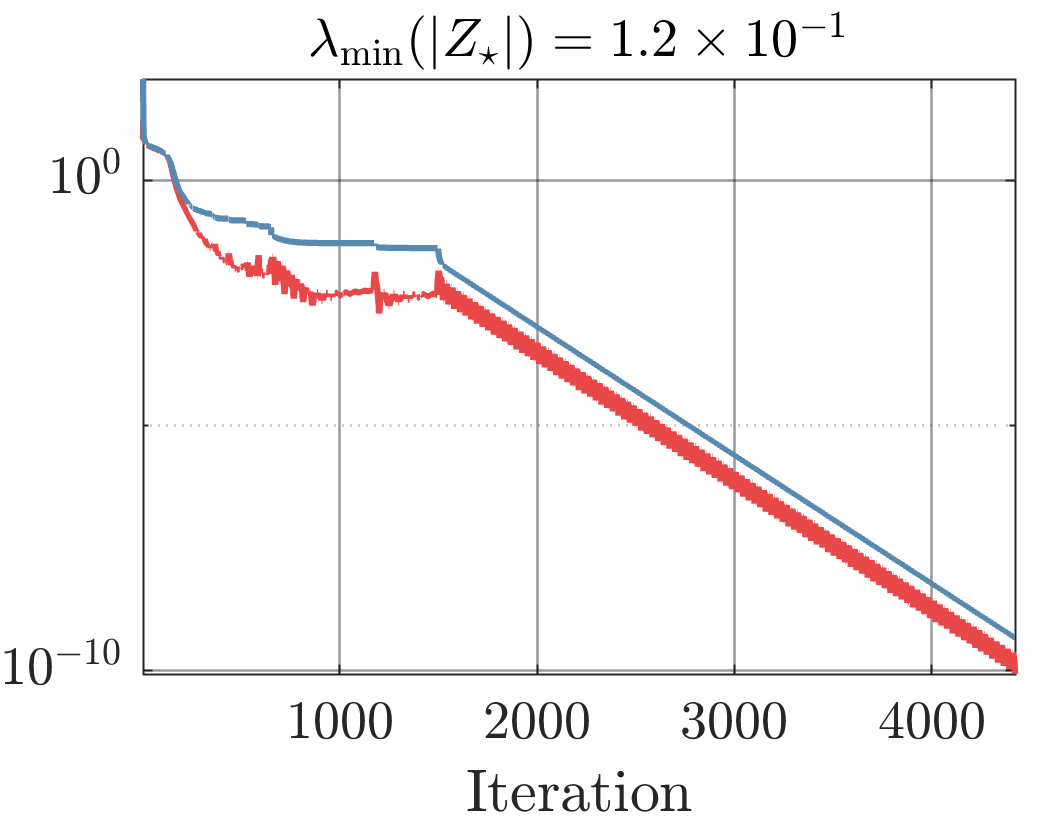}
                \texttt{BQP-r2-20-1}
            \end{minipage}

            \begin{minipage}{0.30\textwidth}
                \centering
                \includegraphics[width=\columnwidth]{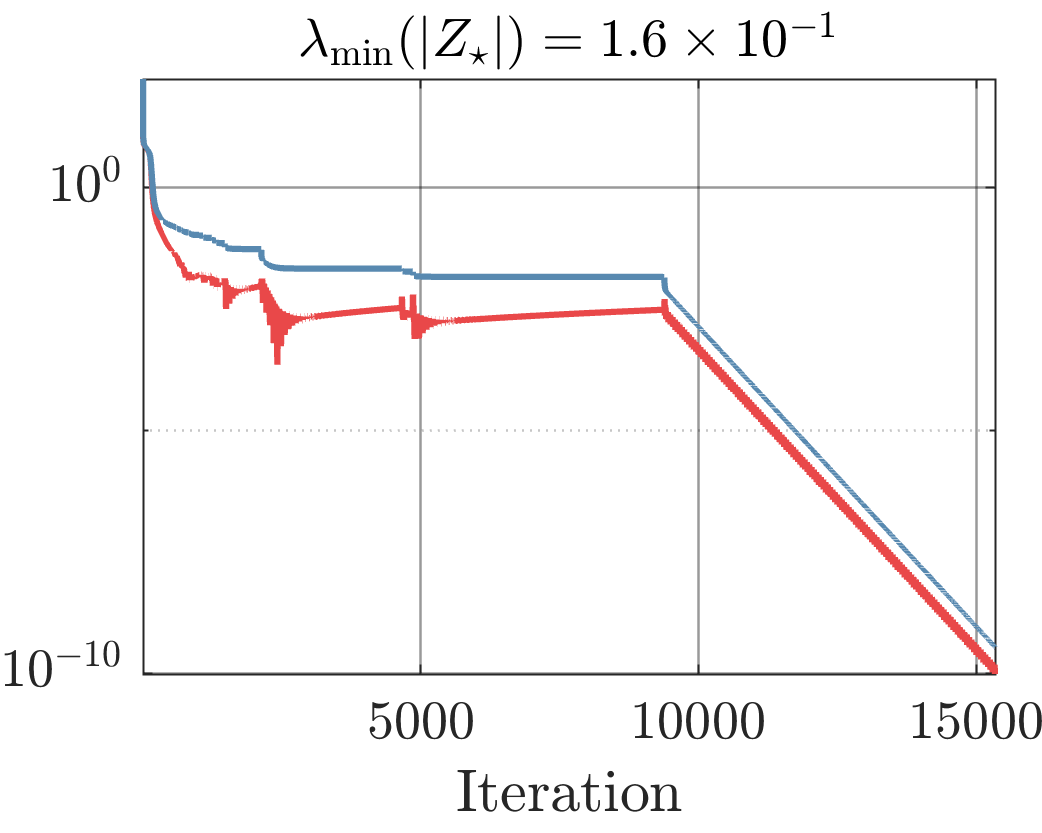}
                \texttt{BQP-r2-30-1}
            \end{minipage}

            \begin{minipage}{0.30\textwidth}
                \centering
                \includegraphics[width=\columnwidth]{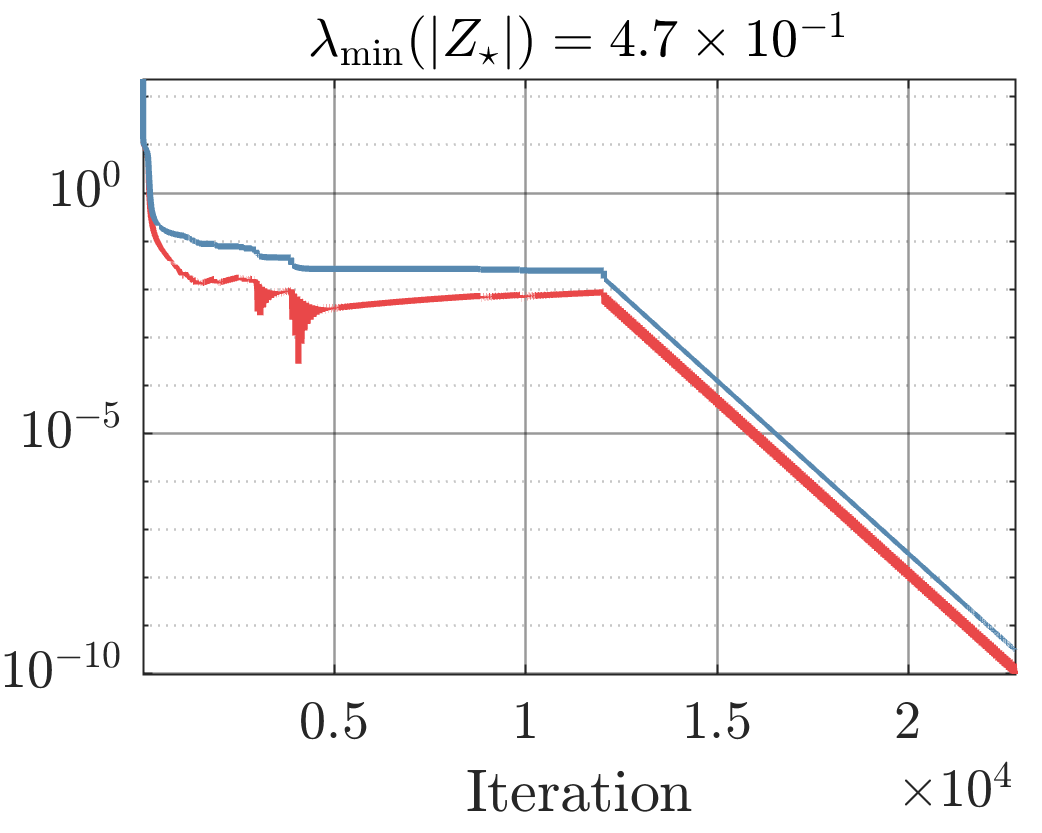}
                \texttt{BQP-r2-40-1}
            \end{minipage}
        \end{tabular}
    \end{minipage}

    \caption{random BQP problems with $c = 0$ with random (standard Gaussian) initial guess, under which both primal and dual nondegeneracy fail. In all cases, the converging $\Zs$ is nonsingular. \label{fig:BQP-r2}}
\end{figure}
    \end{itemize}

    \item \textbf{Quasar problems~\cite{yang2019quaternion}.} 
    In Quasar problems, the primal solution is unique and has rank one. Similar to BQP, primal nondegeneracy always fails in Quasar problems~\cite{yang2019quaternion}. \Cref{fig:quasar} reports three examples, in which strict complementarity holds numerically.

\begin{figure}[tbp]
    \centering

    \begin{minipage}{\textwidth}
        \centering
        \hspace{5mm} \includegraphics[width=0.35\columnwidth]{figs/legends/legend_rmax_dZ.png}
    \end{minipage}

    \begin{minipage}{\textwidth}
        \centering
        \begin{tabular}{ccc}
            \begin{minipage}{0.30\textwidth}
                \centering
                \includegraphics[width=\columnwidth]{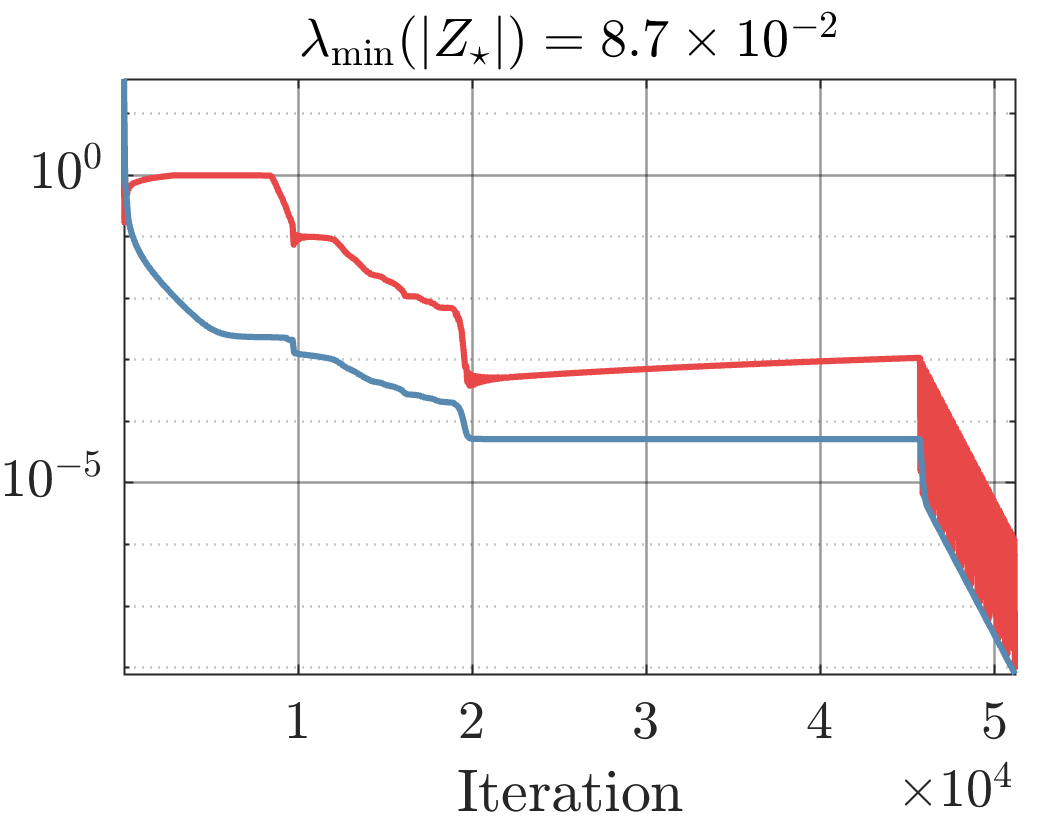}
                \texttt{Quasar-100}
            \end{minipage}

            \begin{minipage}{0.30\textwidth}
                \centering
                \includegraphics[width=\columnwidth]{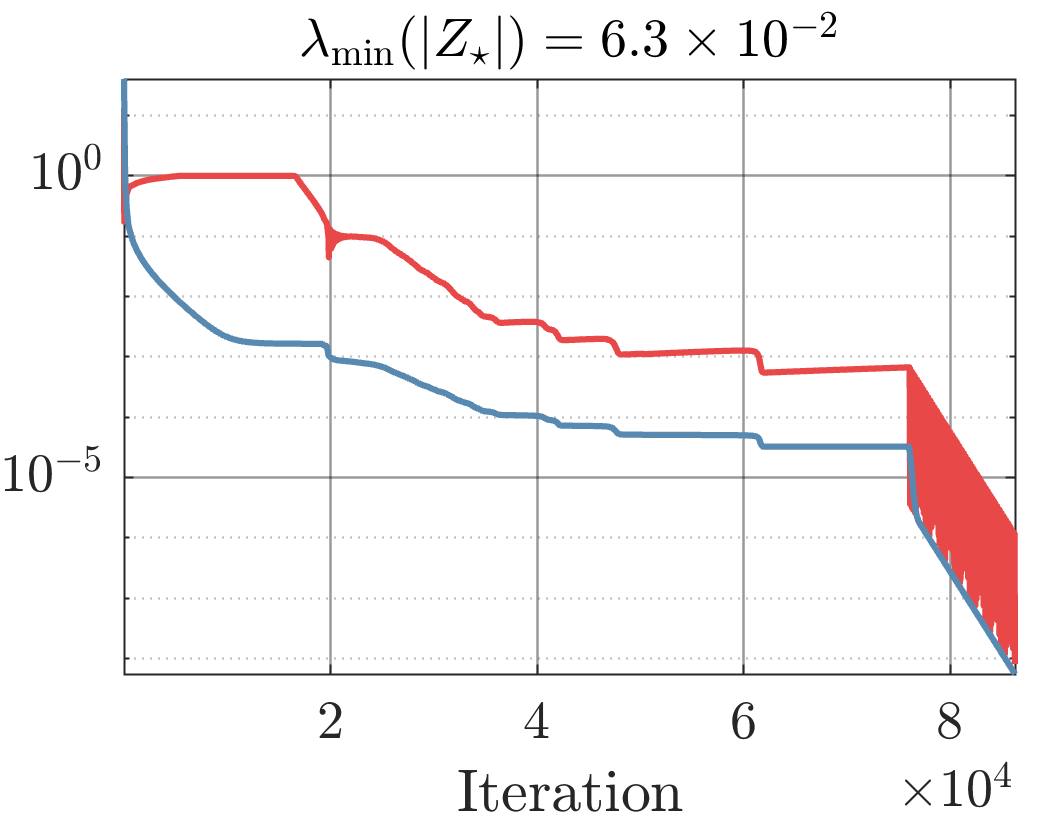}
                \texttt{Quasar-200}
            \end{minipage}

            \begin{minipage}{0.30\textwidth}
                \centering
                \includegraphics[width=\columnwidth]{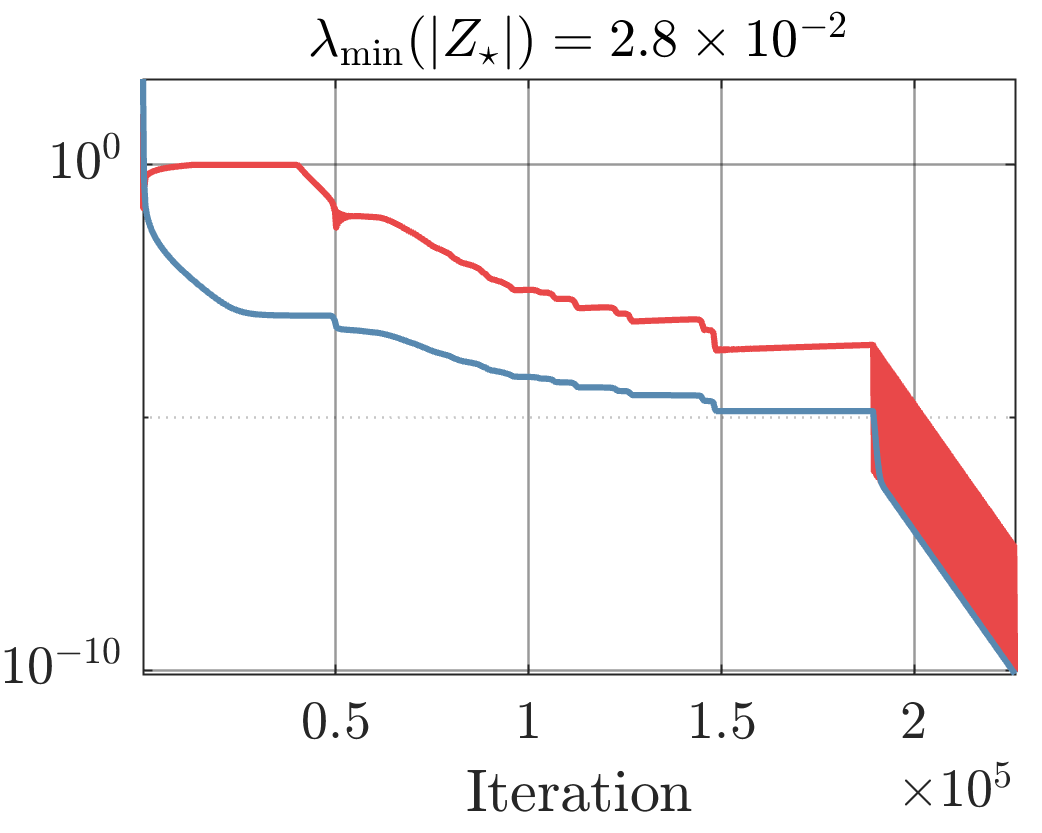}
                \texttt{Quasar-500}
            \end{minipage}
        \end{tabular}
    \end{minipage}

    \caption{Quasar problems with random (standard Gaussian) initial guess. In all cases, the converging $\Zs$ is nonsingular. \label{fig:quasar}}
\end{figure}

    \item \textbf{Quartic function over sphere (QS).}
    Another classical polynomial optimization problem~\cite{yang2023mp-stride,wang2023arxiv-manisdp}. In its second-order relaxation, the primal solution is unique and has rank one. Similar to BQP, primal nondegeneracy of QS always fails. In comparison, \cref{fig:QS} reports three representative examples. In these cases, strict complementarity seems to fail numerically, but linear convergence is still observed.

\begin{figure}[tbp]
    \centering

    \begin{minipage}{\textwidth}
        \centering
        \hspace{5mm} \includegraphics[width=0.35\columnwidth]{figs/legends/legend_rmax_dZ.png}
    \end{minipage}

    \begin{minipage}{\textwidth}
        \centering
        \begin{tabular}{ccc}
            \begin{minipage}{0.30\textwidth}
                \centering
                \includegraphics[width=\columnwidth]{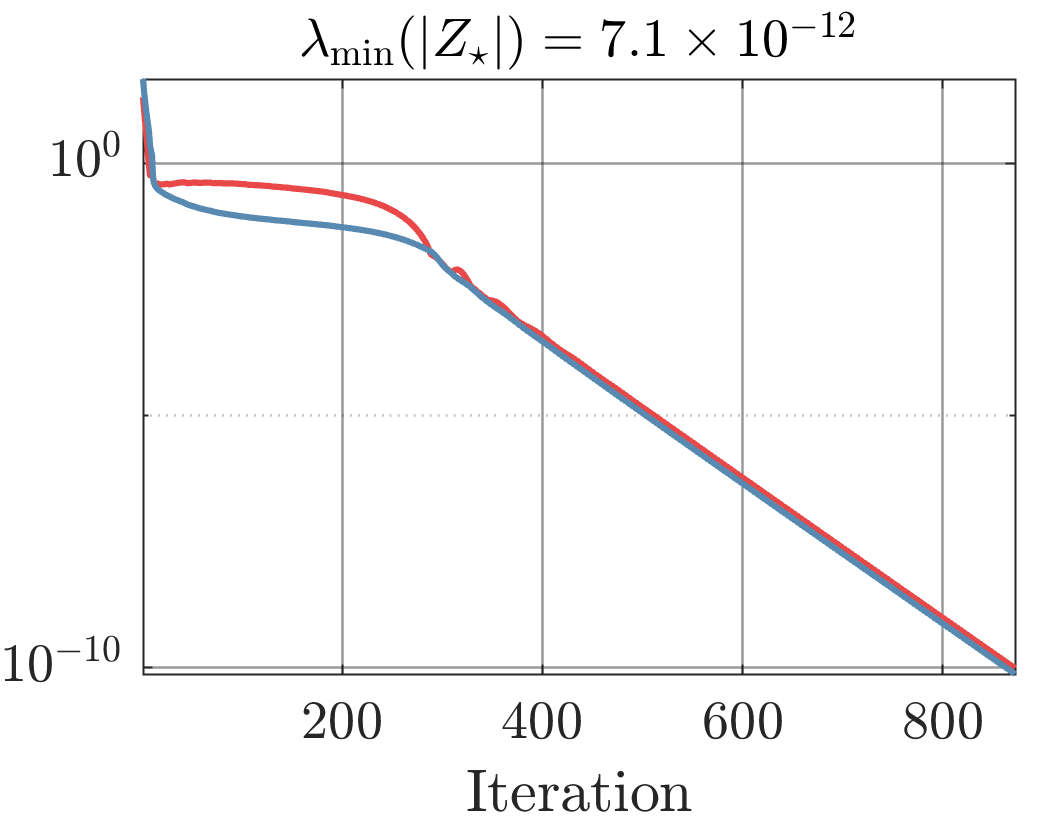}
                \texttt{QS-20}
            \end{minipage}

            \begin{minipage}{0.30\textwidth}
                \centering
                \includegraphics[width=\columnwidth]{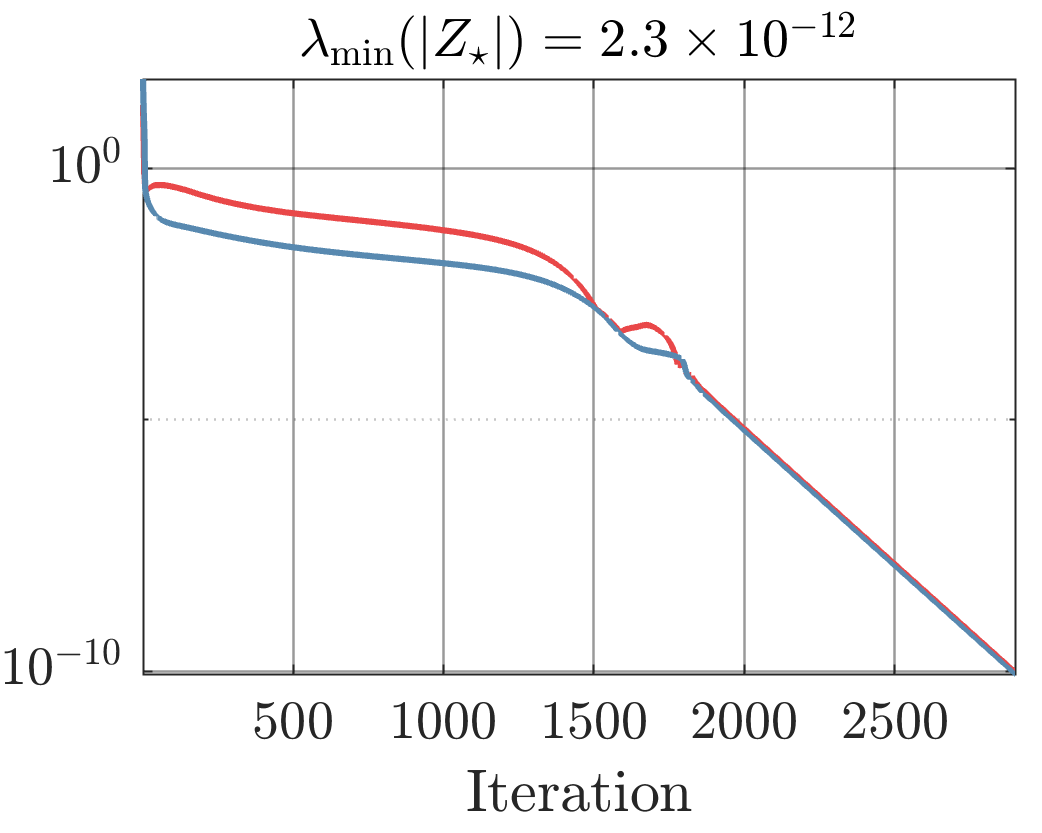}
                \texttt{QS-30}
            \end{minipage}

            \begin{minipage}{0.30\textwidth}
                \centering
                \includegraphics[width=\columnwidth]{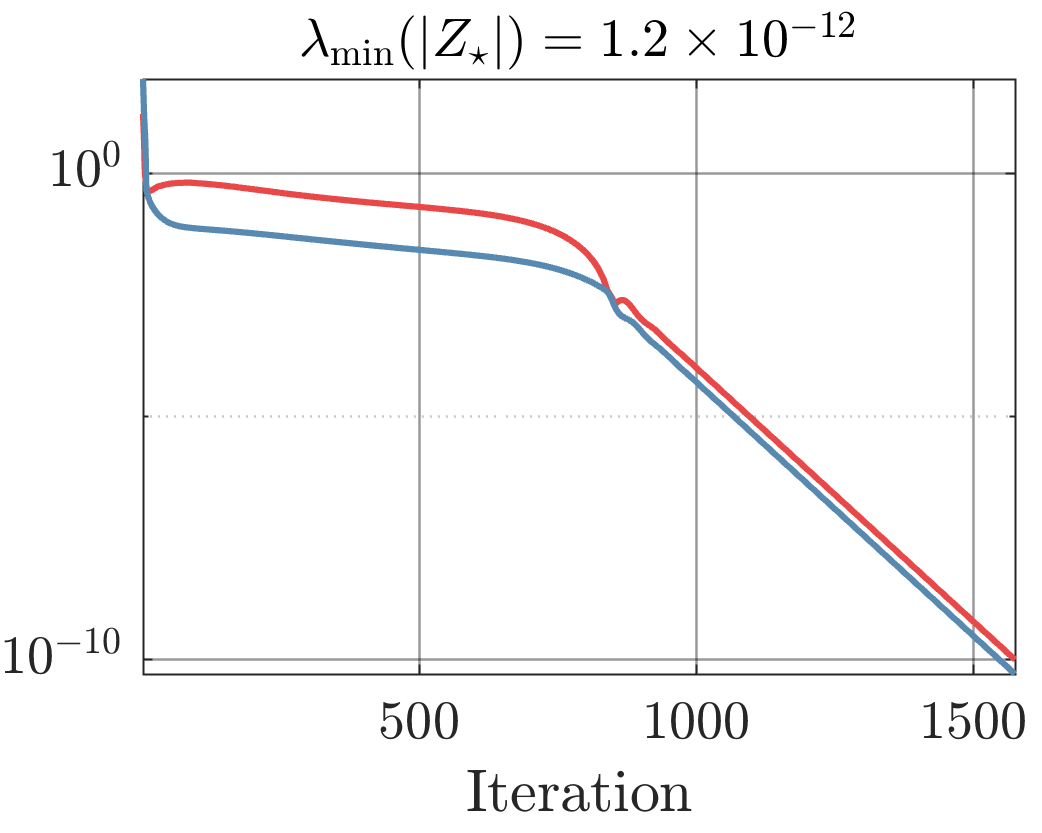}
                \texttt{QS-40}
            \end{minipage}
        \end{tabular}
    \end{minipage}

    \caption{Random QS problems with random (standard Gaussian) initial guess. In all cases, the converging~$\Zs$ is singular. \label{fig:QS}}
\end{figure}
\end{itemize}

Additional numerical results can be found in \cref{app:sec:exp}. 
\rebuttal{
One may also notice two interesting empirical phenomena across a broad class of degenerate SDPs (\eg BQP and Quasar):
\begin{itemize}
    \item ADMM often converges to a strictly complementary primal--dual solution pair when initialized from a random Gaussian point. At present, this is a \emph{purely empirical} observation, and we view this apparent solution-selection tendency as an interesting direction for future study.
    \item Even when ADMM converges to a primal--dual solution pair that is not strictly complementary, linear convergence is still often observed. This suggests that \cref{thm:conv-nnd} provides only a \emph{sufficient} condition for local linear convergence, rather than a necessary one. Further relaxing the strict complementarity assumption would therefore be an intriguing direction for future work.
\end{itemize}
}

\subsection{Demonstration of Numerical Rates}

In this section, we numerically verify that the tightness of the derived (R-)linear rate of convergence. In the following two experiments, primal and dual nondegeneracy are checked numerically as follows. We compute 
\begin{align*}
    W_1 &:= \mymat{\svec[A_1] & \svec[A_2] & \cdots & \svec[A_m]} \\
    W_2 &:= \mymat{\cdots & \svec[\Qs E_{i,j} \Qs\tran] & \cdots}, \;\; \text{for} i = r+1, \dots, n, \ \text{and} \ j = i, \dots, n,
\end{align*}
where $E_{i,j} \in \Real{n \times n}$ is the $(i,j)$th elementary matrix; \ie all the elements are zero except the $(i,j)$th entry is one. It is clear that the columns of $W_1 \in \Real{t(n) \times m}$ form a basis of $\range(\AsdpT)$ and those of $W_2 \in \Real{t(n) \times t(n-r)}$ form a basis of $\calT^\perp_{\Xs}$. To check primal nondegeneracy $\range(\AsdpT) \cap \calT^\perp_{\Xs} = \{0\}$, it suffices to check the following rank condition:
\begin{equation} \label{eq:exp-rank}
    \rank{W_1} + \rank{W_2} = \rank{\mymat{W_1 & W_2}}.
\end{equation}
Dual nondegeneracy can be checked in a similar manner.
\begin{enumerate}[label=(\alph*)]
    \item \textbf{ND holds and SC holds.}
    In \cref{fig:exp:rate} (a), we consider a toy problem from structure-from-motion dataset with $15$ frames. In this case, the matrix size is $n=15$ and the numerical ranks are
    \[
        \rank{W_1} = 903, \qquad \rank{W_2} = 76, \qquad \rank{\mymat{W_1 & W_2}} = 979, 
    \]
    which satisfies the condition \eqref{eq:exp-rank}, and thus primal nondegeneracy holds. Dual nondegeneracy is verified similarly. In this nondegenerate case, we see from \cref{thm:conv-nd} that the sequence $\normF{\Hk}$ converges linearly with rate $\normop{\Madmm} = 0.998$, which matches quite well with the numerical rate $0.996$ from \cref{fig:exp:rate} (a).

    \item \textbf{ND fails and SC holds.}
    In \cref{fig:exp:rate} (b), we consider a toy BQP problem with 10 binary variables. So, the matrix size is $n=66$ and the numerical ranks are
    \[
        \rank{W_1} = 1826, \qquad \rank{W_2} = 2145, \qquad \rank{\mymat{W_1 & W_2}} = 2211, 
    \]
    which implies the failure of primal nondegeneracy. Similarly, dual nondegeneracy holds numerically. From \cref{lem:conv-nnd-blk}, the sequence $\normF{\HOk}$ converges R-linearly with rate $\normop{\Madmm - \pfm} = 0.984$. As expected, the numerical rate from \cref{fig:exp:rate} is also $0.984$, which suggests the tightness of our theory. More interestingly, the numerical rate of $\normF{\HOk}$ and that of $\normF{\Hk}$ are exactly the same in this example.
\end{enumerate}


\begin{figure}[tbp]
    \centering
    \begin{minipage}{\textwidth}
        \centering
        \begin{tabular}{cc}
            \begin{minipage}{0.43\textwidth}
                \centering
                \includegraphics[width=\columnwidth]{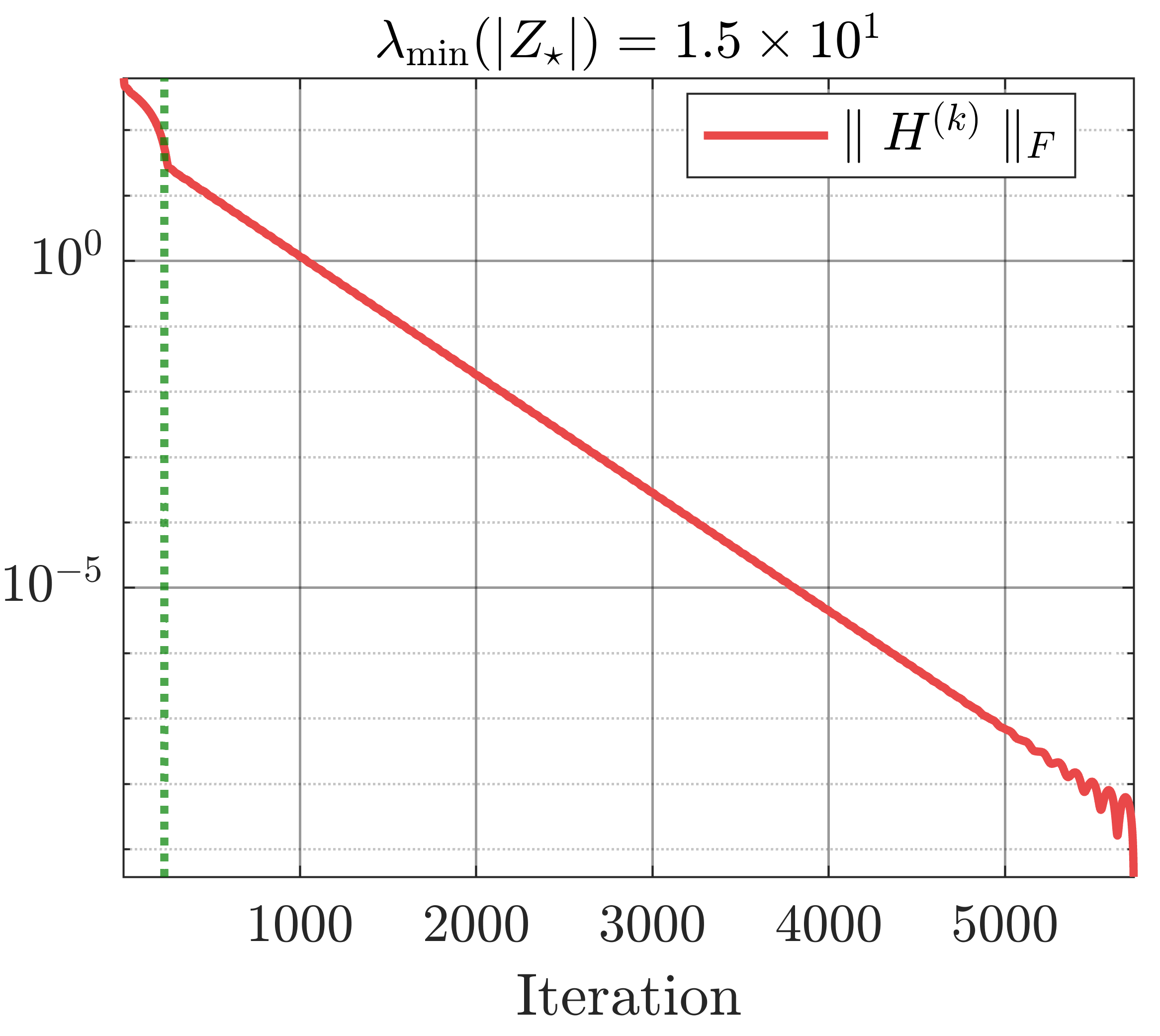}
                (a) ND holds and SC holds
            \end{minipage}

            \begin{minipage}{0.43\textwidth}
                \centering
                \includegraphics[width=\columnwidth]{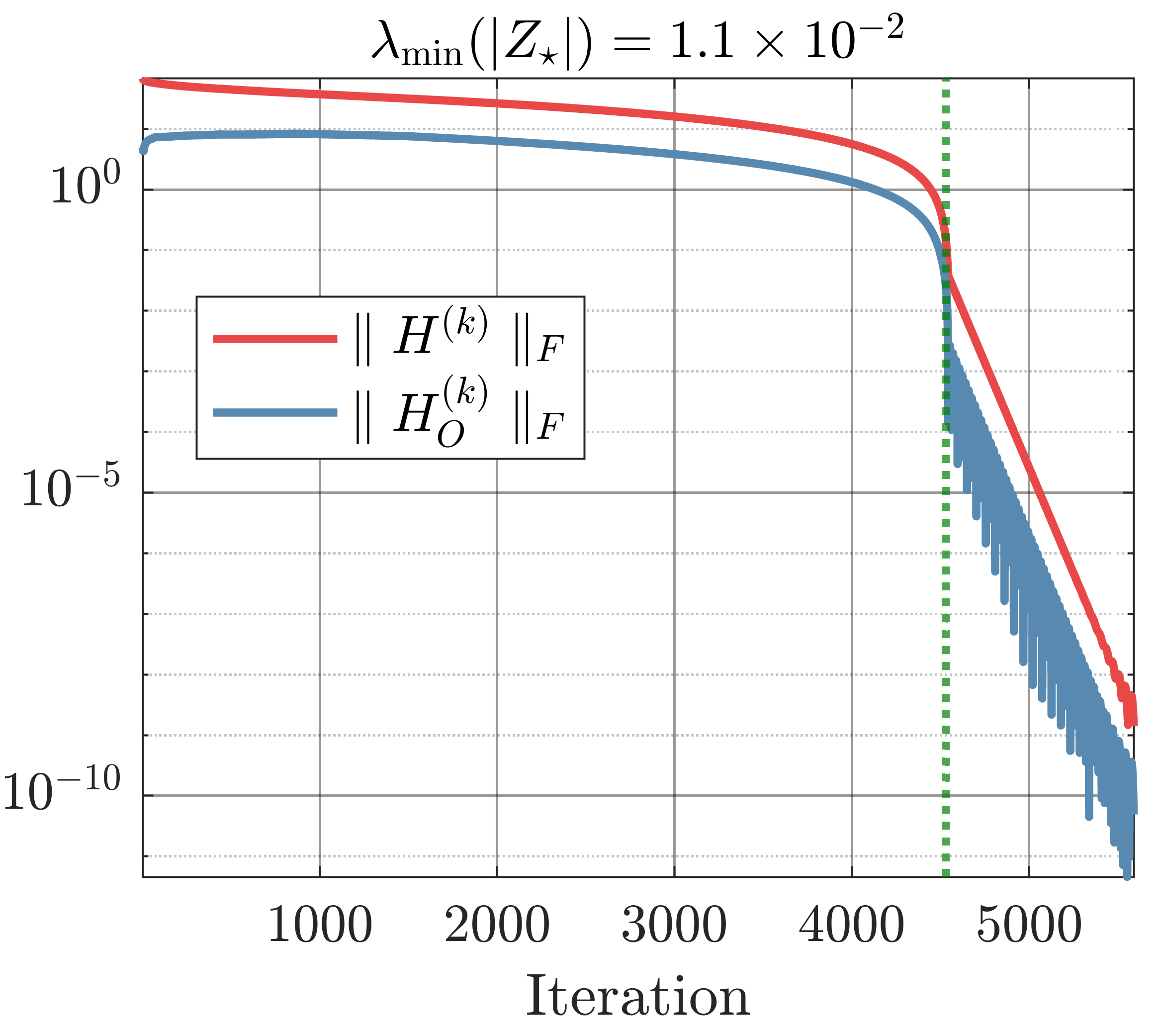}
                (b) ND fails and SC holds
            \end{minipage}
        \end{tabular}
    \end{minipage}
    \caption{Demonstration of numerical rates. (a) Plot of $\normF{\HOk}$ from a toy structure-from-motion problem. (b) Plot of both $\normF{\Hk}$ and $\normF{\HOk}$ from a toy BQP problem. In both cases, the numerical rates match quite well with the theory; see \cref{thm:conv-nd} and \cref{lem:conv-nnd-blk}.}
    \label{fig:exp:rate} 
\end{figure}

\subsection{\rebuttal{``Failure''} Cases}

In \cref{fig:failure}, we report some SDP instances for which ADMM fails to achieve $\rmax \leq 10^{-10}$ within the stated budget and no clear linear convergence is observed. A common feature in these instances is that the values $\lammin{\abs{\Zs}}$ tend to be small (\eg $10^{-4} \sim 10^{-9}$), yet not exactly zero (compared to QS and BQP cases where $\lammin{\abs{\Zs}} < 10^{-14}$). 
\rebuttal{
There are two possible explanations for the observed slow convergence: (\romannumeral1) ADMM has already entered the linear-convergence regime, but the rate is close to $1$, so the method essentially stalls; or (\romannumeral2) the local linear-convergence regime of ADMM, if it exists, has not yet emerged. We conjecture that much of the observed slow convergence falls into case (\romannumeral2). Indeed, \cref{thm:eb-intro-thm} shows that the refined error bound holds only for sufficiently small perturbations satisfying $\|\H\|_2 \leq C_{\mathrm{EB}}$, where $C_{\mathrm{EB}}$ is proportional to $\min\{\lam{r}, -\lam{r+1}\}$; see \eqref{eq:error-bound:Cf}. Hence, when $\lammin{\abs{\Zs}} = \min\{\lam{r}, -\lam{r+1}\}$ is small, the admissible perturbation radius $C_{\mathrm{EB}}$ is also small, so any eventual linear convergence of ADMM may appear only at a very late stage. This interpretation is also consistent with recent observations for first-order methods for LP~\cite{lu2024mp-geometry-pdhg-lp}. At present, however, we do not have a rigorous theoretical proof of this conjecture.
}

\begin{figure}[htbp!]
    \centering

    \begin{minipage}{\textwidth}
        \centering
        \hspace{5mm} \includegraphics[width=0.35\columnwidth]{figs/legends/legend_rmax_dZ.png}
    \end{minipage}

    \begin{minipage}{\textwidth}
        \centering
        \begin{tabular}{ccc}
            \begin{minipage}{0.30\textwidth}
                \centering
                \includegraphics[width=\columnwidth]{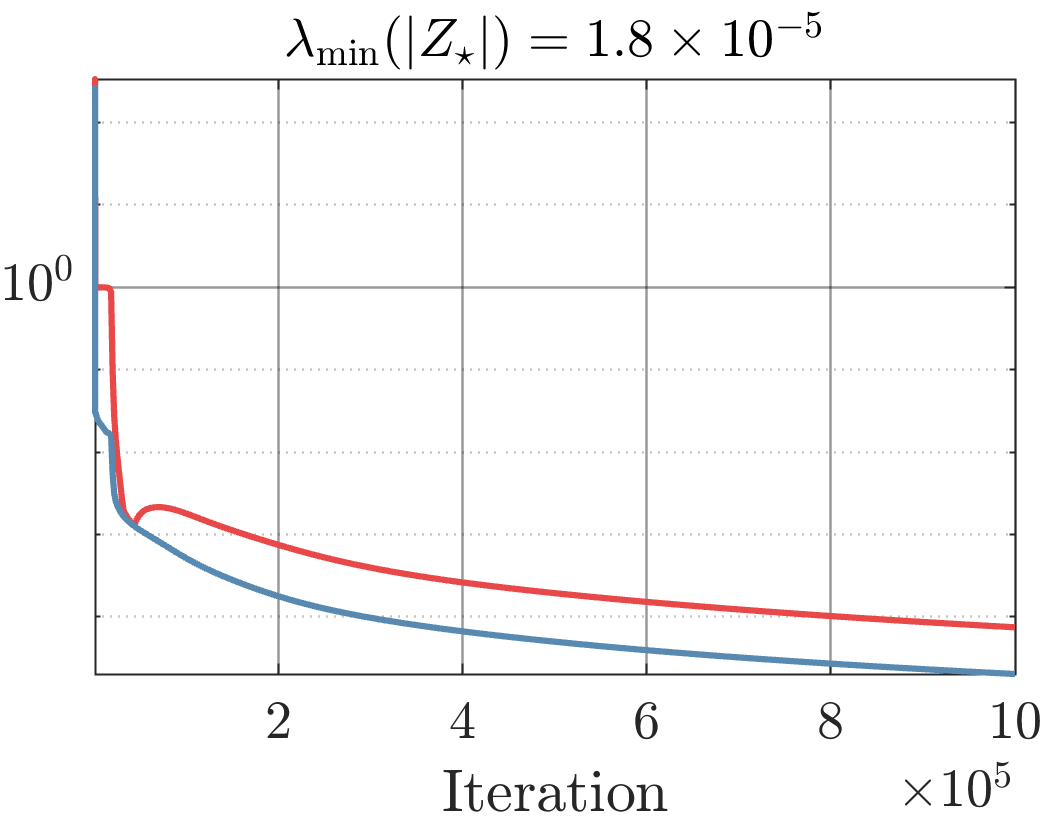}
                \texttt{1dc-1024}
            \end{minipage}

            \begin{minipage}{0.30\textwidth}
                \centering
                \includegraphics[width=\columnwidth]{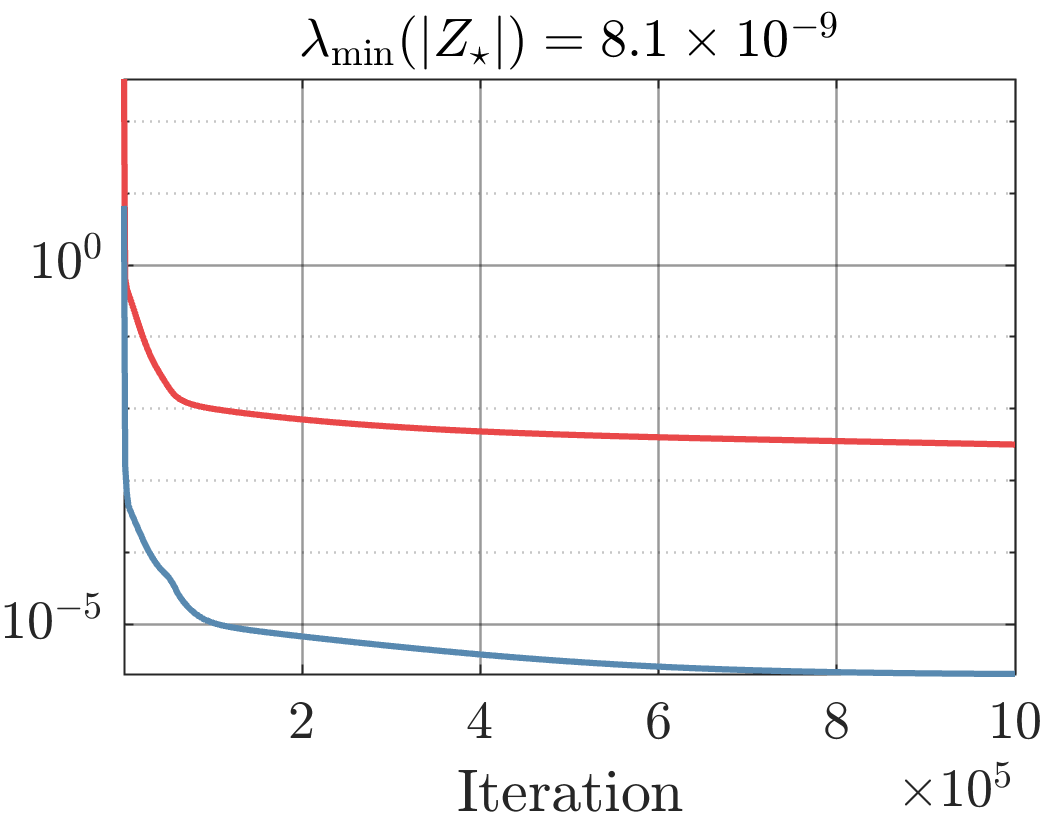}
                \texttt{cnhil10}
            \end{minipage}

            \begin{minipage}{0.30\textwidth}
                \centering
                \includegraphics[width=\columnwidth]{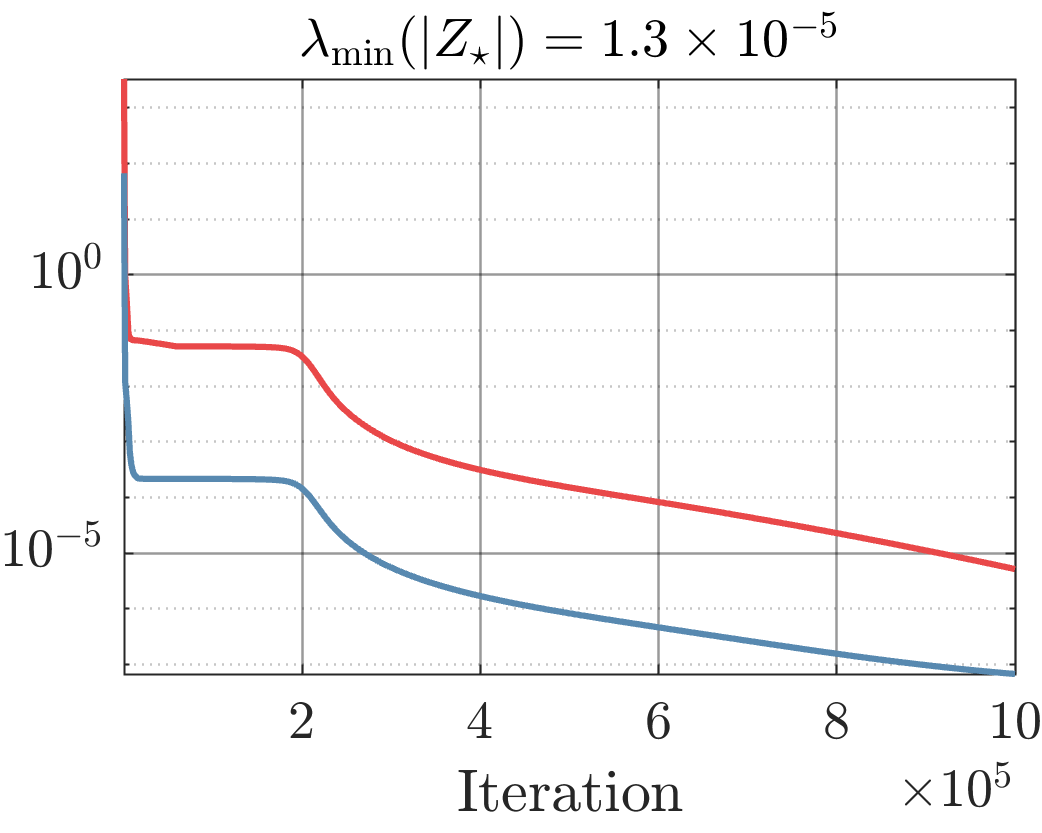}
                \texttt{neosfbr25}
            \end{minipage}
        \end{tabular}
    \vspace{2pt}
    \end{minipage}
    \begin{minipage}{\textwidth}
        \centering
        \begin{tabular}{ccc}
            \begin{minipage}{0.30\textwidth}
                \centering
                \includegraphics[width=\columnwidth]{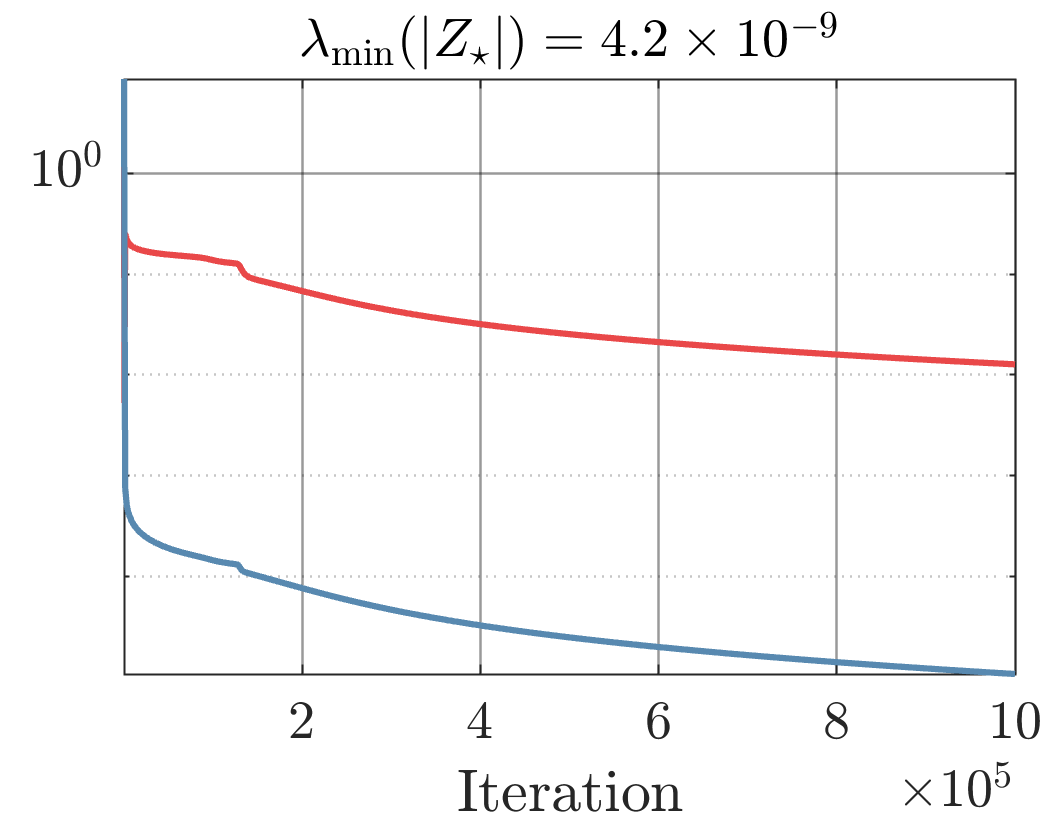}
                \texttt{rose13}
            \end{minipage}

            \begin{minipage}{0.30\textwidth}
                \centering
                \includegraphics[width=\columnwidth]{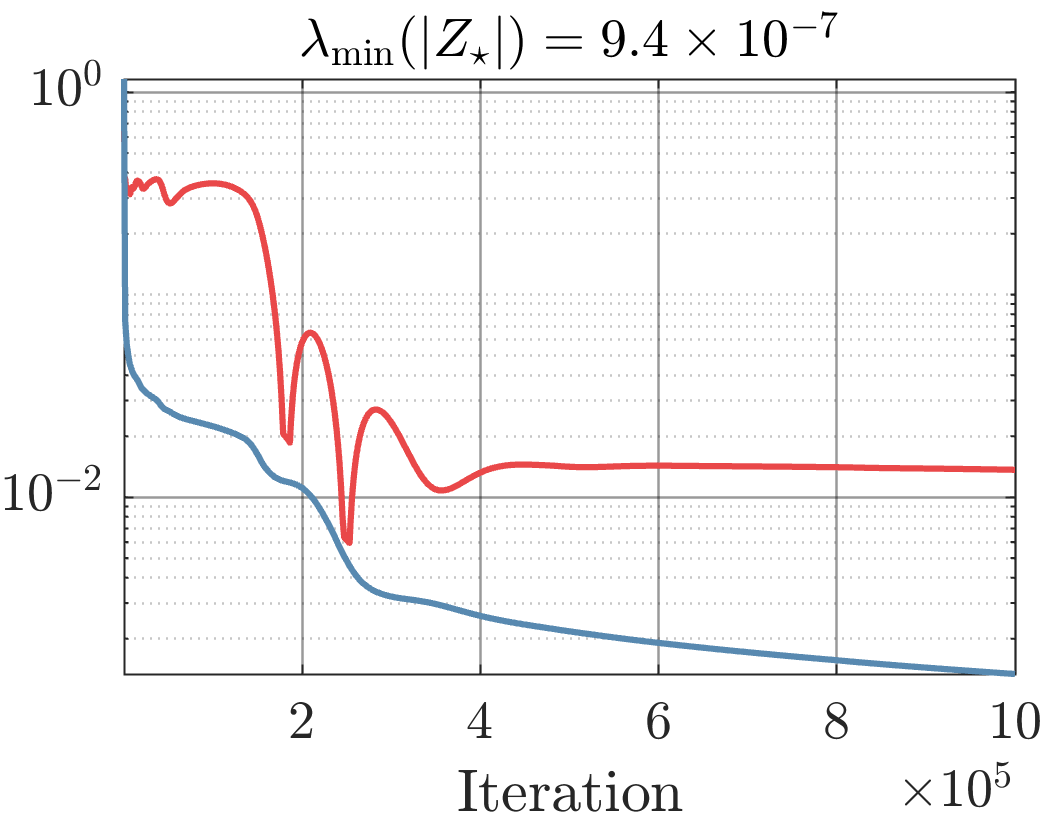}
                \texttt{swissroll}
            \end{minipage}

            \begin{minipage}{0.30\textwidth}
                \centering
                \includegraphics[width=\columnwidth]{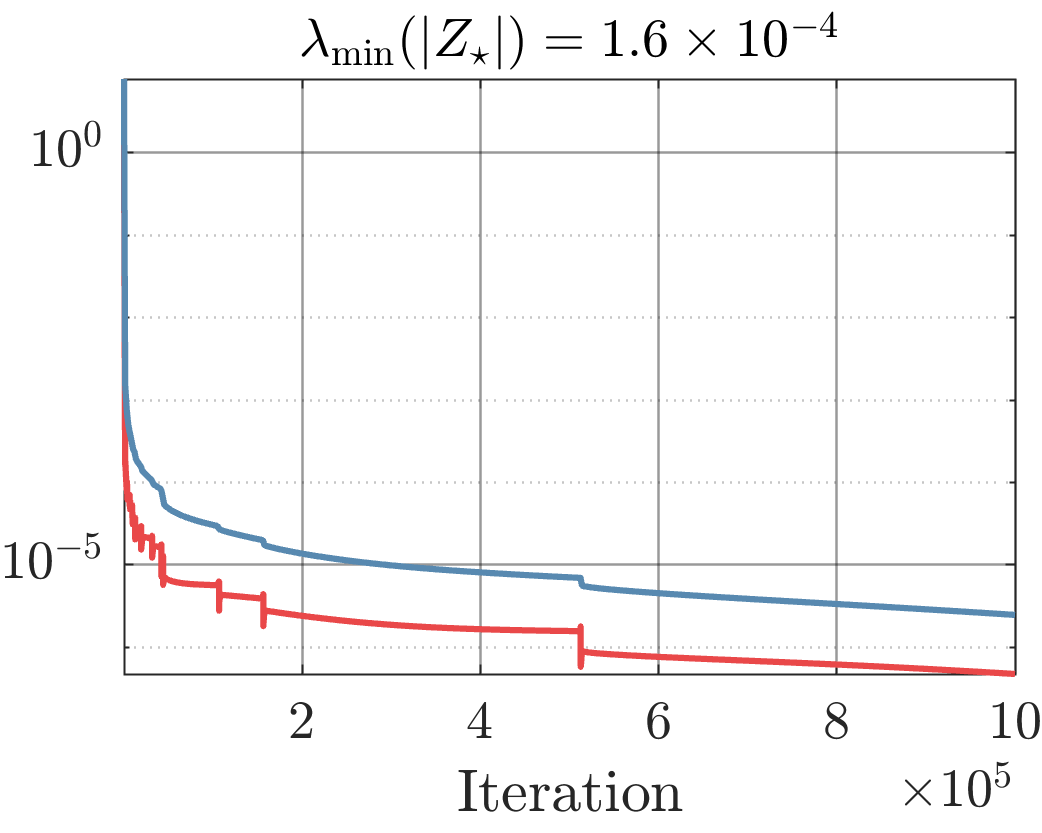}
                \texttt{MAXCUT-G11}
            \end{minipage}
        \end{tabular}
    \end{minipage}
    \caption{Cases from datasets~\cite{mittelmann2006dataset-sparse-sdp-problems,davis2011acm-florida-sparse-matrix-collection}. For these SDPs, ADMM fails to achieve $\rmax \leq 10^{-10}$ within the stated budget, and no clear linear convergence is observed.}
    \label{fig:failure}
\end{figure}


\section{Discussion: Rank Identification and Linear Convergence}
\label{sec:rank-id}

First-order methods (\eg PDHG) for LP (a special case of SDP) are known to have an intriguing two-stage phenomenon \cite{lu2024mp-geometry-pdhg-lp}. The first stage identifies the basis and finishes in a finite number of iterations, with a sublinear rate. Then, the second stage of the algorithm converges linearly, with a rate related to the local sharpness constant. In view of the equivalence between ADMM and PDHG \cite{oconnor2020equivalence}, as well as the similarity between the numerical results in \cref{sec:exp} and those in \cite{lu2024mp-geometry-pdhg-lp}, it is natural to ask whether ADMM for SDP has a similar two-stage performance and whether it could identify the solution \textit{rank} (\textit{c.f.}, basis in LP) within a finite number of iterations. This section aims to provide a partial answer to the above questions, both theoretically and empirically.

\paragraph{Finite-time rank identification.}
In the context of ADMM for SDP, the fact that rank identification occurs within a finite number of iterations is readily guaranteed by the well-known partial smoothness theory \cite{wright1993identifiable,lewis2002siopt-activesets-nonsmoothness-sensitivity,drusvyatskiy2014optimality}. More precisely, rank identification means that ADMM identifies the rank of the solution it converges to and all the subsequent ADMM iterates have the same rank. Here, we provide a more direct proof in the context of SDP, without invoking the more general partial smoothness theory.
\begin{proposition} \label{prop:rank-id}
    Suppose that \cref{ass:lin-sol,ass:lin-sc} hold and that ADMM \eqref{eq:intro:admm-three-step} converges to $(\Xs,\ys,\Ss)$. Then, there exists $\bar k_{\mathrm{ID}} \in \bbN$ such that for any integer $k \geq \bar k_{\mathrm{ID}}$, it holds that
    \[
        \rank{\Xk} = \rank{\Xs}, \qquad \rank{\Sk} = \rank{\Ss}.
    \]
\end{proposition}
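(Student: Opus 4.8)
The plan is to exploit two elementary facts and combine them with eigenvalue stability. First, \cref{ass:lin-sc} (strict complementarity) is equivalent to the limit point $\Zs := \Xs - \sigma\Ss$ of one-step ADMM \eqref{eq:intro:admm-one-step} being nonsingular, and by the decomposition \eqref{eq:intro:Xs-sigSs} together with $r+s=n$, the matrix $\Zs$ has exactly $r := \rank{\Xs}$ strictly positive eigenvalues and $n-r = \rank{\Ss}$ strictly negative ones, none of them zero. Second, the projection onto $\Symp{n}$ of a symmetric matrix keeps precisely the strictly positive part of its spectrum and zeroes out the rest, so $\rank{\PiSnp{M}}$ equals the number of positive eigenvalues of $M$. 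Since $\Xk = \PiSnp{\Zk}$ and $\sigma\Sk = \PiSnp{-\Zk}$ by \eqref{eq:intro:extract-XS}, it suffices to show that for large $k$ the matrix $\Zk$ has exactly $r$ positive and $n-r$ negative eigenvalues.

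Concretely, I would first invoke \cref{ass:lin-sol,ass:lin-sc}: under these, three-step ADMM converges, hence $\Zk \to \Zs$, and $\Zs$ is nonsingular. Write the eigenvalues of $\Zs$ as $\lam{1} \ge \cdots \ge \lam{r} > 0 > \lam{r+1} \ge \cdots \ge \lam{n}$ and set $\mu := \min\{\lam{r}, -\lam{r+1}\} = \sigmin{\Zs} > 0$. Since $\Zk \to \Zs$, choose $\bar k_{\mathrm{ID}} \in \bbN$ so that $\normtwo{\Zk - \Zs} < \mu/2$ for all $k \ge \bar k_{\mathrm{ID}}$. By Weyl's inequality, $\abs{\lami{\Zk} - \lam{i}} \le \normtwo{\Zk - \Zs} < \mu/2$ for every $i$, so $\lami{\Zk} \ge \mu/2 > 0$ for $i \le r$ and $\lami{\Zk} \le -\mu/2 < 0$ for $i \ge r+1$. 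Thus $\Zk$ has exactly $r$ positive eigenvalues; applying the spectral characterization of $\PiSnp{\cdot}$ gives $\rank{\Xk} = r = \rank{\Xs}$. Running the identical argument on $-\Zk \to -\Zs$, whose positive eigenvalues number $n-r$, yields $\rank{\sigma\Sk} = n-r = \rank{\Ss}$, and $\rank{\Sk} = \rank{\sigma\Sk}$ since $\sigma > 0$. This completes the proof.

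There is no substantive obstacle in this argument; it is a short continuity-of-spectra computation. The only point worth emphasizing is \emph{where} strict complementarity enters: it is precisely \cref{ass:lin-sc} that guarantees $\Zs$ is nonsingular with a spectral gap $\mu > 0$ around zero, which is what makes the rank of the projector locally constant along the convergent sequence. If strict complementarity failed, $\Zs$ would be singular and eigenvalues of $\Zk$ of either sign could approach and cross zero, so neither $\rank{\Xk}$ nor $\rank{\Sk}$ would be forced to stabilize — exactly the gap between theory and practice flagged in case (d) of \cref{fig:intro}.
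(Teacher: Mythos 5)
Your proposal is correct and follows essentially the same route as the paper's proof: both use the nonsingularity of $\Zs$ under strict complementarity, Weyl's inequality to show the eigenvalues of $\Zk$ stay separated from zero with the right sign pattern once $\normtwo{\Zk-\Zs}$ is small, and the fact that $\rank{\PiSnp{\cdot}}$ counts strictly positive eigenvalues, applied to $\Zk$ and $-\Zk$ via \eqref{eq:intro:extract-XS}. The only cosmetic difference is your threshold $\mu/2$ versus the paper's $\min\{\lam{r},-\lam{r+1}\}$, which changes nothing substantive.
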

\begin{proof}
    First, we show that
    \[
        \rank{\PiSnp{\Zs + \Hk}} = \rank{\PiSnp{\Zs}} \quad \text{if} \ \normtwo{\Hk} < \min\{\lam{r}, -\lam{r+1}\}. 
    \]
    To see this, denote by $\gamma_r$ and $\gamma_{r+1}$ the $r$th and $(r+1)$st largest eigenvalue of $\Zs + \Hk$, respectively. Then, by Weyl's inequality, we have
    \begin{align*}
        \gamma_r &\ge \lam{r} - \normtwo{\Hk} > \lam{r} - \min\{\lam{r}, -\lam{r+1}\} \geq 0, \\
        \gamma_{r+1} &\le \lam{r+1} + \normtwo{\Hk} < \lam{r+1} + \min\{\lam{r}, -\lam{r+1}\} \le 0,
    \end{align*}
    where recall $\lam{r}$ and $\lam{r+1}$ are the $r$th and $(r+1)$st largest eigenvalue of $\Zs$, respectively. Thus, we have $\gamma_r > 0 > \gamma_{r+1}$ and
    \[
        \rank{\Xk} = \rank{\PiSnp{\Zs + \Hk}} = r = \rank{\PiSnp{\Zs}}.  
    \]
    The dual part follows in a symmetric manner:
    \[
        \rank{\Sk} = \rank{\PiSnp{-\Zs - \Hk}} = n-r = \rank{\PiSnp{-\Zs}}.  
    \]

    Second, since $\normtwo{\Hk} \to 0$ as $k \to \infty$, there exists $\bar k_{\mathrm{ID}} \in \bbN$ such that $\normtwo{\Hk} < \min\{\lam{r}, \lam{r+1}\}$.  This concludes the proof.
\end{proof}

\paragraph{On the relation between rank identification and linear convergence.}
Considering both rank identification and local linear convergence, it is natural to investigate the relationship of these two phenomena: which one occurs first? Unlike the case of PDHG for LP, it remains unclear whether rank identification is the trigger for linear convergence.

\rebuttal{
Here, we provide a simple example to discuss the sutble role of rank identification in our proof procedure.
}
Recall from our analysis (specifically \cref{lem:conv-nnd-T}) that the R-linear convergence of $\normF{\Hk}$ is built upon that of the two sequences
\begin{equation} \label{eq:rank-id:minimal-face}
    \normF{\Proj_{\calT_\Ss} (\Xk)} = \calO (\normF{\HOk}), \qquad \normF{\Proj_{\calT_\Xs} (\Sk)} = \calO (\normF{\HOk}).
\end{equation}
In view of this, we build an SDP instance in which rank identification does not occur and \eqref{eq:rank-id:minimal-face} fails to hold. So, in the worst case, \eqref{eq:rank-id:minimal-face} needs rank identification. \rebuttal{
We conjecture that, if \eqref{eq:rank-id:minimal-face} were also necessary for linear convergence (which we are currently unable to prove), then rank identification would occur no later than the onset of the final (R-)linear convergence regime.
}
\begin{example} \label{ex:rank-id:example}
    Consider the SDP \eqref{eq:intro-sdp} with $n=3$. Suppose \cref{ass:lin-sol}, primal nondegeneracy \eqref{eq:intro:primal-nondegeneracy} and dual nondegeneracy \eqref{eq:intro:dual-nondegeneracy} hold. Suppose $\rank{\Xs} = 1$ and $\rank{\Ss} = 2$. (So \cref{ass:lin-sc} holds.) Suppose $\Zs = \diag{1,-\delta,-\delta}$, where $\delta > 0$ can be arbitrarily small. Then, \cref{prop:rank-id} implies that rank identification must occur if $\normtwo{\Hk} < \delta$.

    Assume, without loss of generality, that ADMM starts at the following points (with $\epsilon > 0$)
    \begin{align*}
        X^{(0)} & = \mymat{
            1 & 0 & 0 \\
            0 & \frac{\epsilon}{2} & \frac{\epsilon}{2} \\
            0 & \frac{\epsilon}{2} & \frac{\epsilon}{2}
        } = \mymat{
            1 & 0 \\
            0 & \sqrt{\frac{\epsilon}{2}} \\
            0 & \sqrt{\frac{\epsilon}{2}}
        }\mymat{
            1 & 0 \\
            0 & \sqrt{\frac{\epsilon}{2}} \\
            0 & \sqrt{\frac{\epsilon}{2}}
        }\tran, \\
        \sigma S^{(0)} & = \mymat{
            0 & 0 & 0 \\
            0 & \delta + \frac{\epsilon}{2} & -\delta - \frac{\epsilon}{2} \\
            0 & -\delta - \frac{\epsilon}{2} & \delta + \frac{\epsilon}{2}
        } = \mymat{
            0 \\ 
            \sqrt{\delta + \frac{\epsilon}{2}} \\
            -\sqrt{\delta + \frac{\epsilon}{2}}
        } \mymat{
            0 \\ 
            \sqrt{\delta + \frac{\epsilon}{2}} \\
            -\sqrt{\delta + \frac{\epsilon}{2}}
        }\tran.
    \end{align*}
    It is clear that $\rank{X^{(0)}} = 1$, $\rank{S^{(0)}} = 2$, $\langle {X^{(0)}}, {S^{(0)}} \rangle = 0$, and
    \[
        H^{(0)} = X^{(0)} - \sigma S^{(0)} - \Zs = \mymat{
            0 & 0 & 0 \\ 
            0 & 0 & \delta + \epsilon \\
            0 & \delta + \epsilon & 0
        }.
    \]
    Moreover, $\normtwo{H^{(0)}} = \delta + \epsilon$ and $\HO^{(0)} = 0$. On the other hand,
    \[
        \Proj_{\calT_\Ss} (X^{(0)}) = \mymat{
            0 & 0 & 0 \\
            0 & \frac{\epsilon}{2} & \frac{\epsilon}{2} \\
            0 & \frac{\epsilon}{2} & \frac{\epsilon}{2}
        }.
    \]
    To conclude, rank identification does not occur and \eqref{eq:rank-id:minimal-face} fails to hold.
\end{example}

\paragraph{Numerical evidence.}
As shown in \cref{fig:rank}, for many tested SDP instances, as soon as $\Xk$ identifies the solution rank, the ADMM iterates simultaneously steps into the final region of linear convergence.

\begin{figure}[tbp]
    \centering

    \begin{minipage}{\textwidth}
        \centering
        \hspace{5mm} \includegraphics[width=0.35\columnwidth]{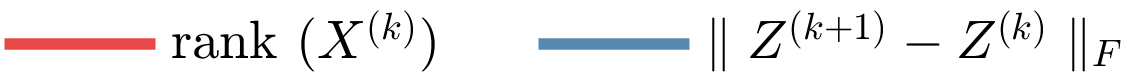}
    \end{minipage}

    \begin{minipage}{\textwidth}
        \centering
        \begin{tabular}{ccc}
            \begin{minipage}{0.30\textwidth}
                \centering
                \includegraphics[width=\columnwidth]{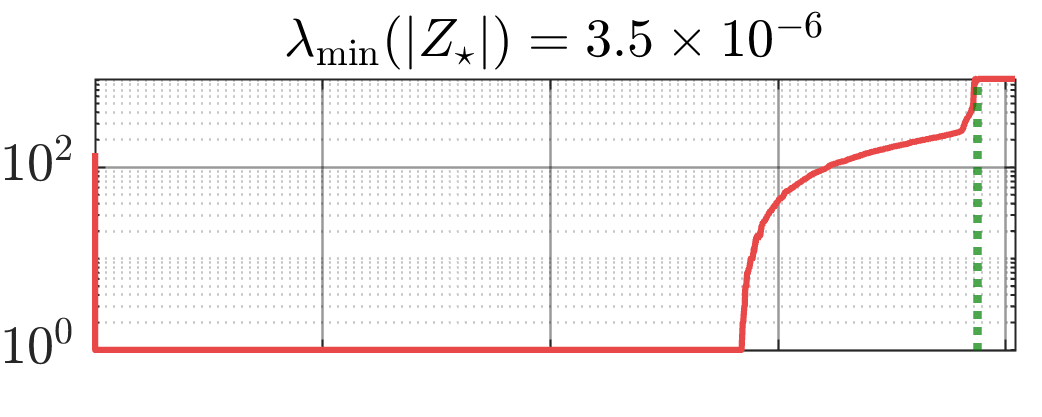}
                \includegraphics[width=\columnwidth]{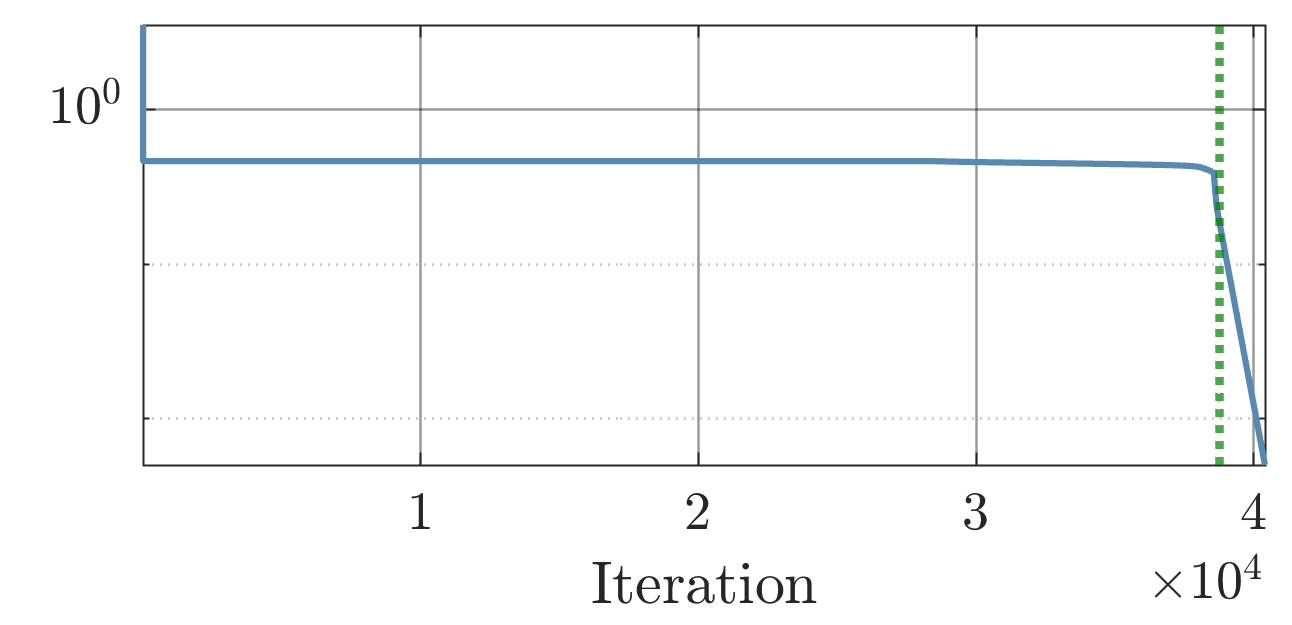}
                \texttt{hamming-11-2}
            \end{minipage}

            \begin{minipage}{0.30\textwidth}
                \centering
                \includegraphics[width=\columnwidth]{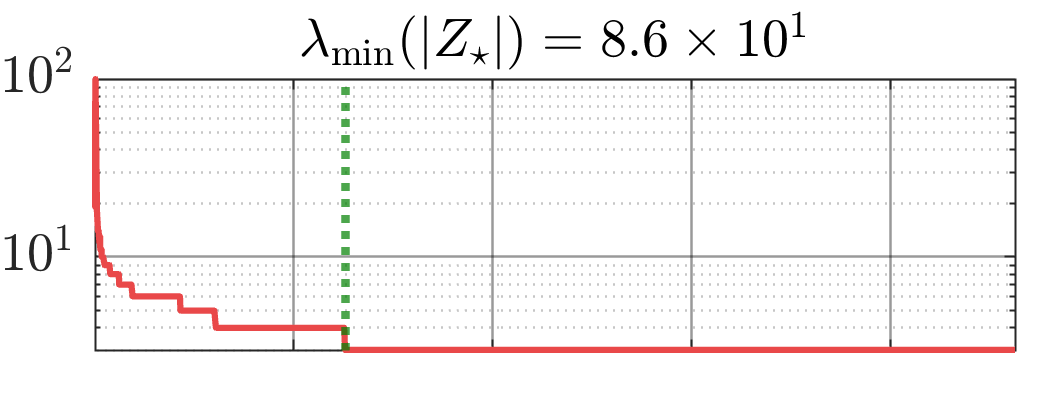}
                \includegraphics[width=\columnwidth]{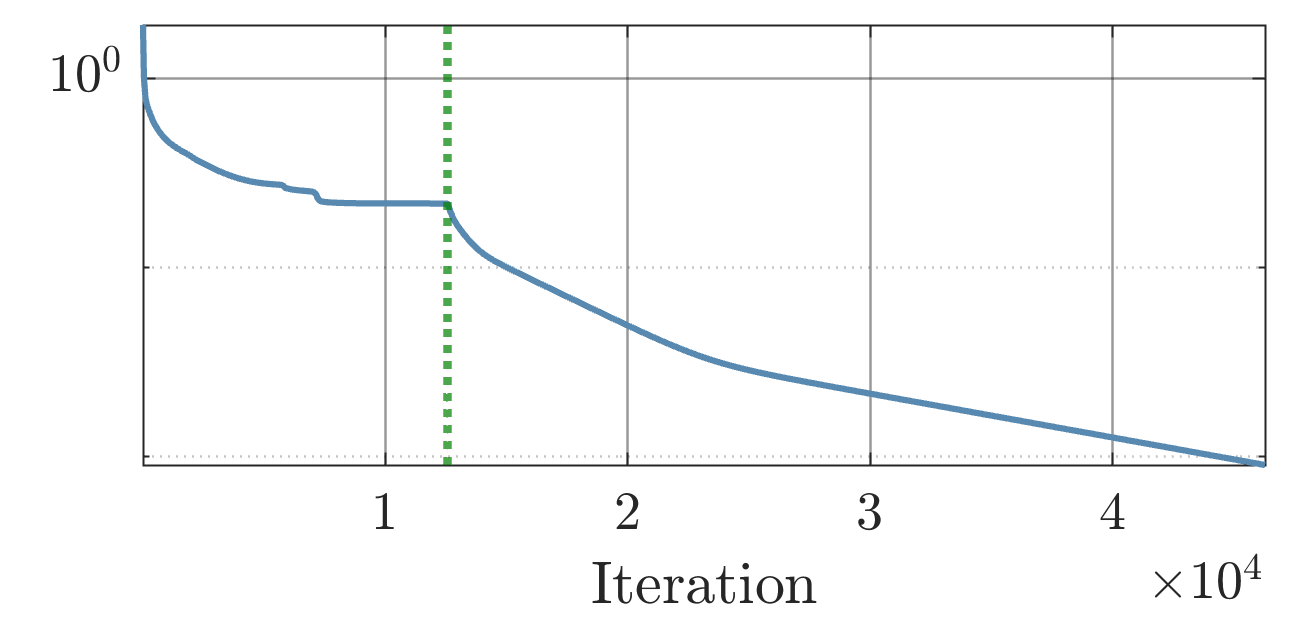}
                \texttt{XM-149}
            \end{minipage}

            \begin{minipage}{0.30\textwidth}
                \centering
                \includegraphics[width=\columnwidth]{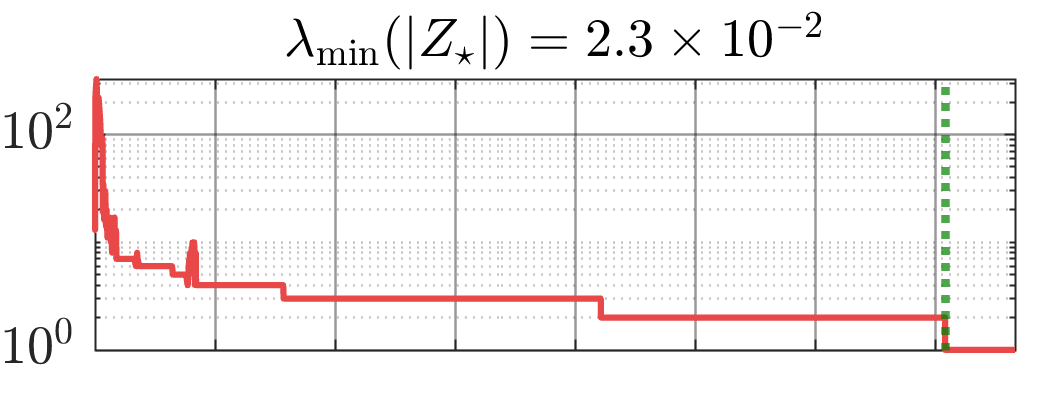}
                \includegraphics[width=\columnwidth]{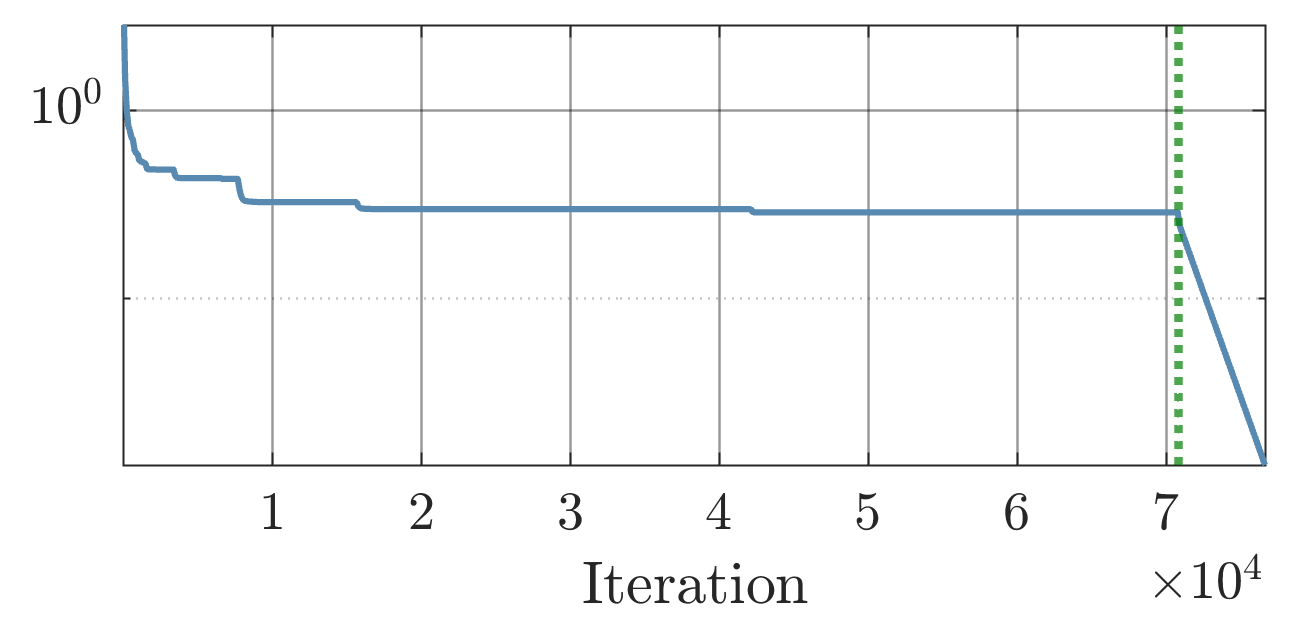}
                \texttt{BQP-r1-30-3}
            \end{minipage}
        \end{tabular}
    \end{minipage}

    \begin{minipage}{\textwidth}
        \centering
        \begin{tabular}{ccc}
            \begin{minipage}{0.30\textwidth}
                \centering
                \includegraphics[width=\columnwidth]{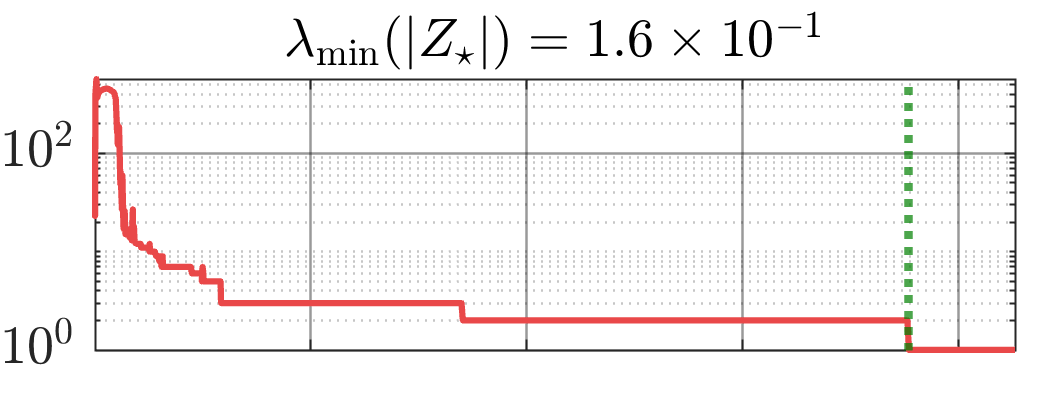}
                \includegraphics[width=\columnwidth]{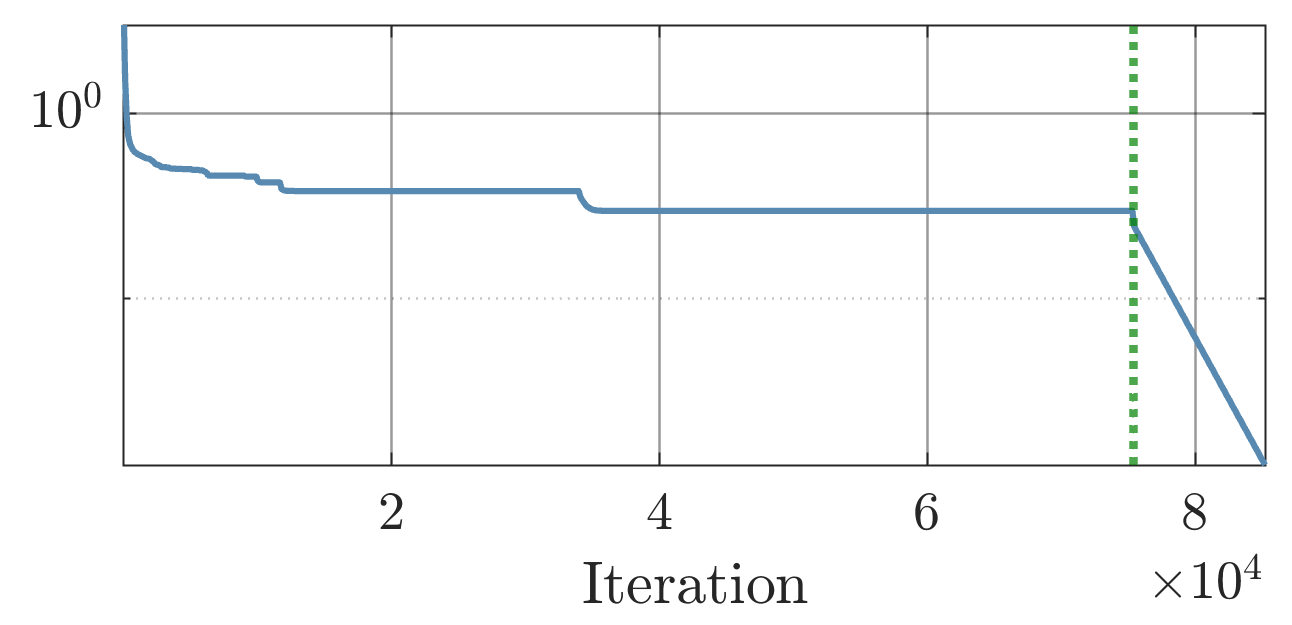}
                \texttt{BQP-r1-40-3}
            \end{minipage}

            \begin{minipage}{0.30\textwidth}
                \centering
                \includegraphics[width=\columnwidth]{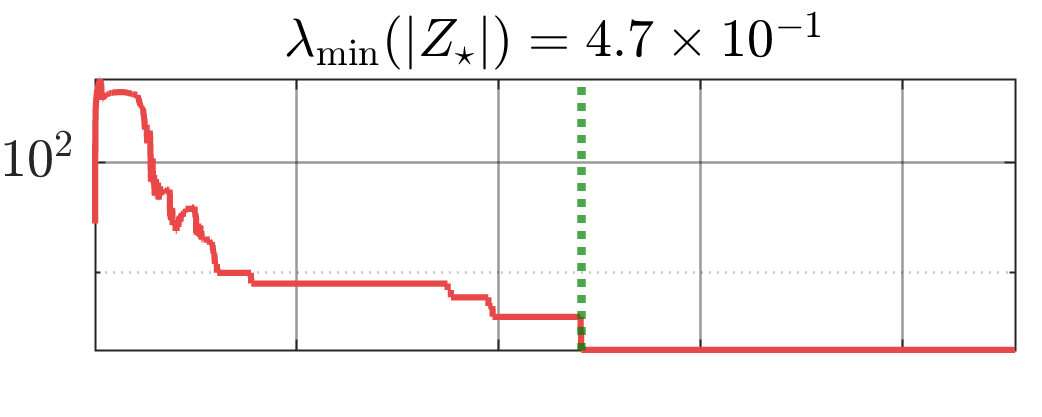}
                \includegraphics[width=\columnwidth]{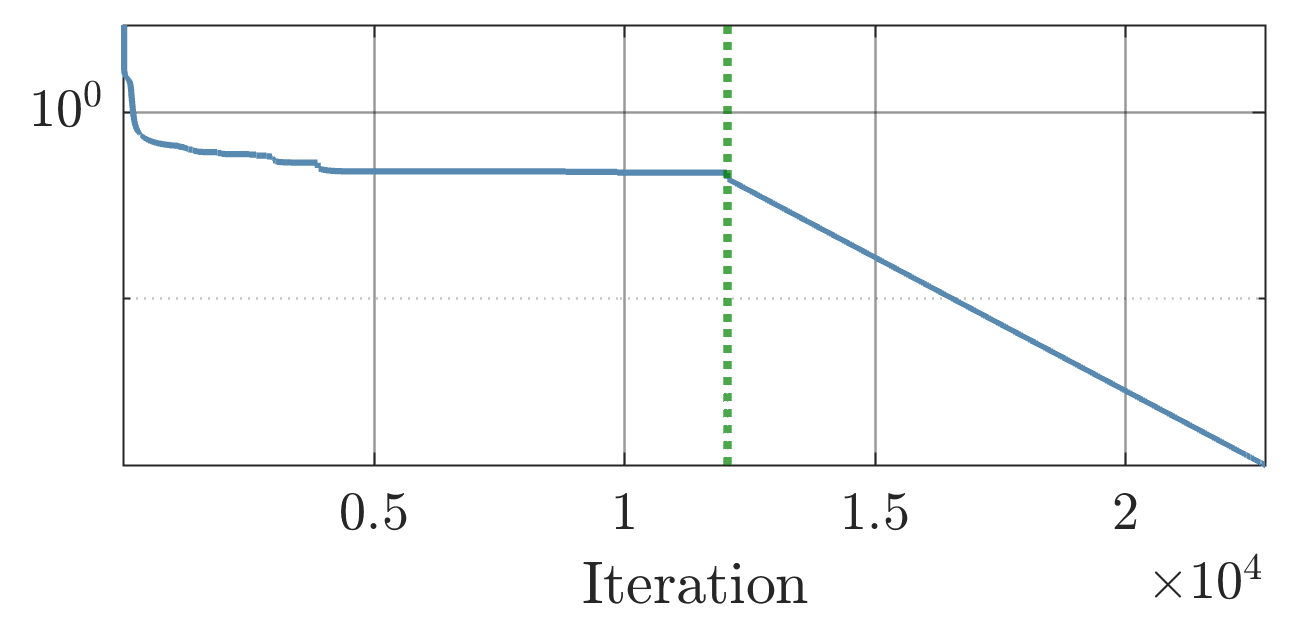}
                \texttt{BQP-r2-40-2}
            \end{minipage}

            \begin{minipage}{0.30\textwidth}
                \centering
                \includegraphics[width=\columnwidth]{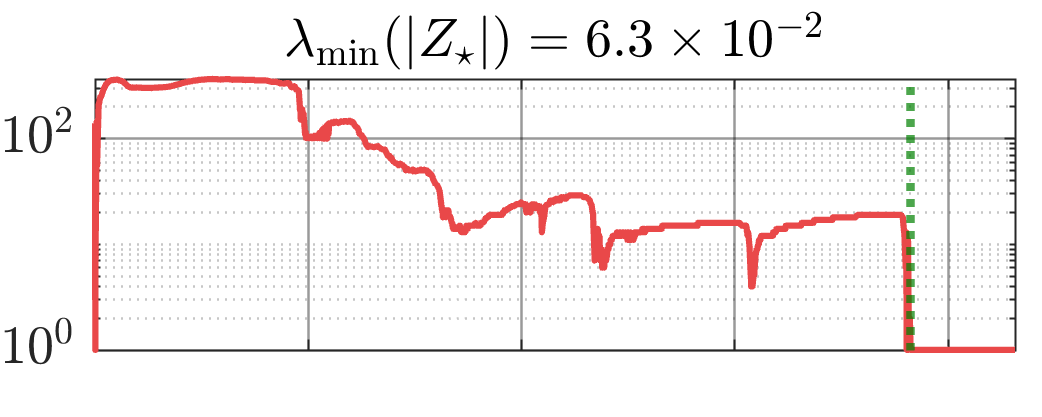}
                \includegraphics[width=\columnwidth]{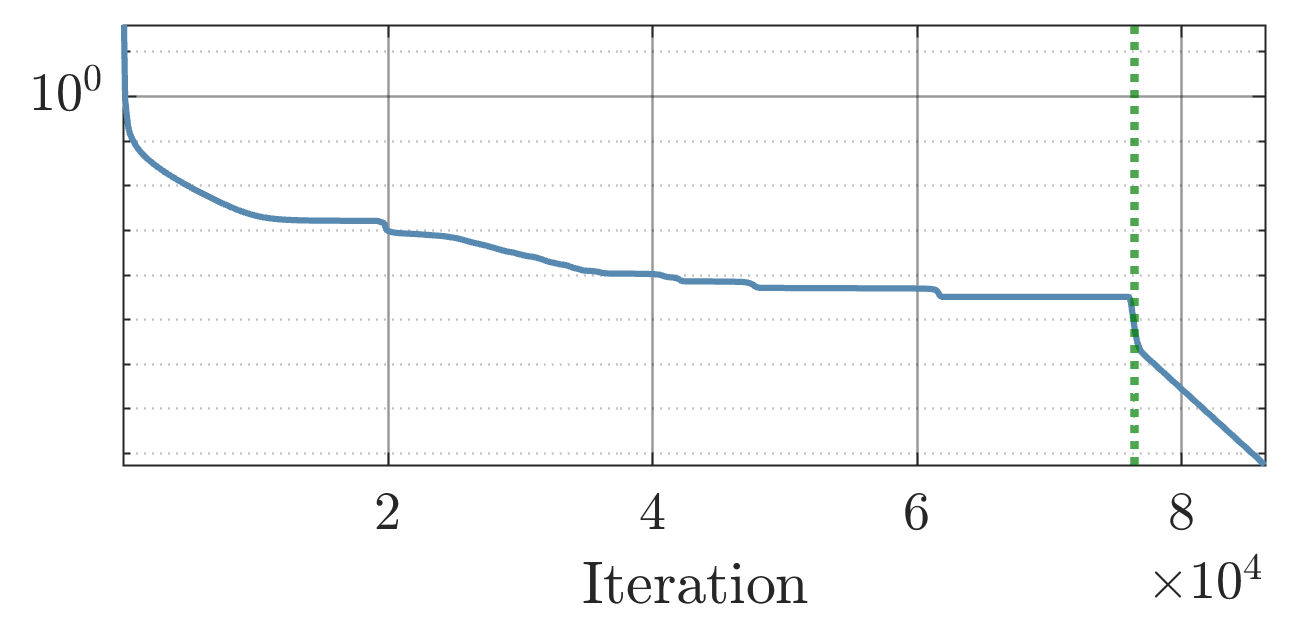}
                \texttt{Quasar-200}
            \end{minipage}
        \end{tabular}
    \end{minipage}

    \caption{Six representative SDP instances illustrating rank identification: almost at the same time when~$\Xk$ identifies the solution rank, ADMM steps into the final linear convergence region.}
    \label{fig:rank}
\end{figure}

In view of \cref{prop:rank-id}, \cref{ex:rank-id:example} and \cref{fig:rank}, one may already identify a gap between theory and practice.
\begin{quotation}
    \noindent
    \textbf{Open problems}: In what type of SDPs is rank identification a necessary condition for (R-)linear convergence? Under which conditions will rank identification and (R-)linear convergence occur simultaneously?
\end{quotation}


\rebuttal{

\section{Discussion: Connections to Metric Subregularity and Local Error Bounds}
\label{sec:metric}

Our proof framework is closely related to several notions of metric subregularity and local error bounds studied in the literature~\cite{yuan2020discerning,liu2018partial,liang2017jota-local-convergence-admm,cui2016arxiv-superlinear-alm-sdp,han2018mor-linear-rate-admm,abbaszadehpeivasti25arxiv-convrate-drs}. Recall that a set-valued mapping $\phi: \calX \rightrightarrows \calW$ is metric subregular at $\bar{x} \in \calX$ for $\bar{w} \in \calW$ with modulus $\kappa$ if $(\bar{x}, \bar{w}) \in \mathrm{gph}(\phi)$ and $\exists \epsilon > 0$, s.t.
\begin{align*}
    \dist(x, \phi^{-1}(\bar{w})) \le \kappa \dist(\bar{w}, \phi(x)), \quad \forall x \in \mathbb{B}_\epsilon(\bar{x}),
\end{align*}
where $\mathbb{B}_\epsilon(\bar{x})$ denotes the ball of radius $\epsilon$ centered at $\bar{x}$.
For first-order splitting methods, $\phi$ is often taken to be a (variant of the) KKT solution mapping defined in~\eqref{eq:intro:kkt}. Let $\calK$ denote the set of KKT points. Then $\bar{x}$ is typically a KKT point $(\bar{X}, \bar{y}, \bar{S}) \in \calK$, with $\bar{w} = 0$. In this case, $\dist(x, \phi^{-1}(0))$ is the distance from $(X, y, S)$ to $\calK$ (the forward error~\cite{sturm2000siopt-error-bounds-linear-matrix-inequalities}), while $\dist(0, \phi(x))$ corresponds to a (variant of the) KKT residual (the backward error~\cite{sturm2000siopt-error-bounds-linear-matrix-inequalities}). Thus, metric subregularity at a KKT point for $0$ naturally yields a local error bound (EB).

\vspace{2pt}
\noindent
\textbf{Variants of local error bounds.}~\cite{liu2018partial} introduces four types of error bounds, which we now specialize to the ADMM-for-SDP setting.
The first three are equivalent~\cite[Appendix A]{liu2018partial}: full error bound (FEB)~\cite[Eq. (1.7)]{liu2018partial}, Proximal EB-I~\cite[Eq. (1.12)]{liu2018partial}, and Proximal EB-II~\cite[Eq. (1.14)]{liu2018partial}. We only state Proximal EB-I, which also coincides with the metric subregularity of the KKT mapping $\calR$ in~\cite[Eq. (7)]{han2018mor-linear-rate-admm}. It is derived from the projection form of the KKT mapping:
\begin{align}
    \label{eq:metric:proximal-eb}
    & \dist((X, y, S), \calK) \le \kappa (\normtwo{\Asdp X - b} + \normF{\AsdpT y + S - C} + \normF{S - \Pi_{\psd{n}}(S - X)} ), \\
    & \forall (X, y, S) \in \mathbb{B}_\epsilon((\bar{X}, \bar{y}, \bar{S})). \nonumber
\end{align}
The fourth is the partial error bound (PEB)~\cite[Definition 5.2]{liu2018partial}:
\begin{align}
    \label{eq:metric:peb}
    & \dist((X, y, S), \calK) \le \kappa (\normtwo{\Asdp X - b} + \normF{\AsdpT y + S - C}  ), \\
    & \forall (X, y, S) \in \mathbb{B}_\epsilon((\bar{X}, \bar{y}, \bar{S})), \ X \succeq 0, S \succeq 0, \inprod{X}{S} = 0. \nonumber
\end{align}
Note that the term $\normF{S - \Pi_{\psd{n}}(S - X)}$ in~\eqref{eq:metric:proximal-eb} is always $0$ whenever $X \succeq 0, S \succeq 0, \inprod{X}{S} = 0$, a condition naturally satisfied by ADMM for SDP~\eqref{eq:intro:admm-three-step}. Therefore, PEB is weaker than Proximal EB-I. Finally, we state an error bound for DRS~\cite{abbaszadehpeivasti25arxiv-convrate-drs,van25arxiv-linconv-eb-admm}, or equivalently, for one-step ADMM~\eqref{eq:intro:admm-one-step}:
\begin{align}
    \label{eq:metric:drs-eb}
    \dist(Z, \Zopt) \le \kappa \normF{
        \underbrace{-\PA (\Pi_{\psd{n}}(Z) - \widetilde{X}) + \PAp ( \Pi_{\psd{n}}(-Z) - \sigma C)}_{=: \delta(Z)}
    }, \ \forall Z \in \mathbb{B}_\epsilon(\bar{Z}),
\end{align}
where $\bar{Z} \in \Zopt$.
We abbreviate~\eqref{eq:metric:drs-eb} as DRS-EB. The update in~\eqref{eq:intro:admm-one-step} can be written equivalently as $\Zkpo - \Zk = \delta(\Zk)$. DRS-EB gives a necessary and sufficient condition for the local Q-linear convergence of $\dist(\Zk, \Zopt)$~\cite[Theorem 3 and Proposition 1]{abbaszadehpeivasti25arxiv-convrate-drs}. 

\begin{proposition}[PEB $\Rightarrow$ DRS-EB]
    \label{prop:metric:peb-imply-drs-eb}
    In the ADMM for SDP setting, PEB~\eqref{eq:metric:peb} implies DRS-EB~\eqref{eq:metric:drs-eb} (up to a different modulus). 
\end{proposition}
\begin{proof}
    Fix $\bar{Z} \in \Zopt$ and $Z \in \mathbb{B}_\epsilon(\bar{Z})$. Let 
    \begin{align*}
        & X := \Pi_{\psd{n}}(Z), \ S := -\frac{1}{\sigma}(\Pi_{\psd{n}}(-Z)), \ y := (\Asdp \AsdpT)^{-1}\Asdp (C-S), \\ 
        & \bar{X} := \Pi_{\psd{n}}(\bar{Z}), \ \bar{S} := -\frac{1}{\sigma}(\Pi_{\psd{n}}(-\bar{Z})), \ \bar{y} := (\Asdp \AsdpT)^{-1}\Asdp(C - \bar{S}).
    \end{align*} 
    Thus, $(\bar{X}, \bar{y}, \bar{S}) \in \calK$ and $(X, y, S)$ satisfies conditions in~\eqref{eq:metric:peb} (up to a different ball radius). Moreover, $\delta(Z) = -\PA (X - \widetilde{X}) + \sigma \PAp (S - C)$. On the other hand,
    \begin{align*}
        & \normtwo{\Asdp X - b} = \normtwo{\Asdp \PA (X - \widetilde{X})} \le \normop{\Asdp} \normF{\PA(X - \widetilde{X})} \le \normop{\Asdp} \normF{\delta(Z)}, \\
        & \normF{\AsdpT y + S - C} = \normF{\PAp (S - C)} \le \frac{1}{\sigma} \normF{\delta(Z)}.
    \end{align*}
    Thus, invoking PEB~\eqref{eq:metric:peb}, $
        \dist((X, y, S), \calK) \le \kappa (\normop{\Asdp} + \frac{1}{\sigma}) \normF{\delta(Z)}.
    $ Now, define a linear map $T: \Sn \times \Real{m} \times \Sn \mapsto \Sn$ as $T(X, y, S) = X - \sigma S$. Then, $Z = T(X, y, S)$, $\bar{Z} = T(\bar{X}, \bar{y}, \bar{S})$, and $T(\calK) = \Zopt$. Since $T$ is $\sqrt{1 + \sigma^2}$-Lipschitz, 
    \begin{align*}
        \dist(Z, \Zopt) = \dist(T(X, y, S), T(\calK)) \le \sqrt{1 + \sigma^2} \dist((X, y, S), \calK) \le \kappa' \normF{\delta(Z)},
    \end{align*}
    where $\kappa^\prime = \kappa \sqrt{1 + \sigma^2} (\normop{\Asdp} + \frac{1}{\sigma})$. 
\end{proof}

\vspace{2pt}
\noindent
\textbf{The role of strict complementarity.} Using the ingredients developed in the proof of the R-linear convergence of $\dist(\Zk, \Zopt)$, we now show that DRS-EB~\eqref{eq:metric:drs-eb} holds at a nonsingular point $\Zs \in \Zopt$.

\begin{theorem}[SC $\Rightarrow$ DRS-ER]
    \label{thm:metric:sc-imply-drs-er} 
    Given a KKT point $(\Xs, \ys, \Ss)$ satisfying Assumption~\ref{ass:lin-sc}. DRS-EB~\eqref{eq:metric:drs-eb} holds at $\Zs = \Xs - \sigma \Ss$. 
\end{theorem}
\begin{proof}
    Let $C_{\mathrm{EB}}$ be the constant radius defined in Theorem~\ref{thm:eb-intro-thm}. Define $\epsilon_0 = \frac{1}{\sqrt{n}} C_{\mathrm{EB}}$. For any $Z \in \mathbb{B}_{\epsilon_0}(\Zs) := \{Z \mid \normF{Z - \Zs}\le \epsilon_0\}$, denote $\H := Z - \Zs$ following the block partition in~\eqref{eq:eb-intro-partition}, $X := \Pi_{\psd{n}}(Z)$, and $S = -\frac{1}{\sigma} \Pi_{\psd{n}}(-Z)$. By dropping all superscripts $(k)$ in Lemma~\ref{lem:conv-nnd-T}'s proof procedure, we get
    \begin{align}
        \label{eq:metric:sc-proof-1}
        \normF{\Pi_{\calT_{\Ss}}(X)} + \sigma \normF{\Pi_{\calT_{\Xs}}(S)} \le \alpha_T \normF{\HO},
    \end{align}
    with some constant $\alpha_T > 0$.
    Similarly, for Lemma~\ref{lem:conv-nnd-Z}, we get 
    \begin{align}
        \label{eq:metric:sc-proof-2}
        \dist (Z, \Zopt) \leq \alpha_Z (\normF{\delta(Z)} + \normF{\Proj_{\calT_{\Ss}} (X)} + \sigma \normF{\Proj_{\calT_\Xs} (S)}),
    \end{align}
    with $\alpha_Z$ defined in Lemma~\ref{lem:conv-nnd-Z}. Combining~\eqref{eq:metric:sc-proof-1} and~\eqref{eq:metric:sc-proof-2}, all we need to proof is 
    \begin{align}
        \label{eq:metric:sc-proof-3}
        \normF{\HO} \le \alpha_O \normF{\delta(Z)},
    \end{align}
    with some constant $\alpha_O > 0$. To see this, we rewrite $\delta(Z)$ as the summation of first-order terms and residuals: 
    \begin{align*}
        \delta(Z) = \underbrace{-\PA (\Omega \circ \H) - \PAp (\Omega^\perp \circ H)}_{=: F(H)} + (\PAp - \PA) \Psi,
    \end{align*}
    where $\Omega$ is defined in~\eqref{eq:eb-intro-Omega}, $\Omega^\perp := E_n - \Omega$, $\Psi$ is defined in~\eqref{eq:lin-psik}. Expand $F(H)$ as:
    \begin{align*}
        F(H) = \underbrace{-\PA \mymat{\HX & 0 \\ 0 & 0} - \PAp \mymat{0 & 0 \\ 0 & \HS}}_{=: G_1 (\HX, \HS)} 
            \underbrace{-\PA \mymat{0 & \TheHOT \\ \TheHO & 0} - \PAp \mymat{0 & \ThepHOT \\ \ThepHO & 0}}_{=:G_2(\HO)},
    \end{align*}
    where $\Theta$ is defined in~\eqref{eq:eb-intro-Omega}.
    Define $f(\HO)$ as 
    \begin{align*}
        f(H_O) := \min_{\HX, \HS} \normF{F(H)}^2 = \min_{\HX, \HS} \normF{
            G_1(\HX, \HS) + G_2(\HO)
        }^2.
    \end{align*}
    Since $G_1: \Sn \times \Sn \mapsto \Sn$ is a linear map, $\calG := \{G_1(\HX, HS) \mid \HX \in \Sym{r}, \HS \in \Sym{n-r}\}$ is a subspace. Therefore, the minimum is attained and 
    \begin{align*}
        f(H_O) = \min_{Y \in \calG} \normF{Y + G_2(\HO)}^2 = \dist(-G_2(\HO), \calG)^2.
    \end{align*}
    On the other hand, $G_2: \Real{(n-r) \times r} \mapsto \Sn$ is also a linear map. Therefore, $f(H_O)$ is continuous, non-negative, and $2$-homogeneous. Denote $c$ as $\min_{\normF{\HO} = 1} f(\HO)$. Since $\normF{\HO} = 1$ is compact and $f(\HO)$ is continuous, the minimum is attained and $c$ is well defined. We shall see $c > 0$. If not, then there exists $\HX, \HS$ and $\HO$ with $\normF{\HO} = 1$, such that $\normF{F(H)}^2 = 0$. However, $\normF{F(H)}^2 = \normF{\PA (\Omega \circ \H)}^2 + \normF{\PAp (\Omega^\perp \circ H)}^2$, which further implies $\PA (\Omega \circ \H) = 0$ and $\PAp (\Omega^\perp \circ H) = 0$. Thus, $\inprod{\Omega \circ \H}{\Omega^\perp \circ H} = 0$. 
    On the other hand, $\inprod{\Omega \circ \H}{\Omega^\perp \circ H} = 2 \inprod{\TheHO}{\ThepHO}$ from~\eqref{eq:lin-inprod-prf}.
    Since each element of $\Theta$ lies in $(0, 1)$, this forces $\HO = 0$, which leads to a contradiction. In addition with $f$'s $2$-homogeneity, we get 
    \begin{align*}
        \normF{F(H)}^2 \ge f(\HO) \ge c \normF{\HO}^2, \quad \forall H \in \Sn. 
    \end{align*}
    Finally, by Remark~\ref{rem:psdproj-eb}, $\normF{\Psi} \le \alpha^\prime_{\mathrm{EB}} \normF{H} \normF{HO}$. Thus, 
    \begin{align}
        \label{eq:metric:sc-proof-4}
        \normF{\delta(Z)} \ge \normF{F(H)} - \normF{(\PAp - \PA) \Psi} \ge \sqrt{c} \normF{\HO} - \alpha^\prime_{\mathrm{EB}} \normF{H} \normF{\HO}. 
    \end{align}
    Now set $\epsilon := \min\{\epsilon_0, \frac{\sqrt{c}}{2 \alpha^\prime_{\mathrm{EB}}}\}$. As long as $Z \in \mathbb{B}_{\epsilon}(\Zs)$, $\normF{\delta(Z)} \ge \frac{\sqrt{c}}{2} \normF{\HO}$, which proves~\eqref{eq:metric:sc-proof-3}.
\end{proof}

The proof of Theorem~\ref{thm:metric:sc-imply-drs-er} again highlights the importance of Theorem~\ref{thm:eb-intro-thm}: without it, neither~\eqref{eq:metric:sc-proof-1} nor~\eqref{eq:metric:sc-proof-4} can be established. In summary, the relationships among the five local error bounds in ADMM for SDP and SC are as follows:
}
\[
    \begin{tikzcd}[column sep=3em]
    \mathrm{FEB}
    \arrow[r, Leftrightarrow, shorten <=2pt, shorten >=2pt]
    &
    \text{Proximal EB-I}
    \arrow[r, Leftrightarrow, shorten <=2pt, shorten >=2pt]
    &
    \text{Proximal EB-II}
    \arrow[r, Rightarrow, shorten <=2pt, shorten >=2pt]
    &
    \mathrm{PEB}
    \arrow[r, Rightarrow, shorten <=2pt, shorten >=2pt]
    &
    \mathrm{DRS\text{-}EB}
    \\
    &&&&
    \mathrm{SC} \arrow[u, Rightarrow, shorten <=2pt, shorten >=2pt]
    \end{tikzcd}
\]

\rebuttal{%
\vspace{2pt}
\noindent
\textbf{Connections to hidden polyhedrality.} In the literature~\cite{liang2017jota-local-convergence-admm,yuan2020discerning}, various forms of hidden polyhedrality have been exploited as special cases of metric subregularity and local error bounds. Specifically,~\cite{liang2017jota-local-convergence-admm} defines local polyhedrality under the partly smoothness setting. Their framework further requires strict complementarity at the KKT point, together with vanishing higher-order terms~\cite[Proposition 5.2]{liang2017jota-local-convergence-admm}, \ie $\Psi^{(k)} = 0$ in~\eqref{eq:lin-psik}. On the other hand,~\cite[Assumption 1.3]{yuan2020discerning} introduces structured polyhedricity, a model-structural variational assumption. This assumption requires one of ADMM's block functions to be piecewise linear-quadratic and the other to be locally strongly convex. Although these conditions have successfully established local linear convergence in piecewise linear-quadratic optimization settings, they do not apply directly to ADMM for SDP due to the non-polyhedrality of $\delta_{\psd{n}}(\cdot)$.

It is natural to ask whether further connections hold between the other error bounds and the SC condition:
\begin{quotation}
    \textbf{Open problems}: In ADMM for SDP, does the SC condition (Assumption~\ref{ass:lin-sc}) imply PEB~\eqref{eq:metric:peb}? Does the SC condition (Assumption~\ref{ass:lin-sc}) imply Proximal EB-I~\eqref{eq:metric:proximal-eb} (and hence FEB and Proximal EB-II)?
\end{quotation}

}


\section{Conclusion}
\label{sec:conclusion}

We established a new sufficient condition for the local linear convergence of the Alternating Direction Method of Multipliers (ADMM) in solving semidefinite programming (SDP) problems. Contrary to the conventional belief that ADMM is inherently slow for SDPs, we demonstrated that when the converged primal--dual optimal solutions satisfy strict complementarity, ADMM exhibits local linear convergence, regardless of nondegeneracy conditions. Our theoretical analysis is grounded in a direct local linearization of the ADMM operator and a refined error bound for the projection onto the positive semidefinite cone, revealing the anisotropic nature of projection residuals and improving previous bounds.

Extensive numerical experiments validated our theoretical findings, showing that ADMM achieves local linear convergence across a variety of SDP instances, including those where nondegeneracy fails. Furthermore, we identified cases where ADMM struggles to reach high accuracy, linking these difficulties to near violations of strict complementarity. This observation aligns with recent results in linear programming. 

Our numerical results also revealed intriguing connections between rank identification and linear convergence. While we provided a qualitative analysis, a complete understanding remains open. Future work could further investigate this relationship, examine whether linear convergence can occur in the absence of both nondegeneracy and strict complementarity, and develop new algorithms to accelerate ADMM and other first-order methods.

\subsection*{Acknowledgements}
We sincerely thank Ying Cui, Jingwei Liang, Ling Liang, Feng-Yi Liao, Haihao Lu, Defeng Sun, and Tianyun Tang for their valuable discussions. Xin Jiang extends special gratitude to Adrian S.\ Lewis for his insightful perspectives and unwavering support.

\clearpage

\appendix
\begin{appendices}


\section{Discussion on \texorpdfstring{\cite[Proposition 3.4]{cui2016arxiv-superlinear-alm-sdp}}{Prop}}
\label{app:sec:details-cui-prop}

In this section, we show that \cite[Proposition 3.4]{cui2016arxiv-superlinear-alm-sdp}, under an additional nonsingularity assumption, can be readily derived from \cref{thm:eb-intro-thm}. We first restate~\cite[Proposition 3.4]{cui2016arxiv-superlinear-alm-sdp} (with the nonsingularity assumption) below.
\begin{corollary}
    \label{app:cor:cui-prop}
    Let $Z \in \Sn$ being nonsingular and $X,S \in \Symp{n}$ be defined as
    \[
        Z := \mymat{\LamX & 0 \\ 0 & \LamS}, \qquad
        X := \mymat{\LamX & 0 \\ 0 & 0}, \qquad
        S := \mymat{0 & 0 \\ 0 & -\LamS}, 
    \]
    where, without loss of generality, $\LamX \in \R^{r \times r}_{++}$ and $\LamS \in \R^{s \times s}_{++}$ are diagonal matrices of the form in~\eqref{eq:intro:Xs-sigSs}, and $r+s=n$. For a sufficiently small perturbation $\H \in \Sn$, define
    \[
        X^\prime := \PiSnp{Z+H}, \qquad S^\prime := X^\prime - (Z+H), \qquad \dX := X^\prime - X, \qquad \dS := S^\prime - S. 
    \]
    Denote $\alpha := \{1,2,\ldots,r\}$ and $\gamma := \{r+1,\ldots,n\}$. Then, it holds that
    \begin{gather*}
        \begin{array}{llll}
            X^\prime_{\alpha\alpha} &= \LamX + \calO (\norm{\dX}), \qquad & S^\prime_{\gamma\gamma} &= \LamS + \calO (\norm{\dS}), \\
            X^\prime_{\gamma\alpha} &= \calO (\min \left\{ \norm{\dX}, \norm{\dS} \right\}), \qquad \qquad &S^\prime_{\gamma\alpha} &= \calO (\min \left\{ \norm{\dX}, \norm{\dS} \right\}), \\
            X^\prime_{\gamma\gamma} &= \calO (\norm{\dX} \cdot \norm{\dS}), \qquad &S^\prime_{\alpha\alpha} &= \calO (\norm{\dX} \cdot \norm{\dS}), \\
        \end{array} \\
        S^\prime_{\gamma\alpha} \LamX - \LamS X^\prime_{\gamma\alpha} = \calO (\norm{\dX} \cdot \norm{\dS}),
    \end{gather*}
    where $\norm{\cdot}$ is an arbitrary matrix norm.
\end{corollary}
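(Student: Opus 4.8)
The plan is to derive \cref{app:cor:cui-prop} directly from \cref{thm:eb-intro-thm}, using the nonsingularity hypothesis to invoke the refined error bound with $Q = I$. The starting point is the observation that under strict complementarity ($r+s=n$), the matrix $\Omega$ in \eqref{eq:eb-intro-Omega} has no zero ``$(n-r)\times(n-r)$ block'' in the interior sense: it is $\Omega = \mymat{E_r & \Theta\tran \\ \Theta & 0}$ with $E_r$ the $r\times r$ all-ones block, $\Theta$ the off-block-diagonal weights, and the bottom-right $s\times s$ block identically zero. First I would write, from \cref{thm:eb-intro-thm} with $Q=I$,
\[
    X^\prime - X = \PiSnp{Z+H} - \PiSnp{Z} = \Omega \circ H + R, \qquad \normtwo{R} \le \alpha_{\mathrm{EB}} \normtwo{H_O}\cdot \normtwo{H},
\]
and block-decompose $H = \mymat{H_X & H_O\tran \\ H_O & H_S}$ exactly as in \eqref{eq:eb-intro-partition}. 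Reading off the blocks: $\dX_{\alpha\alpha} = H_X + R_{\alpha\alpha}$, $\dX_{\gamma\alpha} = \Theta\circ H_O + R_{\gamma\alpha}$, and $\dX_{\gamma\gamma} = 0 + R_{\gamma\gamma}$, so $X^\prime_{\gamma\gamma} = \calO(\normtwo{R})$ and $X^\prime_{\alpha\alpha} = \LamX + H_X + \calO(\normtwo{R})$.

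Next I would establish the two-sided quantitative comparisons that convert the $\normtwo{H_O}\cdot\normtwo{H}$ bound into the $\norm{\dX}\cdot\norm{\dS}$ and $\min\{\norm{\dX},\norm{\dS}\}$ forms in the corollary. Since $S^\prime - S = (X^\prime - H - Z) - S = \dX - H$, we get $\dS = (\Omega - E_n)\circ H + R = -\Omep \circ H + R$, so $\dS_{\alpha\alpha} = R_{\alpha\alpha}$, $\dS_{\gamma\alpha} = -\Thep\circ H_O + R_{\gamma\alpha}$ (using $\Omep = \mymat{0 & \Thep\tran \\ \Thep & E_s}$ from the definition just before \eqref{eq:lin-linearization}), and $\dS_{\gamma\gamma} = -H_S + R_{\gamma\gamma}$. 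From the $\gamma\alpha$ blocks: $\dX_{\gamma\alpha} = \Theta\circ H_O + \calO(\normtwo{R})$ and $\dS_{\gamma\alpha} = -\Thep\circ H_O + \calO(\normtwo{R})$; since all entries of $\Theta$ and $\Thep$ lie in $(0,1)$ and are bounded away from $0$ (they depend only on the fixed eigenvalues of $Z$), both $\norm{\dX}$ and $\norm{\dS}$ are, up to constants and the negligible $\calO(\normtwo{R})$ correction, comparable to $\normtwo{H_O}$. More precisely, for $H$ small enough the dominant contribution forces $c_1\normtwo{H_O} \le \norm{\dX} $ and $c_2\normtwo{H_O}\le\norm{\dS}$, while trivially $\normtwo{H}\ge\normtwo{H_O}$; combined with $\normtwo{R}\le\alpha_{\mathrm{EB}}\normtwo{H_O}\normtwo{H}$ this yields $\normtwo{R} = \calO(\norm{\dX}\cdot\norm{\dS})$. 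Hence $X^\prime_{\gamma\gamma} = \calO(\norm{\dX}\norm{\dS})$, $S^\prime_{\alpha\alpha} = \calO(\norm{\dX}\norm{\dS})$, and $X^\prime_{\gamma\alpha} = \dX_{\gamma\alpha} = \calO(\norm{\dX})$, which is also $\calO(\norm{\dS})$ by the comparison, giving the $\calO(\min\{\norm{\dX},\norm{\dS}\})$ claim for both off-diagonal blocks; similarly $X^\prime_{\alpha\alpha} = \LamX + (\dX_{\alpha\alpha}) = \LamX + \calO(\norm{\dX})$ and $S^\prime_{\gamma\gamma} = \LamS - (-\dS_{\gamma\gamma}) = \LamS + \calO(\norm{\dS})$ once one notes $S^\prime_{\gamma\gamma} = -\LamS - H_S + \dS_{\gamma\gamma}$... wait, more carefully $S^\prime_{\gamma\gamma} = S_{\gamma\gamma} + \dS_{\gamma\gamma} = -\LamS + (-H_S + R_{\gamma\gamma})$, and $-H_S = \calO(\norm{\dS})$ since $\dS_{\gamma\gamma} = -H_S + \calO(\norm{\dX}\norm{\dS})$, so $S^\prime_{\gamma\gamma} = -\LamS + \calO(\norm{\dS})$; matching the sign convention $\LamS = -\diag{\lam{r+1},\dots,\lam n}$ in the corollary's statement gives the stated form.

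Finally, for the Sylvester-type identity $S^\prime_{\gamma\alpha}\LamX - \LamS X^\prime_{\gamma\alpha} = \calO(\norm{\dX}\cdot\norm{\dS})$, I would start from the exact relation that $X^\prime$ and $S^\prime = X^\prime - (Z+H)$ are complementary PSD matrices, so $X^\prime S^\prime = 0$; extracting the $(\gamma,\alpha)$ block of $X^\prime S^\prime = 0$ and isolating the leading-order terms, $X^\prime_{\gamma\alpha} S^\prime_{\alpha\alpha} + X^\prime_{\gamma\gamma} S^\prime_{\gamma\alpha} = 0$, wait that is not quite it — rather one uses $X^\prime(Z+H) = X^\prime X^\prime = (X^\prime)^2$ hmm. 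A cleaner route: from $\dX = \Theta\circ H_O + \calO(\norm{\dX}\norm{\dS})$ in the $\gamma\alpha$ block and $\dS = -\Thep\circ H_O + \calO(\norm{\dX}\norm{\dS})$, and from $S = \mymat{0 & 0 \\ 0 & -\LamS}$, $X = \mymat{\LamX & 0 \\ 0 & 0}$, compute $X^\prime_{\gamma\alpha} = \Theta\circ H_O + \calO(\cdot)$ and $S^\prime_{\gamma\alpha} = -\Thep\circ H_O + \calO(\cdot)$; then $S^\prime_{\gamma\alpha}\LamX - \LamS X^\prime_{\gamma\alpha}$, entrywise, equals $\sum$-free: the $(i,j)$ entry is $(H_O)_{ij}\big(-\Thep_{ij}\lam j - \lammin{}...\big)$ — specifically $-(\Thep_{ij})(H_O)_{ij}\lam j - (-\lam{i+r})\Theta_{ij}(H_O)_{ij}$, and substituting $\Theta_{ij} = \frac{\lam j}{\lam j - \lam{i+r}}$, $\Thep_{ij} = 1-\Theta_{ij} = \frac{-\lam{i+r}}{\lam j - \lam{i+r}}$ makes this combination vanish identically; the remaining terms are $\calO(\norm{\dX}\norm{\dS})$ as before. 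The main obstacle, and the step deserving the most care, is the quantitative two-sided comparison between $\normtwo{H_O}$ and $\norm{\dX},\norm{\dS}$: one must check that the $\calO(\normtwo{R})$ corrections genuinely do not swamp the $\normtwo{H_O}$ leading term, which requires that either $\normtwo{H_O}$ is nonzero (the interesting case, where the comparison is straightforward once $\normtwo{H}$ is small relative to the fixed spectral gaps of $Z$) or $\normtwo{H_O}=0$ (in which case $R=0$, all the $\calO$ terms vanish exactly, and every claimed bound holds trivially). Splitting into these two cases and absorbing norm-equivalence constants handles it cleanly.
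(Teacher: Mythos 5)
Your route is the same as the paper's (invoke \cref{thm:eb-intro-thm} with $Q=I$, write $\dX=\Omega\circ\H+R$, $\dS=\dX-\H$, read off blocks, then compare norms), but the key quantitative step is not justified as stated. You claim $\normtwo{R}=\calO(\norm{\dX}\cdot\norm{\dS})$ from three facts only: $\norm{\dX}\gtrsim\normtwo{\HO}$, $\norm{\dS}\gtrsim\normtwo{\HO}$, and $\normtwo{\H}\ge\normtwo{\HO}$. These give $\norm{\dX}\,\norm{\dS}\gtrsim\normtwo{\HO}^2$, which is the wrong direction: what is needed is $\norm{\dX}\,\norm{\dS}\gtrsim\normtwo{\HO}\,\normtwo{\H}$, and $\normtwo{\HO}\normtwo{\H}$ can be far larger than $\normtwo{\HO}^2$ (take $\normtwo{\HX}\sim\normtwo{\HS}\sim\epsilon$, $\normtwo{\HO}\sim\epsilon^2$: then $\normtwo{\HO}\normtwo{\H}\sim\epsilon^3$ while $\normtwo{\HO}^2\sim\epsilon^4$). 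Since $X^\prime_{\gamma\gamma}$, $S^\prime_{\alpha\alpha}$ and the Sylvester claim all require exactly $\normtwo{\HO}\normtwo{\H}=\calO(\norm{\dX}\norm{\dS})$, this is the central gap. Also, the informal assertion that $\norm{\dX},\norm{\dS}$ are ``comparable to $\normtwo{\HO}$'' is false as an upper bound ($\dX_{\alpha\alpha}=\HX+R_{\alpha\alpha}$ can dominate). The fix is already contained in your block identities: from $\dX_{\alpha\alpha}=\HX+R_{\alpha\alpha}$ and $\dX_{\gamma\alpha}=\Theta\circ\HO+R_{\gamma\alpha}$ (with $R$ higher order) you get the two-sided bound $\norm{\dX}\asymp\normtwo{\HX}+\normtwo{\HO}$, and similarly $\norm{\dS}\asymp\normtwo{\HS}+\normtwo{\HO}$ — this is \eqref{app:eq:details-cui-prop:dX-dS} in the paper — whence $\norm{\dX}\norm{\dS}\gtrsim(\normtwo{\HX}+\normtwo{\HO})(\normtwo{\HS}+\normtwo{\HO})\ge\normtwo{\HO}(\normtwo{\HX}+\normtwo{\HS}+\normtwo{\HO})\gtrsim\normtwo{\HO}\normtwo{\H}$. (Your $\min\{\norm{\dX},\norm{\dS}\}$ claims for the $\gamma\alpha$ blocks do not need this fix, since there you only use the upper bound $\norm{X^\prime_{\gamma\alpha}}\lesssim\normtwo{\HO}$ together with the lower bounds.)

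A secondary point: in the Sylvester step the displayed cancellation does not hold under the convention you substitute. With your (correct) leading terms $X^\prime_{\gamma\alpha}\approx\Theta\circ\HO$ and $S^\prime_{\gamma\alpha}\approx-\Thep\circ\HO$, the $(i,j)$ entry of $S^\prime_{\gamma\alpha}\LamX-\LamS X^\prime_{\gamma\alpha}$ is $(-\Thep_{ij}\lam{j}-(\LamS)_{ii}\Theta_{ij})(\HO)_{ij}$; this vanishes if $(\LamS)_{ii}=\lam{i+r}$ (the convention of \cref{lem:eb-intro-SS02}, where $\LamS\prec 0$), but with the value you plug in, $(\LamS)_{ii}=-\lam{i+r}$, the two terms add and the leading coefficient is $2\lam{i+r}\lam{j}/(\lam{j}-\lam{i+r})\neq 0$. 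The corollary statement itself is ambiguous about the sign of $\LamS$, so this is partly inherited, but you need to fix one convention and verify the cancellation within it (the paper does this via the diagonal-Hadamard identity $\LamS(\Theta\circ\HO)=(\Thep\circ\HO)\LamX$); once the sign bookkeeping is consistent and the product bound above is repaired, the rest of your argument goes through.
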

\begin{proof}
    When $\norm{\H}$ is sufficiently small, we conclude from \cref{thm:eb-intro-thm} that there exists two positive constants $\kappa_X$ and $\kappa_S$ (depending on the norm type) such that
    \begin{align*}
        \norm{\dX - \Omega \circ \H} &= \left\|{\mymat{X^\prime_{\alpha\alpha} - \LamX - \HX & X^\prime_{\alpha\gamma} - \Theta\tran \circ \HO\tran \\ X^\prime_{\gamma\alpha} - \Theta \circ \HO & X^\prime_{\gamma\gamma}}}\right\|\le \kappa_X \cdot \norm{\HO} \cdot \norm{\H}, \\
        \norm{\dS - (E_n - \Omega) \circ \H} &= \left\|{\mymat{S^\prime_{\alpha\alpha} & S^\prime_{\alpha\gamma} - (E_{(n-r) \times r} - \Theta)\tran \circ \HO\tran \\ S^\prime_{\gamma\alpha} - (E_{(n-r) \times r} - \Theta) \circ \HO & S^\prime_{\gamma\gamma} - \LamS - \HS}}\right\| \\
        &\le \kappa_S \cdot \norm{\HO} \cdot \norm{\H}.
    \end{align*}
    Thus, there exist four positive constraints $\kappa_1, \kappa_2, \kappa_3, \kappa_4$ such that
    \begin{subequations}
        \label{app:eq:details-cui-prop:dX-dS}
        \begin{gather}
            \kappa_1 \cdot (\norm{\HX} + \norm{\HO}) \le \norm{\dX} \le \kappa_2 \cdot (\norm{\HX} + \norm{\HO}) \\
            \kappa_3 \cdot (\norm{\HS} + \norm{\HO}) \le \norm{\dS} \le \kappa_4 \cdot (\norm{\HS} + \norm{\HO}).
        \end{gather}
    \end{subequations}
    We only prove the $X$ part; the $S$ part follows directly by symmetry.
    \begin{enumerate}
        \item Since $X^\prime_{\alpha\alpha} - \LamX = (\dX)_{\alpha\alpha}$, the first conclusion $X^\prime_{\alpha\alpha} = \LamX + \calO (\norm{\dX})$ naturally holds.

        \item For $X^\prime_{\gamma\alpha}$, we have
        \begin{equation}
            \label{app:eq:details-cui-prop:Xnew-gamma-alpha}
            \norm{X^\prime_{\gamma\alpha} - \Theta \circ \HO} \le \kappa_X \cdot \norm{\HO} \cdot \norm{\H},
        \end{equation}
        \ie there exist $\kappa_5, \kappa_6 > 0$ such that $\kappa_5 \norm{\HO} \le \norm{X^\prime_{\gamma\alpha}} \le \kappa_6 \norm{\HO}$. From~\eqref{app:eq:details-cui-prop:dX-dS}, we conclude $X^\prime_{\gamma\alpha} = \calO (\min \{ \norm{\dX}, \norm{\dS} \})$.

        \item The norm of $X^\prime_{\gamma\gamma}$ is upper bounded by $\norm{X^\prime_{\gamma\gamma}} \leq \kappa_X \norm{\HO} \norm{\H}$. On the other hand, we have
        \[
            \norm{\dX} \cdot \norm{\dS} \ge \kappa_1 \kappa_3 \cdot (\norm{\HX} + \norm{\HO}) \cdot (\norm{\HS} + \norm{\HO}) \ge \kappa_7 \cdot \norm{\HO} \cdot \norm{\H}
        \]
        for some positive constant $\kappa_7$.

        \item Note that
        \begin{align*}
            \LamS (\Theta \circ \HO) = (\LamS \Theta) \circ \HO &= \mymat{ 
                \frac{-\lam{1}\lam{r+1}}{\lam{1} - \lam{r+1}} & \cdots & \frac{-\lam{r}\lam{r+1}}{\lam{r} - \lam{r+1}} \\
            \vdots & \ddots & \vdots \\
            \frac{-\lam{1}\lam{n}}{\lam{1} - \lam{n}} & \cdots & \frac{-\lam{r}\lam{n}}{\lam{r} - \lam{n}}  
            } \circ \HO \\
            &= \left( (E_{(n-r) \times r} - \Theta) \LamX \right) \circ \HO \\
            &= \left( (E_{(n-r) \times r} - \Theta) \circ \HO \right) \LamX,
        \end{align*}
        where we use the fact that $(AD) \circ B = B \circ (AD) = (B \circ A) D$ for diagonal $D$. Then,
        \begin{align*}
            &\ \norm{S^\prime_{\gamma\alpha} \LamX - \LamS X^\prime_{\gamma\alpha}} \\
            =&\ \norm{
                S^\prime_{\gamma\alpha} \LamX - \big((E_{(n-r) \times r} - \Theta) \circ \HO \big) \LamX - \big(\LamS X^\prime_{\gamma\alpha} - \LamS (\Theta \circ \HO)\big)
            } \\
            \le&\ \norm{S^\prime_{\gamma\alpha} \LamX - \big( (E_{(n-r) \times r} - \Theta) \circ \HO \big) \LamX} + \norm{\LamS X^\prime_{\gamma\alpha} - \LamS (\Theta \circ \HO)} \\
            \le&\ (\kappa_X + \kappa_S) \kappa_8 \cdot \norm{\HO} \cdot \norm{\H} 
        \end{align*}
        for some positive constant $\kappa_8$. We conclude $S^\prime_{\gamma\alpha} \LamX - \LamS X^\prime_{\gamma\alpha} = \calO (\norm{\dX} \norm{\dS})$, following the same argument as in item 3.
    \end{enumerate}
\end{proof}
From the above proof procedure, we see that \cref{thm:eb-intro-thm} provides a subtle and accurate control of the linearization residual, especially the $\norm{\HO}$ term. Otherwise, using the classic result in \cref{lem:eb-intro-SS02} only, inequalities like~\eqref{app:eq:details-cui-prop:dX-dS} and~\eqref{app:eq:details-cui-prop:Xnew-gamma-alpha} may not be derived in a straightforward manner.


\section{Local Linear Convergence with Nondegeneracy but without SC}
\label{app:sec:conv-only-nd}

In this section, we establish the local linear convergence of ADMM applied to SDPs in which primal and dual nondegeneracy hold but strict complementarity fails. That is to say, we consider the case where $\Zs$ is singular, \ie $s + r < n$. Again, we assume without loss of generality that $\Qs = \I[n]$ in~\eqref{eq:intro:Xs-sigSs}. We also need the following index sets
\[
    \alpha := \left\{ 1, \dots , r \right\}, \quad \beta := \left\{ r+1, \dots, n-s \right\}, \quad \gamma := \left\{ n-s+1, \dots, n \right\},
\]
and then any matrix $\H \in \Sn$ can be partitioned as
\begin{align}
    \label{app:eq:conv-only-nd:H-partition}
    \H = \mymat{
        \H[\alpha\alpha] & \H[\beta\alpha]\tran & \H[\gamma\alpha]\tran \\
        \H[\beta\alpha] & \H[\beta\beta] & \H[\gamma\beta]\tran \\
        \H[\gamma\alpha] & \H[\gamma\beta] & \H[\gamma\gamma]
    }.
\end{align}
When $\Zs$ is singular, the projector $\PiSnp{\cdot}$ is no longer Fr\'echet differentiable around~$\Zs$ \cite[Theorem 4.8]{sun2002mor-semismooth-matrix-valued}. However, its directional derivative always exists \cite[Theorem 4.7]{sun2002mor-semismooth-matrix-valued}.
\begin{lemma}[\text{\cite[Theorem 4.7]{sun2002mor-semismooth-matrix-valued}}] 
    \label{app:lem:conv-only-nd:dd}
    The PSD cone projection $\PiSnp{\cdot}$ is directionally differentiable at $\Zs$ and, for any $\H \in \Sn$ partitioned as in~\eqref{app:eq:conv-only-nd:H-partition}, its directional derivative at $\H$ is
    \[
        \Dd{H} := \mymat{
            \H[\alpha \alpha] & \H[\beta \alpha]\tran & \widetilde \Theta\tran \circ \H[\gamma \alpha]\tran \\
            \H[\beta \alpha] & \Pi_{\Symp{\abs{\beta}}} (\H[\beta \beta]) & 0 \\
            \widetilde \Theta \circ \H[\gamma \alpha] & 0 & 0 
        },
    \]
    where $\abs{\beta} = n - r - s$ is the cardinality of the index set $\beta$ and the matrix $\widetilde \Theta \in \Real{s \times r}$ is defined as
    \begin{equation}
        \label{app:eq:conv-only-nd:Theta}
        \widetilde \Theta_{i,j} := \frac{\lam{j}}{\lam{j} - \lam{n-s+i}}, \quad \text{for} \ i \in [s], \, j \in [r].
    \end{equation}
\end{lemma}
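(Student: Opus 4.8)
The statement is \cite[Theorem 4.7]{sun2002mor-semismooth-matrix-valued}, so the shortest route is to cite it; below I sketch a self-contained derivation in the style of \cref{sec:error-bound}. First I would reduce to the diagonal case: since $\Pi_{\Symp{n}}(Q\tran A Q) = Q\tran \Pi_{\Symp{n}}(A) Q$ for orthogonal $Q$ and directional derivatives transform the same way, one may assume $\Zs = \diag{\Lambda_\alpha, 0, \Lambda_\gamma}$ with $\Lambda_\alpha \in \Sympp{r}$, $\Lambda_\gamma \in \Symnn{s}$, and a zero block of size $\abs{\beta} = n-r-s$ in the middle; fixing $\H$ partitioned as in \eqref{app:eq:conv-only-nd:H-partition}, the goal is to expand $\Pi_{\Symp{n}}(\Zs + t\H)$ to first order as $t \downarrow 0$.

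The core step is to block-diagonalize $\Zs + t\H$ along the partition $(\alpha,\beta,\gamma)$ by a near-identity orthogonal conjugation $\exp(W(t))$, exactly as in \blue{Algorithm~\ref{alg:error-bound:iterative-elimination}}. Writing $W(t) = tW^{(1)} + \calO(t^2)$ skew-symmetric with vanishing diagonal blocks, requiring the off-diagonal blocks of $\exp(W)\tran(\Zs + t\H)\exp(W)$ to vanish forces, at first order, $W^{(1)}$ to solve the Sylvester equations
\[ W^{(1)}_{\beta\alpha}\Lambda_\alpha = \H[\beta\alpha], \qquad W^{(1)}_{\gamma\alpha}\Lambda_\alpha - \Lambda_\gamma W^{(1)}_{\gamma\alpha} = \H[\gamma\alpha], \qquad \Lambda_\gamma W^{(1)}_{\gamma\beta} = -\H[\gamma\beta], \]
each uniquely solvable since $\Lambda_\alpha \succ 0 \succ \Lambda_\gamma$; existence and $C^1$ dependence of $t \mapsto W(t)$ with $W(0)=0$ follow from the implicit function theorem applied to the map $W \mapsto (\text{off-diagonal blocks of }\exp(W)\tran(\Zs + t\H)\exp(W))$, whose Jacobian at the origin is invertible for the same reason. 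After the conjugation, $\exp(W(t))\tran(\Zs + t\H)\exp(W(t)) = \diag{M_\alpha(t), M_\beta(t), M_\gamma(t)}$ with $M_\alpha(t) = \Lambda_\alpha + t\H[\alpha\alpha] + \calO(t^2) \succ 0$, $M_\gamma(t) = \Lambda_\gamma + t\H[\gamma\gamma] + \calO(t^2) \prec 0$ for small $t$, and the crucial identity $M_\beta(t) = t\H[\beta\beta] + \calO(t^2)$ (the zero block receives no first-order drift, since the deflation correction is quadratic in $W$). Since $\Pi_{\Symp{n}}$ of a block-diagonal matrix is the block-diagonal of the projections, and by positive homogeneity and $1$-Lipschitzness of $\Pi_{\Symp{\abs{\beta}}}$,
\[ \Pi_{\Symp{n}}\big( \exp(W(t))\tran(\Zs + t\H)\exp(W(t)) \big) = \diag{M_\alpha(t),\ t\,\Pi_{\Symp{\abs{\beta}}}(\H[\beta\beta]) + o(t),\ 0}. \]
Conjugating back by $\exp(W(t)) = I + tW^{(1)} + \calO(t^2)$, expanding to first order, and subtracting $\Pi_{\Symp{n}}(\Zs) = \diag{\Lambda_\alpha, 0, 0}$, I would read off the coefficient of $t$ block by block: the diagonal blocks give $\H[\alpha\alpha]$, $\Pi_{\Symp{\abs{\beta}}}(\H[\beta\beta])$, $0$; the $(\beta\alpha)$ and $(\gamma\beta)$ blocks collapse to $\H[\beta\alpha]$ and $0$ (using $W^{(1)}_{\beta\alpha}\Lambda_\alpha = \H[\beta\alpha]$ and that the $\gamma$-block of the projection is $0$); and the $(\gamma\alpha)$ block becomes $W^{(1)}_{\gamma\alpha}\Lambda_\alpha$, whose $(i,j)$ entry is $\frac{\lam{j}}{\lam{j} - \lam{n-s+i}}(\H[\gamma\alpha])_{ij} = \widetilde\Theta_{ij}(\H[\gamma\alpha])_{ij}$ by the entrywise solution of the second Sylvester equation. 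This is exactly the claimed $\Dd{\H}$.

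The main obstacle is the block-diagonalization step: establishing that the deflating conjugation $W(t)$ exists, is differentiable at $t=0$, and has coupling blocks of exact order $\calO(t)$, so that they perturb the diagonal blocks only at second order --- in particular verifying $M_\beta(t) = t\H[\beta\beta] + \calO(t^2)$, i.e.\ that the degenerate zero-eigenvalue cluster inherits no spurious first-order term from the $\alpha$- and $\gamma$-clusters. This is precisely where $\Pi_{\Symp{n}}$ is genuinely nonsmooth at $\Zs$. One can handle it via the implicit function theorem as above, or, to stay within the paper's machinery, rerun the contractive iteration and exponential-decay estimates of \cref{sec:error-bound} with $\Zs + t\H$ in place of $\Z + \H$ and track the dependence on $t$; once those bounds are secured the remaining first-order bookkeeping is routine, and as a byproduct it exhibits the failure of Fr\'echet differentiability at $\Zs$ through the nonlinear term $\Pi_{\Symp{\abs{\beta}}}(\H[\beta\beta])$.
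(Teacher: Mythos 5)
Your derivation is correct, but note that the paper offers no proof of this lemma at all---it is imported verbatim as \cite[Theorem 4.7]{sun2002mor-semismooth-matrix-valued}---so your self-contained sketch is necessarily a different route. What you propose is essentially a three-block upgrade of the paper's own \cref{sec:error-bound} machinery: a near-identity orthogonal conjugation $\exp(W(t))$ whose first-order term solves the Sylvester equations $W^{(1)}_{\beta\alpha}\Lambda_\alpha=H_{\beta\alpha}$, $W^{(1)}_{\gamma\alpha}\Lambda_\alpha-\Lambda_\gamma W^{(1)}_{\gamma\alpha}=H_{\gamma\alpha}$, $\Lambda_\gamma W^{(1)}_{\gamma\beta}=-H_{\gamma\beta}$ (all uniquely solvable because the three spectral clusters of $\Zs$ are separated), followed by projecting the block-diagonalized matrix and conjugating back. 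I checked the first-order bookkeeping and it is sound: the commutator $[\Zs,W^{(1)}]$ has vanishing $\beta\beta$ block, so the zero cluster is indeed $tH_{\beta\beta}+\calO(t^2)$ and contributes $t\,\Pi_{\Symp{\abs{\beta}}}(H_{\beta\beta})+o(t)$ after projection; conjugating $\diag{\Lambda_\alpha,0,0}$ by $I+tW^{(1)}$ regenerates exactly the $H_{\beta\alpha}$ and $\widetilde\Theta\circ H_{\gamma\alpha}$ blocks, with $(W^{(1)}_{\gamma\alpha}\Lambda_\alpha)_{ij}=\frac{\lam{j}}{\lam{j}-\lam{n-s+i}}(H_{\gamma\alpha})_{ij}$ matching \eqref{app:eq:conv-only-nd:Theta}; and the implicit-function-theorem argument for a differentiable $W(t)$ is legitimate, since the linearization at $(t,W)=(0,0)$ is precisely the invertible Sylvester operator. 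The comparison, then: Sun and Sun obtain the formula through the semismoothness/limiting route behind \cref{lem:eb-intro-SS02}, and the paper simply buys the result by citation; your deflation argument is more elementary and makes transparent why the nonsmooth term $\Pi_{\Symp{\abs{\beta}}}(H_{\beta\beta})$ appears in the middle block, at the cost that, as written, the $\calO(t^2)$ remainders and the existence/regularity of $W(t)$ are asserted in outline rather than estimated, so it remains a plan that would need the quantitative details (IFT or a three-block rerun of the \cref{sec:error-bound} estimates) filled in to stand on its own.
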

From the definition of the directional derivative, we have for sufficiently small $H \in \Sn$ that
\begin{equation}
    \label{app:eq:conv-only-nd:psd-proj-res}
    \PiSnp{\Zs + \H} = \PiSnp{\Zs} + \Dd{\H} + o(\normF{\H}).
\end{equation}
Recall our definition $\Thep := E_{s \times r} - \Theta$; similarly, we denote $\Ddp{\H}$ as
\begin{align*}
    \Ddp{\H} := \H - \Dd{\H} = \mymat{
        0 & 0 & (\Thep)\tran \circ \H[\gamma \alpha]\tran \\
        0 & -\Pi_{\Symp{\abs{\beta}}} (-\H[\beta \beta]) & \H[\gamma \beta]\tran \\
        \Thep \circ \H[\gamma \alpha] & \H[\gamma \beta] & \H[\gamma \gamma]
    },
\end{align*}
where we use the fact that $\H[\beta \beta] = \Pi_{\Symp{\abs{\beta}}} (\H[\beta \beta]) - \Pi_{\Symp{\abs{\beta}}} (-\H[\beta \beta])$. Similar to~\eqref{eq:lin-linearization}, we split one-step ADMM operator into two parts:
\begin{align*}
    \Zkpo - \Zs = \widetilde \Madmm (\Zk - \Zs) + \widetilde \Psi^{(k)},
\end{align*} 
where 
\begin{align}
    \widetilde \Madmm (\H) & := \PA \Ddp{\H} + \PAp \Dd{\H}, \label{app:eq:conv-only-nd:lin-M} \\
    \widetilde \Psi^{(k)} & := (\Id - 2\PA) \big(\PiSnp{\Zk} - \PiSnp{\Zs} - \Dd{\Zk - \Zs} \big) \nonumber \\
    &\phantom{:}= o(\normF{\Zk - \Zs}). \label{app:eq:conv-only-nd:lin-psik}
\end{align}
Although $\widetilde \Madmm$ is no longer a linear operator (because of $\Pi_{\Symp{\abs{\beta}}}$ in $\widetilde \Omega$), it is still positively homogenous. This is because $\Pi_{\Symp{\abs{\beta}}}$ is positively homogenous and other parts of $\widetilde \Madmm$ are linear. So we can still obtain the following result similar to \cref{lem:lin-auxi}.
\begin{lemma}
    \label{app:lem:conv-only-nd:lin-auxi}
    For any matrix $H \in \Sn$ partitioned as in~\eqref{app:eq:conv-only-nd:H-partition}, it holds that
    \begin{equation} 
        \label{app:eq:conv-only-nd:lin-inprod}
        \inprod{\Dd{\H}}{\Ddp{\H}} = 2\inprod{\widetilde \Theta \circ \H_{\gamma\alpha}}{\widetilde \Theta^\perp \circ \H_{\gamma\alpha}} \ge 0,
    \end{equation}
    with equality if and only if $\H[\gamma\alpha] = 0$, and that
    \begin{equation}
        \label{eq:conv-only-nd:lin-minus}
        \normF{\H}^2 - \normF{\widetilde \Madmm(\H)}^2 
        = \normF{\PA \Dd{\H}}^2 + \normF{\PAp \Ddp{\H}}^2 + 4 \inprod{\widetilde \Theta \circ \H_{\gamma\alpha}}{\widetilde \Theta^\perp \circ \H_{\gamma\alpha}}.
    \end{equation}
\end{lemma}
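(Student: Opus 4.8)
The plan is to follow the proof of \cref{lem:lin-auxi} almost verbatim, replacing the coarse $2\times2$ block partition by the finer $\alpha/\beta/\gamma$ partition of~\eqref{app:eq:conv-only-nd:H-partition} and treating the one extra, nonlinear block $\Pi_{\Symp{\abs{\beta}}}(\H_{\beta\beta})$ by Moreau's orthogonal decomposition. First I would establish the inner-product identity~\eqref{app:eq:conv-only-nd:lin-inprod} by a direct block-by-block computation of $\inprod{\Dd{\H}}{\Ddp{\H}}$; then~\eqref{eq:conv-only-nd:lin-minus} follows by orthogonally decomposing $\H$ along $\range(\PA)$ and $\range(\PAp)$, exactly as in \cref{lem:lin-auxi}.

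For~\eqref{app:eq:conv-only-nd:lin-inprod}: writing out the block form of $\Dd{\H}$ from \cref{app:lem:conv-only-nd:dd} and of $\Ddp{\H} = \H - \Dd{\H}$, the two symmetric matrices have disjoint block supports except in the $(\gamma,\alpha)$ block (with its symmetric mate $(\alpha,\gamma)$) and the $(\beta,\beta)$ block. The $(\gamma,\alpha)$ contribution is $2\inprod{\widetilde\Theta\circ\H_{\gamma\alpha}}{\widetilde\Theta^\perp\circ\H_{\gamma\alpha}}$, the factor $2$ coming from the transposed mate, while the $(\beta,\beta)$ contribution is $-\inprod{\Pi_{\Symp{\abs{\beta}}}(\H_{\beta\beta})}{\Pi_{\Symp{\abs{\beta}}}(-\H_{\beta\beta})}$, which vanishes: for any symmetric $M$, the eigendecomposition gives $M = \Pi_{\Symp{\abs{\beta}}}(M)-\Pi_{\Symp{\abs{\beta}}}(-M)$ with the two summands supported on complementary eigenspaces, hence orthogonal. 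Finally, every entry of $\widetilde\Theta$ lies in $(0,1)$ by~\eqref{app:eq:conv-only-nd:Theta}, so every entry of $\widetilde\Theta^\perp = E_{s\times r}-\widetilde\Theta$ does too, whence $\inprod{\widetilde\Theta\circ\H_{\gamma\alpha}}{\widetilde\Theta^\perp\circ\H_{\gamma\alpha}}$ is a sum of terms $\widetilde\Theta_{ij}\widetilde\Theta^\perp_{ij}(\H_{\gamma\alpha})_{ij}^2\ge0$, nonnegative and zero precisely when $\H_{\gamma\alpha}=0$.

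For~\eqref{eq:conv-only-nd:lin-minus}: since $\H = \Dd{\H}+\Ddp{\H}$ and $\PA+\PAp=\Id$, write $\H = \PA\Dd{\H}+\PA\Ddp{\H}+\PAp\Dd{\H}+\PAp\Ddp{\H}$. Because $\range(\PA)\perp\range(\PAp)$, expanding $\normF{\H}^2$ kills every cross term between a $\PA$-term and a $\PAp$-term, leaving the four squared norms plus $2\inprod{\PA\Dd{\H}}{\PA\Ddp{\H}}+2\inprod{\PAp\Dd{\H}}{\PAp\Ddp{\H}}$; reading $\widetilde\Madmm(\H)$ off the block formula~\eqref{app:eq:conv-only-nd:lin-M} directly (not via linearity, since $\widetilde\Madmm$ is only positively homogeneous) gives $\normF{\widetilde\Madmm(\H)}^2 = \normF{\PA\Ddp{\H}}^2+\normF{\PAp\Dd{\H}}^2$. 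Subtracting leaves $\normF{\PA\Dd{\H}}^2+\normF{\PAp\Ddp{\H}}^2$ plus the two cross terms, and the identity $\inprod{\PA a}{\PA b}+\inprod{\PAp a}{\PAp b}=\inprod{a}{b}$ (self-adjoint idempotents summing to $\Id$) collapses them into $2\inprod{\Dd{\H}}{\Ddp{\H}}$; applying~\eqref{app:eq:conv-only-nd:lin-inprod} completes the identity.

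I do not anticipate a genuine obstacle: the argument is a structural copy of \cref{lem:lin-auxi}. The only new point requiring care is the nonlinearity of $\widetilde\Madmm$ — so $\normF{\widetilde\Madmm(\H)}^2$ must be computed from~\eqref{app:eq:conv-only-nd:lin-M} rather than from an operator identity — together with the fact that the vanishing of the $(\beta,\beta)$ cross term and the validity of the splitting $\H=\Dd{\H}+\Ddp{\H}$ rest on Moreau's decomposition for the self-dual cone $\Symp{\abs{\beta}}$ in place of a Hadamard-mask identity; once that is in hand, the remainder is bookkeeping with the $\alpha/\beta/\gamma$ partition.
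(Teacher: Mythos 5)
Your proposal is correct and follows essentially the same route as the paper: a direct block-by-block computation of $\inprod{\Dd{\H}}{\Ddp{\H}}$ in which the $(\beta,\beta)$ cross term vanishes by orthogonality of $\Pi_{\Symp{\abs{\beta}}}(\H_{\beta\beta})$ and $\Pi_{\Symp{\abs{\beta}}}(-\H_{\beta\beta})$ and the $(\gamma,\alpha)$ block yields the factor-$2$ Hadamard term with strict positivity of the entries of $\widetilde\Theta$ and $\widetilde\Theta^\perp$ giving the equality condition, followed by the four-way decomposition of $\H$ along $\PA$, $\PAp$ and the direct expansion of $\normF{\widetilde\Madmm(\H)}^2$ from the definition. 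Your explicit remark that positive homogeneity (rather than linearity) of $\widetilde\Madmm$ suffices here is a fair point of care, but the computation is the same as the paper's.
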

\begin{proof}
    From the definition of $\widetilde \Theta$~\eqref{app:eq:conv-only-nd:Theta} and the partition of $\H$ \eqref{app:eq:conv-only-nd:H-partition}, we see that
    \begin{align}
        &\ \inprod{\Dd{\H}}{\Ddp{\H}} \nonumber \\
        =&\ \inprod{
            \mymat{
                \H[\alpha \alpha] & \H[\beta \alpha]\tran & \widetilde \Theta\tran \circ \H[\gamma \alpha]\tran \\
                \H[\beta \alpha] & \Pi_{\Symp{\abs{\beta}}} (\H[\beta \beta]) & 0 \\
                \widetilde \Theta \circ \H[\gamma \alpha] & 0 & 0 
            }
        }{
            \mymat{
                0 & 0 & (\widetilde \Theta^\perp)\tran \circ \H[\gamma \alpha]\tran \\
                0 & -\Pi_{\Symp{\abs{\beta}}} (-\H[\beta \beta]) & \H[\gamma \beta]\tran \\
                \widetilde \Theta^\perp \circ \H[\gamma \alpha] & \H[\gamma \beta] & \H[\gamma \gamma]
            }
        } \nonumber \\
        =&\ 2 \inprod{\widetilde \Theta \circ \H[\gamma \alpha]}{\widetilde \Theta^\perp \circ \H[\gamma \alpha]} \ge 0. \label{app:eq:conv-only-nd:lin-inprod-prf}
    \end{align}
    where we already use the fact that $\langle \Pi_{\Symp{\abs{\beta}}}(\H[\beta\beta]), \Pi_{\Symp{\abs{\beta}}} (-\H[\beta \beta]) \rangle = 0$.
    Since all the entries in $\widetilde \Theta$ and $\widetilde \Theta^\perp$ are strictly positive, the inner product \eqref{app:eq:conv-only-nd:lin-inprod-prf} is zero if and only if $\H[\gamma \alpha]=0$.

    To show the second conclusion, we first decompose $\H$ as
    \[
        \H = \PA (\Dd{\H}) + \PA (\Ddp{\H}) + \PAp (\Dd{\H}) + \PAp (\Ddp{\H}).
    \]
    Then, we have
    \begin{align*}
        \normF{\H}^2 &= \normF{\PA (\Dd{\H})}^2 + \normF{\PA (\Ddp{\H})}^2 + \normF{\PAp (\Dd{\H})}^2 + \normF{\PAp (\Ddp{\H})}^2 \\
        &\phantom{=} \ + 2\inprod{\PA (\Dd{\H})}{\PA (\Ddp{\H})} + 2\inprod{\PAp (\Dd{\H})}{\PAp (\Ddp{\H})} \\
        &= \normF{\PA (\Dd{\H})}^2 + \normF{\PA (\Ddp{\H})}^2 + \normF{\PAp (\Dd{\H})}^2 + \normF{\PAp (\Ddp{\H})}^2 \\
        &\phantom{=} \ + 2 \inprod{\Dd{\H}}{\Ddp{\H}},
    \end{align*}
    and
    \begin{align*}
        \normF{\widetilde \Madmm(\H)}^2 = \normF{\PAp (\Dd{\H}) + \PA (\Ddp{\H})}^2 
        = \normF{\PAp (\Dd{\H})}^2 + \normF{\PA (\Ddp{\H})}^2.
    \end{align*}
    Combining both expressions with \eqref{app:eq:conv-only-nd:lin-inprod-prf} gives the desirable result.
\end{proof}

\begin{theorem}
    \label{app:thm:conv-only-nd}
    Suppose \cref{ass:lin-sol}, primal nondegeneracy~\eqref{eq:intro:primal-nondegeneracy} and dual nondegeneracy~\eqref{eq:intro:dual-nondegeneracy} hold. Define
    \begin{align*}
        \rho_{\mathrm{ND}} := \sup_{\normF{\H} = 1} \normF{\widetilde \Madmm(\H)} < 1
    \end{align*}
    For any $\rho \in (\rho_{\mathrm{ND}}, 1)$, there exists $\bar k_{\mathrm{ND}} \in \bbN$ such that for any integer $k \ge \bar k_{\mathrm{ND}}$, it holds that
    \[
        \normF{\Zkpo - \Zs} \leq \rho \normF{\Zk - \Zs}.
    \]
\end{theorem}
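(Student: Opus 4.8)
The plan is to mirror the three-step argument behind \cref{thm:conv-nd}, with the linear operator $\Madmm$ replaced by the positively homogeneous operator $\widetilde\Madmm$ and the quadratic residual replaced by the directional-derivative residual $\widetilde\Psi^{(k)}$ from \eqref{app:eq:conv-only-nd:lin-psik}. The first and crucial step is to prove the pointwise strict contraction $\normF{\widetilde\Madmm(\H)}<\normF{\H}$ for every $\H\neq 0$ --- the analogue of \cref{lem:conv-nd-fix}. Suppose instead that $\normF{\widetilde\Madmm(\H)}=\normF{\H}$ for some $\H\neq 0$. By the identity \eqref{eq:conv-only-nd:lin-minus} in \cref{app:lem:conv-only-nd:lin-auxi}, the three nonnegative quantities $\normF{\PA\Dd{\H}}^2$, $\normF{\PAp\Ddp{\H}}^2$, and $4\inprod{\widetilde\Theta\circ\H[\gamma\alpha]}{\widetilde\Theta^\perp\circ\H[\gamma\alpha]}$ all vanish; since every entry of $\widetilde\Theta$ and $\widetilde\Theta^\perp$ is strictly positive, the last one forces $\H[\gamma\alpha]=0$. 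With $\H[\gamma\alpha]=0$, the matrix $\Dd{\H}$ is supported on the index block $\alpha\cup\beta$, hence $\Dd{\H}\in\calN_\Ss$; moreover $\PA\Dd{\H}=0$ means $\Dd{\H}\in\calN(\Asdp)$. Dual nondegeneracy \eqref{eq:intro:dual-nondegeneracy}, i.e.\ $\calN_\Ss\cap\calN(\Asdp)=\{0\}$, then gives $\Dd{\H}=0$, whence $\H[\alpha\alpha]=0$, $\H[\beta\alpha]=0$, and $\Pi_{\Symp{\abs{\beta}}}(\H[\beta\beta])=0$, the last being equivalent to $\H[\beta\beta]\preceq 0$. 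By the symmetric argument, $\Ddp{\H}$ is supported on $\beta\cup\gamma$, hence $\Ddp{\H}\in\calN_\Xs$, while $\PAp\Ddp{\H}=0$ gives $\Ddp{\H}\in\calR(\AsdpT)$; primal nondegeneracy \eqref{eq:intro:primal-nondegeneracy} then forces $\Ddp{\H}=0$, whence $\H[\gamma\beta]=0$, $\H[\gamma\gamma]=0$, and $-\Pi_{\Symp{\abs{\beta}}}(-\H[\beta\beta])=0$, i.e.\ $\H[\beta\beta]\succeq 0$. Combining the two definiteness conditions gives $\H[\beta\beta]=0$, so every block of $\H$ vanishes, contradicting $\H\neq 0$.

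Next I would upgrade this pointwise statement to the uniform bound $\rho_{\mathrm{ND}}<1$. Since $\Pi_{\Symp{\abs{\beta}}}$ is (Lipschitz) continuous, the map $\H\mapsto\normF{\widetilde\Madmm(\H)}$ is continuous and therefore attains its maximum on the compact unit sphere $\{\normF{\H}=1\}$; the strict pointwise inequality from the previous step then yields $\rho_{\mathrm{ND}}<1$. Because $\widetilde\Madmm$ is positively homogeneous, for any $\H\neq 0$ I would rescale to the unit sphere to conclude $\normF{\widetilde\Madmm(\H)}=\normF{\H}\cdot\normF{\widetilde\Madmm(\H/\normF{\H})}\le\rho_{\mathrm{ND}}\normF{\H}$, and since this also holds trivially when $\H=0$, it holds for all $\H\in\Sn$.

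Finally I would absorb the residual exactly as in \cref{thm:conv-nd}. Under \cref{ass:lin-sol}, three-step ADMM converges, so $\Hk:=\Zk-\Zs\to 0$; by \eqref{app:eq:conv-only-nd:lin-psik}, $\normF{\widetilde\Psi^{(k)}}=o(\normF{\Hk})$. Hence for any $\rho\in(\rho_{\mathrm{ND}},1)$ there is $\bar k_{\mathrm{ND}}\in\bbN$ with $\normF{\widetilde\Psi^{(k)}}\le(\rho-\rho_{\mathrm{ND}})\normF{\Hk}$ for all $k\ge\bar k_{\mathrm{ND}}$ (vacuously if $\Hk=0$), and then
\[
    \normF{\Zkpo-\Zs}=\normF{\widetilde\Madmm(\Hk)+\widetilde\Psi^{(k)}}\le\normF{\widetilde\Madmm(\Hk)}+\normF{\widetilde\Psi^{(k)}}\le\rho_{\mathrm{ND}}\normF{\Hk}+(\rho-\rho_{\mathrm{ND}})\normF{\Hk}=\rho\normF{\Hk}.
\]

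The main obstacle is the first step: one must carefully track the block supports of $\Dd{\H}$ and $\Ddp{\H}$ after $\H[\gamma\alpha]$ has been eliminated, identify them with $\calN_\Ss$ and $\calN_\Xs$, and reason through the two \emph{nonlinear} diagonal pieces $\Pi_{\Symp{\abs{\beta}}}(\H[\beta\beta])$ and $-\Pi_{\Symp{\abs{\beta}}}(-\H[\beta\beta])$ so that $\Dd{\H}=0$ and $\Ddp{\H}=0$ together pin down $\H[\beta\beta]=0$. Everything else is a routine transcription of the strictly complementary proof, with ``linear operator'' replaced by ``positively homogeneous continuous operator'' and the $\calO(\normF{\Hk}^2)$ residual replaced by the $o(\normF{\Hk})$ B-differentiability residual coming from \eqref{app:eq:conv-only-nd:psd-proj-res}.
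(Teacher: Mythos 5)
Your proposal is correct and follows essentially the same route as the paper's own proof: the identity in \cref{app:lem:conv-only-nd:lin-auxi} forces $\H[\gamma\alpha]=0$ and places $\Dd{\H}$ in $\calN_\Ss\cap\calN(\Asdp)$ and $\Ddp{\H}$ in $\calN_\Xs\cap\calR(\AsdpT)$, so the two nondegeneracy conditions kill all blocks including $\H[\beta\beta]$ via the two one-sided projections. The compactness/positive-homogeneity argument for $\rho_{\mathrm{ND}}<1$ and the $o(\normF{\Hk})$ residual absorption are exactly the paper's remaining steps.
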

\begin{proof}
    First, we show that for any $\H \ne 0$, we have $\normF{\widetilde \Madmm(\H)} < \normF{\H}$. To see this, suppose there exists a matrix $\H \in \Sn$ partitioned as in~\eqref{app:eq:conv-only-nd:H-partition} such that $\normF{\widetilde \Madmm(\H)} \geq \normF{\H}$. Then, from \cref{app:lem:conv-only-nd:lin-auxi}, we see that
    \begin{align*}
        \PA \Dd{\H} = 0, \quad \PAp \Ddp{\H} = 0, \quad \H[\gamma\alpha] = 0.
    \end{align*} 
    From $\H[\gamma\alpha] = 0$ condition, we have
    \begin{align*}
        \PA \Dd{\H} = 0 \quad \Longleftrightarrow \quad & \PA \mymat{
            \H[\alpha \alpha] & \H[\beta \alpha]\tran & 0 \\
            \H[\beta \alpha] & \Pi_{\Symp{\abs{\beta}}} (\H[\beta \beta]) & 0 \\
            0 & 0 & 0 
        } = 0 \\
        \quad \Longleftrightarrow \quad & \mymat{
            \H[\alpha \alpha] & \H[\beta \alpha]\tran & 0 \\
            \H[\beta \alpha] & \Pi_{\Symp{\abs{\beta}}} (\H[\beta \beta]) & 0 \\
            0 & 0 & 0 
        } \in \calN(\Asdp) \cap \calN_{\Ss}.
    \end{align*}
    On the other hand, dual nondegeneracy~\eqref{eq:intro:dual-nondegeneracy} implies $\calN(\Asdp) \cap \calN_{\Ss} = \left\{ 0 \right\}$, and thus
    \begin{align}
        \label{app:eq:conv-only-nd:dual-side}
        \H[\alpha \alpha] = 0, \qquad \H[\beta \alpha] = 0, \qquad \Pi_{\Symp{\abs{\beta}}} (\H[\beta \beta]) = 0.
    \end{align}
    Symmetrically, we have
    \begin{align*}
        \PAp \Ddp{\H} \quad \Longleftrightarrow \quad & \PAp \mymat{
            0 & 0 & 0 \\
            0 & -\Pi_{\Symp{\abs{\beta}}} (-\H[\beta \beta]) & \H[\gamma \beta]\tran \\
            0 & \H[\gamma \beta] & \H[\gamma \gamma]
        } = 0 \\
        \Longleftrightarrow \quad & \mymat{
            0 & 0 & 0 \\
            0 & -\Pi_{\Symp{\abs{\beta}}} (-\H[\beta \beta]) & \H[\gamma \beta]\tran \\
            0 & \H[\gamma \beta] & \H[\gamma \gamma]
        } \in \calR(\AsdpT) \cap \calN_{\Xs}.
    \end{align*}
    On the other hand, primal nondegeneracy~\eqref{eq:intro:primal-nondegeneracy} implies $\calR(\AsdpT) \cap \calN_{\Xs} = \left\{ 0 \right\}$, and thus
    \begin{equation}
        \label{app:eq:conv-only-nd:primal-side}
        \H[\gamma \gamma] = 0, \quad \H[\gamma \beta] = 0, \quad \Pi_{\Symp{\abs{\beta}}} (-\H[\beta \beta]) = 0.
    \end{equation}
    Finally, combining~\eqref{app:eq:conv-only-nd:dual-side},~\eqref{app:eq:conv-only-nd:primal-side} together with $\H[\gamma\alpha] = 0$ induces $\H = 0$. 

    Second, we show that ${\normF{\widetilde \Madmm(\H)}}/{\normF{\H}} \le \rho_{\mathrm{ND}} < 1$ for all $\H \ne 0$. Since $\normF{\widetilde \Madmm(\cdot)}$ is continuous and the set $\left\{ \H \mymid \normF{\H} = 1 \right\}$ is compact, we draw from Extreme Value Theorem that 
    \[
        \rho_{\mathrm{ND}} := \sup_{\normF{\H} = 1} \normF{\widetilde \Madmm(\H)} < 1.
    \]
    On the other hand, because $\widetilde \Madmm$ is positively homogenous, we have for any $\H \ne 0$ that
    \[
        \frac{\normF{\widetilde \Madmm(\H)}}{\normF{\H}} = \normF{\widetilde \Madmm(\H / \normF{\H})} \le \sup_{\normF{\H'} = 1} \normF{\widetilde \Madmm(\H')} < 1.
    \]

    Third, we prove the locally linear decay of $\normF{\Zk - \Zs}$. It follows from~\eqref{app:eq:conv-only-nd:lin-psik} that for any $\rho \in (\rho_{\mathrm{ND}}, 1)$, these exists $\bar k_{\mathrm{ND}} \in \bbN$ such that for any $k \geq \bar k_{\mathrm{ND}}$, $\normF{\widetilde \Psi^{(k)}} \le (\rho - \rho_{\mathrm{ND}}) \normF{\Zk - \Zs}$. Finally, 
    \begin{align*}
        \normF{\Zkpo - \Zs} &= \normF{\widetilde \Madmm (\dZk) + \widetilde \Psi^{(k)}} \\
        &\le \rho_{\mathrm{ND}} \cdot \normF{\dZk} + \normF{\widetilde \Psi^{(k)}} \\
        &\le (\rho_{\mathrm{ND}} + \rho - \rho_{\mathrm{ND}}) \cdot \normF{\dZk} \\
        &= \rho \normF{\dZk}.
    \end{align*}
\end{proof}


\section{Missing Materials in \texorpdfstring{\cref{sec:conv-without-nd}}{Section 6}}
\label{app:sec:conv-without-nd}

\subsection{Proof of \texorpdfstring{\cref{lem:conv-nnd-Zdiff}}{Lemma}}
\label{app:sec:conv-nnd-Zdiff-prf}

From one-step ADMM \eqref{eq:intro:admm-one-step}, we have
\begin{align*}
    \Zkpo - \Zk &= -2\PA \PiSnp{\Zk} + \PA \Zk + \PiSnp{\Zk} + \Asdp^\dagger b - \sigma \PAp C - \Zk \\
    &= -2\PA \Xk + \Xk - \PAp \Zk + \Asdp^\dagger b - \sigma \PAp C \\
    &= -\PA \Xk + \PAp \Xk - \PAp (\Xk - \sigma \Sk) + \Asdp^\dagger b - \sigma \PAp C \\
    &= -\PA \Xk + \Asdp^\dagger b + \sigma \PAp (\Sk - C).
\end{align*}
Since for any $\widetilde{X}$ such that $\Asdp \widetilde{X} = b$,
\begin{align*}
    \PA \widetilde{X} = \AsdpT (\Asdp \AsdpT)^{-1} \Asdp \widetilde{X} = \AsdpT (\Asdp \AsdpT)^{-1} b = \Asdp^\dagger b,
\end{align*} 
the equality \eqref{eq:conv-nnd-Zdiff-eq} follows from the orthogonality between $\PA$ and $\PAp$.

Now we prove the inequality \eqref{eq:conv-nnd-Zdiff-ineq}. Let $\overline Z$ be an arbitrary point in $\Zopt$; \ie $\overline Z$ may not be the convergent point $\Zs$ of ADMM. Define $\overline X := \PiSnp{\overline Z}$ and $\overline S := (1/\sigma) \PiSnp{-\overline Z}$. So,
\[
    \PA (\PiSnp{\overline Z} - \widetilde X) = 0, \qquad \PAp (\PiSnp{\overline Z} - C) = 0
\]
for any matrix $\widetilde X \in \Sn$ satisfying $\Asdp \widetilde X = b$. Then, we have
\begin{align}
    &\ \normF{\Zk-\overline Z}^2 - \normF{\Zkpo-\overline Z}^2 \nonumber \\
    =&\ \normF{\Zk-\overline Z}^2 - \normF{\Zkpo - \Zk + \Zk - \overline Z}^2 \nonumber \\
    =&\ -2\langle {\Zkpo-\Zk}, {\Zk-\overline Z} \rangle - \normF{\Zkpo-\Zk}^2 \nonumber \\
    =&\ 2\langle {\PA (\Xk - \overline X) - \sigma \PAp (\Sk-\overline S)}, {\Zk-\Zs} \rangle - \normF{\Zkpo-\Zk}^2. \label{eq:conv-nnd-Zdiff-prf-1}
\end{align}
We further decompose $\Zk-\overline Z$ as 
\begin{align*}
    &\ \Zk-\overline Z \\
    =&\ \Xk - \sigSk - (\overline X - \sigma \overline S) \\
    =&\ \PA (\Xk-\overline X) + \PAp (\Xk-\overline X) - \sigma \PAp (\Sk-C) - \sigma \PAp (\Sk-C).
\end{align*}
Then the inner product term on the right-hand side of \eqref{eq:conv-nnd-Zdiff-prf-1} becomes
\begin{align*}
    &\ \langle {\PA (\Xk - \overline X) - \sigma \PAp (\Sk-\overline S)}, {\Zk-\Zs} \rangle \\
    =&\ \langle {\PA (\Xk - \overline X) - \sigma \PAp (\Sk-\overline S)}, {\PA (\Xk-\overline X) - \sigma \PAp (\Sk-C)} \rangle \\
    &\ - \langle {\PA (\Xk - \overline X) - \sigma \PAp (\Sk-\overline S)}, {\PAp (\Xk-\overline X) - \sigma \PA (\Sk-C)} \rangle \\
    =&\ \normF{\Zkpo-\Zk}^2 - \sigma \langle{\PA (\Xk-\overline X)}, {\PAp (\PA (\Sk-\overline S))} \\
    &\ - \sigma \langle {\PAp (\Xk-\overline X)}, {\PAp (\Sk-\overline S)} \rangle \\
    =&\ \normF{\Zkpo-\Zk}^2 - \sigma \langle {\Xk-\overline X}, {\Sk-\overline S} \rangle \\
    =&\ \normF{\Zkpo-\Zk}^2 - \sigma \langle {\Xk}, {\overline S} \rangle + \sigma \langle {\overline X}, {\Sk} \rangle \\
    \ge &\ \normF{\Zkpo-\Zk}^2,
\end{align*}
where the last equality uses the fact that $\inprod{\overline X}{\overline S} = 0$ and $\langle \Xk, \Sk \rangle = 0$. Combining with \eqref{eq:conv-nnd-Zdiff-prf-1} yields
\[
    \normF{\Zk-\overline Z}^2 - \normF{\Zkpo-\overline Z}^2 \geq \normF{\Zkpo-\Zk}^2.
\]
Now we choose $\overline Z$ as the closest point in $\Zopt$ to $\Zk$. Then, we have
\begin{align*}
    \dist^2 (\Zk,\Zopt) &= \normF{\Zk - \overline Z}^2 \\
    &\geq \normF{\Zkpo-\Zk}^2 + \normF{\Zkpo - \overline Z}^2 \\
    &\geq \normF{\Zkpo-\Zk}^2 + \dist^2 (\Zkpo,\Zopt).
\end{align*}

\subsection{Proof of \texorpdfstring{\cref{lem:conv-nnd-kkt}}{Lemma}}
\label{app:sec:conv-nnd-kkt-prf} 

(1) First, we show that $(\Xs,\Ss)$ is a KKT point for \eqref{eq:intro-sdp} if and only if
\[
    \PA (\Xs - \widetilde X) = 0, \;\; \PAp (\Ss - C) = 0, \;\; \inprod{\Xs}{C} + \langle{\widetilde X}, {\Ss} \rangle - \langle{\widetilde X}, {C} \rangle= 0, \;\; X \in \Symp{n}, \;\; S \in \Symp{n},
\]
where $\widetilde X \in \Sn$ is an arbitrary matrix satisfying $\Asdp \widetilde X = b$.
\begin{itemize}
\item If $\Asdp \Xs=b$, then $\PA (\Xs-\widetilde X) = \Asdp^\dagger \Asdp (\Xs-\widetilde X) = \Asdp^\dagger (b-b) = 0$. The converse holds because $\Asdp$ is surjective and thus $\Asdp \Asdp^\dagger = \Id$. Together with $\Xs \in \Symp{n}$, it gives primal feasibility.

\item Similarly, we note that $\PAp (\Ss - C) = 0$ is equivalent to $\Ss - C \in \range(\Asdp^\ast)$, which is further equivalent to $\Asdp^\ast y + S = C$ for some $y \in \Real{m}$. Together with $\Ss \in \Symp{n}$, this gives dual feasibility.

\item The third condition implies zero duality gap:
\begin{align*}
    \inprod{\Xs}{C} - b\tran \ys = 0 \quad \Longleftrightarrow \quad &\inprod{\Xs}{C} + \inprod{b}{(\Asdp^\ast)^\dagger (\Ss-C)} = 0 \\
    \Longleftrightarrow \quad &\inprod{\Xs}{C} + \langle{\Asdp^\dagger b}, {\Ss} \rangle - \langle{\Asdp^\dagger b}, {C} \rangle = 0 \\
    \Longleftrightarrow \quad &\inprod{\Xs}{C} + \langle{\widetilde X}, {\Ss} \rangle - \langle{\widetilde X}, {C} \rangle = 0.
\end{align*}
\end{itemize}
For notational convenience, we define
\begin{gather*}
    \begin{array}{llll}
    \calF_X &:= \{(X,\sigS) \mid \PA (\X - \widetilde X) = 0\}, \qquad \qquad
    &\calF_Z &:= \{(X,\sigS) \mid \PAp (S - C) = 0\}, \\
    \calR_X &:= \{(X,\sigS) \mid \Proj_{\calT_{\Ss}} (X) = 0\}, \qquad
    &\calR_S &:= \{(X,\sigS) \mid \Proj_{\calT_{\Xs}} (\sigS) = 0\}, \qquad \\
    \end{array} \\
    \calF_{\mathrm{gap}} := \{(X,\sigS) \mid \inprod{\Xs}{\sigC} + \langle{\widetilde X}, {\sigSs} \rangle - \langle{\widetilde X}, {\sigC} \rangle = 0\}, \\
    \calF := \calF_X \cap \calF_S \cap \calF_{\mathrm{gap}}, \qquad \calR := \calR_X \cap \calR_S.
\end{gather*}
(Note that the choice of $\widetilde X$ does not matter in $\calF_X$, as long as $\Asdp \widetilde X = b$.)

(2) Second, we show that $\Xopt \cap (\sigma \Sopt) = \calF \cap (\Symp{n} \times \Symp{n}) = \calF \cap (\Symp{n} \times \Symp{n}) \times \calR$. To see this, we choose any pair of optimal solutions $(\X, \sigS) \in \Xopt \times (\sigma \Sopt)$ and write down the complementary slackness condition: $\inprod{\X}{\sigma \Ss} = 0$ and $\inprod{\Xs}{\sigS} = 0$. Combined with the facts that $\X \in \Symp{n}$ and $\sigS \in \Symp{n}$, we have
\[
    \X \in \minface{\Xs}{\Symp{n}} = \Symp{n} \times \calN_{\Ss}, \qquad
    \sigS \in \minface{\sigSs}{\Symp{n}} = \Symp{n} \cap \calN_{\Xs}.
\]
Thus, $\Proj_{\calT_{\Ss}} (X) = 0$ and $\Proj_{\calT_{\Xs}} (\sigS) = 0$, which directly implies $(\X, \sigS) \in \calR_X \cap \calR_S = \calR$.

(3) Third, we project an arbitrary $(X, \sigS) \in \Sn \times \Sn$ to the regularized linear system $\calF \cap \calR$. By Hoffman's error bound~\cite{hoffman2003ws-approximate-sol-linear-inequalities}, there exists $\kappa_0 > 0$ such that
\begin{align}
    &\ \kappa_0 \cdot \dist ((X, \sigS), \calF \cap \calR) \nonumber \\
    \le&\ \dist ((X, \sigS), \calF_X) + \dist((X, \sigS), \calF_S) + \dist((X, \sigS), \calF_{\mathrm{gap}}) \nonumber \\
    &\ + \dist ((X, \sigS), \calR_X) + \dist ((X, \sigS), \calR_S), \label{app:eq:regularized-backforward-kkt-0}
\end{align}
where
\begin{gather*}
    \begin{array}{llll}
        \dist((X, \sigS), \calF_X) &= \normF{\PA (\X - \widetilde X)}, \qquad \quad
        &\dist((X, \sigS), \calF_Z) &= \sigma \normF{\PAp (S-C)} \\
        \dist((X, \sigS), \calR_X) &= \normF{\Proj_{\calT_{\Ss}} (X)}, \qquad
        &\dist((X, \sigS), \calR_S) &= \sigma \normF{\Proj_{\calT_{\Xs}} (S)},
    \end{array} \\
    \dist((X, \sigS), \calF_{\mathrm{gap}}) = \tfrac{\sigma}{\sqrt{\sigma^2 \normF{C}^2 + \normF{\Asdp^\dagger b}^2}} |\inprod{\Xs}{C} + \langle{\widetilde X}, {\Ss} \rangle - \langle{\widetilde X}, {C} \rangle|.
\end{gather*}

(4) Fourth, we explicitly construct a point belonging to $\Xopt \times (\sigma \Sopt)$. Take 
\[
    \beta := \max \left\{ 
        \frac{1}{\lam{r}} \cdot \left( [-\lammin{X}]_+ + \normtwo{Z_X} \right), \ 
        \frac{1}{-\lam{r+1}} \cdot \left( [-\lammin{\sigS}]_+ + \normtwo{Z_S} \right)
    \right\}
\]
and define $(Z_X,Z_S) := (\X, \sigS) - \Proj_{\calF \cap \calR} (\X, \sigS)$. Thus, by definition
\[
    \sqrt{\normF{Z_X}^2 + \normF{Z_S}^2} = \dist((X, \sigS), \calF \cap \calR).
\]
Then, consider the point $(X, \sigS) - (Z_X, Z_S) + \beta \cdot (\Xs, \sigSs)$. For the primal part:
\begin{subequations}
\begin{align}
    &\ \lammin{(X - Z_X + \beta \cdot \Xs)_{1:r, 1:r}} \nonumber \\
    \ge&\ \beta \cdot \lammin{(\Xs)_{1:r, 1:r}} + \lammin{(X - Z_X)_{1:r, 1:r}} \label{app:eq:regularized-backforward-kkt-1a} \\
    =&\ \beta \lam{r} + \min \left\{ \lammin{X - Z_X}, 0 \right\} \label{app:eq:regularized-backforward-kkt-1b} \\
    \ge&\ \beta \lam{r} + \min \left\{ \lammin{X} - \normtwo{Z_X}, 0 \right\} \label{app:eq:regularized-backforward-kkt-1c}  \\
    \ge&\ \frac{1}{\lam{r}} \big([-\lammin{X}]_+ + \normtwo{Z_X} \big) \cdot \lam{r} + \min \left\{ \lammin{X} - \normtwo{Z_X}, 0 \right\} \nonumber \\
    \ge&\ 0, \nonumber
\end{align}
\end{subequations}
where~\eqref{app:eq:regularized-backforward-kkt-1a} and~\eqref{app:eq:regularized-backforward-kkt-1c} come form Weyl's inequality,~\eqref{app:eq:regularized-backforward-kkt-1b} holds since $X - Z_X \in \calN_\Ss$.  \rebuttal{Combining the above inequality with the facts that} $\Xs \in \calN_\Ss$ and $X - Z_X \in \calN_\Ss$ gives
\begin{align*}
    & \lammin{X - Z_X + \beta \Xs} = \min \left\{ \lammin{(X - Z_X + \beta \Xs)_{1:r, 1:r}}, 0 \right\} \ge 0.
\end{align*}
Symmetrically, $\lammin{\sigS - Z_S + \beta \sigSs} \ge 0$. Therefore, $(X - Z_X + \beta \Xs, \sigS - Z_S + \beta \sigSs) \in \Symp{n} \times \Symp{n}$. Combining the fact that both $(X - Z_X, \sigS - Z_S)$ and $(\Xs, \sigSs)$ belong to $\calF \cap \calR$, we conclude that
\[
    \frac{1}{1 + \beta} \cdot (X - Z_X + \beta \Xs, \sigS - Z_S + \beta \sigSs) \in \calF \cap \calR \cap (\Symp{n} \times \Symp{n}) = \Xopt \cap (\sigma \Sopt).
\]

(5) Finally, we upper bound the distance from $(X, \sigS)$ to $\Xopt \cap (\sigma \Sopt)$ by
\begin{align}
    &\ \dist((X, \sigS), \Xopt \times \sigma \Sopt) \nonumber \\
    \le&\ \left\|{(X, \sigS) - \frac{1}{1 + \beta} \cdot (X - Z_X + \beta \Xs, \sigS - Z_S + \beta \sigSs)}\right\|_{\mathsf{F} \times \mathsf{F}} \nonumber \\
    =&\ \sqrt{
        \left\|{X - \frac{1}{1 + \beta} (X - Z_X + \beta \Xs)}\right\|_{\mathsf{F}}^2 
        + \left\|{\sigS - \frac{1}{1 + \beta} (\sigS - Z_S + \beta \sigSs)}\right\|_{\mathsf{F}}^2
    } \nonumber \\
    =&\ \sqrt{2} \left\|{X - \frac{1}{1 + \beta} (X - Z_X + \beta \Xs)}\right\|_{\mathsf{F}} + \sqrt{2} \left\|{\sigS - \frac{1}{1 + \beta} (\sigS - Z_S + \beta \sigSs)}\right\|_{\mathsf{F}}. \label{app:eq:regularized-backforward-kkt-2a}
\end{align}
To bound the first term on the right-hand side, we have
\begin{align}
    \left\|{X - \frac{1}{1 + \beta} \cdot (X - Z_X + \beta \Xs)}\right\|_{\mathsf{F}} 
    =&\ \left\|{\frac{1}{1 + \beta} \cdot (\beta \cdot X - Z_X + \beta \Xs)}\right\|_{\mathsf{F}} \nonumber \\
    \le&\ \frac{1}{1 + \beta} \cdot \big(\normF{\beta X} + \normF{Z_X} + \normF{\beta \Xs} \big) \nonumber \\
    \le&\ \beta \normF{X} + \normF{Z_X} + \beta \normF{\Xs} \nonumber \\
    \le&\ (\delta_X + \lam{1}) \beta  + \normF{Z_X}. \label{app:eq:regularized-backforward-kkt-2b}
\end{align}
Similarly, we have 
\begin{equation} \label{app:eq:regularized-backforward-kkt-2c}
    \left\|{\sigS - \frac{1}{1 + \beta} \cdot (\sigS - Z_S + \beta \sigSs)}\right\|_{\mathsf{F}} \le (\delta_S - \lam{n}) \beta  + \normF{Z_S}.
\end{equation}
Combining \eqref{app:eq:regularized-backforward-kkt-2a}--\eqref{app:eq:regularized-backforward-kkt-2c} gives
\begin{align}
    &\ \dist ((X, \sigS), \Xopt \times (\sigma \Sopt)) \nonumber \\
    \le&\ \normF{Z_X} + \normF{Z_S} + (\delta_X + \delta_S + \lam{1} - \lam{n}) \beta \nonumber \\
    \le&\ \sqrt{2 (\normF{Z_X}^2 + \normF{Z_S}^2)} + (\delta_X + \delta_S + \lam{1} - \lam{n}) \nonumber \\
    &\ \cdot \max \left\{ 
        \frac{1}{\lam{r}} \cdot \left( [-\lammin{X}]_+ + \normtwo{Z_X} \right), \ 
        \frac{1}{-\lam{r+1}} \cdot \left( [-\lammin{\sigS}]_+ \normtwo{Z_S} \right)
        \right\} \nonumber \\
    \le&\ \sqrt{2 (\normF{Z_X}^2 + \normF{Z_S}^2)} + (\delta_X + \delta_S + \lam{1} - \lam{n}) \cdot \max \left\{ \frac{1}{\lam{r}}, \frac{1}{-\lam{r+1}} \right\} \nonumber \\
    &\ \cdot \left( [-\lammin{X}]_+ + \normtwo{Z_X} + [-\lammin{\sigS}]_+ + \normtwo{Z_S} \right) \nonumber \\
    \le&\ \kappa_1 \cdot \left( [-\lammin{X}]_+ + [-\lammin{\sigS}]_+ \right) + \kappa_2 \cdot \sqrt{\normF{Z_X}^2 + \normF{Z_S}^2} \nonumber \\
    \leq&\ \kappa_1 \cdot \left( [-\lammin{X}]_+ + [-\lammin{\sigS}]_+ \right) + \kappa_2 \cdot \dist ((X, \sigS), \calF \cap \calR), \label{app:eq:regularized-backforward-kkt-3}
\end{align}
where
\[
    \kappa_1 := (\delta_X + \delta_S + \lam{1} - \lam{n}) \cdot \max \left\{ \frac{1}{\lam{r}}, \, \frac{1}{-\lam{r+1}} \right\}, \quad \kappa_2 := \kappa_1 + \sqrt{2}.
\]
Therefore, combining \eqref{app:eq:regularized-backforward-kkt-0} and \eqref{app:eq:regularized-backforward-kkt-3} yields the desirable result with
\[
    \kappa = \left(\kappa_1 + \frac{\kappa_2}{\kappa_0} \cdot \left( 1 + \frac{1}{\sqrt{\sigma^2 \normF{C}^2 + \normF{\Asdp^\dagger b}^2}} \right) \right)^{-1}.
\]


\section{Missing Materials in \texorpdfstring{\cref{sec:error-bound}}{Section 7}}
\label{app:sec:error-bound}


\subsection{Proof of \texorpdfstring{\cref{lem:error-bound:error-control-one-step}}{Lemma 9}}
\label{app:sec:error-bound:error-control-one-step}

The proof of \cref{lem:error-bound:error-control-one-step} needs the following two auxiliary lemmas.
\begin{lemma}
    \label{lem:error-bound:exp-bound}
    Let $S \in \Sym{n}$ satisfy $\normtwo{S} \le \frac{3}{4}$ and define $\psi(S) := \exp(S) - I - S$. Then, it holds that
    \[
        \normtwo{\psi(S)} \le \frac{2}{3} \normtwo{S}^2.
    \]
\end{lemma}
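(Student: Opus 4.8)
The plan is to expand the matrix exponential as its defining power series and reduce the estimate to an elementary scalar series in $t := \normtwo{S}$. Since $\exp(S) = \sum_{k\ge 0} S^k/k!$ converges absolutely in the spectral norm, we have $\psi(S) = \sum_{k\ge 2} S^k/k!$, and by the triangle inequality together with submultiplicativity of the spectral norm ($\normtwo{S^k}\le\normtwo{S}^k$),
\[
    \normtwo{\psi(S)} \;\le\; \sum_{k\ge 2} \frac{\normtwo{S}^k}{k!} \;=\; \normtwo{S}^2 \sum_{j\ge 0} \frac{t^{\,j}}{(j+2)!}.
\]

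Next I would bound the remaining series by a geometric one. Using $(j+2)!/2! = 3\cdot 4\cdots(j+2) \ge 3^{\,j}$, i.e.\ $\tfrac{1}{(j+2)!}\le \tfrac{1}{2\cdot 3^{\,j}}$, I get
\[
    \sum_{j\ge 0} \frac{t^{\,j}}{(j+2)!} \;\le\; \frac12 \sum_{j\ge 0}\Big(\frac{t}{3}\Big)^{\!j} \;=\; \frac{3}{2\,(3-t)},
\]
where the geometric series converges because $t=\normtwo{S}\le 3/4 < 3$. Since $t\mapsto \tfrac{3}{2(3-t)}$ is increasing on $[0,3/4]$, its maximum over the admissible range is attained at $t = 3/4$, giving $\tfrac{3}{2(3-3/4)} = \tfrac{3}{2\cdot 9/4} = \tfrac{2}{3}$. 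Combining the two displays yields $\normtwo{\psi(S)} \le \tfrac{2}{3}\,\normtwo{S}^2$, as claimed.

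I do not expect a genuine obstacle here: the argument is routine power-series bookkeeping. The only point requiring a little care is the constant chasing — choosing the factorial-to-geometric comparison ($(j+2)!\ge 2\cdot 3^j$) sharply enough that the bound evaluated at $t=3/4$ lands exactly on the stated constant $2/3$ rather than something slightly larger; a cruder comparison (e.g.\ $(j+2)!\ge 2\cdot 2^j$) would not suffice. One should also state explicitly that absolute convergence of the exponential series justifies interchanging norm and sum.
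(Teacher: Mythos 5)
Your proposal is correct and follows essentially the same route as the paper's proof: expand $\psi(S)=\sum_{k\ge 2}S^k/k!$, use submultiplicativity, compare $(k+2)!\ge 2\cdot 3^{k}$ to get a geometric series, and evaluate the resulting factor $\tfrac{1}{2}\cdot\tfrac{1}{1-\normtwo{S}/3}$ at $\normtwo{S}=3/4$ to obtain the constant $2/3$. No gaps.
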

\begin{proof}
    From the definition of $\psi$, we have
    \[
        \psi(S) = \exp(S) - I - S = \sum_{k=2}^\infty \frac{1}{k!} S^k,
    \]
    and then
    \begin{align*}
        \normtwo{\psi(S)} &\le \sum_{k=2}^\infty \frac{1}{k!} \normtwo{S}^k 
        = \frac{1}{2} \normtwo{S}^2 \cdot \sum_{k=0}^\infty \frac{2}{(k+2)!} \normtwo{S}^k
        \le \frac{1}{2} \normtwo{S}^2 \cdot \sum_{k=0}^\infty \left( \frac{1}{3} \right)^k \normtwo{S}^k \\
        &= \frac{1}{2} \normtwo{S}^2 \cdot \frac{1}{1 - \frac{1}{3} \normtwo{S}},
    \end{align*}
    where the last inequality uses the fact that $\frac{2}{n!} \le \left( \frac{1}{3} \right)^{n-2}$ for all $n \ge 3$. Finally, we conclude from the assumption that $\normtwo{S} \le \frac{3}{4}$:
    \[
        \normtwo{\psi(S)} \le \frac{1}{2} \normtwo{S}^2 \cdot \frac{4}{3} = \frac{2}{3} \normtwo{S}^2.
    \]
\end{proof}

\begin{lemma}
    \label{lem:error-bound:twonorm}
    Let $X \in \Sn$ be partitioned as
    \[
        X = \mymat{A & B\tran \\ B & C} \in \Sn.
    \]
    Then, it holds that
    \[
        \max\{\normtwo{A}, \normtwo{B}, \normtwo{C}\} \le \normtwo{X} \le \normtwo{A} + \normtwo{B} + \normtwo{C}.
    \]
\end{lemma}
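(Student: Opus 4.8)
The plan is to prove the two inequalities separately, each by an elementary operator-norm manipulation.

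For the lower bound, I would realize $A$, $B$, and $C$ as compressions of $X$. Introducing the coordinate projections $P_1 := \mymat{\I[r] & 0}$ and $P_2 := \mymat{0 & \I[n-r]}$, which both have spectral norm $1$, one has $A = P_1 X P_1\tran$, $B = P_2 X P_1\tran$, and $C = P_2 X P_2\tran$. Submultiplicativity of $\normtwo{\cdot}$ then gives $\normtwo{A} \le \normtwo{X}$, $\normtwo{B} \le \normtwo{X}$, and $\normtwo{C} \le \normtwo{X}$, and taking the maximum yields $\max\{\normtwo{A}, \normtwo{B}, \normtwo{C}\} \le \normtwo{X}$.

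For the upper bound, I would split $X$ into its three natural pieces,
\[
    X = \mymat{A & 0 \\ 0 & 0} + \mymat{0 & B\tran \\ B & 0} + \mymat{0 & 0 \\ 0 & C},
\]
and apply the triangle inequality for $\normtwo{\cdot}$. The first and third summands plainly have spectral norms $\normtwo{A}$ and $\normtwo{C}$. For the middle (antidiagonal) summand, squaring it gives the block-diagonal matrix $\diag{B\tran B, \, B B\tran}$, whose largest eigenvalue is $\normtwo{B}^2$; hence that summand has spectral norm $\normtwo{B}$. Adding the three contributions gives $\normtwo{X} \le \normtwo{A} + \normtwo{B} + \normtwo{C}$.

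I do not expect a genuine obstacle here: the only mildly non-obvious ingredient is the spectral norm of the antidiagonal block matrix, which follows immediately from the eigenvalue computation above (equivalently, from the fact that its singular values are precisely $\pm\sigma_i(B)$). Consequently the proof will be short and self-contained.
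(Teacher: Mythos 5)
Your proof is correct and follows essentially the same route as the paper: the same three-term splitting plus triangle inequality for the upper bound (with the antidiagonal block's norm $\normtwo{B}$, which you justify explicitly while the paper merely asserts it), and a lower bound that is just a repackaging of the paper's restricted-supremum argument via norm-one coordinate projections and submultiplicativity. The only cosmetic slip is the parenthetical claim that the singular values of the antidiagonal block are $\pm\sigma_i(B)$ — its eigenvalues are $\pm\sigma_i(B)$, its singular values are the $\sigma_i(B)$ (with multiplicity two) — which does not affect the argument.
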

\begin{proof}
    On one hand, we have
    \begin{align*}
        \normtwo{X} = \sup_{\normtwo{x}^2 + \normtwo{y}^2 = 1} \left\|{\mymat{A & B\tran \\ B & C} \mymat{x \\ y}}\right\|_2 \ge \sup_{\normtwo{x}^2 = 1} \left\|{\mymat{A & B\tran \\ B & C} \mymat{x \\ 0}}\right\|_2 = \sup_{\normtwo{x}^2 = 1} \left\|{\mymat{Ax \\ Bx}}\right\|_2,
    \end{align*}
    and then 
    \[
        \normtwo{X} \ge \sup_{\normtwo{x}^2 = 1}  \normtwo{Ax} = \normtwo{A}, \qquad
        \normtwo{X} \ge \sup_{\normtwo{x}^2 = 1}  \normtwo{Bx} = \normtwo{B}.
    \]
    Similarly, $\normtwo{X} \le \normtwo{C}$.

    On the other hand, we see that
    \begin{align*}
        \normtwo{X} &\le \left\|{\mymat{A & 0 \\ 0 & 0}}\right\|_2 + \left\|{\mymat{0 & 0 \\ 0 & C}}\right\|_2 + \left\|{\mymat{0 & B\tran \\ B & 0}}\right\|_2 \\
        &= \normtwo{A} + \normtwo{C} + \left\|{\mymat{0 & B\tran \\ B & 0}}\right\|_2 \\
        &= \normtwo{A} + \normtwo{C} + \normtwo{B}.
    \end{align*}
\end{proof}

Now we are ready to prove \cref{lem:error-bound:error-control-one-step}.
\begin{proof}[Proof of \cref{lem:error-bound:error-control-one-step}]
Since $\ZX[\ell] \in \Sympp{n}$ and $\ZS[\ell] \in \Symnn{n}$, the Sylvester equation~\eqref{eq:error-bound:sylvester-eq} has a unique solution with 
\begin{align*}
    \text{vec} (\WO[\ell]) & = \left( I_{r} \otimes (-\ZS[\ell]) + \ZX[\ell] \otimes I_{n-r}  \right)^{-1} \text{vec} (\ZO[\ell]) \\
    & = \left( (-\ZS[\ell]) \oplus \ZX[\ell] \right)^{-1} \text{vec} (\ZO[\ell]) 
\end{align*}
From~\cite[Theorem 2.5]{schacke2004thesis-kronecker-product}, we see that the eigenvalues of $(-\ZS[\ell]) \oplus \ZX[\ell]$ equal to the sum of the eigenvalues of $-\ZS[\ell]$ and $\ZX[\ell]$. Thus, $(-\ZS[\ell]) \oplus \ZX[\ell]$ is positive definite and 
\begin{align*}
    \normtwo{\text{vec} (\WO[\ell])} &\le \frac{1}{\lammin{(-\ZS[\ell]) \oplus \ZX[\ell]}} \cdot \normtwo{\text{vec} (\ZO[\ell])} \\
    &= \frac{1}{\deltalam{\ZX[\ell]}{\ZS[\ell]}} \cdot \normtwo{\text{vec} (\ZO[\ell])}.
\end{align*}
Therefore, we can upper bound $\normtwo{\WO[\ell]}$ and $\normtwo{\W[\ell]}$ by $\normtwo{\ZO[\ell]}$:
\begin{align}
    \label{app:eq:error-control-one-step:WO}
    & \normtwo{\WO[\ell]} \le \normF{\WO[\ell]} = \normtwo{\text{vec} (\WO[\ell])} 
    \le \frac{d}{\deltalam{\ZX[\ell]}{\ZS[\ell]}} \cdot \normtwo{\ZO[\ell]} = \etanew[\ell] \cdot \normtwo{\ZO[\ell]}
\end{align}
and 
\begin{align}
    \label{app:eq:error-control-one-step:W-bound}
    \normtwo{\W[\ell]} = \left\|{\mymat{0 & -\WO[\ell]\tran \\ \WO[\ell] & 0}}\right\|_2 = \normtwo{\WO[\ell]} \le \etanew[\ell] \cdot \normtwo{\ZO[\ell]}
\end{align}

In addition, we have 
\begin{align*}
    \mymat{\I[r] & \WO[\ell]\tran \\ -\WO[\ell] & I_{n-r}} \mymat{\ZX[\ell] & \ZO[\ell]\tran \\ \ZO[\ell] & \ZS[\ell]} \mymat{\I[r] & -\WO[\ell]\tran \\ \WO[\ell] & I_{n-r}} = \mymat{\QX & \QO\tran \\ \QO & \QS}, \quad \text{where} \ \W[\ell] = \mymat{0 & -\WO[\ell]\tran \\ \WO[\ell] & 0}
\end{align*}
and
\begin{align*}
    & \QX = \ZX[\ell] + \WO[\ell]\tran \ZO[\ell] + \ZO[\ell]\tran \WO[\ell] + \WO[\ell]\tran \ZS[\ell] \WO[\ell] \\
    & \QS = \WO[\ell] \ZX[\ell] \WO[\ell]\tran - \ZO[\ell] \WO[\ell]\tran - \WO[\ell] \ZO[\ell]\tran + \ZS[\ell] \\
    & \QO = -\WO[\ell] \ZX[\ell] + \ZO[\ell] - \WO[\ell] \ZO[\ell]\tran \WO[\ell] + \ZS[\ell] \WO[\ell].
\end{align*}
From the definition of $\WO[\ell]$~\eqref{eq:error-bound:sylvester-eq}, we further have $\QO =  - \WO[\ell] \ZO[\ell]\tran \WO[\ell]$.

Now, we are ready to bound the following spectral norm:
\begin{align}
    &\ \left\|{\mymat{\ZX[\ell+1] - \ZX[\ell] & \ZO[\ell+1]\tran \\ \ZO[\ell+1] & \ZS[\ell+1] - \ZS[\ell]}}\right\|_2 \nonumber \\
    =&\ \left\|{\exp(\W[\ell])\tran \mymat{\ZX[\ell] & \ZO[\ell]\tran \\ \ZO[\ell] & \ZS[\ell]} \exp(\W[\ell]) - \mymat{\ZX[\ell] & 0 \\ 0 & \ZS[\ell]}}\right\|_2 \nonumber \\
    \le&\ \left\|{(I+\W[\ell])\tran \mymat{\ZX[\ell] & \ZO[\ell]\tran \\ \ZO[\ell] & \ZS[\ell]}(I+\W[\ell]) - \mymat{\ZX[\ell] & 0 \\ 0 & \ZS[\ell]}} \right\|_2 \nonumber \\
    &\ + \left\|\exp(\W[\ell])\tran \mymat{\ZX[\ell] & \ZO[\ell]\tran \\ \ZO[\ell] & \ZS[\ell]} \exp(\W[\ell]) - (I+\W[\ell])\tran \mymat{\ZX[\ell] & \ZO[\ell]\tran \\ \ZO[\ell] & \ZS[\ell]}(I+\W[\ell])\right\|_2. \label{app:eq:error-control-one-step:0}
\end{align}
We then bound the two terms on right-hand side of \eqref{app:eq:error-control-one-step:0} one-by-one.
\begin{enumerate}
    \item By definition, we have
    \begin{align}
        &\ \left\|{(I+\W[\ell])\tran \mymat{\ZX[\ell] & \ZO[\ell]\tran \\ \ZO[\ell] & \ZS[\ell]}(I+\W[\ell]) - \mymat{\ZX[\ell] & 0 \\ 0 & \ZS[\ell]}} \right\|_2 \nonumber \\
        =&\ \left\|{\mymat{\QX - \ZX[\ell] & \QO\tran \\ \QO & \QS - \QS[\ell]}}\right\|_2 \nonumber \\
        \le&\ \normtwo{\QS - \ZX[\ell]} + \normtwo{\QO} + \normtwo{\QS - \ZS[\ell]} \nonumber \\
        =&\ \normtwo{\WO[\ell]\tran \ZO[\ell] + \ZO[\ell]\tran \WO[\ell] + \WO[\ell]\tran \ZS[\ell] \WO[\ell]} + \normtwo{\WO[\ell] \ZO[\ell]\tran \WO[\ell]} \nonumber \\
        &\ + \normtwo{\WO[\ell] \ZX[\ell] \WO[\ell]\tran - \ZO[\ell] \WO[\ell]\tran - \WO[\ell] \ZO[\ell]\tran}, \label{app:eq:error-control-one-step:1}
    \end{align}
    where the inequality follows from \cref{lem:error-bound:twonorm}. We can further bound the three terms on the right-hand side of \eqref{app:eq:error-control-one-step:1} one-by-one.
    \begin{enumerate}
        \item For the first term, we have from \eqref{app:eq:error-control-one-step:WO} that
        \begin{align}
            &\ \normtwo{ \WO[\ell]\tran \ZO[\ell] + \ZO[\ell]\tran \WO[\ell] + \WO[\ell]\tran \ZS[\ell] \WO[\ell]} \nonumber \\
            \le&\ 2 \normtwo{\WO[\ell]} \cdot \normtwo{\ZO[\ell]} + \normtwo{\WO[\ell]}^2 \cdot \normtwo{\ZS[\ell]} \nonumber \\
            \le&\ 2\etanew[\ell] \cdot \normtwo{\ZO[\ell]}^2 + \etanew[\ell]^2 \cdot \normtwo{\ZS[\ell]} \cdot \normtwo{\ZO[\ell]}^2 \nonumber \\
            \le&\ 2\etanew[\ell] \cdot \normtwo{\ZO[\ell]}^2 + \etanew[\ell]^2 \cdot \normtwo{\Z[\ell]} \cdot \normtwo{\ZO[\ell]}^2. \label{app:eq:error-control-one-step:1a}
        \end{align}

        \item For the second term, we obtain from \eqref{app:eq:error-control-one-step:WO} that
        \begin{equation} \label{app:eq:error-control-one-step:1b}
            \normtwo{\WO[\ell] \ZO[\ell]\tran \WO[\ell]} \le \etanew[\ell]^2 \cdot \normtwo{\ZO[\ell]}^3 \le \etanew[\ell]^2 \cdot \normtwo{\Z[\ell]} \cdot \normtwo{\ZO[\ell]}^2.
        \end{equation}

        \item Again from \eqref{app:eq:error-control-one-step:WO}, we have 
        \begin{equation} \label{app:eq:error-control-one-step:1c}
            \normtwo{\WO[\ell] \ZX[\ell] \WO[\ell]\tran - \ZO[\ell] \WO[\ell]\tran - \WO[\ell] \ZO[\ell]\tran} \le 2\etanew[\ell] \cdot \normtwo{\ZO[\ell]}^2 + \etanew[\ell]^2 \cdot \normtwo{\Z[\ell]} \cdot \normtwo{\ZO[\ell]}^2.
        \end{equation}
    \end{enumerate}

    \item Similarly, the second term on the right-hand side of \eqref{app:eq:error-control-one-step:0} can be bounded as
    \begin{align}
        &\ \normtwo{\exp(\W[\ell])\tran \Z[\ell] \exp(\W[\ell]) - (I+\W[\ell])\tran \Z[\ell] (I+\W[\ell])} \nonumber \\
        \le&\ 2 \normtwo{\exp(\W[\ell])- \I[n] - \W[\ell]} \cdot \normtwo{\Z[\ell]} \cdot \normtwo{\I[n] + \W[\ell]} + \normtwo{\exp(\W[\ell])- \I[n] - \W[\ell]}^2 \cdot \normtwo{\Z[\ell]}. \label{app:eq:error-control-one-step:2}
    \end{align}
    Again, we bound the two terms on the right-hand side one-by-one.
    \begin{enumerate}
        \item We see from \cref{lem:error-bound:exp-bound} and \eqref{app:eq:error-control-one-step:W-bound} that
        \begin{align}
            &\ \normtwo{\exp(\W[\ell])- \I[n] - \W[\ell]} \cdot \normtwo{\Z[\ell]} \cdot \normtwo{\I[n] + \W[\ell]} \nonumber \\
            \le&\ \frac{2}{3} \cdot \normtwo{\W[\ell]}^2 \cdot \normtwo{\Z[\ell]} \cdot \normtwo{\I[n] + \W[\ell]} \nonumber \\
            \le&\ \frac{2}{3} \cdot \normtwo{\W[\ell]}^2 \cdot \normtwo{\Z[\ell]} \cdot (1 + \normtwo{\W[\ell]}) \nonumber \\
            \le&\ \frac{2}{3}\etanew[\ell]^2 \cdot \normtwo{\ZO[\ell]}^2 \cdot \normtwo{\Z[\ell]} \cdot (1 + \etanew[\ell] \normtwo{\ZO[\ell]}) \nonumber \\
            \le&\ \frac{2}{3}\etanew[\ell]^2 \cdot \normtwo{\Z[\ell]} \cdot \normtwo{\ZO[\ell]}^2  + \frac{2}{3}\etanew[\ell]^3 \cdot \normtwo{\Z[\ell]}^2 \cdot \normtwo{\ZO[\ell]}^2. \label{app:eq:error-control-one-step:2a}
        \end{align}

        \item The second term on the right-hand side of \eqref{app:eq:error-control-one-step:2} can be readily bounded by
        \begin{equation} \label{app:eq:error-control-one-step:2b}
            \normtwo{\exp(\W[\ell])- \I[n] - \W[\ell]}^2 \cdot \normtwo{\Z[\ell]} \le \frac{4}{9} \cdot \normtwo{\W[\ell]}^4 \cdot \normtwo{\Z[\ell]} \le \frac{4}{9} \etanew[\ell]^4 \cdot \normtwo{\Z[\ell]}^3 \cdot \normtwo{\ZO[\ell]}^2.
        \end{equation}
    \end{enumerate}
\end{enumerate}
Then, combining \cref{lem:error-bound:twonorm} and \eqref{app:eq:error-control-one-step:0}--\eqref{app:eq:error-control-one-step:2b} yields
\begin{align}
    &\ \max\{\normtwo{\ZX[\ell+1] - \ZX[\ell]}, \normtwo{\ZS[\ell+1] - \ZS[\ell]}, \normtwo{\ZO[\ell+1]}\} \nonumber \\
    \le&\ \left\|{\mymat{\ZX[\ell+1] - \ZX[\ell] & \ZO[\ell+1]\tran \\ \ZO[\ell+1] & \ZS[\ell+1] - \ZS[\ell]}}\right\|_2 \nonumber \\
    \le&\ \left( 
        4 \etanew[\ell] + 2\etanew[\ell]^2 \cdot \normtwo{\Z[\ell]} + \etanew[\ell]^2 \cdot \normtwo{\Z[\ell]} + \frac{4}{3} \etanew[\ell]^2 \cdot \normtwo{\Z[\ell]} + \frac{4}{3} \etanew[\ell]^3 \cdot \normtwo{\Z[\ell]}^2 + \frac{4}{9} \etanew[\ell]^4 \cdot \normtwo{\Z[\ell]}^3
        \right) \cdot \normtwo{\ZO[\ell]} \nonumber \\
    =&\ \left( 
        \frac{4}{9}\etanew[\ell]^4 \cdot \normtwo{\Z[\ell]}^3 + \frac{4}{3} \etanew[\ell]^3 \cdot \normtwo{\Z[\ell]}^2 + \frac{13}{3} \etanew[\ell]^2 \cdot \normtwo{\Z[\ell]} + 4\etanew[\ell] 
        \right) \cdot \normtwo{\ZO[\ell]}^2. \label{app:eq:error-control-one-step:3}
\end{align}
Finally, notice that $\W[\ell]$ is skew-symmetric and $\exp(\W[\ell])$ is orthogonal. Thus, the eigenvalues of $\Z[\ell]$ remain the same for all $\ell \in \bbN$, so $\normtwo{\Z[\ell]} = \normtwo{\Z[0]}$ for all $\ell \in \bbN$. Therefore, replacing $\normtwo{\Z[\ell]}$ with $\normtwo{\Z[0]}$ in \eqref{app:eq:error-control-one-step:3} gives the desirable result.
\end{proof}

\subsection{Proof of \texorpdfstring{\cref{lem:error-bound:induction}}{Lemma 10}}
\label{app:sec:error-bound:induction}

\paragraph{A strengthened version.}
Here we prove a strenthened version of \eqref{eq:error-bound:induction}:
\begin{subequations} \label{app:eq:induction:strengthen}
    \begin{align}
        \normtwo{\ZO[\ell]} &\le \frac{1}{\normtwo{\Z[0]}} \cdot f(\etanew[0] \cdot \normtwo{\Z[0]}) \cdot \normtwo{\ZO[0]}^{\ell+1}  \label{app:eq:induction:strengthen-1} \\
        \normtwo{\ZX[\ell] - \ZX[0]} &\le \frac{1}{\normtwo{\Z[0]}} \cdot f(\etanew[0] \cdot \normtwo{\Z[0]}) \cdot \sum_{i=0}^{\ell-1} \normtwo{\ZO[0]}^{i+2} \label{app:eq:induction:strengthen-2} \\
        \normtwo{\ZS[\ell] - \ZS[0]} &\le \frac{1}{\normtwo{\Z[0]}} \cdot f(\etanew[0] \cdot \normtwo{\Z[0]}) \cdot \sum_{i=0}^{\ell-1} \normtwo{\ZO[0]}^{i+2}. \label{app:eq:induction:strengthen-3} 
    \end{align}
\end{subequations}
More specifically, \eqref{app:eq:induction:strengthen} implies \eqref{eq:error-bound:induction} because
\begin{align*}
    \frac{1}{\normtwo{\Z[0]}} \cdot f(\etanew[0] \cdot \normtwo{\Z[0]}) &\le \frac{1}{\normtwo{\ZX[0]}} \cdot f(\etanew[0] \cdot (\normtwo{\ZX[0]} + \normtwo{\ZS[0]} + C_K)) \\
    &\le \frac{1}{\normtwo{\ZX[0]}} \cdot f(\etanew[0] \cdot (\normtwo{\ZX[0]} + \normtwo{\ZS[0]} + \frac{1}{2})) = \alpha_K,
\end{align*}
following from the monotonicity of $f$, \cref{lem:error-bound:twonorm}, and the definition of $C_K$ \eqref{eq:error-bound:alphaK}: $\normtwo{\ZO[0]} \leq C_K \leq \frac{1}{2}$.

\paragraph{Proof by induction.}
Now we prove \eqref{app:eq:induction:strengthen} by induction.
\begin{enumerate}
    \item \textit{Base case.}
    When $\ell=1$, we see from $\normtwo{\ZO[0]} \leq C_K \leq \frac{3}{4\etanew[0]}$ and \cref{lem:error-bound:error-control-one-step} that
    \begin{align*}
        &\ \max\{\normtwo{\ZX[1] - \ZX[0]}, \ \normtwo{\ZS[1] - \ZS[0]}, \ \normtwo{\ZO[1]}\} \\
        \le&\ \frac{1}{\normtwo{\Z[0]}} \left( 
        \frac{4}{9} (\etanew[0] \normtwo{\Z[0]})^4 + \frac{4}{3} (\etanew[0] \normtwo{\Z[0]})^3 + \frac{13}{3} (\etanew[0] \normtwo{\Z[0]})^2 + 4 (\etanew[0] \normtwo{\Z[0]}) 
        \right) \normtwo{\ZO[0]}^2 \\
        \le&\ \frac{1}{\normtwo{\Z[0]}} f(\etanew[0] \cdot \normtwo{\Z[0]}) \cdot \normtwo{\ZO[0]}^2.
    \end{align*}

    \item \textit{Induction.}
    Suppose \eqref{app:eq:induction:strengthen} holds for index $\ell$. The proof for the induction case $\ell+1$ takes the following three steps.
    \begin{enumerate}
        \item First, \eqref{app:eq:induction:strengthen-2} implies that
        \begin{subequations}
        \begin{align}
            \lammin{\ZX[\ell]} &\ge \lammin{\ZX[0]} + \lammin{\ZX[\ell] - \ZX[0]} \label{app:eq:induction:lamX-1} \\
            & \ge \lammin{\ZX[0]} - \normtwo{\ZX[\ell] - \ZX[0]} \nonumber \\
            & \ge \lammin{\ZX[0]} - \frac{f(\etanew[0] \cdot \normtwo{\Z[0]})}{\normtwo{\Z[0]}} \cdot \sum_{i=0}^{\ell-1} \normtwo{\ZO[0]}^{i+2} \label{app:eq:induction:lamX-2}  \\
            & \ge \lammin{\ZX[0]} - \frac{f(\etanew[0] \cdot \normtwo{\Z[0]})}{\normtwo{\Z[0]}} \cdot \frac{\normtwo{\ZO[0]}^2}{1 - \normtwo{\ZO[0]}} \nonumber \\
            & \ge \lammin{\ZX[0]} - \frac{f(\etanew[0] \cdot \normtwo{\Z[0]})}{\normtwo{\Z[0]}} \cdot \frac{1}{\alpha_K} \cdot \frac{\frac{1}{4} \lammin{\ZX[0]}}{1 - \frac{1}{2}} \label{app:eq:induction:lamX-3} \\
            & \ge \lammin{\ZX[0]} - \frac{1}{2} \lammin{\ZX[0]} \label{app:eq:induction:lamX-4} \\
            &= \frac{1}{2} \lammin{\ZX[0]}, \label{app:eq:induction:lamX-5}
        \end{align}
        \end{subequations}
        where~\eqref{app:eq:induction:lamX-1} comes from Weyl's inequality,~\eqref{app:eq:induction:lamX-2} holds by the induction hypothesis, \eqref{app:eq:induction:lamX-3} follows from the definition of $C_K$:
        \[
            \normtwo{\ZO[0]} \le C_K \le \frac{1}{2}, \qquad
            \normtwo{\ZO[0]} \le C_K \le \sqrt{\frac{1}{4\alpha_K} \lammin{\ZX[0]}},
        \]
        and~\eqref{app:eq:induction:lamX-4} holds since $\alpha_K \ge \frac{1}{\normtwo{\Z[0]}} \cdot f(\etanew[0] \normtwo{\Z[0]})$. Similarly, we obtain 
        \begin{align*}
            \lammin{\ZX[\ell]} &\le \lammin{\ZX[0]} + \lammax{\ZX[\ell] - \ZX[0]} \\
            &\le \lammin{\ZX[0]} + \normtwo{\ZX[\ell] - \ZX[0]} \\
            &\le \frac{3}{2} \lammin{\ZX[0]},
        \end{align*}
        and
        \[
            \frac{3}{2} \lammax{\ZS[0]} \le \lammax{\ZS[0]} \le \frac{1}{2} \lammax{\ZS[0]}.
        \]
        Rearranging to get
        \begin{align*}
            \deltalam{\ZX[\ell]}{\ZS[\ell]} &\ge \frac{1}{2} \big(\deltalam{\ZX[0]}{\ZS[0]}\big) \\
            \deltalam{\ZX[\ell]}{\ZS[\ell]} &\le \frac{3}{2} \big(\deltalam{\ZX[0]}{\ZS[0]} \big),
        \end{align*}
        or equivalently,
        \begin{equation} \label{app:eq:induction:eta-bound}
            \frac{2}{3} \etanew[0] \le \etanew[\ell] \le 2 \etanew[0].
        \end{equation}

        \item Second, we show \eqref{app:eq:induction:strengthen-1} for index $\ell+1$:
        \begin{subequations}
        \begin{align}
            \normtwo{\ZO[\ell+1]} &\le \frac{f(\etanew[\ell] \cdot \normtwo{\Z[0]})}{\normtwo{\Z[0]}} \cdot \normtwo{\ZO[\ell]}^2 \label{app:eq:induction:ZO-1} \\
            &\le \frac{f(2\etanew[0] \cdot \normtwo{\Z[0]})}{\normtwo{\Z[0]}} \cdot \normtwo{\ZO[\ell]}^2 \label{app:eq:induction:ZO-2} \\
            &\le \frac{f(2\etanew[0] \cdot \normtwo{\Z[0]})}{\normtwo{\Z[0]}} \cdot \frac{f(\etanew[0] \cdot \normtwo{\Z[0]})^2}{\normtwo{\Z[0]}^2} \cdot \normtwo{\ZO[0]}^{2\ell+2}, \label{app:eq:induction:ZO-3}
        \end{align}
        \end{subequations}
        In~\eqref{app:eq:induction:ZO-1}, we use \cref{lem:error-bound:error-control-one-step}, the fact $\ZX[\ell] \in \Sympp{r}$ and $\ZS[\ell] \in \Symnn{n-r}$ from (a), and the fact that for any integer $\ell \ge 1$, we have
        \begin{equation}
            \label{app:eq:induction:eta-times-ZO}
            \etanew[\ell] \normtwo{\ZO[\ell]} \le 2 \etanew[0] \alpha_K \normtwo{\ZO[0]}^{\ell+1} \le 2 \etanew[0] \alpha_K \normtwo{\ZO[0]}^2 \le \frac{3}{4}
        \end{equation}
        since $\normtwo{\ZO[0]} \le C_K \le \min \{\frac{1}{2}, \frac{1}{\sqrt{4\etanew[0] \alpha_K}}\}$. Then,~\eqref{app:eq:induction:ZO-2} uses~\eqref{app:eq:induction:eta-bound}, and~\eqref{app:eq:induction:ZO-3} uses the induction hypothesis.

        On the other hand, we see from the definition of $C_K$ that
        \begin{subequations}
        \begin{align}
            & \normtwo{\ZO[0]} \le C_K \le g^{-1}(\normtwo{\ZX[0]}^2) \nonumber \\
            \Longleftrightarrow \; & \normtwo{\ZO[0]} \cdot f\left( 2\etanew[0] (\normtwo{\ZX[0]} + \normtwo{\ZS[0]} + \normtwo{\ZO[0]}) \right) \nonumber \\
            & \cdot f \big( \etanew[0] (\normtwo{\ZX[0]} + \normtwo{\ZS[0]} + \normtwo{\ZO[0]}) \big) \le \normtwo{\ZX[0]}^2 \label{app:eq:induction:ginv-1} \\
            \Longrightarrow \; & \normtwo{\ZO[0]} \cdot f\left( 2\etanew[0] \normtwo{\Z[0]} \right) \cdot f\left( \etanew[0] \normtwo{\Z[0]} \right) \le \normtwo{\ZX[0]}^2 \label{app:eq:induction:ginv-2} \\
            \Longrightarrow \; & \normtwo{\ZO[0]} \cdot f\big(2\etanew[0] \normtwo{\Z[0]} \big) \cdot f\big(\etanew[0] \normtwo{\Z[0]} \big) \le \normtwo{\Z[0]}^2 \nonumber \\
            \Longleftrightarrow \; & \normtwo{\ZO[0]}^{\ell+3} \cdot \frac{f(2\etanew[0] \normtwo{\Z[0]})}{\normtwo{\Z[0]}} \cdot \frac{f(\etanew[0] \normtwo{\Z[0]})^2}{\normtwo{\Z[0]}^2} \le \frac{f(\etanew[0] \normtwo{\Z[0]})}{\normtwo{\Z[0]}} \cdot \normtwo{\ZO[0]}^{\ell+2} \nonumber \\
            \Longrightarrow \; & \normtwo{\ZO[0]}^{2\ell+2} \cdot \frac{f(2\etanew[0] \normtwo{\Z[0]})}{\normtwo{\Z[0]}} \cdot \frac{f(\etanew[0] \normtwo{\Z[0]})^2}{\normtwo{\Z[0]}^2} \le \frac{f(\etanew[0] \normtwo{\Z[0]})}{\normtwo{\Z[0]}} \cdot \normtwo{\ZO[0]}^{\ell+2}, \label{app:eq:induction:ginv-3} 
        \end{align}
        \end{subequations}
        where~\eqref{app:eq:induction:ginv-1} follows from the monotonicity of $g$,~\eqref{app:eq:induction:ginv-2} from that of~$f$, \eqref{app:eq:induction:ginv-3} uses the definition of $C_K$ (so $\normtwo{\ZO[0]} \le C_K \le \frac{1}{2} < 1$) and the fact $2\ell + 2 \ge \ell + 3$. Thus, 
        \[
            \normtwo{\ZO[\ell+1]} \le \frac{f(\etanew[0] \normtwo{\Z[0]})}{\normtwo{\Z[0]}} \cdot \normtwo{\ZO[0]}^{\ell+2}.
        \]

        \item It remains to prove \eqref{app:eq:induction:strengthen-2} and \eqref{app:eq:induction:strengthen-3} for index $\ell+1$. From \cref{lem:error-bound:error-control-one-step} and \eqref{app:eq:induction:strengthen-1}, we have
        \begin{align*}
            &\ \normtwo{\ZX[\ell+1] - \ZX[0]} \\
            \le&\ \normtwo{\ZX[\ell+1] - \ZX[\ell]} + \normtwo{\ZX[\ell] - \ZX[0]} \\
            \le&\ \frac{f(\etanew[\ell] \cdot \normtwo{\Z[0]})}{\normtwo{\Z[0]}} \cdot \normtwo{\ZO[\ell]}^2 + \frac{f(\etanew[0] \cdot \normtwo{\Z[0]})}{\normtwo{\Z[0]}} \cdot \left( \sum_{i=0}^{\ell-1} \normtwo{\ZO[0]}^{i+2} \right) \\
            \le&\ \frac{f(2\etanew[0] \cdot \normtwo{\Z[0]})}{\normtwo{\Z[0]}} \cdot \normtwo{\ZO[\ell]}^2 + \frac{f(\etanew[0] \cdot \normtwo{\Z[0]})}{\normtwo{\Z[0]}} \cdot \left( \sum_{i=0}^{\ell-1} \normtwo{\ZO[0]}^{i+2} \right) \\
            \le&\ \frac{f(2\etanew[0] \normtwo{\Z[0]})}{\normtwo{\Z[0]}} \frac{f(\etanew[0] \normtwo{\Z[0]})^2}{\normtwo{\Z[0]}^2} \cdot \normtwo{\ZO[0]}^{2\ell+2} + \frac{f(\etanew[0] \normtwo{\Z[0]})}{\normtwo{\Z[0]}} \left( \sum_{i=0}^{\ell-1} \normtwo{\ZO[0]}^{i+2} \right) \\
            \le&\ \frac{f(\etanew[0] \cdot \normtwo{\Z[0]})}{\normtwo{\Z[0]}} \cdot \normtwo{\ZO[0]}^{\ell+2} + \frac{f(\etanew[0] \cdot \normtwo{\Z[0]})}{\normtwo{\Z[0]}} \cdot \left( \sum_{i=0}^{\ell-1} \normtwo{\ZO[0]}^{i+2} \right) \\
            =&\ \frac{f(\etanew[0] \cdot \normtwo{\Z[0]})}{\normtwo{\Z[0]}} \cdot \left( \sum_{i=0}^{\ell} \normtwo{\ZO[0]}^{i+2} \right),
        \end{align*}
        where the last inequality follows from~\eqref{app:eq:induction:ginv-3}.
    \end{enumerate}
\end{enumerate}
So we finish the proof of the strengthened inequalities \eqref{app:eq:induction:strengthen}, which implies~\eqref{eq:error-bound:induction}. From the above mathematical induction process, we can also conclude that
\[
    \frac{2}{3} \etanew[0] \le \etanew[\ell] \le 2 \etanew[0], \quad \lammin{\ZX[\ell]} \ge \frac{1}{2} \lammin{\ZX[0]} > 0 \quad \lammax{\ZS[\ell]} \le \frac{1}{2} \lammax{\ZS[0]} < 0,
\]
which follows directly from~\eqref{app:eq:induction:lamX-4} and~\eqref{app:eq:induction:eta-bound}.

\subsection{Proof of \texorpdfstring{\cref{lem:error-bound:distance-Yk}}{Lemma 11}}
\label{app:sec:error-bound:distance-Yk}

First of all, we have
\begin{equation} \label{app:eq:distance-Yk:W-2}
    \normtwo{\W[\ell]} = \normtwo{\WO[\ell]} \overset{\text{(a)}}{\le} \etanew[\ell] \normtwo{\ZO[\ell]} \le 2\etanew[0] \normtwo{\ZO[\ell]} \overset{\text{(b)}}{\le} 2\etanew[0] \alpha_K \normtwo{\ZO[0]}^{\ell + 1} \overset{\text{(c)}}{\le} \frac{1}{2},
\end{equation}
where step (a) follows from~\eqref{app:eq:error-control-one-step:W-bound}, step (b) holds since 
$\ZO[0] \le \min \{ C_K, \frac{1}{\sqrt{4 \etanew[0] \alpha_K}} \}$ for any integer $\ell \ge 1$, and step (c) holds for the same reason as in~\eqref{app:eq:induction:ZO-1}.
Then, we see that
\begin{subequations}
\begin{align}
    \normtwo{\Y[\ell+1] - \Y[\ell]} &= \normtwo{\Y[\ell] \left( \exp(\W[\ell]) - \I[n] \right)} \nonumber \\
    &\le \normtwo{\Y[\ell]} \cdot \normtwo{\exp(\W[\ell]) - \I[n]} \nonumber \\
    &\le \normtwo{\exp(\W[\ell]) - \I[n]} \label{app:eq:distance-Yk:Yk-1} \\
    &\le \normtwo{\exp (\W[\ell]) - (\I[n] + \W[\ell])} + \normtwo{\W[\ell]} \nonumber \\
    &\le \frac{2}{3} \normtwo{\W[\ell]}^2 + \normtwo{\W[\ell]} \le \frac{4}{3} \normtwo{\W[\ell]} \label{app:eq:distance-Yk:Yk-2}  \\
    &\le \frac{4}{3} \etanew[\ell] \normtwo{\ZO[\ell]} \le \frac{8}{3} \etanew[0] \normtwo{\ZO[\ell]} \nonumber \\
    &\le \frac{8}{3} \etanew[0] \alpha_K \cdot \normtwo{\ZO[0]}^{\ell+1}, \nonumber
\end{align}
\end{subequations}
where~\eqref{app:eq:distance-Yk:Yk-1} holds because $\Y[\ell] = \prod_{i=1}^{\ell} \exp(\W[i])$ is orthogonal,~\eqref{app:eq:distance-Yk:Yk-2} uses \cref{lem:error-bound:exp-bound} and~\eqref{app:eq:distance-Yk:W-2}. 

For the second inequality in the lemma, we have
\begin{subequations}
\begin{align}
    \normtwo{\Y[\ell] - (\I[n] + \W[0])} &= \left\|{\Y[1] - (\I[n]+\W[0]) + \sum_{i=1}^{\ell-1} (\Y[i+1] - \Y[i])}\right\|_2 \nonumber \\
    &\le \normtwo{\Y[1] - (\I[n]+\W[0])} + \sum_{i=1}^{\ell-1} \normtwo{\Y[i+1] - \Y[i]} \nonumber \\
    &\le \frac{2}{3} \normtwo{\W[0]}^2 + \frac{8}{3} \etanew[0] \alpha_K \sum_{i=1}^{\ell-1} \normtwo{\ZO[0]}^{i+1} \label{app:eq:distance-Yk-final:1} \\
    & \le \frac{2}{3}\etanew[0]^2 \normtwo{\ZO[0]}^2 + \frac{8}{3} \etanew[0] \alpha_K \sum_{i=1}^{\ell-1} \normtwo{\ZO[0]}^{i+1}, \label{app:eq:distance-Yk-final:2}
\end{align}
\end{subequations}
where~\eqref{app:eq:distance-Yk-final:1} holds because of $\etanew[\ell] \normtwo{\ZO[\ell]} \le \frac{3}{4}$ for all $\ell \in \bbN$ from~\eqref{app:eq:induction:eta-times-ZO} and~\eqref{eq:error-bound:distance-Yk-1}, and~\eqref{app:eq:distance-Yk-final:2} follows from~\eqref{app:eq:error-control-one-step:W-bound}.

\subsection{Proof of \texorpdfstring{\cref{lem:error-bound:V-limit}}{Lemma~12}}
\label{app:sec:error-bound:V-limit}

We first prove that the limit $\V[\infty] := \lim_{\ell \rightarrow \infty} \V[\ell]$ exists. From \cref{lem:error-bound:induction}, we already know that $\ZO[\ell] \rightarrow 0$ as $\ell \rightarrow \infty$. It remains to show that $\ZX[\ell]$ and $\ZS[\ell]$ are also convergent. For any $\epsilon > 0$, we define
\[
    N := \ceil{
    \frac{1}{2} \log_{\normtwo{\ZO[0]}} \left( \frac{\epsilon}{\beta} (1 - \normtwo{\ZO[0]}) \right) - 1}, \quad \text{where} \ \beta = \alpha_K^2 f(2\eta_0 \normtwo{\Z[0]}).
\]
Then, for any integers $m, n \ge N$, we have 
\begin{subequations}
\begin{align}
    &\ \normtwo{\ZX[n] - \ZX[m]} \nonumber \\
    \le &\ \sum_{\ell=m}^{n-1} \normtwo{\ZX[\ell+1] - \ZX[\ell]} \nonumber \\
    \le &\ \sum_{\ell=m}^{n-1} \frac{1}{\normtwo{\Z[0]}} \cdot f(\etanew[\ell] \cdot \normtwo{\Z[0]}) \cdot \normtwo{\ZO[\ell]}^2 \label{app:eq:V-limit:cauchy-1} \\
    \le &\ \sum_{\ell=m}^{n-1} \frac{f(2\etanew[0] \cdot (\normtwo{\ZX[0]} + \normtwo{\ZS[0]} + 0.5))}{\normtwo{\ZX[0]}}  \cdot \normtwo{\ZO[\ell]}^2 \label{app:eq:V-limit:cauchy-2} \\
    \le &\ \sum_{\ell=m}^{n-1} \frac{f(2\etanew[0] \cdot (\normtwo{\ZX[0]} + \normtwo{\ZS[0]} + 0.5))}{\normtwo{\ZX[0]}} \cdot \alpha_K^2 \cdot \normtwo{\ZO[0]}^{2\ell+2} \label{app:eq:V-limit:cauchy-3} \\
    \le &\ \sum_{\ell=m}^{\infty} \beta \cdot \normtwo{\ZO[0]}^{2\ell+2} 
    \le \beta \cdot \frac{\normtwo{\ZO[0]}^{2m+2}}{1 - \normtwo{\ZO[0]}} \le \beta \cdot \frac{\normtwo{\ZO[0]}^{2N+2}}{1 - \normtwo{\ZO[0]}} \le \epsilon, \label{app:eq:V-limit:cauchy-4}
\end{align}
\end{subequations}
Here,~\eqref{app:eq:V-limit:cauchy-1} holds because of \cref{lem:error-bound:error-control-one-step} and the fact that $\etanew[\ell] \cdot \normtwo{\ZO[\ell]} \le \frac{3}{4}$, for any integer $\ell \ge 1$ (see~\eqref{app:eq:induction:eta-times-ZO}), ~\eqref{app:eq:V-limit:cauchy-2} uses \cref{lem:error-bound:twonorm}, the monotonicity of $f$, and the fact $\normtwo{\ZO[0]} \le C_K \le \frac{1}{2}$,~\eqref{app:eq:V-limit:cauchy-3} uses~\eqref{eq:error-bound:induction-1}, and~\eqref{app:eq:V-limit:cauchy-4} follows directly from the definition of $\beta$ and $N$.

Thus, $\left\{ \ZX[\ell] \right\}_{\ell=1}^{\infty} \subseteq \Sym{r}$ is a Cauchy sequence and the limit $\ZX[\infty] := \lim_{\ell \rightarrow \infty} \ZX[\ell]$ exists. The same argument applies to $\ZS[\infty]$. Similarly, from \cref{lem:error-bound:distance-Yk}, we conclude that $\left\{ \Y[\ell] \right\}_{\ell=1}^\infty$ is convergent to $\Y[\infty]$, and thus the limit
\begin{align*}
    \V[\infty] := \lim_{\ell \rightarrow \infty} \Y[\ell] \mymat{\ZX[\ell] & 0 \\ 0 & 0} \Y[\ell]\tran = \Y[\infty] \mymat{\ZX[\infty] & 0 \\ 0 & 0} \Y[\infty]\tran 
\end{align*}
exists.

It only remains to show that $\V[\infty] = \PiSnp{\Z + \H}$.  It readily follows from
\begin{align*}
    \Z[\ell] &= \exp(\W[\ell])\tran \mymat{\ZX[\ell] & \ZO[\ell]\tran \\ \ZO[\ell] & \ZS[\ell]} \exp(\W[\ell]) \\
    &= \left( \exp(\W[\ell-1])\exp(\W[\ell]) \right)\tran \mymat{\ZX[\ell-1] & \ZO[\ell-1]\tran \\ \ZO[\ell-1] & \ZS[\ell-1]} \exp(\W[\ell-1])\exp(\W[\ell]) \\
    &\;\; \vdots \\
    &= \left( \prod_{i=0}^{\ell} \exp(\W[i]) \right)\tran \mymat{\ZX[0] & \ZO[0]\tran \\ \ZO[0] & \ZS[0]} \left( \prod_{i=0}^{\ell} \exp(\W[i]) \right) \\
    &= \Y[\ell]\tran (\Z + \H) \Y[\ell].
\end{align*}
Thus, we conclude that
\[
    \Z[\infty] = \lim_{\ell \rightarrow \infty} \Z[\ell] = \Y[\infty]\tran (\Z + \H) \Y[\infty].
\]
Recall that for any $\ell \in \bbN$, $\exp(\W[\ell])$ is orthogonal, so $\Y[\infty] = \lim_{\ell \rightarrow \infty} \prod_{i=0}^{\ell} \exp(\W[i])$ is also orthogonal. It implies that
\[
    \PiSnp{\Z + \H} = \Y[\infty] \cdot \PiSnp{\Z[\infty]} \cdot \Y[\infty]\tran.
\]
On the other hand, recall from \cref{lem:error-bound:induction} that $\lammin{\ZX[\ell]} \ge \frac{1}{2} \lammin{\ZX[0]} > 0$ and $-\lammax{\ZS[\ell]} \ge -\frac{1}{2} \lammax{\ZS[0]} > 0$, so we have $\ZX[\infty] \in \Sympp{r}$, $-\ZS[\infty] \in \Symnn{n-r}$, and
\begin{align*}
    \PiSnp{\Z[\infty]} &= \Pi_{\Sympp{n}} \left(\mymat{\ZX[\infty] & 0 \\ 0 & \ZS[\infty]}\right) = \mymat{\ZX[\infty] & 0 \\ 0 & 0}, \\
    \PiSnp{\Z + \H} &= \Y[\infty] \mymat{\ZX[\infty] & 0 \\ 0 & 0} \Y[\infty]\tran = \V[\infty].
\end{align*}
This concludes the proof.

\subsection{Proof of \texorpdfstring{\cref{lem:error-bound:lower-bound-C-eta-alpha}}{Lemma 13}}
\label{app:sec:error-bound:lower-bound-C-eta-alpha}

We prove the four conclusions one-by-one.
\begin{enumerate}
    \item Since $\normtwo{\H} \le \frac{1}{2} \min \{\lam{r}, -\lam{r+1}\}$, we have 
    \[
        \lammin{\ZX[0]} \ge \lammin{\LamX} - \normtwo{\ZX} \ge \lam{r} - \normtwo{\H} \ge \frac{1}{2} \lam{r} > 0
    \]
    by Weyl's inequality and \cref{lem:error-bound:twonorm}. Symmetrically, we obtain $\lammax{\ZS[0]} \le \frac{1}{2} \lam{r+1} < 0$. Similarly, we can obtain an upper bound for $\lammin{\ZX[0]}$ and a lower bound for $\lammax{\ZS[0]}$:
    \[
        \lammin{\ZX[0]} \le \frac{3}{2} \lam{r}, \qquad \lammax{\ZS[0]} \le \frac{3}{2} \lam{r+1}.
    \]

    \item Now we bound $\etanew[0]$. On one hand, we have
    \begin{equation}
        \label{app:eq:lower-bound-C-eta-alpha:eta0f}
        \etanew[0] = \frac{d}{\deltalam{\ZX[0]}{\ZS[0]}} \le \frac{d}{\frac{1}{2} \lam{r} - \frac{1}{2} \lam{r+1}} = \frac{2d}{\lam{r} - \lam{r+1}} =: \etanew[0,f].
    \end{equation}
    On the other hand, we have an lower bound for $\etanew[0]$:
    \[
        \etanew[0] \ge \frac{d}{1.5 \lam{r} - 1.5 \lam{r+1}} = \frac{2d}{3(\lam{r} - \lam{r+1})}.
    \]

    \item To bound $\alpha_K$, we first notice from $\normtwo{H} \le \frac{1}{2} \min \{ \lam{r}, -\lam{r+1} \}$ that
    \begin{align*}
        \normtwo{\Z + \H} &\ge \normtwo{\Z} - \normtwo{\H} \ge \normtwo{\Z} - 0.5 \min \left\{ \lam{r}, -\lam{r+1} \right\}, \\
        \normtwo{\Z + \H} &\le \normtwo{\Z} + \normtwo{\H} \le \normtwo{\Z} + 0.5 \min \{ \lam{r}, -\lam{r+1} \}.
    \end{align*}
    Then, from the definition of $\alpha_K$~\eqref{eq:error-bound:alphaK}, we deduce that
    \begin{align}
        \alpha_K &= \frac{f\left( 
                \etanew[0] \cdot (\normtwo{\ZX[0]} + \normtwo{\ZS[0]} + 0.5)
             \right)}{\normtwo{\ZX[0]}} \nonumber \\
        &\le \frac{f\left( 
            \frac{2d}{3(\lam{r} - \lam{r+1})} \cdot (2(\lam{1} - \lam{n}) + \min \left\{ \lam{r}, -\lam{r+1} \right\} + 0.5)
         \right)}{
            \lam{1} - 0.5 \lam{r}
         } \nonumber \\
        &=: \alpha_{K,f}, \label{app:eq:lower-bound-C-eta-alpha:alphaKf}
    \end{align}
    where the inequality uses Weyl's inequality:
    \begin{align*}
        \normtwo{\Z[0]} &\ge \max\{\normtwo{\ZX[0]}, \normtwo{\ZS[0]}\}, \\
        \normtwo{\Z[0]} &\le \normtwo{\Z} + \frac{1}{2} \min \{\lam{r}, -\lam{r+1} \} \le (\lam{1} - \lam{n}) + \frac{1}{2} \min \{\lam{r}, -\lam{r+1}\} 
    \end{align*}
    and recall $\normtwo{\ZX[0]} \ge \lam{1} - \frac{1}{2} \lam{r}$.

    \item It remains to find a lower bound for $C_K$. Since $C_K$ is the minimum of five terms, we bound each of them one-by-one.
    \begin{itemize}
        \item $C_1=\frac{1}{2}$, which is trivial.
        
        \item $C_2 = g^{-1} (\normtwo{\ZX[0]}^2)$. We see from the definition of $g$ that
        \begin{align*}
            g(y) &= y \cdot f\left( 2\etanew[0] \cdot (\normtwo{\ZX[0]} + \normtwo{\ZS[0]} + y) \right) \cdot f\left( \etanew[0] \cdot (\normtwo{\ZX[0]} + \normtwo{\ZS[0]} + y) \right) \\
            &\le y \cdot f\left( 2\etanew[0,f] \cdot (2(\lam{1} - \lam{n}) + \min \left\{ \lam{r}, -\lam{r+1} \right\} + y) \right) \\
            &\phantom{\le} \ \cdot f\left( \etanew[0,f] \cdot (2(\lam{1} - \lam{n}) + \min \left\{ \lam{r}, -\lam{r+1} \right\} + y) \right) =: g_1(y).
        \end{align*}
        It is clear that $g_1$ is monotonically increasing on $[0, \infty)$ and $g_1(0)=0$. Combined with the fact that $\normtwo{\ZX[0]} \geq \lam{1} - \frac{1}{2} \lam{r}$, we conclude that
        \[
            g^{-1}(\normtwo{\ZX[0]}^2) \ge g_1^{-1}(\normtwo{\ZX[0]}^2) \ge g_1^{-1}((\lam{1} - 0.5\lam{r})^2) =: C_{2,f} > 0,
        \]
        which serves as a lower bound for $C_2$.

        \item $C_3=\frac{3}{8\etanew[0]} \geq \frac{3}{8\etanew[0,f]} =: C_{3,f}$.

        \item $C_4=\frac{1}{\sqrt{4\etanew[0] \alpha_K}} \geq \frac{1}{\sqrt{4\etanew[0,f] \alpha_{K,f}}} =: C_{4,f}$.
        
        \item Lastly, we have
        \begin{align*}
            C_5 &= \sqrt{\frac{1}{4\alpha_K} \min\{\lammin{\ZX[0]}, - \lammax{\ZS[0]}\}} \\
            &\ge \sqrt{\frac{1}{4\alpha_{K,f}} \min\{\frac{1}{2} \lammin{\ZX[0]}, - \frac{1}{2} \lammax{\ZS[0]}\}} =: C_{5,f}.
        \end{align*}
    \end{itemize}
    Therefore, $C_{K,f} := \min\{\frac{1}{2}, C_{2,f}, C_{3,f}, C_{4,f}, C_{5,f}\}$ serves as a lower bound for $C_K$.
\end{enumerate}

\subsection{Proof of \texorpdfstring{\cref{lem:error-bound:first-sylvester-eq}}{Lemma 14}}
\label{app:sec:error-bound:first-sylvester-eq}

We first show $\Theta_0 \circ \HO$ is the solution of 
\[
    \WO \LamX - \LamS \WO = \HO.
\]
To see this, expand both sides of the equity and consider the $(i,j)$th element, $i \in \{r+1, \ldots, n\}$ and $j \in \{1, \ldots, r\}$:
\[
    \WO[i,j] \cdot \lam{j} - \lam{i} \cdot \WO[i,j] = \HO[i,j] \; \Longrightarrow \; \WO[i,j] = \frac{1}{\lam{j} - \lam{i}} \HO[i,j] \; \Longrightarrow \; \WO = \Theta_0 \circ \HO.
\]

Second, we show that the perturbations $\HX$ and $\HS$ only affect the second- and higher-order terms. To see this, we explicitly write down $\WO[0]$ as:
\begin{align*}
    & \WO[0] (\LamX + \HX) - (\LamS + \HS) \WO[0] = \HO \\
    \Longrightarrow \quad & \text{vec}(\WO[0]) = (A + \dA)^{-1} \text{vec} (\HO),
\end{align*}
where $A := \I[r] \otimes (-\LamS) + \LamX \otimes \I[n-r]$ and $\dA := \I[r] \otimes (-\HS) + \HX \otimes \I[n-r]$. We bound $\normtwo{\dA}$ as
\begin{align}
    \normtwo{\dA} \le \normF{\dA} &= \normF{\I[r] \otimes (-\HS) + \HX \otimes \I[n-r]} \nonumber \\
    &\le \normF{\I[r] \otimes (-\HS)} + \normF{\HX \otimes \I[n-r]} \nonumber \\
    &= \normF{\I[r]} \cdot \normF{\HS} + \normF{\I[n-r]} \cdot \normF{\HX} \nonumber \\
    &= r \normF{\HS} + (n-r) \normF{\HX} \nonumber \\
    &\le n (\normF{\HX} + \normF{\HS}), \label{app:eq:first-sylvester-eq:dA-normtwo}
\end{align}
and bound $\normtwo{A^{-1}}$ as
\begin{equation} \label{app:eq:first-sylvester-eq:Ainv-normtwo}
    \normtwo{A^{-1}} = \big(\lammin{\I[r] \otimes (-\LamS) + \LamX \otimes \I[n-r]} \big)^{-1} = \frac{1}{\lam{r} - \lam{r+1}},
\end{equation}
which follows from~\cite[Theorem 2.5]{schacke2004thesis-kronecker-product} and the fact that $A \in \Sympp{n}$. Combining \eqref{app:eq:first-sylvester-eq:dA-normtwo} and \eqref{app:eq:first-sylvester-eq:Ainv-normtwo} gives
\begin{equation}
    \normtwo{A^{-1} \dA} \le \frac{n}{\lam{r} - \lam{r+1}} (\normF{\HX} + \normF{\HS}) \le \frac{n d}{\lam{r} - \lam{r+1}} (\normtwo{\HX} + \normtwo{\HS}) \le \frac{1}{2} \label{app:eq:first-sylvester-eq:Ainv-dA-normtwo}
\end{equation}
where the last inequliaty is from the assumption of the lemma. Therefore, we have 
\begin{subequations}
\begin{align}
    &\ \normtwo{\WO[0] - \Theta_0 \circ \HO} \nonumber \\
    \le&\ \normF{\WO[0] - \Theta_0 \circ \HO} \nonumber \\
    =&\ \normtwo{\vvec(\WO[0]) - \vvec(\Theta_0 \circ \HO)} \nonumber \\
    =&\ \normtwo{(A + \dA)^{-1} \vvec(\HO) - A^{-1} \vvec(\HO)} \nonumber \\
    \le&\ \normtwo{A^{-1} \vvec(\HO)} \cdot \normtwo{(I + A^{-1}\dA)^{-1} - I} \nonumber \\
    =&\ \normtwo{A^{-1} \vvec(\HO)} \cdot \left\|\sum_{i=0}^{\infty} (-A^{-1}\dA)^i - I\right\|_2 \label{app:eq:first-sylvester-eq:bound-1} \\
    \le&\ \normtwo{A^{-1} \vvec(\HO)} \cdot \sum_{i=1}^\infty \normtwo{A^{-1}\dA}^i \nonumber \\
    \le&\ \normtwo{A^{-1}} \cdot \normtwo{\vvec(\HO)} \cdot \frac{\normtwo{A^{-1}\dA}}{1 - \normtwo{A^{-1}\dA}} \nonumber \\
    \le&\ \frac{1}{\lam{r} - \lam{r+1}} \cdot \normtwo{\HO} \cdot 2 \cdot \frac{1}{\lam{r} - \lam{r+1}} \cdot \normtwo{\dA} \label{app:eq:first-sylvester-eq:bound-2} \\
    \le&\ \frac{2nd}{(\lam{r} - \lam{r+1})^2} \cdot \normtwo{\HO} \cdot (\normtwo{\HX} + \normtwo{\HS}). \label{app:eq:first-sylvester-eq:bound-3}
\end{align}
\end{subequations}
In~\eqref{app:eq:first-sylvester-eq:bound-1}, the expansion of the Neumann series is valid because $\normtwo{A^{-1}\dA} < \frac{1}{2}$ (see~\eqref{app:eq:first-sylvester-eq:Ainv-normtwo});~\eqref{app:eq:first-sylvester-eq:bound-2} uses~\eqref{app:eq:first-sylvester-eq:Ainv-normtwo} and~\eqref{app:eq:first-sylvester-eq:Ainv-dA-normtwo}; and finally~\eqref{app:eq:first-sylvester-eq:bound-3} uses~\eqref{app:eq:first-sylvester-eq:dA-normtwo}.


\section{Additional Numerical Results}
\label{app:sec:exp}

\Cref{app:fig:hamming} presents additional numerical results for the Hamming set problems \cite{abor1999dataset-dimacs}. \Cref{app:fig:BQP-r1} reports additional examples for the BQP problems, with $c \sim \calN(0, \I[n])$ and random (standard Gaussian) initial guess, and \cref{app:fig:BQP-r1-zero} reports for the same problems as in \cref{app:fig:BQP-r1}, but with all-zeros initialization. Last, \cref{app:fig:BQP-r2} shows numerical results for BQP problems with $c = 0$ and random (standard Gaussian) initial guess.


\begin{figure}[htbp]
    \centering

    \begin{minipage}{\textwidth}
        \centering
        \hspace{5mm} \includegraphics[width=0.35\columnwidth]{figs/legends/legend_rmax_dZ.png}
    \end{minipage}

    \begin{minipage}{\textwidth}
        \centering
        \begin{tabular}{ccc}
            \begin{minipage}{0.30\textwidth}
                \centering
                \includegraphics[width=\columnwidth]{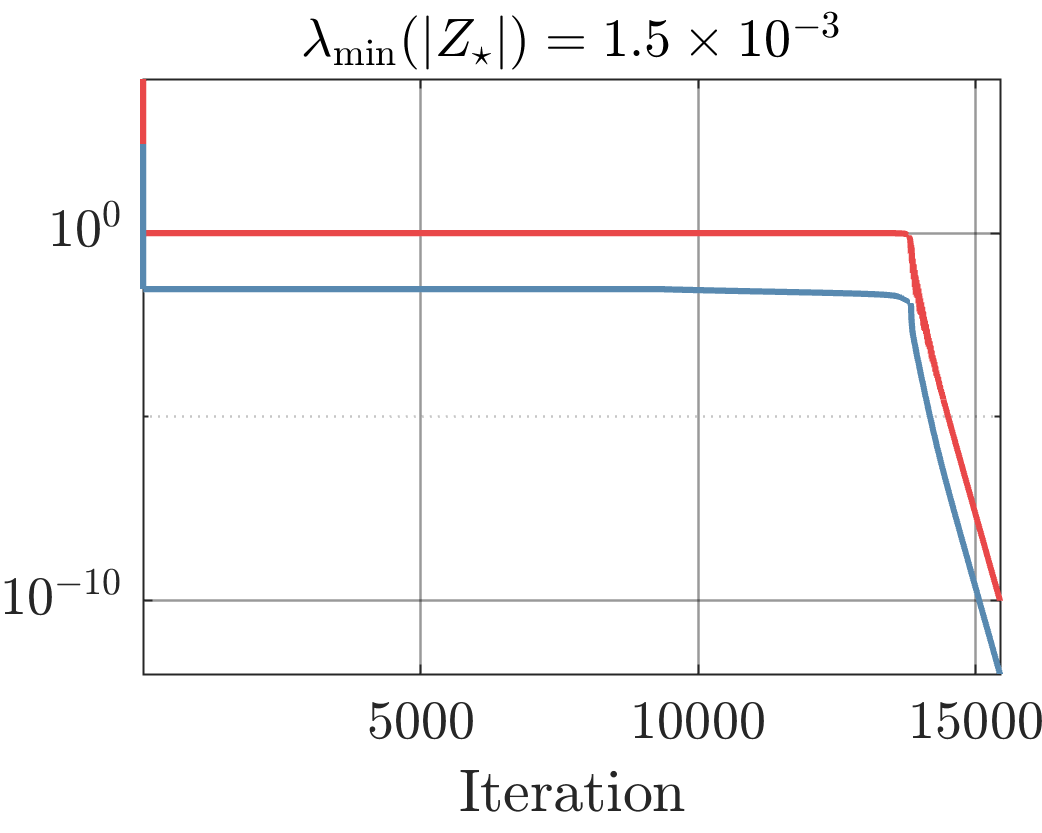}
                \texttt{hamming-10-2}
            \end{minipage}

            \begin{minipage}{0.30\textwidth}
                \centering
                \includegraphics[width=\columnwidth]{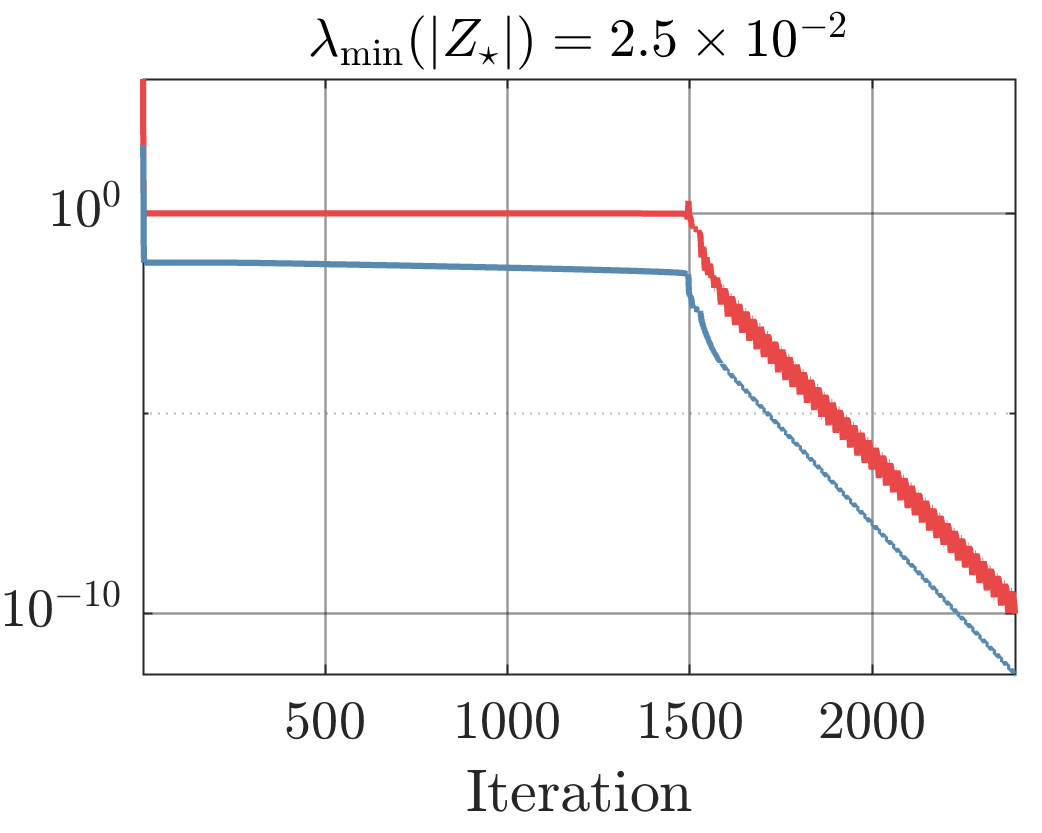}
                \texttt{hamming-8-3-4}
            \end{minipage}

            \begin{minipage}{0.30\textwidth}
                \centering
                \includegraphics[width=\columnwidth]{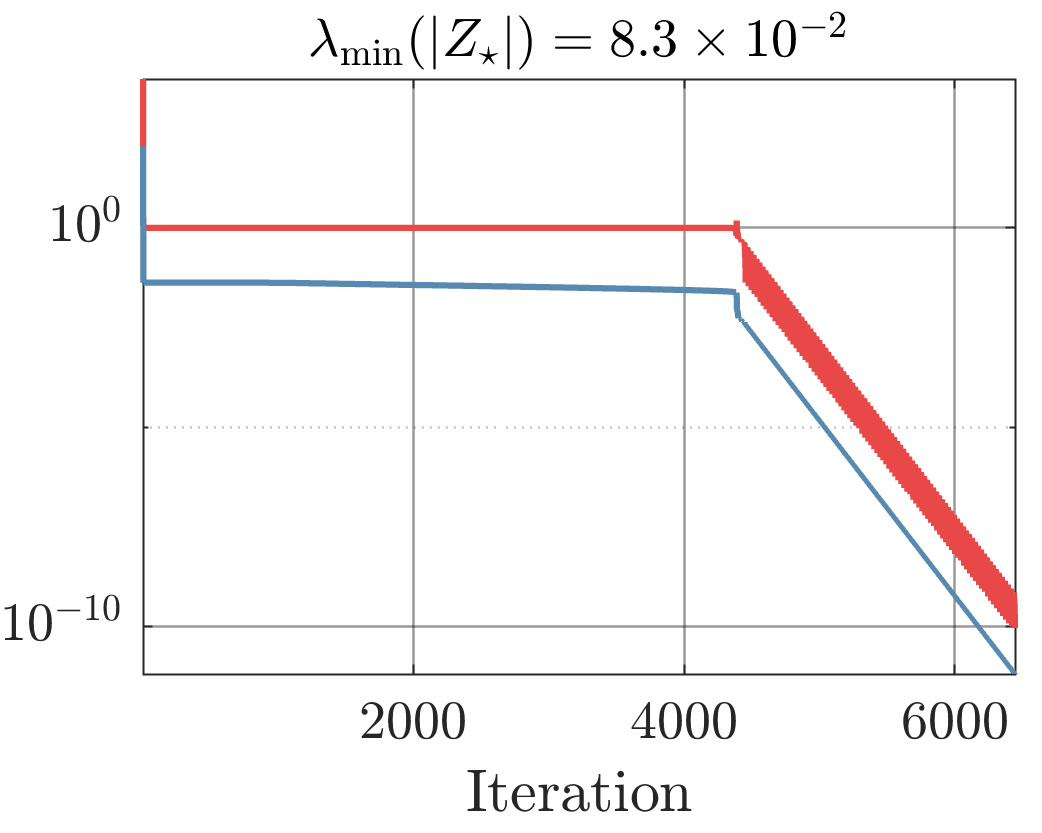}
                \texttt{hamming-9-5-6}
            \end{minipage}
        \end{tabular}
    \end{minipage}
    \caption{More Hamming graph problems with with random (standard Gaussian) initial guess. In all cases, the converging $\Zs$ is nonsingular. \label{app:fig:hamming}}
\end{figure}


\begin{figure}[tbp]
    \centering

    \begin{minipage}{\textwidth}
        \centering
        \hspace{5mm} \includegraphics[width=0.35\columnwidth]{figs/legends/legend_rmax_dZ.png}
    \end{minipage}

    \begin{minipage}{\textwidth}
        \centering
        \begin{tabular}{ccc}
            \begin{minipage}{0.30\textwidth}
                \centering
                \includegraphics[width=\columnwidth]{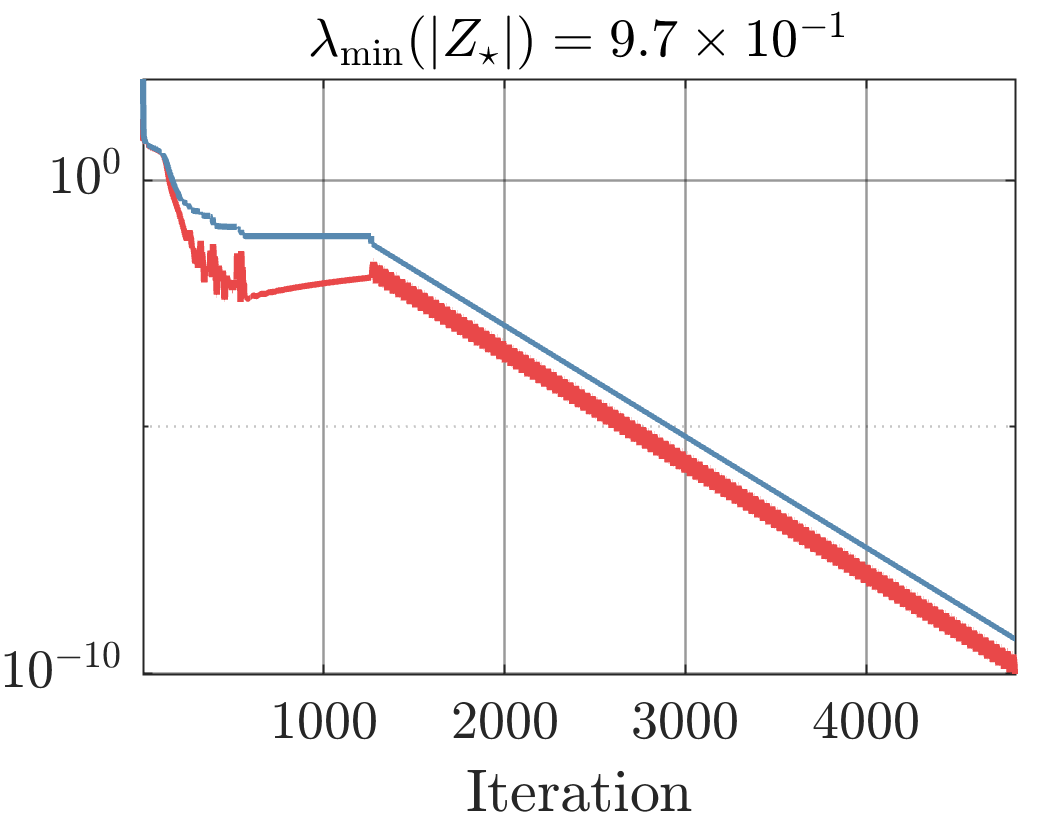}
                \texttt{BQP-r1-20-2}
            \end{minipage}

            \begin{minipage}{0.30\textwidth}
                \centering
                \includegraphics[width=\columnwidth]{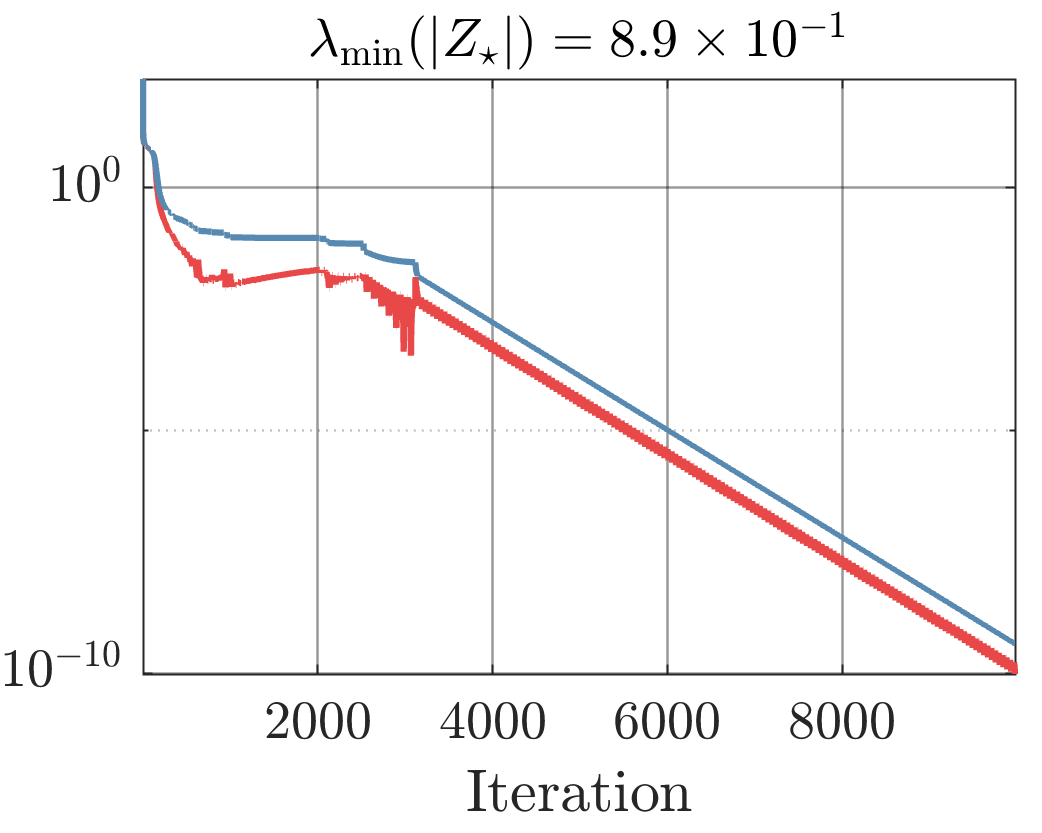}
                \texttt{BQP-r1-30-2}
            \end{minipage}

            \begin{minipage}{0.30\textwidth}
                \centering
                \includegraphics[width=\columnwidth]{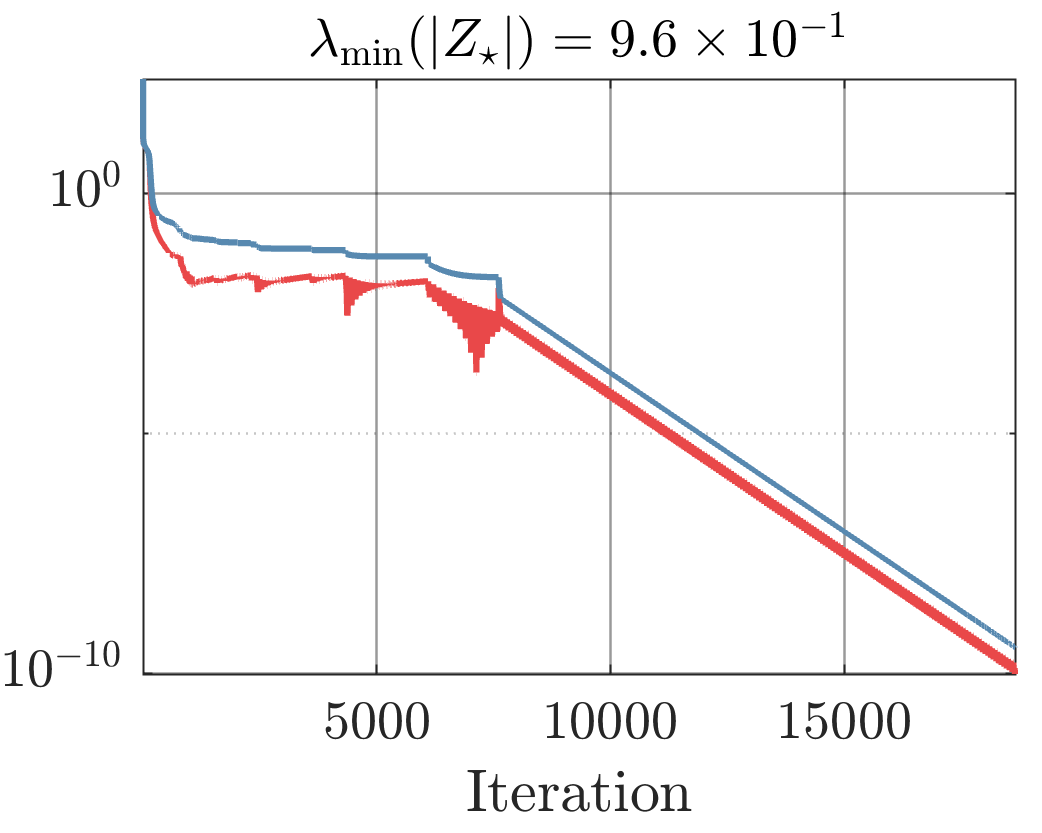}
                \texttt{BQP-r1-40-2}
            \end{minipage}
        \end{tabular}
    \end{minipage}

    \begin{minipage}{\textwidth}
        \centering
        \begin{tabular}{ccc}
            \begin{minipage}{0.30\textwidth}
                \centering
                \includegraphics[width=\columnwidth]{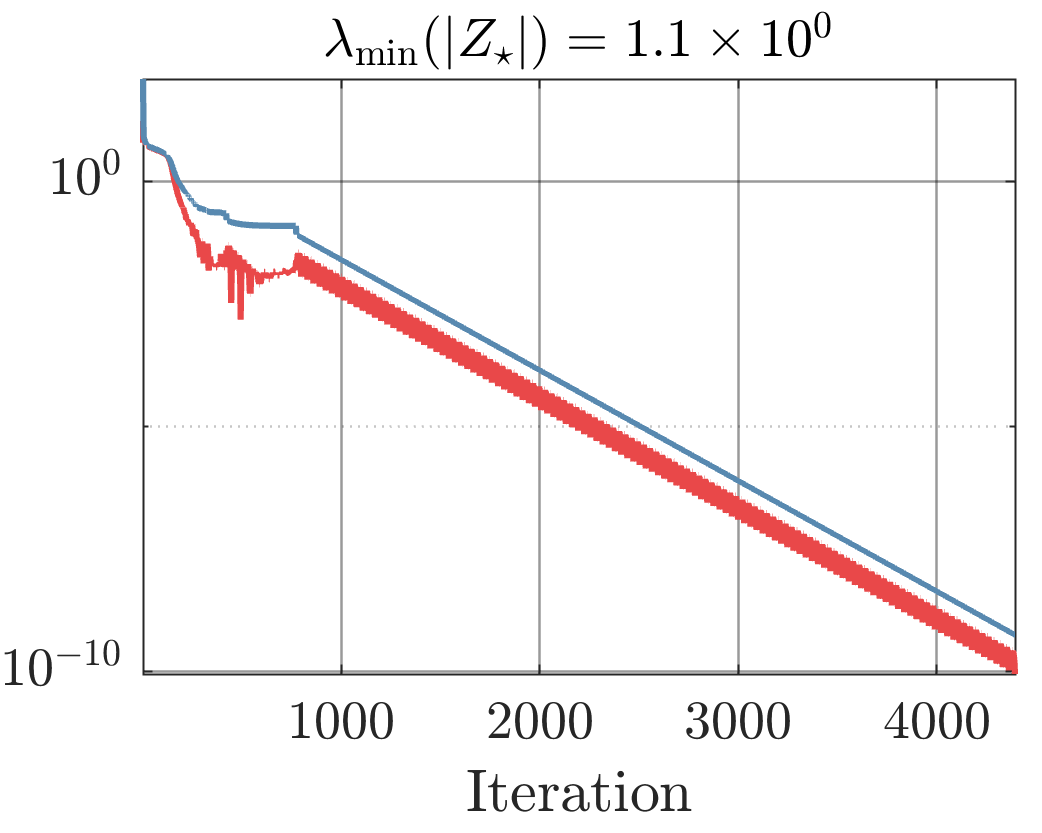}
                \texttt{BQP-r1-20-3}
            \end{minipage}

            \begin{minipage}{0.30\textwidth}
                \centering
                \includegraphics[width=\columnwidth]{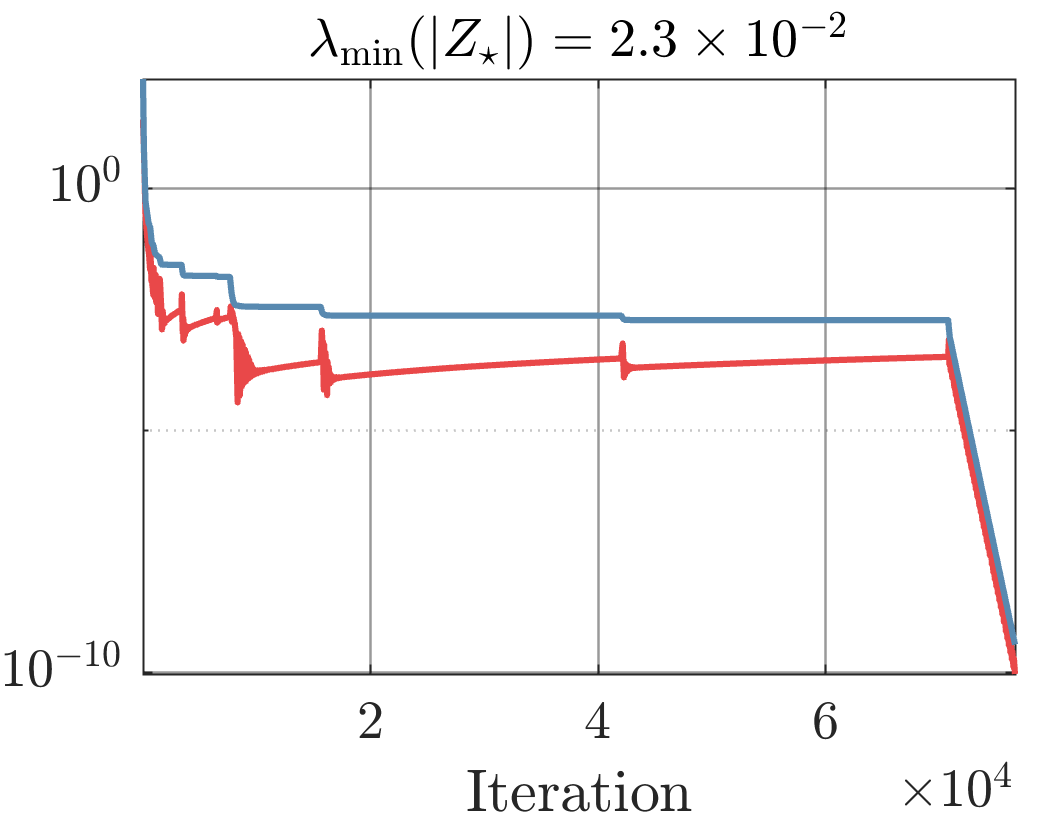}
                \texttt{BQP-r1-30-3}
            \end{minipage}

            \begin{minipage}{0.30\textwidth}
                \centering
                \includegraphics[width=\columnwidth]{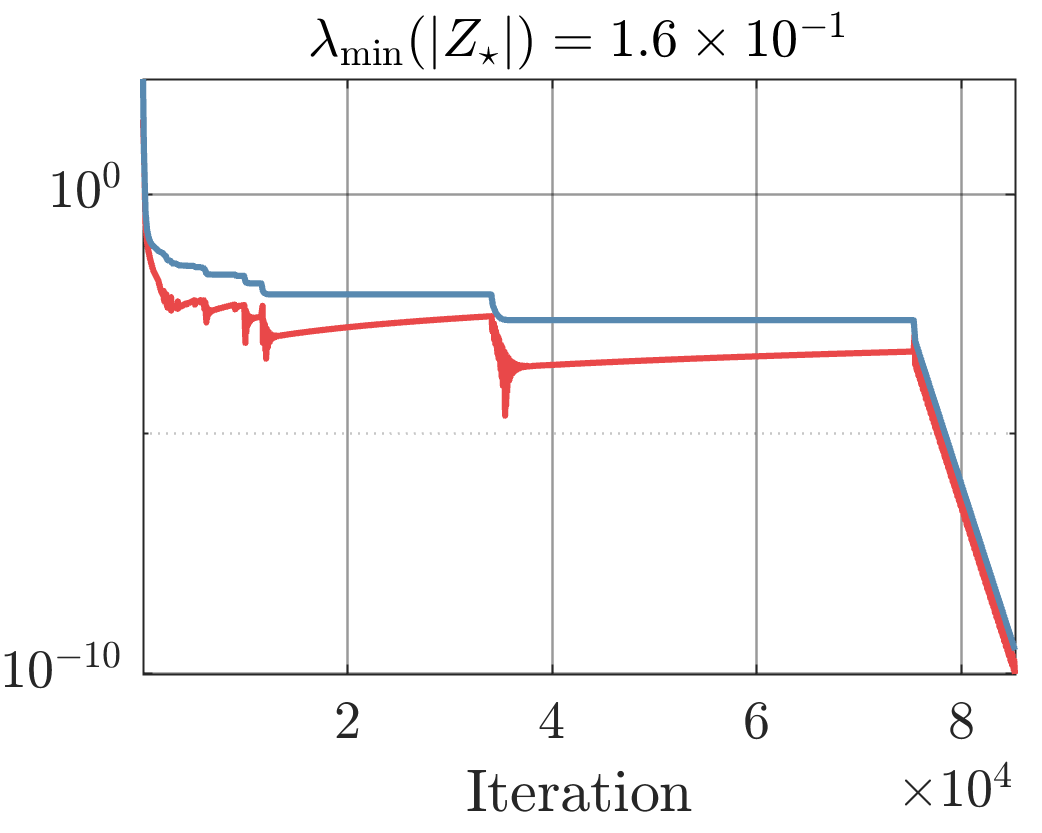}
                \texttt{BQP-r1-40-3}
            \end{minipage}
        \end{tabular}
    \end{minipage}

    \caption{Additional Random BQP problems with $c \sim \calN(0, I_n)$ and random (standard Gaussian) initialization. In all cases, the converging $\Zs$ is nonsingular.}
    \label{app:fig:BQP-r1}
\end{figure}


\begin{figure}[tbp]
    \centering

    \begin{minipage}{\textwidth}
        \centering
        \hspace{5mm} \includegraphics[width=0.35\columnwidth]{figs/legends/legend_rmax_dZ.png}
    \end{minipage}

    \begin{minipage}{\textwidth}
        \centering
        \begin{tabular}{ccc}
            \begin{minipage}{0.30\textwidth}
                \centering
                \includegraphics[width=\columnwidth]{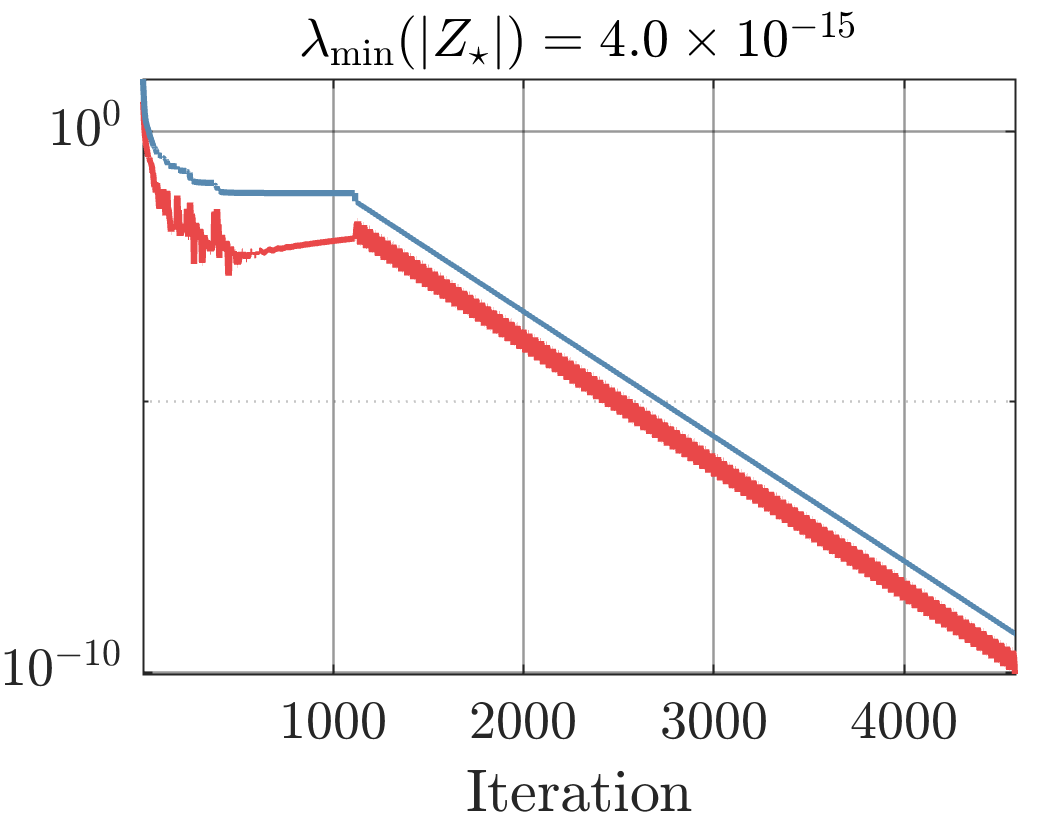}
                \texttt{BQP-r1-20-2}
            \end{minipage}

            \begin{minipage}{0.30\textwidth}
                \centering
                \includegraphics[width=\columnwidth]{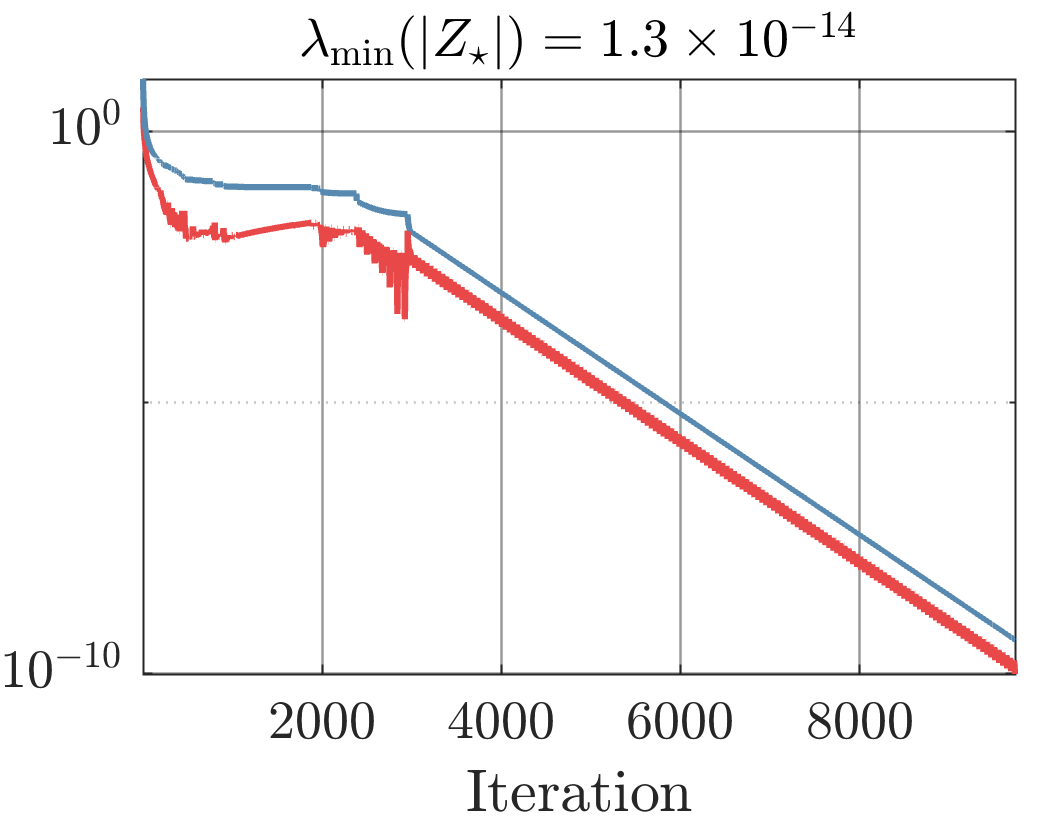}
                \texttt{BQP-r1-30-2}
            \end{minipage}

            \begin{minipage}{0.30\textwidth}
                \centering
                \includegraphics[width=\columnwidth]{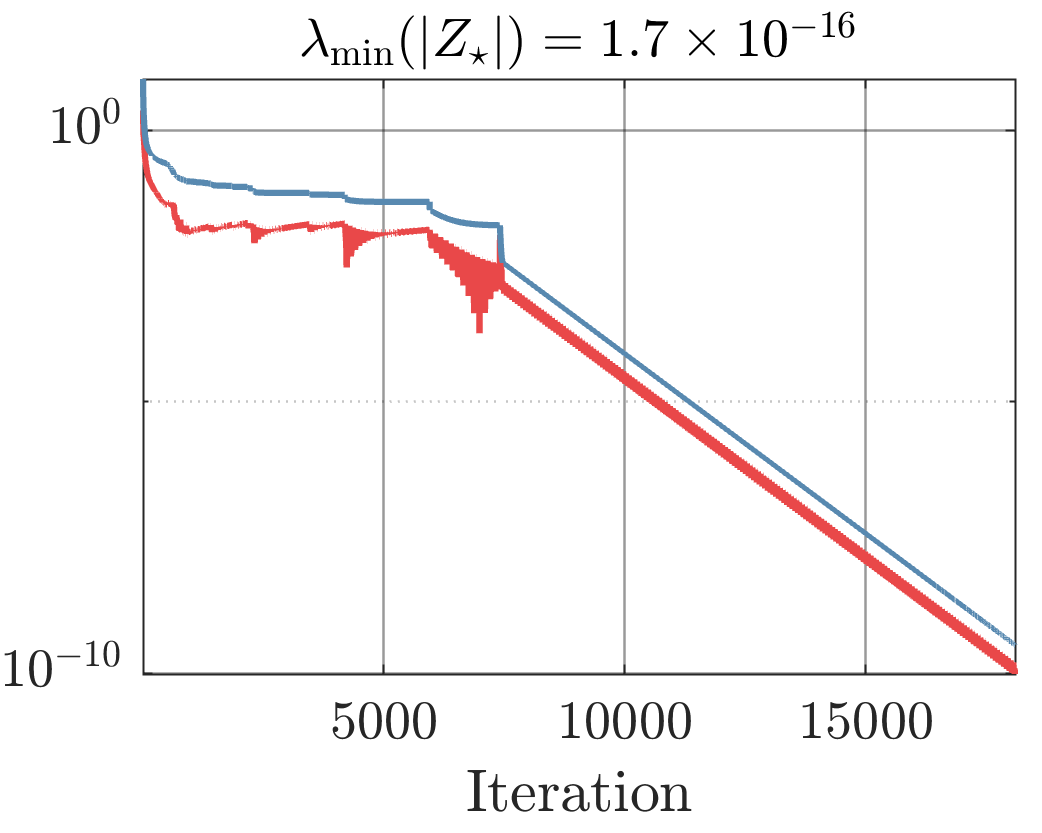}
                \texttt{BQP-r1-40-2}
            \end{minipage}
        \end{tabular}
    \end{minipage}

    \begin{minipage}{\textwidth}
        \centering
        \begin{tabular}{ccc}
            \begin{minipage}{0.30\textwidth}
                \centering
                \includegraphics[width=\columnwidth]{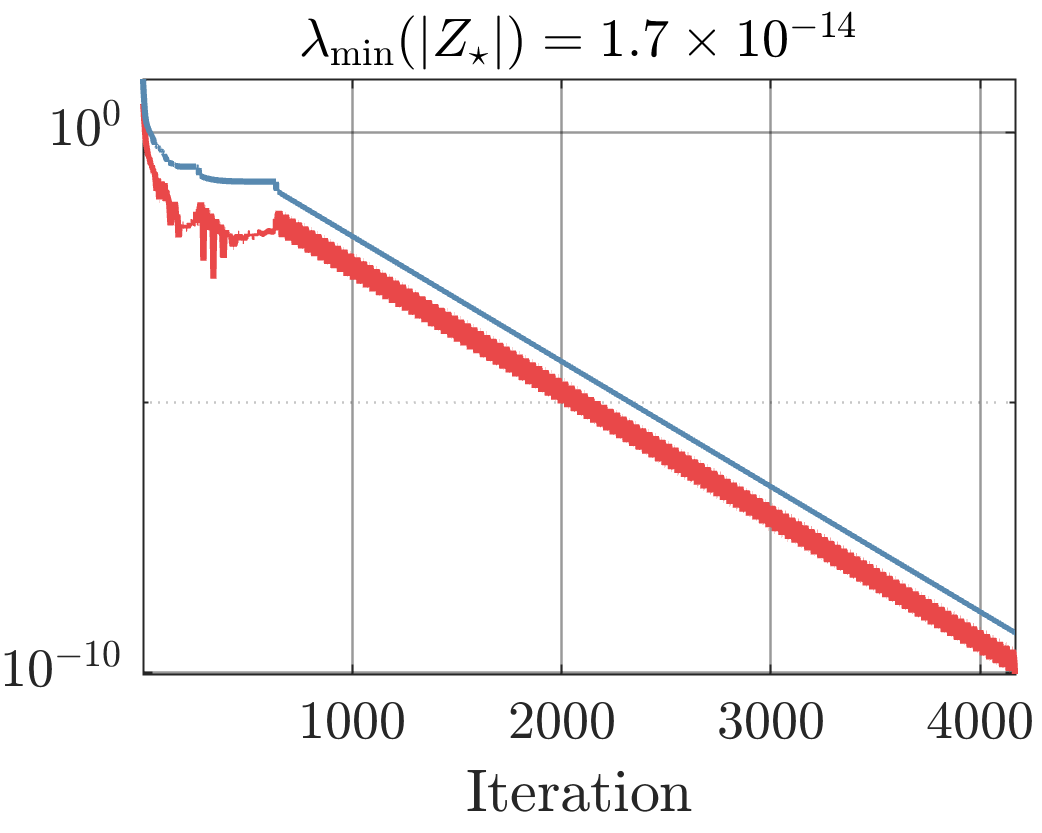}
                \texttt{BQP-r1-20-3}
            \end{minipage}

            \begin{minipage}{0.30\textwidth}
                \centering
                \includegraphics[width=\columnwidth]{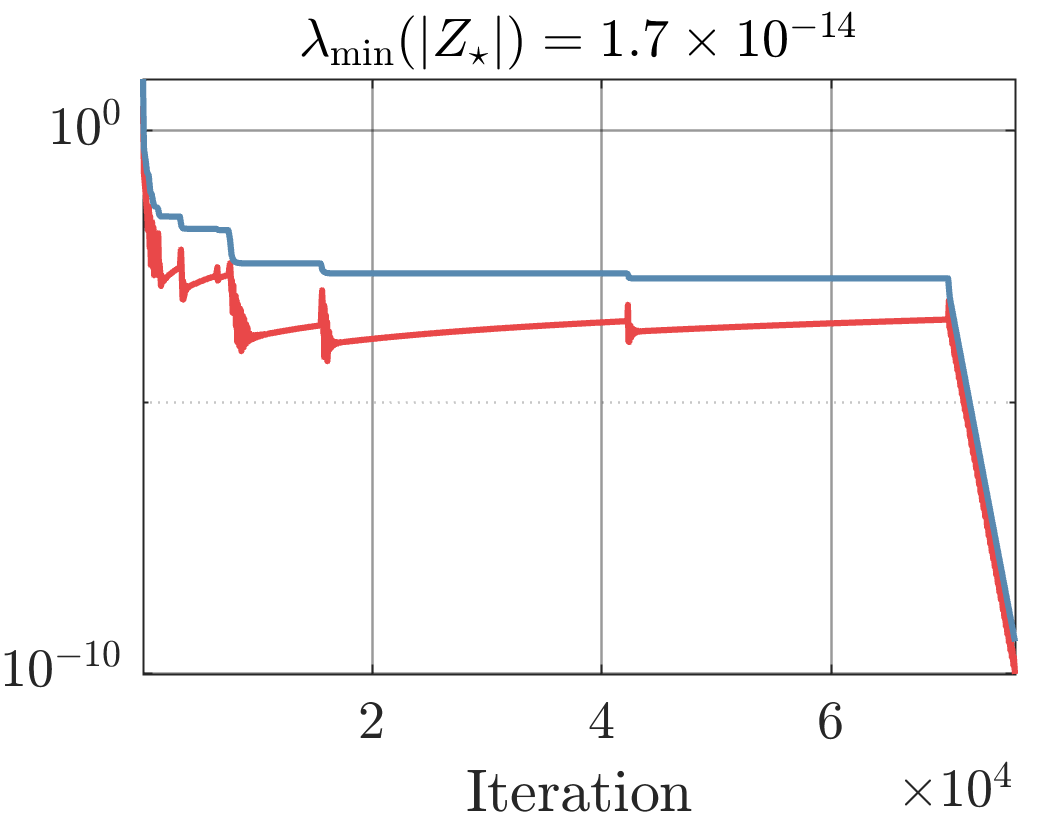}
                \texttt{BQP-r1-30-3}
            \end{minipage}

            \begin{minipage}{0.30\textwidth}
                \centering
                \includegraphics[width=\columnwidth]{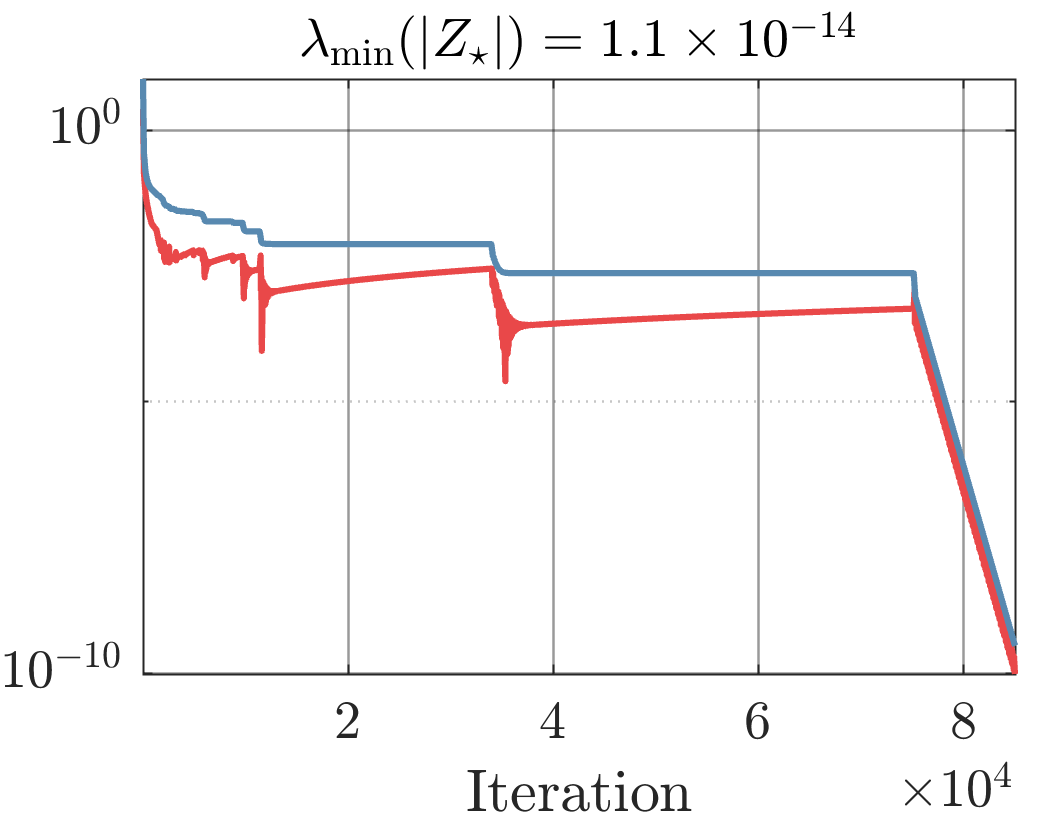}
                \texttt{BQP-r1-40-3}
            \end{minipage}
        \end{tabular}
    \end{minipage}

    \caption{Additional Random BQP problems with $c \sim \calN(0, I_n)$ and all-zeros initialization. In all cases, the converging $\Zs$ is singular. \label{app:fig:BQP-r1-zero}}
\end{figure}


\begin{figure}[tbp]
    \centering

    \begin{minipage}{\textwidth}
        \centering
        \hspace{5mm} \includegraphics[width=0.35\columnwidth]{figs/legends/legend_rmax_dZ.png}
    \end{minipage}

    \begin{minipage}{\textwidth}
        \centering
        \begin{tabular}{ccc}
            \begin{minipage}{0.30\textwidth}
                \centering
                \includegraphics[width=\columnwidth]{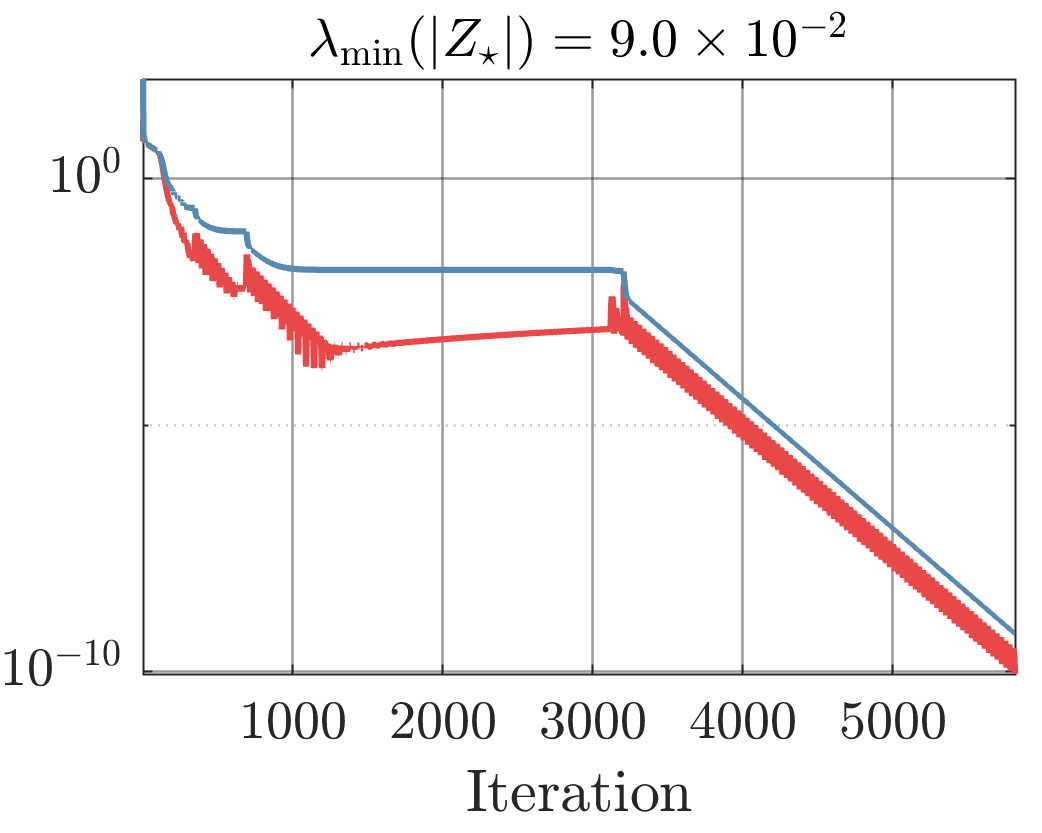}
                \texttt{BQP-r2-20-2}
            \end{minipage}

            \begin{minipage}{0.30\textwidth}
                \centering
                \includegraphics[width=\columnwidth]{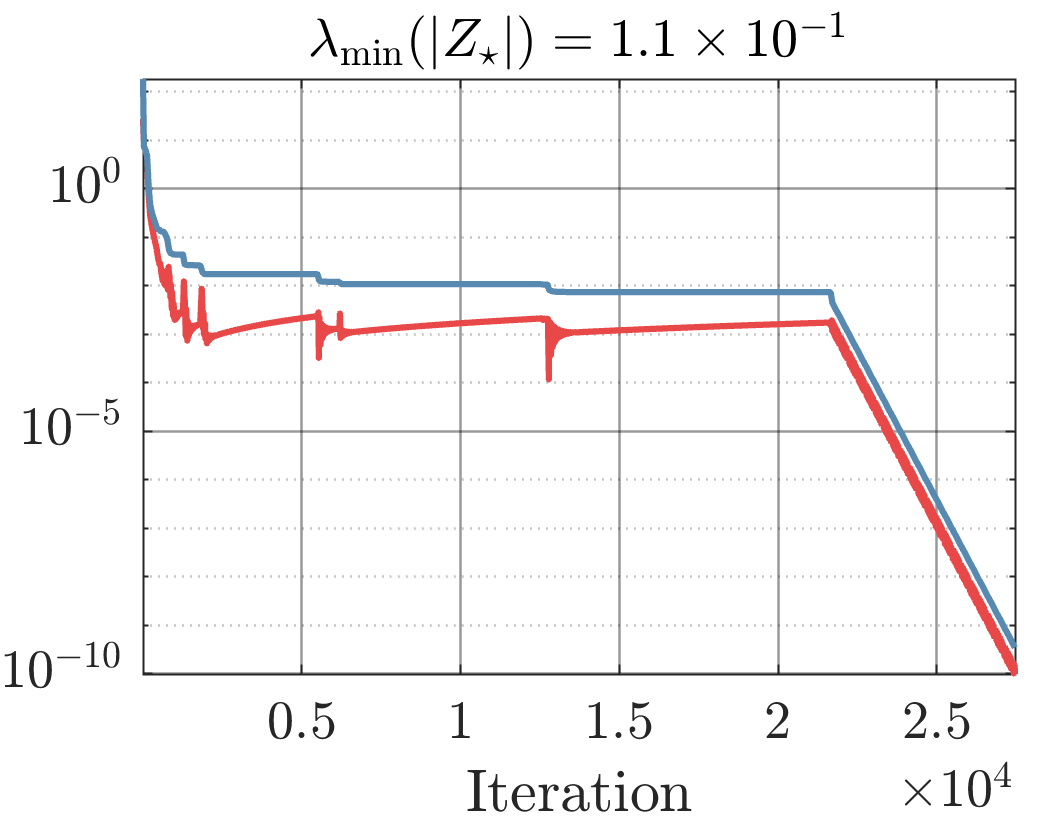}
                \texttt{BQP-r2-30-2}
            \end{minipage}

            \begin{minipage}{0.30\textwidth}
                \centering
                \includegraphics[width=\columnwidth]{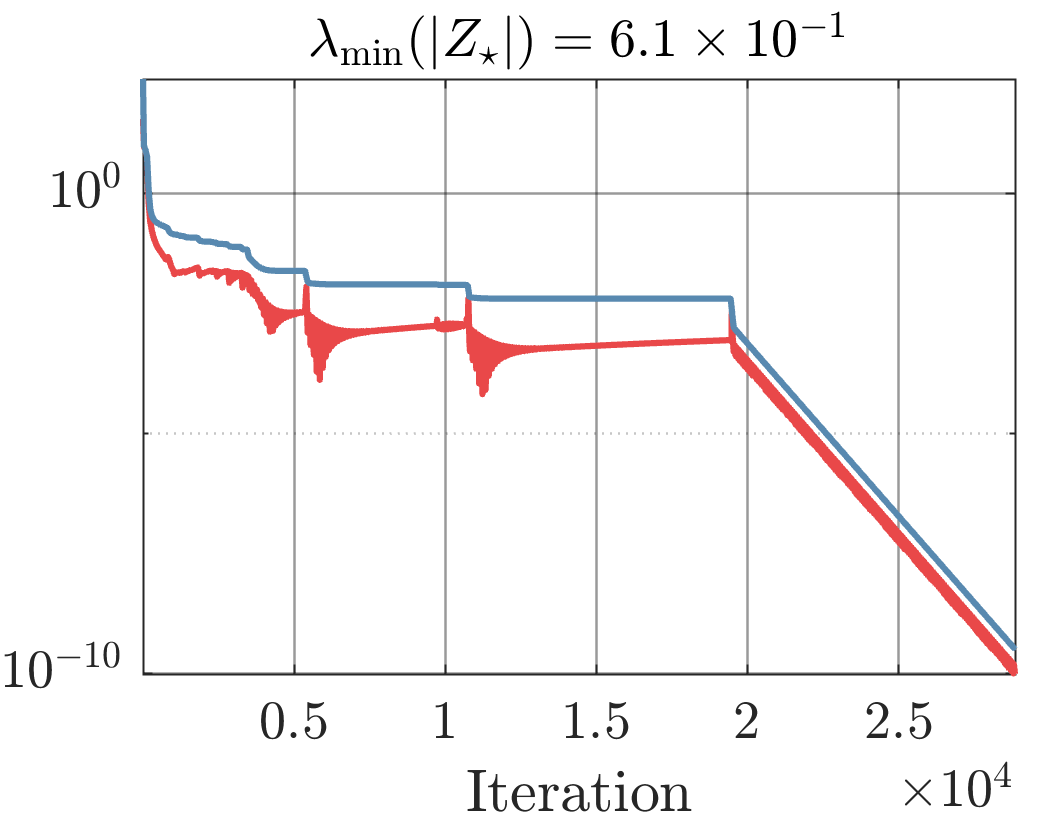}
                \texttt{BQP-r2-40-2}
            \end{minipage}
        \end{tabular}
    \end{minipage}

    \begin{minipage}{\textwidth}
        \centering
        \begin{tabular}{ccc}
            \begin{minipage}{0.30\textwidth}
                \centering
                \includegraphics[width=\columnwidth]{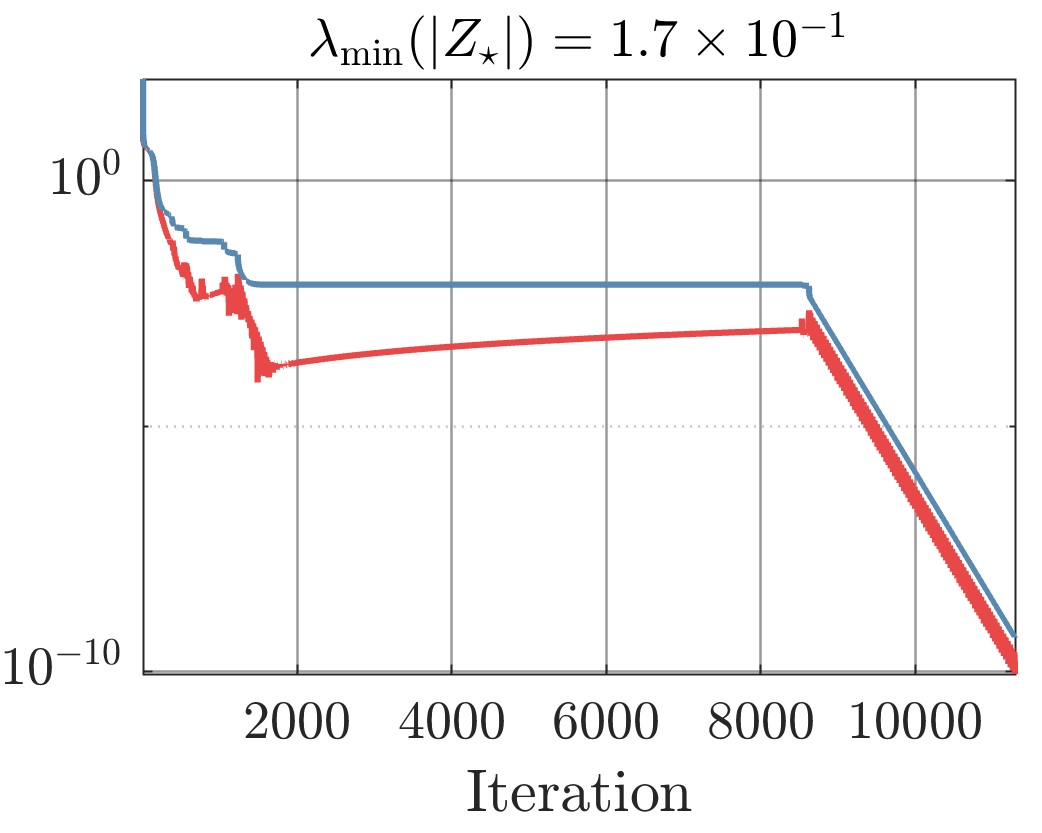}
                \texttt{BQP-r2-20-3}
            \end{minipage}

            \begin{minipage}{0.30\textwidth}
                \centering
                \includegraphics[width=\columnwidth]{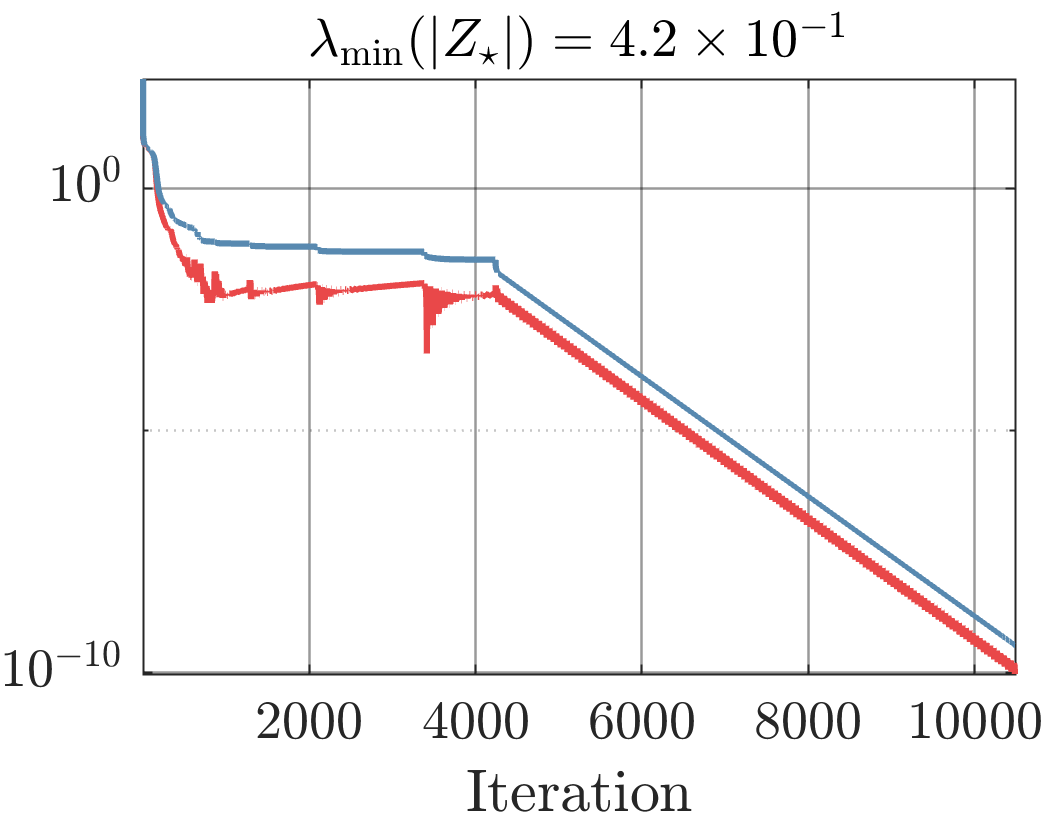}
                \texttt{BQP-r2-30-3}
            \end{minipage}

            \begin{minipage}{0.30\textwidth}
                \centering
                \includegraphics[width=\columnwidth]{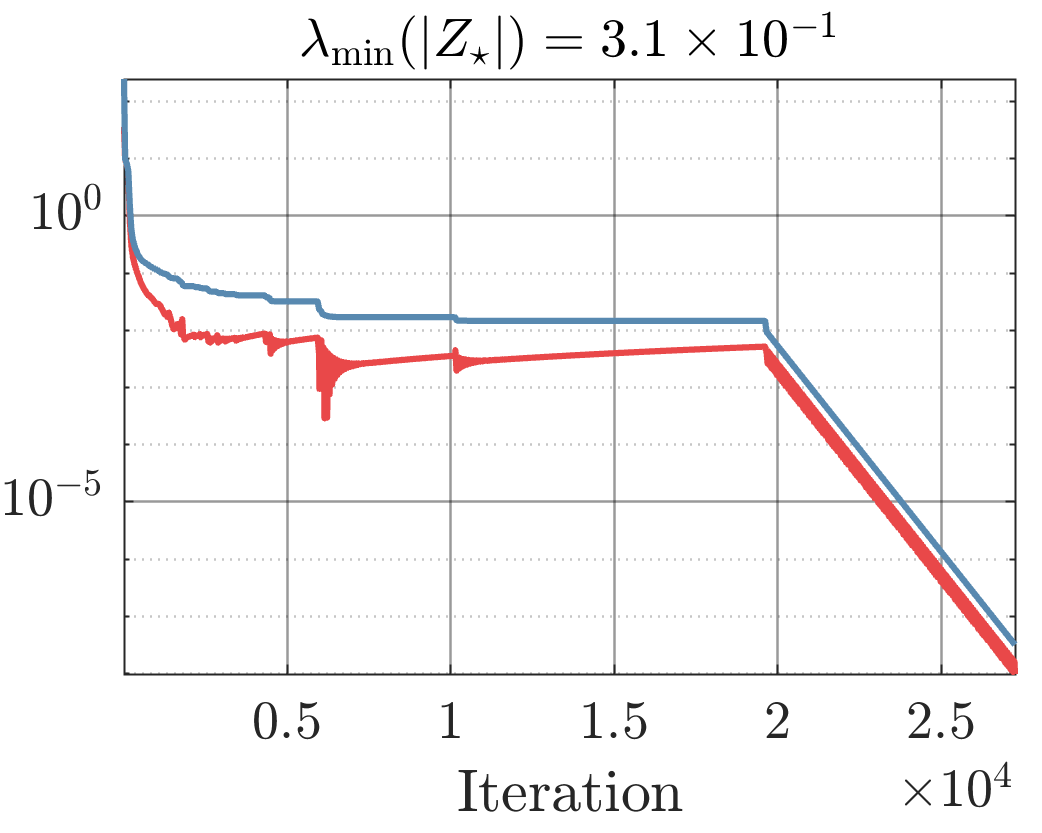}
                \texttt{BQP-r2-40-3}
            \end{minipage}
        \end{tabular}
    \end{minipage}

    \caption{Additional random BQP problems with $c = 0$ and random (standard Gaussian) initialization. In all cases, the converging $\Zs$ is nonsingular. \label{app:fig:BQP-r2}}
\end{figure}

\end{appendices}

\bibliography{ref}

\begin{thebibliography}{10}

\bibitem{abbaszadehpeivasti25arxiv-convrate-drs}
Hadi Abbaszadehpeivasti and Moslem Zamani.
\newblock On the convergence rate of the douglas-rachford splitting algorithm.
\newblock {\em arXiv preprint arXiv:2509.06676}, 2025.

\bibitem{alizadeh1997mp-complementarity-nondegeneracy-sdp}
Farid Alizadeh, Jean-Pierre~A Haeberly, and Michael~L Overton.
\newblock Complementarity and nondegeneracy in semidefinite programming.
\newblock {\em Mathematical Programming}, 77(1):111--128, 1997.

\bibitem{alizadeh1998siopt-aho}
Farid Alizadeh, Jean-Pierre~A Haeberly, and Michael~L Overton.
\newblock Primal--dual interior-point methods for semidefinite programming:
  Convergence rates, stability and numerical results.
\newblock {\em SIAM Journal on Optimization}, 8(3):746--768, 1998.

\bibitem{applegate2023mp-faster-lp-sharpness}
David Applegate, Oliver Hinder, Haihao Lu, and Miles Lubin.
\newblock Faster first-order primal--dual methods for linear programming using
  restarts and sharpness.
\newblock {\em Mathematical Programming}, 201(1):133--184, 2023.

\bibitem{aps2019ugrm-mosek-sdpsolver}
Mosek ApS.
\newblock {MOSEK optimization toolbox for MATLAB}.
\newblock {\em User's Guide and Reference Manual, Version}, 4(1), 2019.

\bibitem{bauschke2016springer-drs-two-subspaces}
Heinz~H Bauschke, JY~Bello~Cruz, Tran~TA Nghia, Hung~M Pha, and Xianfu Wang.
\newblock Optimal rates of linear convergence of relaxed alternating
  projections and generalized {Douglas--Rachford} methods for two subspaces.
\newblock {\em Numerical Algorithms}, 73:33--76, 2016.

\bibitem{bauschke2017springer-convex-analysis-hilbert-spaces}
Heinz~H Bauschke and Patrick~L Combettes.
\newblock {\em Convex Analysis and Monotone Operator Theory in Hilbert Spaces}.
\newblock Springer, Berlin, Heidelberg, 2017.

\bibitem{boley13siopt-linearconv-admm-lp-qp}
Daniel Boley.
\newblock Local linear convergence of the alternating direction method of
  multipliers on quadratic or linear programs.
\newblock {\em SIAM Journal on Optimization}, 23(4):2183--2207, 2013.

\bibitem{boumal2016arxiv-bm-smoothsdp}
Nicolas Boumal, Vlad Voroninski, and Afonso Bandeira.
\newblock The non-convex {Burer--Monteiro} approach works on smooth
  semidefinite programs.
\newblock {\em Advances in Neural Information Processing Systems}, 29, 2016.

\bibitem{burer2003springer-bm}
Samuel Burer and Renato~DC Monteiro.
\newblock A nonlinear programming algorithm for solving semidefinite programs
  via low-rank factorization.
\newblock {\em Mathematical Programming}, 95(2):329--357, 2003.

\bibitem{chan2008siopt-nd-sr-nonsigularity-sdp}
Zi~Xian Chan and Defeng Sun.
\newblock Constraint nondegeneracy, strong regularity, and nonsingularity in
  semidefinite programming.
\newblock {\em SIAM Journal on Optimization}, 19(1):370--396, 2008.

\bibitem{chen2021mp-alm-admm-equivalence}
Liang Chen, Xudong Li, Defeng Sun, and Kim-Chuan Toh.
\newblock On the equivalence of inexact proximal {ALM} and {ADMM} for a class
  of convex composite programming.
\newblock {\em Mathematical Programming}, 185(1):111--161, 2021.

\bibitem{chen2017mp-sgsadmm}
Liang Chen, Defeng Sun, and Kim-Chuan Toh.
\newblock An efficient inexact symmetric {Gauss--Seidel} based majorized {ADMM}
  for high-dimensional convex composite conic programming.
\newblock {\em Mathematical Programming}, 161:237--270, 2017.

\bibitem{cui2016arxiv-superlinear-alm-sdp}
Ying Cui, Defeng Sun, and Kim-Chuan Toh.
\newblock On the asymptotic superlinear convergence of the augmented
  {Lagrangian} method for semidefinite programming with multiple solutions.
\newblock {\em arXiv preprint arXiv:1610.00875}, 2016.

\bibitem{davis2017fasterconvergence}
Damek Davis and Wotao Yin.
\newblock Faster convergence rates of relaxed {{Peaceman-Rachford}} and
  {{ADMM}} under regularity assumptions.
\newblock {\em Mathematics of Operations Research}, 42(3):783--805, August
  2017.

\bibitem{davis2011acm-florida-sparse-matrix-collection}
Timothy~A Davis and Yifan Hu.
\newblock {The University of Florida sparse matrix collection}.
\newblock {\em ACM Transactions on Mathematical Software (TOMS)}, 38(1):1--25,
  2011.

\bibitem{deng2016global}
Wei Deng and Wotao Yin.
\newblock On the global and linear convergence of the generalized alternating
  direction method of multipliers.
\newblock {\em Journal of Scientific Computing}, 66(3):889--916, March 2016.

\bibitem{ding2023siopt-revisit-spectral-bundle}
Lijun Ding and Benjamin Grimmer.
\newblock Revisiting spectral bundle methods: Primal--dual (sub) linear
  convergence rates.
\newblock {\em SIAM Journal on Optimization}, 33(2):1305--1332, 2023.

\bibitem{ding2023ol-strict-complementarity-error-bound}
Lijun Ding and Madeleine Udell.
\newblock A strict complementarity approach to error bound and sensitivity of
  solution of conic programs.
\newblock {\em Optimization Letters}, 17(7):1551--1574, 2023.

\bibitem{drusvyatskiy2014optimality}
D.~Drusvyatskiy and A.~S. Lewis.
\newblock Optimality, identifiability, and sensitivity.
\newblock {\em Mathematical Programming}, 147(1):467--498, 2014.

\bibitem{fantuzzi2024siam-global-pop-integral-functionals}
Giovanni Fantuzzi and Federico Fuentes.
\newblock Global minimization of polynomial integral functionals.
\newblock {\em SIAM Journal on Scientific Computing}, 46(4):A2123--A2149, 2024.

\bibitem{garstka2021jota-cosmo}
Michael Garstka, Mark Cannon, and Paul Goulart.
\newblock Cosmo: A conic operator splitting method for convex conic problems.
\newblock {\em Journal of Optimization Theory and Applications},
  190(3):779--810, 2021.

\bibitem{gertler2025many}
Shai Gertler, Zeyu Kuang, Colin Christie, Hao Li, and Owen~D Miller.
\newblock Many photonic design problems are sparse {QCQPs}.
\newblock {\em Science Advances}, 11(1):eadl3237, 2025.

\bibitem{han2018mor-linear-rate-admm}
Deren Han, Defeng Sun, and Liwei Zhang.
\newblock Linear rate convergence of the alternating direction method of
  multipliers for convex composite programming.
\newblock {\em Mathematics of Operations Research}, 43(2):622--637, 2018.

\bibitem{han2025arxiv-xm}
Haoyu Han and Heng Yang.
\newblock Building rome with convex optimization.
\newblock {\em arXiv preprint arXiv:2502.04640}, 2025.

\bibitem{han2024arxiv-lowrank-admm}
Qiushi Han, Chenxi Li, Zhenwei Lin, Caihua Chen, Qi~Deng, Dongdong Ge, Huikang
  Liu, and Yinyu Ye.
\newblock A low-rank {ADMM} splitting approach for semidefinite programming.
\newblock {\em arXiv preprint arXiv:2403.09133}, 2024.

\bibitem{helmberg2000siopt-spectral-bundle-sdp}
Christoph Helmberg and Franz Rendl.
\newblock A spectral bundle method for semidefinite programming.
\newblock {\em SIAM Journal on Optimization}, 10(3):673--696, 2000.

\bibitem{hoffman2003ws-approximate-sol-linear-inequalities}
Alan~J Hoffman.
\newblock On approximate solutions of systems of linear inequalities.
\newblock In {\em Selected Papers Of Alan J Hoffman: With Commentary}, pages
  174--176. World Scientific, Singapore, 2003.

\bibitem{hong2017mp-linear-convergence-admm}
Mingyi Hong and Zhi-Quan Luo.
\newblock On the linear convergence of the alternating direction method of
  multipliers.
\newblock {\em Mathematical Programming}, 162(1):165--199, 2017.

\bibitem{huang2024arxiv-sparsehomogenization}
Lei Huang, Shucheng Kang, Jie Wang, and Heng Yang.
\newblock Sparse polynomial optimization with unbounded sets.
\newblock {\em arXiv preprint arXiv:2401.15837}, 2024.

\bibitem{jiang2022coa-bregman-sparsesdp}
Xin Jiang and Lieven Vandenberghe.
\newblock Bregman primal--dual first-order method and application to sparse
  semidefinite programming.
\newblock {\em Computational Optimization and Applications}, 81(1):127--159,
  2022.

\bibitem{kang2025global}
Shucheng Kang, Guorui Liu, and Heng Yang.
\newblock Global contact-rich planning with sparsity-rich semidefinite
  relaxations.
\newblock {\em arXiv preprint arXiv:2502.02829}, 2025.

\bibitem{kang2024wafr-strom}
Shucheng Kang, Xiaoyang Xu, Jay Sarva, Ling Liang, and Heng Yang.
\newblock Fast and certifiable trajectory optimization.
\newblock In {\em International Workshop on the Algorithmic Foundations of
  Robotics}, 2024.

\bibitem{lasserre2001siopt-global}
Jean~B Lasserre.
\newblock Global optimization with polynomials and the problem of moments.
\newblock {\em SIAM Journal on Optimization}, 11(3):796--817, 2001.

\bibitem{lewis2002siopt-activesets-nonsmoothness-sensitivity}
Adrian~S Lewis.
\newblock Active sets, nonsmoothness, and sensitivity.
\newblock {\em SIAM Journal on Optimization}, 13(3):702--725, 2002.

\bibitem{li2018siamsc-semismooth-newton-sdp}
Yongfeng Li, Zaiwen Wen, Chao Yang, and Ya-Xiang Yuan.
\newblock A semismooth newton method for semidefinite programs and its
  applications in electronic structure calculations.
\newblock {\em SIAM Journal on Scientific Computing}, 40(6):A4131--A4157, 2018.

\bibitem{liang2017jota-local-convergence-admm}
Jingwei Liang, Jalal Fadili, and Gabriel Peyr{\'e}.
\newblock Local convergence properties of {Douglas--Rachford} and alternating
  direction method of multipliers.
\newblock {\em Journal of Optimization Theory and Applications}, 172:874--913,
  2017.

\bibitem{liao2023arxiv-overview-spectral-bundle-sdp}
Feng-Yi Liao, Lijun Ding, and Yang Zheng.
\newblock An overview and comparison of spectral bundle methods for primal and
  dual semidefinite programs.
\newblock {\em arXiv preprint arXiv:2307.07651}, 2023.

\bibitem{liao2024nips-inexact-alm-linear-convergence}
Feng-Yi Liao, Lijun Ding, and Yang Zheng.
\newblock Inexact augmented {Lagrangian} methods for conic optimization:
  Quadratic growth and linear convergence.
\newblock {\em Advances in Neural Information Processing Systems},
  37:41013--41050, 2024.

\bibitem{liao2025arxiv-bundle-alm}
Feng-Yi Liao and Yang Zheng.
\newblock A bundle-based augmented {Lagrangian} framework: Algorithm,
  convergence, and primal--dual principles.
\newblock {\em arXiv preprint arXiv:2502.08835}, 2025.

\bibitem{liu2018partial}
Yongchao Liu, Xiaoming Yuan, Shangzhi Zeng, and Jin Zhang.
\newblock Partial error bound conditions and the linear convergence rate of the
  alternating direction method of multipliers.
\newblock {\em SIAM Journal on Numerical Analysis}, 56(4):2095--2123, January
  2018.

\bibitem{lu2024mp-geometry-pdhg-lp}
Haihao Lu and Jinwen Yang.
\newblock On the geometry and refined rate of primal--dual hybrid gradient for
  linear programming.
\newblock {\em Mathematical Programming}, pages 1--39, 2024.

\bibitem{magron23book-sparse}
Victor Magron and Jie Wang.
\newblock {\em Sparse polynomial optimization: theory and practice}.
\newblock World Scientific, Singapore, 2023.

\bibitem{mittelmann2006dataset-sparse-sdp-problems}
Hans~D Mittelmann.
\newblock Several sdp-codes on sparse and other sdp problems, 2006.

\bibitem{nie2023siopt-moment-momentpolynomialopt}
Jiawang Nie.
\newblock {\em Moment and Polynomial Optimization}.
\newblock SIAM, Philadelphia, PA, 2023.

\bibitem{nishihara2015pmlr-general-analysis-admm}
Robert Nishihara, Laurent Lessard, Ben Recht, Andrew Packard, and Michael
  Jordan.
\newblock A general analysis of the convergence of {ADMM}.
\newblock In {\em International Conference on Machine learning}, pages
  343--352. PMLR, 2015.

\bibitem{oconnor2020equivalence}
Daniel O'Connor and Lieven Vandenberghe.
\newblock On the equivalence of the primal--dual hybrid gradient method and
  {{Douglas}}--{{Rachford}} splitting.
\newblock {\em Mathematical Programming}, 179(1-2):85--108, 2020.

\bibitem{odonoghue2023-scs-sdpsolver}
Brendan O'Donoghue, Eric Chu, Neal Parikh, and Stephen Boyd.
\newblock {SCS}: Splitting conic solver, version 3.2.4.
\newblock \url{https://github.com/cvxgrp/scs}, November 2023.

\bibitem{parlett1987book-symmetric-eig}
Beresford~N Parlett.
\newblock {\em The symmetric eigenvalue problem}.
\newblock SIAM, Philadelphia, PA, 1998.

\bibitem{abor1999dataset-dimacs}
Gabor Pataki and Stefan Schmieta.
\newblock The {DIMACS} library of semidefinite-quadratic-linear programs.
\newblock 1999.

\bibitem{powell1969ac-nonlinear-constraints-min}
Michael~JD Powell.
\newblock A method for nonlinear constraints in minimization problems.
\newblock {\em Optimization}, pages 283--298, 1969.

\bibitem{rockafellar1976mor-alm-ppm}
R~Tyrrell Rockafellar.
\newblock Augmented lagrangians and applications of the proximal point
  algorithm in convex programming.
\newblock {\em Mathematics of operations research}, 1(2):97--116, 1976.

\bibitem{ryu2023book-largescale-convexopt}
Ernest~K Ryu and Wotao Yin.
\newblock {\em Large-scale convex optimization: algorithms \& analyses via
  monotone operators}.
\newblock Cambridge University Press, Cambridge, 2022.

\bibitem{schacke2004thesis-kronecker-product}
Kathrin Schacke.
\newblock On the {Kronecker} product.
\newblock Master's thesis, University of Waterloo, 2004.

\bibitem{sturm2000siopt-error-bounds-linear-matrix-inequalities}
Jos~F Sturm.
\newblock Error bounds for linear matrix inequalities.
\newblock {\em SIAM Journal on Optimization}, 10(4):1228--1248, 2000.

\bibitem{sun2002mor-semismooth-matrix-valued}
Defeng Sun and Jie Sun.
\newblock Semismooth matrix-valued functions.
\newblock {\em Mathematics of Operations Research}, 27(1):150--169, 2002.

\bibitem{tang2023arxiv-feasible-lowranksdp}
Tianyun Tang and Kim-Chuan Toh.
\newblock A feasible method for general convex low-rank {SDP} problems.
\newblock {\em arXiv preprint arXiv:2312.07908}, 2023.

\bibitem{van25arxiv-linconv-eb-admm}
Kira van Treek, Javier~F Pe{\~n}a, Juan~C Vera, and Luis~F Zuluaga.
\newblock Linear convergence and error bounds for optimization without strong
  convexity.
\newblock {\em arXiv preprint arXiv:2510.27540}, 2025.

\bibitem{wang2023arxiv-manisdp}
Jie Wang and Liangbing Hu.
\newblock Solving low-rank semidefinite programs via manifold optimization.
\newblock {\em arXiv preprint arXiv:2303.01722}, 2023.

\bibitem{wang2022certifying}
Jie Wang, Victor Magron, and Jean~B Lasserre.
\newblock Certifying global optimality of ac-opf solutions via sparse
  polynomial optimization.
\newblock {\em Electric Power Systems Research}, 213:108683, 2022.

\bibitem{wen2010mp-admmsdp}
Zaiwen Wen, Donald Goldfarb, and Wotao Yin.
\newblock Alternating direction augmented {Lagrangian} methods for semidefinite
  programming.
\newblock {\em Mathematical Programming Computation}, 2(3):203--230, 2010.

\bibitem{wolkowicz2000book-sdp-handbook}
Henry Wolkowicz, Romesh Saigal, and Lieven Vandenberghe.
\newblock {\em Handbook of semidefinite programming: theory, algorithms, and
  applications}, volume~27.
\newblock Springer Science \& Business Media, Berlin, Heidelberg, 2012.

\bibitem{wright1993identifiable}
Stephen~J. Wright.
\newblock Identifiable surfaces in constrained optimization.
\newblock {\em SIAM Journal on Control and Optimization}, 31(4):1063--1079,
  1993.

\bibitem{yang2019quaternion}
Heng Yang and Luca Carlone.
\newblock A quaternion-based certifiably optimal solution to the wahba problem
  with outliers.
\newblock In {\em International Conference on Computer Vision}, pages
  1665--1674, 2019.

\bibitem{yang2022pami-outlier-robust-geometric-perception}
Heng Yang and Luca Carlone.
\newblock Certifiably optimal outlier-robust geometric perception: Semidefinite
  relaxations and scalable global optimization.
\newblock {\em IEEE Transactions on Pattern Analysis and Machine Intelligence},
  45(3):2816--2834, 2022.

\bibitem{yang2023mp-stride}
Heng Yang, Ling Liang, Luca Carlone, and Kim-Chuan Toh.
\newblock An inexact projected gradient method with rounding and lifting by
  nonlinear programming for solving rank-one semidefinite relaxation of
  polynomial optimization.
\newblock {\em Mathematical Programming}, 201(1):409--472, 2023.

\bibitem{yang2015mp-sdpnalplus-sdpsolver}
Liuqin Yang, Defeng Sun, and Kim-Chuan Toh.
\newblock {SDPNAL+: A majorized semismooth Newton-CG augmented Lagrangian
  method for semidefinite programming with nonnegative constraints}.
\newblock {\em Mathematical Programming Computation}, 7(3):331--366, 2015.

\bibitem{yuan2020discerning}
Xiaoming Yuan, Shangzhi Zeng, and Jin Zhang.
\newblock Discerning the linear convergence of {{ADMM}} for structured convex
  optimization through the lens of variational analysis.
\newblock {\em Journal of Machine Learning Research}, 21(83):1--75, 2020.

\bibitem{zamani2024exact}
Moslem Zamani, Hadi Abbaszadehpeivasti, and Etienne {de Klerk}.
\newblock The exact worst-case convergence rate of the alternating direction
  method of multipliers.
\newblock {\em Mathematical Programming}, 208(1):243--276, 2024.

\bibitem{zhang2018arxiv-linearly-convergent-admm}
Ning Zhang, Jia Wu, and Liwei Zhang.
\newblock A linearly convergent majorized {ADMM} with indefinite proximal terms
  for convex composite programming and its applications.
\newblock {\em Mathematics of Computation}, 89(324):1867--1894, 2020.

\bibitem{zhao2010siopt-newton-cg-alm}
Xin-Yuan Zhao, Defeng Sun, and Kim-Chuan Toh.
\newblock {A Newton-CG augmented Lagrangian method for semidefinite
  programming}.
\newblock {\em SIAM Journal on Optimization}, 20(4):1737--1765, 2010.

\bibitem{zheng2017ifac-cdcs-sdpsolver}
Yang Zheng, Giovanni Fantuzzi, Antonis Papachristodoulou, Paul Goulart, and
  Andrew Wynn.
\newblock Fast {ADMM} for homogeneous self-dual embedding of sparse {SDPs}.
\newblock {\em IFAC-PapersOnLine}, 50(1):8411--8416, 2017.

\end{thebibliography}

\end{document}